\documentclass[reqno, 11pt]{amsart}
\usepackage[utf8]{inputenc}
\usepackage[all]{xy}
\usepackage{enumerate}
\usepackage[english]{babel}
\usepackage{graphicx}
\usepackage{xcolor}
\usepackage{setspace}
\usepackage{cancel}
\usepackage[left=2.5cm, right=2.5cm, bottom=2cm]{geometry}
\usepackage{amsmath, amssymb, amsthm, epsfig, mathtools}
\usepackage{latexsym}
\usepackage{url}
\usepackage{float}
\usepackage{bbm}
\usepackage[pagewise]{lineno}
\usepackage{comment}
\usepackage{array,booktabs}
\usepackage{tikz}
\usepackage{placeins}
\usetikzlibrary{positioning,arrows.meta}
\usepackage[colorlinks=true, linkcolor=blue, citecolor=green, urlcolor=purple]{hyperref}
\usepackage[nameinlink,noabbrev]{cleveref}
\def\today{\ifcase\month\or
  January\or February\or March\or April\or May\or June\or
  July\or August\or September\or October\or November\or December\fi
  \space\number\day, \number\year}

\newcommand{\mc}{\mathcal}

\newcommand{\R}{\mathbb{R}}
\newcommand{\N}{\mathbb{N}}
\newcommand{\T}{\mathbb{T}}

\newcommand{\C}{\mathbb{C}}
\newcommand{\eps}{\varepsilon}
\newcommand{\dis}{\displaystyle}

\newtheorem{theorem}{Theorem}[section]
\newtheorem{lemma}{Lemma}[section]
\newtheorem{proposition}{Proposition}[section]
\newtheorem{corollary}{Corollary}[section]
\newtheorem{assumption}{Assumption}[section]

\theoremstyle{definition}
\newtheorem{definition}{Definition}[section]

\theoremstyle{remark}
\newtheorem{remark}{Remark}[section]

\crefname{theorem}{Theorem}{Theorems}
\crefname{lemma}{Lemma}{Lemmas}
\crefname{proposition}{Proposition}{Propositions}
\crefname{corollary}{Corollary}{Corollaries}
\crefname{assumption}{Assumption}{Assumptions}
\crefname{definition}{Definition}{Definitions}
\crefname{example}{Example}{Examples}
\crefname{remark}{Remark}{Remarks}
\begin{document}
\color{black}

\date{\today}

\title[Saturation on compact manifolds and forced nonlinear PDEs]{Saturation on compact manifolds via  Stone--Weierstrass type theorem, and forced nonlinear PDEs}

\author[R. Buffe]{Rémi Buffe}
\address{Université de Lorraine, CNRS, Inria, IECL, F-54000 Nancy, France.}
\email{remi.buffe@univ-lorraine.fr}

\author[A. Duca]{Alessandro Duca}
\address{Université de Lorraine, CNRS, Inria, IECL, F-54000 Nancy, France.}
\email{alessandro.duca@inria.fr}

\author[H. Parada]{Hugo Parada}
\address{Université de Lorraine, CNRS, Inria, IECL, F-54000 Nancy, France.}
\email{hugo.parada@inria.fr}
\thanks{This project is funded by the Agence Nationale de la Recherche by the QuBiCCS project (ANR-24-CE40-3008)}

\begin{abstract}
We consider polynomial nonlinear PDEs on compact manifolds with finite-dimensional forcing terms. We provide necessary and sufficient conditions, such that the forcing trough the nonlinear dynamics attains a dense subspace on manifolds with and without boundary. Our key argument is the Nachbin's theorem, which is an analogue of the well-known Stone-Weierstrass theorem in differentiable spaces. As applications, we prove small-time approximate controllability by additive finite-dimensional controls, topological irreducibility and almost sure unboundedness of Sobolev norms.
\end{abstract}
\maketitle

\setcounter{tocdepth}{1}
\tableofcontents


\section{Introduction and main results}
\label{sec:intro}

We study the density of power saturations generated by finitely many smooth functions. Roughly speaking, starting from a vector space spanned by finitely many real-valued functions on a compact manifold, we allow iterations of convex combinations and the nonlinear operation $f\mapsto f^p$, where $p\geq2$ is an integer. We call this procedure power saturation, and say that an initial family is saturating in a given Sobolev space if its power saturation is dense in that space. Our first goal is to construct finite families that are saturating in every Sobolev space. 

Our construction uses the coordinate functions of a smooth embedding of the manifold into an Euclidean space. Together with the constant function, these coordinates have dense power saturation for every integer degree $p\geq2$. For the cone saturation with even $p$, one additional smooth function is sufficient: it completes the coordinate family so that its sum of squares is constant. This identity makes it possible to recover both signs and then generate a dense space. The resulting families are independent of the Sobolev exponent and work for all degrees in the corresponding saturation.\par\medskip

\noindent These approximation results are motivated in part by finite-dimensional forcing of nonlinear evolution equations. In the geometric control approach of Agrachev and Sarychev \cite{AS05,AS06}, short-time operations generate
new spatial directions through the nonlinearity. We use the density theorem to implement this principle for Schr\"odinger, Klein--Gordon and parabolic equations on compact manifolds. The approximation problem is treated first, independently of these dynamical applications. Throughout this paper, we consider manifolds of dimension greater than one. \footnote{The one-dimensional case has particular features and admits direct trigonometric arguments that differ from the
geometric approach developed in this paper.}\par\medskip\color{black}

\subsection{Density of saturations: Manifolds without boundary} 
In this subsection, $(M,g)$ denotes a smooth compact connected Riemannian manifold without boundary of dimension $d:=\dim M>1$. To define the two saturations, let
\begin{equation}\label{eq:Control_Direction_Space}
\begin{aligned}
&E^\R \text{ be a finite-dimensional real vector space containing }1
\text{ and contained in }C^\infty(M;\R),\\
&E:=E^\R\otimes_\R\C\subset C^\infty(M;\C).
\end{aligned}
\end{equation}
Fix an integer $p\geq2$. Starting from $E^\R$, we define its \emph{power saturation} through the recursion:
\begin{equation}
\label{eq:odd-saturation}
\begin{aligned}
&\mathsf E_0^\R:=E^\R,\\
&\mathsf E_{n+1}^\R:=\operatorname{span}_{\R}\left(\mathsf E_n^\R\cup \{\varphi^{p}:\varphi\in \mathsf E_n^\R\}
\right),\qquad n\ge0,\\
& \mathsf E_n:=\mathsf E_n^\R\otimes_\R\C,\qquad n\ge0,\\
&\mathsf E_\infty^\R :=\bigcup_{n\ge0}\mathsf E_n^\R,\qquad \mathsf E_\infty:=\mathsf E_\infty^\R\otimes_\R\C.
\end{aligned}
\end{equation}

\noindent For even $p$, we also define the convex \emph{cone saturation}, which records the restriction to one sign of the newly generated powers. Fix $r\in\R$. Starting from $E^\R$, we define recursively the following cones and spaces: 
\begin{equation}
\label{eq:even-saturation}
\begin{aligned}
&\mathsf F_0^\R:=E^\R,\\
& \mathsf C_n:=\Big\{\eta-\sum_{j=1}^N\varphi_j^p:N\geq1,\quad \eta,\varphi_1,\ldots,\varphi_N\in \mathsf F_n^\R\Big\}, \quad \mathsf F_{n+1}^\R:=\overline{\mathsf C_n}^{H^r(M;\R)}\cap\left(-\overline{\mathsf C_n}^{H^r(M;\R)}\right),\\
& \mathsf F_n:=\mathsf F_n^\R\otimes_\R\C,\qquad n\ge0,\\
&\mathsf F_\infty^\R :=\bigcup_{n\ge0}\mathsf F_n^\R,\qquad \mathsf F_\infty:=\mathsf F_\infty^\R\otimes_\R\C.
\end{aligned}
\end{equation}
Observe that $\mathsf F_{n+1}^\R$ is the largest vector subspace contained in $\overline{\mathsf C_n}^{H^r(M;\R)}$. We call $\mathsf E_\infty^\R$ and $\mathsf F_\infty^\R$ the power and cone saturations of $E^\R$, respectively. An initial family is called \emph{saturating} in a given function space if its corresponding saturation is dense in that space. With this in mind, we state one of the main results of this work. Given $V_1,\ldots,V_m\in C^\infty(M;\R)$, define the auxiliary function
\begin{equation}\label{eq:V_m+1}
V_{m+1}:=\sqrt{R^2-\sum_{j=1}^{m}V_j^2}, \qquad R^2>\max_{x\in M}\sum_{j=1}^{m}V_j^2.
\end{equation}
Such a choice of $R$ is always possible since the manifold $M$ is compact. Moreover, $V_{m+1}$ is smooth.
\begin{theorem}[Density of power and cone saturations]
\label{th:odd-saturation-density}
Assume that the map
\[\mathcal V:M\to\R^m, \qquad \mathcal V(x):=(V_1(x),\dots,V_m(x)),\]
is a smooth embedding. For every $r\in\R$, the following assertions hold.
\begin{enumerate}
\item The spaces $\mathsf E_\infty^\R$ and $\mathsf E_\infty$ defined  by \eqref{eq:odd-saturation}, with $\mathsf E_0^\R=\operatorname{span}_{\R}\{1,V_1,\dots,V_m\}$ satisfy
\[\overline{\mathsf E_\infty^\R}^{H^r(M;\R)}=H^r(M;\R),
\qquad\overline{\mathsf E_\infty}^{H^r(M;\C)}=H^r(M;\C).\]
\item If $p$ is even, the spaces $\mathsf F_\infty^\R$ and $\mathsf F_\infty$ defined by
\eqref{eq:even-saturation}, with $\mathsf F_0^\R=\operatorname{span}_{\R}\{1,V_1,\dots,V_{m+1}\}$ satisfy
\[\overline{\mathsf F_\infty^\R}^{H^r(M;\R)}=H^r(M;\R),
\qquad \overline{\mathsf F_\infty}^{H^r(M;\C)}=H^r(M;\C).\]
\end{enumerate}
\end{theorem}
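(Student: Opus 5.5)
Part (1) will follow from two facts: the power saturation already contains every polynomial in $V_1,\dots,V_m$, and polynomials in the coordinates of an embedding are dense in $C^\infty(M)$. Density in $H^r$ for every $r$ is then automatic.

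\textbf{Step 1: polynomials lie in $\mathsf E_\infty^\R$.} Since $\mathsf E_n^\R$ is a vector space closed under $\varphi\mapsto\varphi^p$ at the next level, the polarization identity applies. For real $t$,
\[
(\varphi+t\psi)^p=\sum_{k=0}^p\binom pk t^k\varphi^{p-k}\psi^k .
\]
Taking finitely many distinct values of $t$ and inverting the Vandermonde matrix, each monomial $\varphi^{p-k}\psi^k$ lies in $\mathsf E_{n+1}^\R$. With $\psi=1$ and $k=p-2$ this gives $\varphi^2\in\mathsf E_{n+1}^\R$, and then $(\varphi+\psi)^2-\varphi^2-\psi^2=2\varphi\psi$ gives $\varphi\psi\in\mathsf E_{n+2}^\R$. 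Thus $\mathsf E_\infty^\R$ is a unital algebra containing $1,V_1,\dots,V_m$, hence it contains all polynomials in the $V_j$.

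\textbf{Step 2: density via Nachbin.} The algebra $\mathcal A=\R[V_1,\dots,V_m]$ separates points because $\mathcal V$ is injective. It also separates tangent vectors, meaning that for each $x\in M$ and $0\neq v\in T_xM$ some $f\in\mathcal A$ has $df_x(v)\ne0$; this holds because $\mathcal V$ is an immersion. Nachbin's theorem then gives density of $\mathcal A$ in $C^\infty(M;\R)$ with its Fréchet topology. Since $C^\infty(M)\hookrightarrow H^r(M)$ continuously with dense image for every $r\in\R$, $\mathcal A$ is dense in $H^r(M;\R)$. The complex statement follows by splitting into real and imaginary parts.

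\textbf{Step 3: the cone saturation for even $p$.} Write $\mathsf F_{n+1}^\R$ for the lineality space of $\overline{\mathsf C_n}$, and take $\varphi\in\mathsf F_n^\R$. Then $\varphi\in\mathsf F_{n+1}^\R$ because $\varphi=(\varphi+1^p)-1^p$ and $-\varphi$ is obtained the same way. By construction $-\varphi^p\in\mathsf C_n$, and we must show $+\varphi^p\in\overline{\mathsf C_n}$ as well. I would do this in two sub-steps.
\begin{itemize}
\item[(a)] \emph{Reduce to a quadratic identity.} Using polarization inside the cone, negative combinations of $p$-th powers of elements of $\mathsf F_n^\R$ allow one to realize $-\psi^2$ up to lineality-space terms. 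The lineality-space terms are handled as in Step 1, recalling that $\mathsf F_{n+1}^\R$ is the largest subspace contained in $\overline{\mathsf C_n}$. At the first levels, the identity $\sum_{j=1}^{m+1}V_j^2=R^2$ gives
\[
V_k^2=R^2-\sum_{j\ne k}V_j^2 .
\]
The right-hand side lies in the cone, so $+V_k^2$ is reachable and the sign obstruction disappears.
\item[(b)] \emph{Iterate the sign recovery.} The goal is to show that the lineality spaces grow to contain the algebra generated by $1,V_1,\dots,V_{m+1}$. One then concludes exactly as in Step 2, using that $\R[V_1,\dots,V_{m+1}]\supset\R[V_1,\dots,V_m]$ is dense.
\end{itemize}

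\textbf{Main obstacle.} I expect Step 3 to be the hardest part. The difficulty is proving that $\varphi^p$, and then general products, belong to the \emph{subspace} $\mathsf F_{n+1}^\R$ and not merely to the cone. My first attempt would be to establish an inductive claim that $\mathsf F_n^\R$ is invariant under multiplication by each $V_j^2$, exploiting the constant sum of squares to rewrite positive terms as negative ones plus elements of the lineality space. If that fails, I would instead apply Hahn--Banach: suppose some $\ell\in H^{-r}$ annihilates $\mathsf F_\infty^\R$, and derive a sign condition $\langle\ell,\varphi^p\rangle\le0$ for all $\varphi$ in the saturation. The aim would then be to use $\sum_j V_j^2=R^2$ to force $\ell=0$.
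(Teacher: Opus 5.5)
Part (1) is correct and follows the paper's argument. Your Vandermonde extraction is the polarization identity in another form; in fact $\varphi\psi\in\mathsf E^\R_{n+1}$ already, since $(\varphi+\psi)^2$, $\varphi^2$ and $\psi^2$ all lie there. Nachbin's theorem then concludes. Part (2), however, is not proved: Step 3 states the goal but supplies neither of the two mechanisms it needs.

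First, sub-step (a) cannot be ``handled as in Step 1''. Vandermonde inversion uses weights of both signs, whereas $\mathsf C_n$ only lets you subtract $p$-th powers. For nonconstant $\varphi$ and even $p\ge4$, no identity $-\varphi^2=a+b\varphi-\sum_j\lambda_j(c_j+\varphi)^p$ with $\lambda_j\ge0$ can hold, which is what a Step-1-type polarization would need: the coefficient of $\varphi^p$ forces $\sum_j\lambda_j=0$. The missing idea is to exploit the closure in the definition of $\mathsf F^\R_{n+1}$ through a scaling. Set $c_\varepsilon=\varepsilon^{-2/(p-2)}$. The element $c_\varepsilon^{p}+p\,c_\varepsilon^{p-1}\varepsilon\varphi-(c_\varepsilon+\varepsilon\varphi)^p$ of $\mathsf C_n$ equals $-\tfrac{p(p-1)}{2}\varphi^2+O(\varepsilon^{p/(p-2)})$ in $H^r$. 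Hence $-\varphi^2\in\overline{\mathsf C_n}$ for every $\varphi\in\mathsf F^\R_n$. Without this, your level-zero identity $V_k^2=R^2-\sum_{j\ne k}V_j^2$ only gives $V_k^2\in\mathsf F^\R_1$ when $p=2$, where $-V_j^2\in\mathsf C_0$ by definition.

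Second, sub-step (b) is the entire content of part (2), and you give no mechanism for it. The target you set yourself, $\varphi^p\in\mathsf F^\R_{n+1}$ for every $\varphi\in\mathsf F^\R_n$, is neither needed nor what makes the argument close. What works is an iteration on families with constant sum of squares:
\begin{enumerate}
\item If $f_1,\dots,f_N\in\mathsf F^\R_n$ satisfy $\sum_jf_j^2\equiv c$, then $f_i^2=c+\sum_{j\ne i}(-f_j^2)\in\overline{\mathsf C_n}$, hence $f_i^2\in\mathsf F^\R_{n+1}$.
\item Applying the $-\varphi^2$ claim one stage later, $\mp2f_if_j=f_i^2+f_j^2-(f_i\pm f_j)^2\in\overline{\mathsf C_{n+1}}$, hence $f_if_j\in\mathsf F^\R_{n+2}$.
\item Crucially, the family of all pairwise products again has constant sum of squares, $\sum_{i,j}(f_if_j)^2=c^2$, so the step can be repeated.
\end{enumerate}
Starting from $1,V_1,\dots,V_{m+1}$, whose squares sum to $1+R^2$, this puts every product of $2^k$ profiles into $\mathsf F^\R_{2k}$. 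That includes every monomial in $V_1,\dots,V_m$, and Step 2 finishes.

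Your fallbacks do not substitute for this. Invariance of $\mathsf F^\R_n$ under multiplication by $V_j^2$ is unsubstantiated. In the Hahn--Banach route, the sign condition $\langle\ell,\varphi^p\rangle\le0$ does not follow from $\ell\perp\mathsf F^\R_\infty$. A functional vanishing on the lineality space $\overline{\mathsf C_n}\cap(-\overline{\mathsf C_n})$ is in general only a difference of two elements of the dual cone, so it carries no sign.
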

The proof is given in \Cref{sec:prelim}. Note that such a smooth embedding always exists by Whitney's embedding theorem (see for instance \cite[Theorem 6.15]{LeeSM}). In \Cref{sec:prelim}, we provide examples of initial families based on smooth embeddings. Once an embedding has been chosen, the functions $1,V_1,\dots,V_m$ work for every degree in \eqref{eq:odd-saturation}, and the single enlargement \eqref{eq:V_m+1} works for every even degree in \eqref{eq:even-saturation}.\par\medskip

This theorem has to be linked with well-known results concerning density of subalgebras of functions defined on manifolds.

\begin{theorem}[Stone--Weierstrass]
\label{thm:StoneWeierstrass}
Let $\mathcal A\subset C(M)$ be a real subalgebra. Assume that $1\in\mathcal A$, and that $\mathcal A$ separates points on $M$, namely, for every $x\ne y$ there exists $f\in\mathcal A$ such that $f(x)\ne f(y)$. Then
\[\overline{\mathcal A}^{\|\cdot\|_\infty}=C(M).\]
\end{theorem}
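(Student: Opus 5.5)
We follow the classical lattice argument. Let $\mathcal B:=\overline{\mathcal A}^{\|\cdot\|_\infty}$. Since multiplication and addition are continuous for the sup norm, $\mathcal B$ is again a closed real subalgebra of $C(M)$ containing $1$ and separating points. The goal is to show that $\mathcal B$ is closed under $\max$ and $\min$, and then to use compactness of $M$ to approximate an arbitrary $f\in C(M)$.

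\emph{Step 1: $\mathcal B$ is a lattice.} Fix $g\in\mathcal B$ and set $a:=\|g\|_\infty$. We approximate $t\mapsto|t|$ uniformly on $[-a,a]$ by polynomials $P_k$. One route avoids the Weierstrass theorem: write $|t|=a\sqrt{1-(1-t^2/a^2)}$ and expand $\sqrt{1-s}$ in its binomial series, which converges uniformly on $[0,1]$ by Abel's theorem. Alternatively, take the Newton-type iteration $u_{k+1}=u_k+\tfrac12(s-u_k^2)$, which increases monotonically to $\sqrt s$ uniformly on $[0,1]$ by Dini's theorem. Because $1\in\mathcal B$, each $P_k(g)$ lies in $\mathcal B$. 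Hence $|g|\in\mathcal B$ by closedness. Then
\[\max(g,h)=\tfrac12(g+h+|g-h|),\qquad \min(g,h)=\tfrac12(g+h-|g-h|)\]
also lie in $\mathcal B$. This is the only genuinely analytic step, and it is the main obstacle: one must obtain a uniform polynomial approximation of $|t|$ without circularly invoking the one-dimensional Weierstrass theorem. The explicit square-root iteration resolves this.

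\emph{Step 2: two-point interpolation.} For $x\neq y$ in $M$ and prescribed values $\alpha,\beta\in\R$, pick $h\in\mathcal A$ with $h(x)\neq h(y)$. Then
\[
g:=\alpha+(\beta-\alpha)\,\frac{h-h(x)}{h(y)-h(x)}\in\mathcal A
\]
satisfies $g(x)=\alpha$ and $g(y)=\beta$. This uses $1\in\mathcal A$. For $x=y$ the constant function $\alpha$ works.

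\emph{Step 3: compactness.} Fix $f\in C(M)$ and $\eps>0$. For each pair $(x,y)$ choose $g_{xy}\in\mathcal B$ with $g_{xy}(x)=f(x)$ and $g_{xy}(y)=f(y)$.

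First fix $x$. The open sets $U_y:=\{z:g_{xy}(z)<f(z)+\eps\}$ contain $y$ and cover $M$. Extracting a finite subcover $U_{y_1},\dots,U_{y_k}$, set $g_x:=\min_i g_{xy_i}\in\mathcal B$. Then $g_x<f+\eps$ on $M$ and $g_x(x)=f(x)$.

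Next, the sets $W_x:=\{z:g_x(z)>f(z)-\eps\}$ are open, contain $x$, and cover $M$. Take a finite subcover $W_{x_1},\dots,W_{x_l}$ and set $G:=\max_j g_{x_j}\in\mathcal B$. Then $f-\eps<G<f+\eps$, so $\|G-f\|_\infty<\eps$. Since $\mathcal B$ is closed, $f\in\mathcal B$, which gives $\overline{\mathcal A}^{\|\cdot\|_\infty}=C(M)$.
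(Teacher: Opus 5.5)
Your proof is correct and complete. It is the classical lattice proof: uniform polynomial approximation of $|t|$, closure of $\overline{\mathcal A}$ under $\max$ and $\min$, two-point interpolation, and the two successive finite-subcover arguments. Only trivial details are left implicit, namely the case $\|g\|_\infty=0$ and the starting value $u_0=0$ of your square-root iteration. The paper gives no proof of its own and simply cites \cite[Theorem~4.45]{FollandRA}, whose proof follows these same lines, so there is nothing further to compare.
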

Observe that \Cref{thm:StoneWeierstrass} (see \textit{e.g.} \cite[Theorem~4.45]{FollandRA}) only provides density in $C(M)$. In the proof of \Cref{th:odd-saturation-density}, we shall need density in $C^r(M)$ (or even in $H^r(M)$), which is provided by Nachbin's theorem (see \cite{Nachbin1949} for the case $\partial M =\emptyset$, and for instance \cite{CG20} in the case $\partial M \neq \emptyset$).
\begin{theorem}[Nachbin]\label{thm:Nachbin}
Let $M$ be a smooth compact manifold with or without boundary. Let $\mathcal A\subset C^\infty(M)$ be a real unital subalgebra. Then $\mathcal A$ is dense in $C^\infty(M)$ if and only if the following conditions are satisfied
\begin{enumerate}[(i)]
\item for any $x,y\in M$, $x\neq y$, there exists $f\in \mathcal A$ such that $f(x)\neq f(y)$;
\item for any $x\in M$ and any $\xi\in T_xM$, $\xi\neq 0$, there exists $f\in \mathcal A$ such that $d_x f(\xi)\neq 0.$
\end{enumerate}
\end{theorem}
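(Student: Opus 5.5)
The necessity of (i) and (ii) follows from closure arguments. The sufficiency will be proved by reducing to the classical polynomial approximation in $C^k$ on $\R^N$, through an embedding of $M$ built from finitely many elements of $\mathcal A$.

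\emph{Necessity.} Suppose $\mathcal A$ is dense in $C^\infty(M)$.
\begin{enumerate}[(i)]
\item If (i) failed for some $x\neq y$, then every $f\in\mathcal A$ would satisfy $f(x)=f(y)$. This is a closed condition already in the $C^0$ topology, so it would pass to the closure $C^\infty(M)$. That is impossible, since smooth bump functions separate points.
\item Similarly, the condition $d_xf(\xi)=0$ is closed in the $C^1$ topology. It would then hold for every smooth $f$, which contradicts the existence of local coordinates at $x$.
\end{enumerate}

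\emph{Sufficiency, step 1: an embedding built from $\mathcal A$.}
\begin{enumerate}[(a)]
\item Fix $x\in M$. By (ii), the set $\{d_xf: f\in\mathcal A\}$ is a linear subspace of $T_x^*M$ annihilating no nonzero vector, so it is all of $T_x^*M$. Hence finitely many $f$'s have differentials spanning $T_x^*M$. This spanning property persists on a neighbourhood of $x$, and compactness gives finitely many $f_1,\dots,f_k\in\mathcal A$ such that $(f_1,\dots,f_k)$ is an immersion of $M$.
\item An immersion is locally injective. A sequence argument by contradiction then yields a neighbourhood $U$ of the diagonal in $M\times M$ on which $F(x)=F(y)$ forces $x=y$. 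Indeed, suppose $x_n\neq y_n$ with $F(x_n)=F(y_n)$ and both sequences converging to a common point; this contradicts local injectivity there.
\item The set $(M\times M)\setminus U$ is compact. By (i), each pair in it is separated by some $f\in\mathcal A$, and separation is an open condition. Finitely many further functions $f_{k+1},\dots,f_N$ therefore separate all such pairs.
\end{enumerate}
The resulting map $F=(f_1,\dots,f_N):M\to\R^N$ is an injective immersion of a compact manifold, hence an embedding. Moreover $P\circ F\in\mathcal A$ for every polynomial $P$ on $\R^N$, since $\mathcal A$ is a unital algebra.

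\emph{Sufficiency, step 2: approximation.} Given $g\in C^\infty(M)$, I would extend $g\circ F^{-1}$ from $F(M)$ to some $G\in C_c^\infty(\R^N)$, using a tubular neighbourhood retraction and a cutoff. Then I approximate $G$ in $C^k(\overline B)$, for a ball $B\supset F(M)$, by polynomials $P$. This can be done, for instance, by convolving with Gaussians and truncating their Taylor series, or by the Weierstrass theorem applied to the top derivatives followed by integration. The chain rule bounds $\|g-P\circ F\|_{C^k(M)}$ by $C_F\|G-P\|_{C^k(\overline B)}$, which gives density in each $C^k$ and hence in $C^\infty(M)$.

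\emph{Main obstacle: the case $\partial M\neq\emptyset$.} The extension of $g\circ F^{-1}$ to $\R^N$ is delicate there, since $F(M)$ is then a submanifold with boundary. My plan is as follows.
\begin{enumerate}[(a)]
\item Embed $M$ in a slightly larger open manifold $\widetilde M$ obtained by adding an exterior collar.
\item Extend $g$ and the finitely many $f_i$ smoothly to $\widetilde M$ by Seeley extension.
\item Observe that, by openness of the immersion condition and the same compactness argument as in step 1, $F$ remains an injective immersion on a compact neighbourhood $M'$ of $M$ in $\widetilde M$.
\item Run step 2 on $F(M')$, which contains $F(M)$ in its interior. The $C^k$ estimates restricted to $M$ then go through unchanged.
\end{enumerate}
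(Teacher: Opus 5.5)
Your proof is correct, and it is a different route from the paper's, because the paper does not prove this statement: it quotes it from \cite{Nachbin1949} for $\partial M=\emptyset$ and from \cite{CG20} for $\partial M\neq\emptyset$.

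Your argument runs as follows. You use (i), (ii) and compactness to extract finitely many $f_1,\dots,f_N\in\mathcal A$ forming an embedding $F$. You then reduce to density of polynomials in $C^\infty(\R^N)$ on compact sets, via a smooth extension $G$ of $g\circ F^{-1}$ and the chain rule. The duality step for the differentials, the sequence argument near the diagonal, and the injectivity of the extended map on a compact neighbourhood $M'$ of $M$ in $\widetilde M$ are all sound.

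The one step to phrase more carefully is (d). $F(M')$ is again a compact set with boundary, so what you actually use is the following:
\begin{enumerate}
\item $\widetilde F$ restricted to $W:=\operatorname{int}M'$ is an embedding, since it is injective on the compact set $M'$ and hence a homeomorphism onto its image.
\item Take a tubular neighbourhood $T$ of the noncompact embedded submanifold $\widetilde F(W)$, with smooth retraction $r$.
\item Set $G:=\chi\,(\widetilde g\circ\widetilde F^{-1}\circ r)$, where $\chi\in C_c^\infty(T)$ equals $1$ near the compact set $F(M)$.
\end{enumerate}
Then $G\circ F=g$ on $M$, and your $C^k$ estimates apply verbatim.

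Your argument is elementary and tailored to compact $M$ and the $C^\infty$ topology. It also shows that, for the uses in this paper, only your step 2 is needed. Step 2 proves \Cref{lem:polynomial-density-embedding} directly, without Nachbin. In the Neumann argument, the paper's verification of (i) and (ii) for $\widetilde{\mathcal A}_N$ actually shows that the generators $\widetilde{\mathcal G}$ form an injective immersion of the compact manifold $M$, i.e.\ again an embedding.

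Nachbin's theorem in its classical form covers more general settings: finite differentiability order $C^k$, and noncompact manifolds with the compact-open $C^k$ topology. There one cannot in general select finitely many elements of $\mathcal A$ forming an embedding, and for finite $k$ the extension through a tubular retraction costs a derivative. So the embedding-plus-polynomial trick is not directly available, and a more local argument is needed.
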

\noindent Let $\mathcal A$ be the real unital algebra generated by
$1,V_1,\dots,V_m$. On a compact manifold, the components of a smooth
embedding satisfy both requirements of \Cref{thm:Nachbin}, so $\mathcal A$
is dense in $C^\infty(M)$ and consequently in every $H^r(M)$.\par\medskip

\noindent For \eqref{eq:odd-saturation}, polarization shows that closure
under a fixed power, together with linear combinations and the constant
function, generates all products of the initial functions. Thus
$\mathsf E_\infty^\R=\mathcal A$. The cone saturation
\eqref{eq:even-saturation} requires an additional argument: one must
recover both signs before using linear combinations of the generated
powers. The auxiliary function \eqref{eq:V_m+1} supplies the identity
\[\sum_{j=1}^{m+1}V_j^2=R^2.\]
This identity is the geometric analogue of $\sin^2\theta+\cos^2\theta=1$, used in trigonometric saturation arguments on the torus; see \cite{Ner21}. In the quadratic case, the cone contains the directions $-V_j^2$, and the constant function allows us to recover the opposite direction through
\[V_i^2=R^2+\sum_{\substack{1\leq j\leq m+1\\j\neq i}}(-V_j^2).\]
For a general even degree, a rescaling around a constant first produces
quadratic directions. Iterating the recovery of signs then produces the
algebra $\mathcal A$; the details are given in \Cref{lem:algebra-odd-sat}. The proof also explains why the closures in \eqref{eq:even-saturation} do not introduce a dependence on $r$. At each finite stage, the cone is contained in a finite-dimensional space of smooth functions, on which all Sobolev norms induce the same topology. In fact, both saturations are dense in $C^\infty(M)$ itself. The Sobolev formulation in \Cref{th:odd-saturation-density} is convenient for the applications below. For the power saturation defined by \eqref{eq:odd-saturation}, the embedding condition is also necessary for density in $C^\infty(M)$. More precisely, \Cref{prop:power-saturation-characterization} characterizes the finite families containing $1$ for which $\mathsf E_\infty^\R$ is dense, and \Cref{cor:stable-minimal-generators} records stability under small $C^1$ perturbations and the relation with the embedding dimension of $M$.\\

\noindent We also obtain finite families of Laplace--Beltrami eigenfunctions with dense power saturation in every Sobolev space. Let $(\phi_k)_{k\geq0}$ be a real orthonormal basis of $L^2(M)$ satisfying
\[-\Delta_g\phi_k=\lambda_k\phi_k,\qquad 0=\lambda_0<\lambda_1\leq\lambda_2\leq\cdots,\]
with $\phi_0$ constant. A sufficiently accurate finite spectral
approximation of the coordinates of an embedding remains an embedding. This gives the following consequence.

\begin{corollary}[Density by finitely many eigenfunctions]
\label{cor:eigenfunction-controls}
There exists $N\in\N$ such that for every integer $p\geq2$ and every $r\in\R$, if
\[E^\R_N:=\operatorname{span}_{\R}\{1,\phi_1,\dots,\phi_N\},\qquad E_N:=E^\R_N\otimes_\R\C,\]
the power saturation $\mathsf E_\infty^\R$ of $E^\R_N$, defined by \eqref{eq:odd-saturation}, is dense in $H^r(M;\R)$,  and its complexification is dense in $H^r(M;\C)$.
\end{corollary}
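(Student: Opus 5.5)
The plan is to reduce the corollary to \Cref{th:odd-saturation-density}(1). Concretely, I will produce finitely many eigenfunctions whose span contains the coordinate functions of a smooth embedding $\widetilde{\mathcal V}:M\to\R^m$. Embeddings are stable under small $C^1$ perturbations, and truncated eigenfunction expansions of smooth functions converge in $C^1$; together these give the required $N$.

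\textbf{Step 1: a fixed embedding.} By Whitney's theorem, fix a smooth embedding $\mathcal V=(V_1,\dots,V_m):M\to\R^m$.

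\textbf{Step 2: spectral truncation.} For $N\in\N$ let $\Pi_N$ denote the $L^2$-orthogonal projection onto $\operatorname{span}\{\phi_0,\dots,\phi_N\}$. Each $V_j$ is smooth, so its coefficients $\langle V_j,\phi_k\rangle$ decay faster than any power of $\lambda_k$; this follows by integrating $(-\Delta_g)^s$ by parts, since $M$ has no boundary. Weyl's law gives $\lambda_k\gtrsim k^{2/d}$, and Sobolev embedding gives $\|\phi_k\|_{C^1}\lesssim (1+\lambda_k)^{s_0}$ for some fixed $s_0$. Combining these, $\Pi_N V_j\to V_j$ in $C^1(M)$. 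More directly, $\Pi_N V_j\to V_j$ in $H^s$ for every $s$, and $H^s\hookrightarrow C^1$ for $s>d/2+1$. The functions $\widetilde V_j:=\Pi_N V_j$ are real, since the $\phi_k$ are real.

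\textbf{Step 3: stability of embeddings.} I will show that for $N$ large, $\widetilde{\mathcal V}:=(\Pi_NV_1,\dots,\Pi_NV_m)$ is again an embedding. This is the main technical point, since only a $C^1$-small perturbation is available. Because $M$ is compact, it suffices to prove that $\widetilde{\mathcal V}$ is an injective immersion.

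For the immersion property, $|d\mathcal V(\xi)|\ge c>0$ on the compact unit tangent bundle, and this lower bound survives a $C^1$-small perturbation.

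For injectivity, I will use that embeddings are locally bi-Lipschitz. Choose $\delta>0$ and $c'>0$ such that $|\mathcal V(x)-\mathcal V(y)|\ge c'\,\mathrm{dist}(x,y)$ whenever $\mathrm{dist}(x,y)<\delta$. This estimate persists for $\widetilde{\mathcal V}$ once $\|\widetilde{\mathcal V}-\mathcal V\|_{C^1}$ is small, by working in finitely many charts and using the mean value theorem. For $\mathrm{dist}(x,y)\ge\delta$, compactness and injectivity of $\mathcal V$ give $|\mathcal V(x)-\mathcal V(y)|\ge\eta>0$, which survives a perturbation of size less than $\eta/2$ in $C^0$. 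If \Cref{cor:stable-minimal-generators} already records this $C^1$-stability, I will simply cite it.

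\textbf{Step 4: conclusion.} Since $\widetilde V_j\in E^\R_N$ and $1\in E^\R_N$, we have $\operatorname{span}_\R\{1,\widetilde V_1,\dots,\widetilde V_m\}\subset E^\R_N$. Saturation is monotone under inclusion of the initial space: by induction on $n$, each $\mathsf E_n^\R$ built from the smaller space is contained in the one built from $E^\R_N$. Applying \Cref{th:odd-saturation-density}(1) to the embedding $\widetilde{\mathcal V}$ then gives density of the power saturation of $E^\R_N$ in $H^r(M;\R)$, and of its complexification in $H^r(M;\C)$.

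The integer $N$ depends only on the chosen embedding. It is therefore independent of $p$ and $r$, which gives the uniformity claimed in the corollary.
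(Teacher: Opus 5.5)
Your proposal is correct and follows the paper's own proof: fix a Whitney embedding, use $\Pi_N V_j\to V_j$ in $C^1(M)$, invoke the $C^1$-stability of embeddings recorded in \Cref{cor:stable-minimal-generators}, and conclude with \Cref{th:odd-saturation-density}. You also spell out the monotonicity step that the paper leaves implicit: the saturation of $\operatorname{span}_\R\{1,\Pi_NV_1,\dots,\Pi_NV_m\}$ is contained in the saturation of $E^\R_N$.
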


\subsection[Density of saturations: Manifolds with boundary]{Density of saturations: Manifolds with boundary}
\label{sec:intro-boundary-approximation}
In this section, $(M,g)$ denotes a smooth compact connected Riemannian manifold with boundary. The approximation problem changes when the generating functions must satisfy a boundary condition. Under the Neumann condition, all their differentials annihilate the normal direction at the boundary, so they cannot be the coordinates of a smooth embedding there. We construct a family using boundary and interior profiles and a normal coordinate squared. These profiles generate dense subspaces of the Neumann Sobolev spaces. Set $A_N=-\Delta_{g,N}$, the Neumann realization of $-\Delta_g$, and denote by $H_N^s:=D((I-\Delta_{g,N})^{s/2})$ the corresponding Neumann Sobolev scale (see \Cref{sec:admissible_control_boundary}).

\begin{theorem}[Density in the Neumann Sobolev scale]
\label{th:Neumann-approximation}
Let $(M,g)$ be a smooth compact connected Riemannian manifold with boundary and dimension $d$. There exist a finite-dimensional real space independent of $p$
\[E_N^\R\subset\{f\in C^\infty(\overline M;\R):\partial_\nu f|_{\partial M}=0\}\]
containing $1$, and one additional smooth real-valued profile $\Psi$ satisfying $\partial_\nu\Psi|_{\partial M}=0$, with the following properties. For every $0\leq s<7/2$, $s\neq3/2$:
\begin{enumerate}
\item For every integer $p\geq2$, the space $\mathsf E_{N,\infty}^\R$ defined by \eqref{eq:odd-saturation}, with $\mathsf E_{N,0}^\R=E_N^\R$, satisfies
\[\overline{\mathsf E_{N,\infty}^\R}^{H_N^s(M;\R)}=H_N^s(M;\R).\]
\item For every even integer $p\geq2$, the space $\mathsf F_{N,\infty}^\R$ defined by \eqref{eq:even-saturation}, with $\mathsf F_{N,0}^\R=E_N^\R+\operatorname{span}_{\R}\{\Psi\}$ and closures taken in $H_N^s(M;\R)$, satisfies
\[\overline{\mathsf F_{N,\infty}^\R}^{H_N^s(M;\R)}=H_N^s(M;\R).\]
\end{enumerate}
Both complex-valued density statements follow by complexification.
\end{theorem}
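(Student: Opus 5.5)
The plan is to reduce everything to a Nachbin-type density statement for an algebra of Neumann-compatible functions, and then transfer it to the Neumann Sobolev scale $H_N^s$ for $0\le s<7/2$, $s\neq 3/2$.

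\emph{Step 1: the generators.} Fix a collar $\partial M\times[0,\delta)\ni(y,t)\mapsto \exp_y(t\nu)$, with $t$ the normal distance. Choose a smooth embedding $W=(W_1,\dots,W_k):\partial M\to\R^k$ and a cutoff $\chi(t)$ that is even, equal to $1$ near $t=0$, and vanishes for $t\ge\delta/2$. The boundary profiles are $\chi(t)W_j(y)$, together with $\chi(t)\,t^2$, the ``normal coordinate squared''. All of these are even in $t$ near $t=0$, hence satisfy $\partial_\nu f=0$. For the interior, take the coordinates of a smooth embedding of $M$ multiplied by a cutoff supported in $\{t>\delta/4\}$. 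Together with $1$, these span $E_N^\R$.

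\emph{Step 2: the algebra.} Let $\mathcal A$ be the unital algebra generated by $E_N^\R$. Polarization, as in \Cref{lem:algebra-odd-sat}, gives $\mathsf E_{N,\infty}^\R=\mathcal A$. For even $p$, set $\Psi:=\sqrt{R^2-\sum_\ell f_\ell^2}$ over a basis $f_\ell$ of $E_N^\R$. It is smooth and even in $t$, so $\partial_\nu\Psi=0$. The sign-recovery argument of \Cref{lem:algebra-odd-sat} then gives $\mathcal A\subset\overline{\mathsf F_{N,\infty}^\R}$.

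\emph{Step 3: the target.} Consider the subalgebra $\mathcal N\subset C^\infty(\overline M)$ of functions that, in the collar, are smooth functions of $(y,t^2)$. Under the map $(y,t)\mapsto(y,t^2)$, the collar becomes a manifold with boundary on which the generators $W_j$ and $t^2$ form coordinates of an embedding near the boundary. Together with the interior coordinates, $\mathcal A$ therefore separates points and tangent vectors of the quotient manifold obtained by ``folding'' along $t\mapsto t^2$. Applying \Cref{thm:Nachbin} with boundary (the \cite{CG20} version) on this folded manifold gives density of $\mathcal A$ in $\mathcal N$ for the $C^\infty$ topology. Separation near the gluing region between collar and interior needs some care with the cutoffs. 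I would handle it by letting the interior embedding coordinates extend into the collar region $t>\delta/4$, where $t$ itself is a legitimate smooth coordinate.

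\emph{Step 4: from $\mathcal N$ to $H_N^s$.} It remains to show that $\mathcal N$ is dense in $H_N^s$ for $s<7/2$, $s\neq3/2$. For $s<3/2$, $H_N^s=H^s$ and smooth functions are dense, so this is immediate. For $3/2<s<7/2$, $H_N^s=\{u\in H^s:\partial_\nu u=0\}$, and the next compatibility condition, $\partial_\nu\Delta u=0$, only enters at $s>7/2$. The condition $\partial_\nu u=0$ allows a nonzero $\partial_t^3u$ at the boundary, which $\mathcal N$ cannot produce. So density must come from the Sobolev closure. Functions $u$ with $\partial_\nu u=0$ can be approximated in $H^s$, $s<7/2$, by functions even in $t$ to infinite order. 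One does this by subtracting a corrector $t^3g(y)\theta(t/\eps)$; its $H^s$ norm tends to $0$ as $\eps\to0$ exactly when $s<3+1/2$.

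\textbf{Main obstacle.} I expect Step 4 to be the hard part, namely this scaling estimate for the odd Taylor term and the handling of the threshold $s=7/2$. The estimate reduces to the one-dimensional bound $\|t^3\theta(t/\eps)\|_{H^s(\R_+)}\lesssim\eps^{7/2-s}$, which I would check first. The excluded value $s=3/2$ is where the Neumann trace is not defined, so it is ruled out by the scale rather than by the construction.
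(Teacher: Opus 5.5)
Your Steps 1--3 and the cone part follow the paper's architecture. The paper realises your ``folded manifold'' as $M$ itself, via the homeomorphism $D(\mathfrak r(y,r))=\mathfrak r(y,h(r))$ with $h(r)=r^2$ near $r=0$. It applies \Cref{thm:Nachbin} to $\mathcal A_N\circ D^{-1}$ and pulls back with the continuous map $D^*$. Your Step 4 is a genuinely different route. The paper identifies $\{u:u\circ D^{-1}\in C^\infty\}$ with the space of smooth functions whose odd normal derivatives all vanish (Whitney--Schwarz), then quotes \cite[Remark~A.2]{MullerSchweigerSuli2020} for density of that space in $H_N^s$. Your explicit corrector with the $\eps^{7/2-s}$ scaling is self-contained and shows where the threshold $7/2$ comes from. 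Two points need repair, however.

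\emph{Separation of points.} Call your interior cutoff $\zeta$. The family $1,\chi W_j,\chi t^2,\zeta V_k$ need not separate points. Suppose $W(y_0)=0$, and $V(x_1)=0$ at a point where $\chi=0$. Then every generator takes the same value at the boundary point $(y_0,0)$ and at $x_1$, so $\mathcal A$ is not even $C^0$-dense. More generally, where $\zeta\in(0,1)$ you cannot recover $V$ from $\zeta V$. Letting the interior coordinates reach into $\{t>\delta/4\}$ does not address this. The paper's remedy is to add the interior cutoff itself as a generator (its $\Theta$), so that $V=\Psi_k/\Theta$ is recovered wherever $\Theta>0$. It also takes the boundary cutoff $\equiv1$ on a region $\{\rho\le\eps\}$ that contains $\{\Theta=0\}=\{\rho\le\eps/4\}$ and overlaps $\{\Theta\equiv1\}=\{\rho\ge\eps/2\}$. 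Then every point has a neighbourhood where either $(W,t^2)$ or $V$ appears undistorted, which gives both point and tangent-vector separation.

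\emph{Step 4 as written.} First, for $s<3/2$, density of $C^\infty(\overline M)$ in $H^s$ is not enough, since $\mathcal N\subsetneq C^\infty(\overline M)$. Second, removing only the term $t^3g(y)$ leaves $t^5,t^7,\dots$. Moreover, ``even to infinite order'' must still be converted into ``smooth in $(y,t^2)$''; this is Whitney's even-function theorem with parameters, which the paper invokes through Schwarz. Both issues disappear with a Borel construction:
\begin{itemize}
\item Choose $H(y,\rho)$ smooth with $\partial_\rho^kH(y,0)=\frac{k!}{(2k)!}\partial_t^{2k}u(y,0)$.
\item Set $u_\eps:=\theta(t/\eps)H(y,t^2)+(1-\theta(t/\eps))u$. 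Then $u_\eps\in\mathcal N$ directly.
\item The difference is $u-u_\eps=\theta(t/\eps)\,t^3k(y,t)$ if $\partial_\nu u=0$, and $\theta(t/\eps)\,t\,k(y,t)$ in general, with $k$ smooth.
\item Its $H^s$ norm is $O(\eps^{7/2-s})$, respectively $O(\eps^{3/2-s})$.
\end{itemize}
Apply this after approximating $u\in H_N^s$ by finite Neumann eigenfunction expansions, which are smooth up to the boundary and satisfy $\partial_\nu u=0$.
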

The construction and proof are given in \Cref{sec:admissible_control_boundary}. The family $E_N^\R$ and the additional profile $\Psi$ are independent of the degree $p$ and the Sobolev exponent $s$. The smooth closure of the generated algebra is identified in \Cref{th:Neumann-smooth-closure}, and the additional profile is given in \Cref{cor:Neumann-even-saturation}. \par\medskip
\noindent Dirichlet conditions lead to a different conclusion for power saturation. Powers of functions vanishing on the boundary do not generate new first normal traces. In dimension at least two and for $s>3/2$, this prevents both
saturations from being dense in the zero-trace
Sobolev space $\mathcal X_D^s:=\{u\in H^s(M;\R):u|_{\partial M}=0\}$, starting from any finite family of smooth functions vanishing on
the boundary; see \Cref{prop:Dirichlet-power-obstruction}.\color{black}\par\medskip

\noindent \Cref{tab:saturating-families} summarizes the first part of our main results, concerning saturating families and the Dirichlet obstruction.

\begin{table}[!ht]
\centering
\resizebox{0.9\textwidth}{!}{%
\renewcommand{\arraystretch}{1.25}
\setlength{\tabcolsep}{3.5pt}
\begin{tabular}{
p{0.12\textwidth}
p{0.19\textwidth}
p{0.22\textwidth}
p{0.22\textwidth}
p{0.17\textwidth}}
\hline
\textbf{Space}
& \textbf{Manifold}
& \textbf{Initial family}
& \textbf{Target space}
& \textbf{Result} \\
\hline
$\mathsf E_\infty^\R$ \newline 
$\mathsf F_\infty^\R$
& Boundaryless
& $1,V_1,\ldots,V_m$\newline
$1,V_1,\ldots,V_{m+1}$
& $H^r(M;\R)$, $r\in\R$
& \Cref{th:odd-saturation-density}. \\

\hline
$\mathsf E_{N,\infty}^\R$\newline  $\mathsf F_{N,\infty}^\R$
& With boundary \newline Neumann case
& $E_N^\R$ \newline $E_N^\R+\R\Psi$
& $H_N^s(M;\R)$, $s\ne \tfrac 32$ \newline
$0\le s<\tfrac 72$
& \Cref{th:Neumann-approximation}. \\
\hline 

$\mathsf E_{D,\infty}^\R$ \newline $\mathsf F_{D,\infty}^\R$
& With boundary \newline 
Dirichlet case
& Any finite smooth \newline 
Dirichlet family
& $\mathcal X_D^s$, $s>\tfrac 32$
& \Cref{prop:Dirichlet-power-obstruction}\newline (Not dense). \\
\hline
\end{tabular}}
\vspace{1mm}
\caption{\small Saturating families and the Dirichlet obstruction.}
\label{tab:saturating-families}
\end{table}
\FloatBarrier

The proofs of the preceding density results are given in \Cref{sec:prelim}. We now present their applications to the control of nonlinear evolution equations.

\subsection{Applications to finite-dimensional control of nonlinear PDEs}
In many control problems for PDEs, the control is not allowed to act freely on the whole state space. Typical examples include boundary controls or localized internal controls, where the action is restricted to a prescribed part of the domain or its boundary. Here, we consider a different degenerate situation: the control acts on the whole manifold, but only through finitely many spatial profiles. Thus, the source term is finite-dimensional in space, and the question is whether the nonlinear dynamics can propagate these few controlled directions to a dense set of states.\medskip

\noindent We are interested in small-time approximate controllability by means of finite-dimensional source terms. The strategy is inspired by the geometric control approach introduced by Agrachev and Sarychev for the Navier-Stokes and Euler equations on the torus \cite{AS05,AS06}. Their method relies on large controls acting on small time intervals and shows how a nonlinear equation can propagate the directly controlled modes to new directions. These ideas have their finite-dimensional antecedent on polynomial control systems \cite{JK85}, and have since been extended in several infinite-dimensional directions; see, for instance, the abstract framework developed in \cite{GHHM18}.\medskip

 In our setting, the saturation method consists in generating new control directions through the map $f\mapsto f^p$ and using the density of the resulting spaces to obtain approximate controllability. We use the power saturation \eqref{eq:odd-saturation} or the cone saturation \eqref{eq:even-saturation} in the Sobolev phase space of the equation.\medskip

\noindent Related results have been obtained for several nonlinear PDEs, mainly on periodic domains. Examples include three-dimensional Navier--Stokes equations \cite{Shi06}, the nonlinear heat and Cahn--Hilliard equations \cite{Ner21,HSMT26}, the Kuramoto--Sivashinsky and viscous Burgers equations \cite{Gao22,Shi14,Shi18}, as well as the Benjamin--Bona--Mahony, Kadomtsev--Petviashvili--I, Korteweg--de Vries, Kawahara, Zakharov--Kuznetsov--Burgers and Camassa--Holm equations \cite{Jel23,Jel22,Che23,AM25,Che25,CDM25}. In these periodic domains, the construction of saturating families typically uses the interaction of trigonometric modes.\medskip

\noindent The closest result to our Schrödinger result is \cite{Sar12}, where approximate controllability of the defocusing cubic Schrödinger equation on $\T^d$ is obtained by means of a low-dimensional additive source term. There, the controlled directions are Fourier modes, and the saturation mechanism is based on the algebraic interaction of the cubic nonlinearity with the Fourier structure. Our contribution can be viewed as a geometric counterpart of this result on arbitrary compact manifolds. Instead of using Fourier-arithmetic, we take finitely many smooth profiles whose joint map is an embedding of $M$ together with polynomial density along this embedding. We also mention the bilinear Schrödinger result of \cite{DN24}, where a multiplicative version of the Agrachev–Sarychev strategy is developed for the nonlinear Schrödinger equation on $\T^d$.\medskip

\noindent In contrast, we are not aware of a direct analogue of additive finite-dimensional saturation from \cite{Sar12} for wave-type equations. The closest results are in the bilinear setting, where related saturation ideas have been used for wave-type equations; see, for instance, \cite{Poz24,BPP26}. For the Klein–Gordon equation, the second-order structure requires an additional step: velocity translations are first converted into position translations, and the nonlinear kick then generates new velocity directions \cite{Poz24}.\medskip

\noindent We also emphasize that our argument is local in time and only uses the short-time odd- and even-power expansion; in particular, the sign of the nonlinearity (focusing or defocusing) plays no role in the controllability mechanism. The short-time strategy used below is a concrete instance of the scaling and saturation approach developed in \cite{GHHM18}. In this abstract framework, large controls acting on small time intervals are used to generate new effective directions through the highest-order nonlinear term, under sufficiently strong regularity assumptions ensuring well-posedness and the validity of the saturation procedure, as verified in several examples. To the authors' knowledge, there is no explicit general construction of finite saturating families available for arbitrary compact Riemannian manifolds, beyond the torus, sphere, rectangles and cylinders. For the control applications, our goal is to overcome these limitations and provide explicit control profiles on general compact manifolds ensuring the saturation property.\medskip

\noindent Let $X$ be a Banach phase space of the equation under consideration and let $E$ be the finite-dimensional space of admissible spatial control profiles. We consider a controlled evolution equation, formally
written as
\begin{equation}
\label{eq:general_evolution}
    \dot z(t)=f(z(t),\eta(t)), \qquad z(0)=z_0,
\end{equation}
where $z$ is an $X$-valued state and
$\eta\in L^\infty(0,T;E)$ is the control. Solutions are understood in the appropriate sense for each equation considered below and will be mostly defined locally in time.
\begin{definition}[Small-time approximate controllability]
\label{def:STAC}
Equation \eqref{eq:general_evolution} is said to be small-time approximately controllable (STAC) in $X$ by controls with values in $E$ if, for every $z_0,z_1\in X$, every
$\varepsilon>0$, and every $\varkappa>0$, there exist $T\in(0,\varkappa)$ and $\eta\in L^\infty(0,T;E)$ such that the corresponding solution is defined on $[0,T]$ and satisfies
\[ \|z(T)-z_1\|_X<\varepsilon.\]
\end{definition}

The proof strategies illustrated in \Cref{fig:saturation-mechanism,fig:KG-saturation-mechanism} rely on the following notion of small-time approximately realizable translations. For $\xi\in X$, we write $\tau_\xi(z):=z+\xi$.
\begin{definition}
The translation $\tau_\xi$ is said to be small-time approximately realizable if, for every $z_0\in X$, every $\varepsilon>0$, and every $\varkappa>0$, there exist $T\in(0,\varkappa)$ and $\eta\in L^\infty(0,T;E)$ such that the corresponding solution is defined on $[0,T]$ and satisfies
\[\|z(T)-(z_0+\xi)\|_X<\varepsilon.\]
\end{definition}

With \Cref{th:odd-saturation-density} in hand, we first record some basic consequences of this mechanism in the Sobolev-algebra regime $s>d/2$. In this range, $H^s(M)$ is a Banach algebra, the polynomial nonlinearities are locally Lipschitz in the relevant spaces, and the short-time large-amplitude limits are easier to justify. We emphasize that our control is \emph{finite-dimensional}, in the sense that $E$ has finite dimension. For the Schr\"odinger equation, we use $E=\operatorname{span}_{\C}\{1,V_1,\dots,V_m\}$ for both parities. For the real Klein--Gordon and parabolic equations, we use $E^\R=\operatorname{span}_{\R}\{1,V_1,\dots,V_m\}$ for odd powers and add the profile $V_{m+1}$ for even powers. Recall that as $V_1,\dots,V_m$ are the coordinates of an embedding, we have $m\geq d$ ($m\geq d+1$ in the boundaryless case). \medskip

\subsubsection*{Schrödinger equation}
For the Schr\"odinger equation with gauge-invariant nonlinearity
\begin{equation}
\label{eq:Sch_intro}
\left\{
\begin{aligned}
&i\partial_t u=(-\Delta_g+W)u+\kappa  |u|^{p-1}u+\eta(t,x), \qquad (t,x)\in(0,T)\times M,\\
&u(0)=u_0,
\end{aligned}
\right.
\end{equation}
we obtain the following small-time approximate controllability result. 
\begin{theorem}
\label{thm:intro-NLS-STAC}
Let $(M,g)$ be a smooth connected compact Riemannian manifold without boundary. Let $s>d/2$ and let $p\ge2$ be an integer. Let $W\in C^\infty(M;\R)$ and $\kappa\in\R\setminus\{0\}$. Assume that $\mathcal V=(V_1,\ldots,V_m):M\to\R^m$ is a smooth embedding and set $E=\operatorname{span}_{\C}\{1,V_1,\ldots,V_m\}$. Then the Schr\"odinger equation \eqref{eq:Sch_intro} is small-time approximately controllable in $H^s(M;\C)$ by controls $\eta\in L^\infty(0,T;E)$  in the following cases:
\begin{itemize}
\item $p$ is odd;
\item $p$ is even and $s<p-1$.
\end{itemize}
\end{theorem}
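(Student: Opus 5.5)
We follow the Agrachev--Sarychev scheme and reduce STAC to the density result \Cref{th:odd-saturation-density}. The plan has three steps: local well-posedness, realization of the directions in $E$, and the nonlinear limit that generates new directions. First we record local well-posedness and continuous dependence of \eqref{eq:Sch_intro} in $H^s(M;\C)$ for $s>d/2$, with $L^\infty(0,T;E)$ controls. Since $H^s$ is an algebra and $E\subset C^\infty$, a fixed point on the Duhamel formula works. For odd $p$ the map $u\mapsto|u|^{p-1}u$ is a polynomial in $u,\bar u$, hence locally Lipschitz on $H^s$. For even $p$, $|u|^{p-1}$ is not polynomial, and we expect that $s<p-1$ is exactly the range in which $|u|^{p-1}u$ is $C^1$ and locally Lipschitz on $H^s$. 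Denote by $\Theta$ the set of $\xi\in H^s(M;\C)$ such that $\tau_\xi$ is small-time approximately realizable. By composition of controls and continuous dependence, $\Theta$ is closed under addition and is closed in $H^s$.

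The first limit is the constant-control one. With $\eta\equiv\delta^{-1}\zeta$ on $[0,\delta]$ and $\zeta\in E$, the state converges to $u_0-i\zeta$ as $\delta\to0$; the bounded terms contribute $O(\delta)$. This gives $iE\subset\Theta$, and since $E$ is complex, $E\subset\Theta$.

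The second limit is the key one. Fix $\varphi\in E^\R$, or more generally real $\varphi$ with $\varphi\in\Theta\cap i\Theta$. We conjugate the free dynamics by a large real translation: realize $\tau_{\delta^{-1/p}\varphi}$ approximately, let the system evolve freely for time $\delta$, then realize $\tau_{-\delta^{-1/p}\varphi}$. The intermediate trajectory is $u+\delta^{-1/p}\varphi$ on a time interval of length $\delta$. Its nonlinear term is
\[\kappa\,|u+\delta^{-1/p}\varphi|^{p-1}(u+\delta^{-1/p}\varphi)=\delta^{-1}\kappa|\varphi|^{p-1}\varphi+O(\delta^{-(p-1)/p}).\]
Hence, as $\delta\to0$, the state converges to $u_0-i\kappa|\varphi|^{p-1}\varphi$. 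The linear part acting on the large translation contributes $\delta\cdot\delta^{-1/p}\to0$.

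We intend to use real $\varphi$. For odd $p$ we then get $|\varphi|^{p-1}\varphi=\varphi^p$. For even $p$ we get $|\varphi|^{p-1}\varphi$, which is not $\varphi^p$. There we plan to use $\varphi=c+\lambda\psi$ with a large constant $c>\max|\lambda\psi|$, so that $|\varphi|^{p-1}\varphi=\varphi^p$ exactly. We also use $-\varphi$ and rescaling in $\lambda$ to get both signs. Because $\Theta$ is a closed additive group stable under these operations, and multiplication by $i$ is handled via the $\pm$ sign and the previous step, induction shows $\mathsf E_n\subset\Theta$ for all $n$. By \Cref{th:odd-saturation-density}(1), $\Theta$ is then dense in $H^s(M;\C)$, and being closed, $\Theta=H^s(M;\C)$.

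To conclude, given $z_0$ and $z_1$ we realize $\tau_{z_1-z_0}$ in small time.

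The main obstacle is the rigorous justification of the second limit. The difficulty is uniform control of the Duhamel remainder in $H^s$ when the intermediate state has size $\delta^{-1/p}$. Since the translations are only approximately realized, the errors must be absorbed through continuous dependence on a time interval of length $O(\delta)$. We would handle this with a Gronwall estimate on $w=u-\delta^{-1/p}\varphi$, using the algebra property, with $\varphi$ smooth. For even $p$, the regularity restriction $s<p-1$ must also be tracked in the locally Lipschitz bounds.
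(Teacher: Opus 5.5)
Your proposal is correct and follows essentially the paper's proof. The common steps are the direct translations by constant controls of size $\delta^{-1}$, the large-translation kick with a renormalized estimate on $[0,\delta]$ (the paper uses a fixed point for the renormalized Duhamel map where you use a Gronwall/continuation argument), closure of realizable translations under sums and $H^s$-limits, the positivity shift $c+\lambda\psi$ for even $p$, and the density from \Cref{th:odd-saturation-density}.

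The one point to state precisely is how you obtain the full complex line through $|\varphi|^{p-1}\varphi$. Real translations $\pm\delta^{-1/p}\lambda\varphi$ with $\lambda\in\R$ only give $i\R\,|\varphi|^{p-1}\varphi$, and the $\pm$ sign does not supply the missing factor $i$. You also need the imaginary translations $\pm i\delta^{-1/p}\lambda\varphi$, which are available because $\mathsf E_n$ is a complex space. The paper does this in one step by allowing $\alpha\in\C$ and using that $\alpha\mapsto-i\kappa\alpha|\alpha|^{p-1}$ is onto $\C$. For even $p$, you should also say that $\psi^p$ is extracted from the profiles $(c+\lambda\psi)^p$ by a Vandermonde inversion over $p+1$ distinct parameters; the paper uses distinct constants $c_0,\dots,c_p$ for this.
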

The complete proof of \Cref{thm:intro-NLS-STAC} is given in \Cref{sec:additive-NLS}. The main steps in the cubic case are summarized in \Cref{fig:saturation-mechanism}.

\begin{figure}[ht]
\centering
\resizebox{0.8\textwidth}{!}{%
\begin{tikzpicture}[
    font=\small,
    box/.style={
        draw,
        align=center,
        inner xsep=6pt,
        inner ysep=6pt,
        minimum height=1.65cm,
        text width=4.5cm
    },
    arrow/.style={->, thick, >=stealth}
]
\node[box] (setup) at (0,0) {
    \textbf{Setup}\\
    Cubic Schrödinger equation\\
    controls in $L^\infty(0,T;E)$
};
\node[box] (direct) at (5.5,0) {
    \textbf{Direct translations}\\
    for $\xi\in E$,\\
    $\tau_\xi$ is realizable
};
\node[box] (asymptotic) at (11,0) {
    \textbf{Asymptotic limit}\\
    translate by $\delta^{-1/3}\alpha\varphi$, $\forall\alpha\in\C$, $\varphi$ real\\
    free evolution on $[0,\delta]$\\
    remove translation
};
\node[box] (kick) at (11,-2.9) {
    \textbf{Cubic kick}\\
    $\tau_{\pm\delta^{-1/3}\alpha\varphi}$ realizable $\forall\alpha\in\C$\\
    $\Longrightarrow$ $\tau_{\varphi^3}$ realizable
};
\node[box, text width=4.9cm] (saturation) at (5.5,-2.9) {
    \textbf{Saturation criterion}\\
    $\mathsf E_0=E$\\
    iterate the cubic kick and take\\
    complex linear combinations
};
\node[box] (conclusion) at (0,-2.9) {
    \textbf{Conclusion}\\
    cubic Schrödinger equation\\
    is STAC in $H^s$
};
\draw[arrow] (setup) -- (direct);
\draw[arrow] (direct) -- (asymptotic);
\draw[arrow] (asymptotic) -- (kick);
\draw[arrow] (kick) -- (saturation);
\draw[arrow] (saturation) -- (conclusion);
\end{tikzpicture}}
\caption{\small Structure of the proof for the cubic Schrödinger equation.}
\label{fig:saturation-mechanism}
\end{figure}

\subsubsection*{Klein--Gordon equation} For the real-valued Klein--Gordon equation with nonlinearity $y^{p}$
\[\left\{
\begin{aligned}
&\partial_t^2 y-\Delta_g y+Wy+\kappa y^{p}=\eta(t,x), \qquad (t,x)\in(0,T)\times M,\\
&(y(0),\partial_t y(0))= (y_0,y_1),
\end{aligned}
\right.\]
we work in $\mathcal H^s:=H^s(M;\R)\times H^{s-1}(M;\R)$, $s>d/2$. 
\begin{theorem}
\label{thm:intro-KG-STAC}
Let $(M,g)$ be a smooth connected compact Riemannian manifold without boundary. Let $s>d/2$ and let $p\ge2$ be an integer. Let $W\in C^\infty(M;\R)$ and $\kappa\in\R\setminus\{0\}$. Assume that $\mathcal V=(V_1,\ldots,V_m):M\to\R^m$ is a smooth embedding, and set

\[E^\R=
\begin{cases}
\operatorname{span}_{\R}\{1,V_1,\ldots,V_m\},& \text{if }p\text{ is odd},\\
\operatorname{span}_{\R}\{1,V_1,\ldots,V_{m+1}\},& \text{if }p\text{ is even}.
\end{cases}
\]
Then the nonlinear Klein--Gordon equation is small-time approximately controllable in $\mathcal H^s$ by controls $\eta\in L^\infty(0,T;E^\R)$.
\end{theorem}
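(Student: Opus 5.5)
We follow the Agrachev--Sarychev scheme already used for the Schr\"odinger equation, adapted to the second-order structure as in \cite{Poz24}. Write the equation as a first-order system for $z=(y,\partial_t y)\in\mathcal H^s$. Since $s>d/2$, $H^s(M)$ is an algebra, so the nonlinearity $y\mapsto y^p$ is locally Lipschitz on $H^s$. The problem is therefore locally well posed in $\mathcal H^s$, and solutions depend continuously on the data and on the control in $L^1(0,T;H^{s-1})$.

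The plan establishes three families of small-time approximately realizable translations, in the following order.

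\textbf{Step 1: velocity translations.} For $\zeta\in E^\R$, apply the control $\eta=\delta^{-1}\zeta$ on $[0,\delta]$. A Duhamel comparison with the explicit solution of $\ddot y=\delta^{-1}\zeta$ gives
\[
(y(\delta),\partial_t y(\delta))=(y_0,\,y_1+\zeta)+o(1)\quad\text{in }\mathcal H^s\text{ as }\delta\to0.
\]
Hence $\tau_{(0,\zeta)}$ is realizable.

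\textbf{Step 2: position translations.} Fix $\varphi\in E^\R$. First translate the velocity by $\delta^{-1}\varphi$ (Step 1). Then let the system evolve freely on $[0,\delta]$, so that $y$ moves by approximately $\varphi$. Finally remove the velocity $-\delta^{-1}\varphi$ (Step 1 again). The error terms are controlled by the local Lipschitz estimate. This realizes $\tau_{(\varphi,0)}$.

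\textbf{Step 3: the nonlinear kick.} For $\varphi$ in the current space, set $y\mapsto y+\delta^{-1/p}\varphi$ using Step 2, with Steps 1--2 made quantitatively uniform in the size of the translation. Evolve freely on $[0,\delta]$. The velocity then changes by
\[
-\delta\,\kappa\,(y+\delta^{-1/p}\varphi)^p\approx-\kappa\varphi^p+O(\delta^{1/p}),
\]
while the position moves only by $O(\delta^{1-1/p})$. Then translate back by $-\delta^{-1/p}\varphi$. This makes $\tau_{(0,-\kappa\varphi^p)}$ realizable.

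Let $\mathcal R$ be the set of $\zeta$ for which $\tau_{(0,\zeta)}$ is realizable. It is closed under compositions (that is, sums), it is closed in $H^{s-1}$ by continuity of the flow, and by Steps 1--3 positions in $\mathcal R$ are reachable too. Realizable velocity translations become position translations by repeating Step 2 with $\varphi\in\mathcal R$, which shows that $\mathcal R$ is stable under $\varphi\mapsto-\kappa\varphi^p$.

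\textbf{Closure properties of $\mathcal R$.} For odd $p$, replacing $\varphi$ by $\pm\lambda\varphi$ with $\lambda>0$ gives both signs. Hence $\mathcal R\supset\mathsf E_\infty^\R$, which is dense by \Cref{th:odd-saturation-density}(1). For even $p$, only $-\operatorname{sgn}(\kappa)\varphi^p$ is available, with nonnegative multiples. This is exactly the cone structure of \eqref{eq:even-saturation}, after replacing $\varphi$ by $|\kappa|^{1/p}\varphi$ and, if $\kappa<0$, swapping the roles of signs. In that case we work with the symmetric cone; its lineality space is unchanged, since the subspace $\mathsf F_{n+1}^\R$ is the same. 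By induction, $\mathsf F_n^\R\subset\mathcal R$. Here we use that $\mathcal R$ is a closed convex cone containing $\mathsf C_n$, hence containing its closure and the largest subspace inside it. Closures in $H^r$ agree across all $r$ on the finite-stage spaces, by the remark after \Cref{th:odd-saturation-density}. Then \Cref{th:odd-saturation-density}(2) shows that $\mathcal R$ is dense in $H^{s-1}$, hence $\mathcal R=H^{s-1}$; the same argument gives all of $H^s$ as position translations.

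\textbf{Conclusion.} To steer $z_0$ to $z_1$ in arbitrarily small time, first approximate $z_1$ by smooth data. Then compose the position translation $y_1-y_0$ with the velocity translation. All of this occurs in total time below $\varkappa$, since each step uses arbitrarily short time.

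\textbf{Main obstacle.} The main difficulty is Step 3 together with the iteration. The limit $\delta\to0$ involves translations of size $\delta^{-1/p}$, which are themselves only approximately realized. Only $s>d/2$ is assumed, while the free Klein--Gordon flow acts on $\mathcal H^s$ without smoothing. The plan is to keep $\varphi$ smooth (first in the finite-stage spaces, then approximate). The errors from the approximate large translation are then absorbed by continuous dependence on the fixed finite interval of the free evolution, with $\delta$ chosen first and the approximation tolerance afterwards.
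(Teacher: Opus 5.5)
Your proposal follows the paper's proof essentially step by step. Both use velocity translations by the control $\delta^{-1}\zeta$ on $[0,\delta]$, and velocity-to-position conversion through a free evolution of length $\delta$ between the velocity translations $\pm\delta^{-1}\xi$. Both then use a position translation of size $\delta^{-1/p}$ followed by free evolution, which produces the velocity direction $-\kappa\alpha^p\varphi^p$, with $\delta$ fixed before the approximation tolerance. The induction runs over $\mathsf E_n^\R$ for odd $p$, or over $\operatorname{sgn}(\kappa)\mathsf C_n$ and $\mathsf F_{n+1}^\R$ for even $p$, using stability under sums and limits.

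Three points should be stated more carefully, though none changes the argument.
\begin{itemize}
\item $\mathcal R$ is only known to be additive and closed, not a convex cone. This suffices, because $\operatorname{sgn}(\kappa)\mathsf C_n\subset\mathcal R$ already gives $\operatorname{sgn}(\kappa)\overline{\mathsf C_n}\subset\mathcal R$.
\item The conversion and kick steps apply only to smooth $\varphi$ whose whole line $\R\varphi$ is realizable, namely elements of the finite-stage spaces. They do not apply to arbitrary $\varphi\in\mathcal R$.
\item Existence on $[0,\delta]$ from the large initial data, and the uniform bounds behind your expansions, are not supplied by the local theory alone. As in the paper, they come from a renormalized fixed-point argument; in the conversion step this fixed point is taken on the bounded position component.
\end{itemize}
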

The complete proof of \Cref{thm:intro-KG-STAC} is given in \Cref{sec:additive-KG}. The main steps in the cubic case are given in \Cref{fig:KG-saturation-mechanism}.

\begin{figure}[ht]
\centering
\resizebox{0.9\textwidth}{!}{%
\begin{tikzpicture}[
    font=\footnotesize,
    box/.style={
        draw,
        align=center,
        inner xsep=6pt,
        inner ysep=6pt,
        minimum height=1.35cm,
        text width=3.5cm
    },
    arrow/.style={
        ->,
        thick,
        >=stealth
    }
]

\node[box, text width=4.4cm] (setup) at (0,0) {
    \textbf{Setup}\\
    Cubic Klein--Gordon equation\\
    controls in $L^\infty(0,T;E^\R)$
};

\node[box] (velocity) at (4.7,0) {
    \textbf{Velocity translations}\\
    for $\xi\in E^\R$,\\
    $\tau_{(0,\xi)^\top}$ is realizable
};

\node[box] (position) at (9.1,0) {
    \textbf{Velocity-to-position}\\
    $\tau_{(0,\pm\delta^{-1}\xi)^\top}$ realizable\\
    $\Longrightarrow$ $\tau_{(\xi,0)^\top}$ realizable
};

\node[box, text width=4.8cm] (asymptotic) at (14.1,0) {
    \textbf{Asymptotic limit}\\
    position translation of size $\delta^{-1/3}$\\
    free evolution on $[0,\delta]$\\
    remove the position translation
};

\node[box, text width=4.0cm] (kick) at (14.1,-3.2) {
    \textbf{Cubic kick}\\
    $\tau_{(\pm\delta^{-1/3}\alpha\varphi,0)^\top}$ realizable\\
    $\Longrightarrow$
    $\tau_{(0,-\kappa\alpha^3\varphi^3)^\top}$ realizable
};

\node[box, text width=4.4cm] (saturation) at (9.0,-3.2) {
    \textbf{Saturation criterion}\\
    iterate the cubic kick and\\
    velocity-to-position conversion
};

\node[box, text width=4.4cm] (conclusion) at (3.8,-3.2) {
    \textbf{Conclusion}\\
    cubic Klein--Gordon equation\\
    is STAC in $\mathcal H^s$
};

\draw[arrow] (setup) -- (velocity);
\draw[arrow] (velocity) -- (position);
\draw[arrow] (position) -- (asymptotic);
\draw[arrow] (asymptotic) -- (kick);
\draw[arrow] (kick) -- (saturation);
\draw[arrow] (saturation) -- (conclusion);

\end{tikzpicture}%
}
\caption{\small Structure of the proof for the cubic Klein--Gordon equation.}
\label{fig:KG-saturation-mechanism}
\end{figure}

\subsubsection*{Parabolic equation} We finally point out that the same additive saturation mechanism applies to parabolic equations, as shown on $\T^d$ in \cite{Ner21}. Let $P\in\R[X]$ be a real polynomial such that
\[P(\lambda)\to+\infty \qquad\text{as }\lambda\to+\infty,\]
and set $A:=P(-\Delta_g)+W$. We consider the controlled parabolic equation with polynomial nonlinearity
\begin{equation*}
\left\{
\begin{aligned}
&\partial_t y+Ay+\kappa y^{p}=\eta(t,x),\qquad (t,x)\in(0,T)\times M,\\
&y(0)=y_0.
\end{aligned}
\right.
\end{equation*}
\begin{theorem}
\label{thm:heat-STAC}
Let $(M,g)$ be a smooth connected compact Riemannian manifold without boundary. Let $s>d/2$ and let $p\ge2$ be an integer. Let $W\in C^\infty(M;\R)$ and $\kappa\in\R\setminus\{0\}$. Assume that
$\mathcal V=(V_1,\ldots,V_m):M\to\R^m$ is a smooth embedding, and set 
\[E^\R=
\begin{cases}
\operatorname{span}_{\R}\{1,V_1,\ldots,V_m\},& \text{if }p\text{ is odd},\\
\operatorname{span}_{\R}\{1,V_1,\ldots,V_{m+1}\},& \text{if }p\text{ is even}.
\end{cases}\]
Then the parabolic equation with nonlinearity $y^p$ is small-time approximately controllable in $H^s(M;\R)$ by controls $\eta\in L^\infty(0,T;E^\R)$.
\end{theorem}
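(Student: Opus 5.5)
We would follow the Agrachev--Sarychev scheme of \Cref{fig:saturation-mechanism,fig:KG-saturation-mechanism}, adapted to a first-order dissipative equation as in \cite{Ner21}. We work in $X=H^s(M;\R)$ with $s>d/2$. There $H^s$ is an algebra, $y\mapsto y^p$ is locally Lipschitz, and $-A$ generates an analytic semigroup, since $P(\lambda)\to+\infty$ makes $A$ bounded below and self-adjoint. Local well-posedness with $L^\infty(0,T;E^\R)$ forcing and continuous dependence on data and control are therefore standard. The core object is the set $\mathcal T\subset X$ of vectors $\xi$ for which $\tau_\xi$ is small-time approximately realizable. The goal is to show that $\mathcal T$ is a closed convex cone, contains $E^\R$, and is stable under the relevant power operations. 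Then $\mathcal T\supset\mathsf E_\infty^\R$ (odd $p$), respectively $\mathcal T\supset\mathsf F_\infty^\R$ (even $p$). By \Cref{th:odd-saturation-density}, $\mathcal T=X$, which gives STAC: translate $z_0$ to approximately $z_1$ by $\xi=z_1-z_0$.

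\emph{Step 1: direct translations and cone structure.} For $\xi\in E^\R$, take the control $\eta=\xi/\delta$ on $[0,\delta]$. Then $y(\delta)=z_0+\xi+o(1)$ in $H^s$ as $\delta\to0$, because the contributions of $Ay$ and $\kappa y^p$ over time $\delta$ vanish. For $y$ this needs only continuity of the semigroup, writing $y=e^{-tA}z_0+\int_0^t e^{-(t-\tau)A}\eta\,d\tau+\dots$, with $\eta$ smooth.
\begin{itemize}
\item Closedness of $\mathcal T$ follows from continuous dependence.
\item Additivity follows by concatenating realizable translations.
\item Positive homogeneity holds for vectors of $E^\R$, and for any $\xi\in\mathcal T$ it holds only through the closed cone generated.
\end{itemize}
To do: check that the small-time error from concatenation is uniform enough; it is, since the times can be taken in succession.

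\emph{Step 2: power kick.} For $\varphi\in\mathcal T\cap C^\infty$ with $\pm\varphi\in\mathcal T$ (needed to undo the translation), consider the composition: translate by $\delta^{-1/p}\varphi$, evolve freely on $[0,\delta]$, translate by $-\delta^{-1/p}\varphi$. The key limit is
\[
\lim_{\delta\to0}\Big(S_\delta\big(z_0+\delta^{-1/p}\varphi\big)-\delta^{-1/p}\varphi\Big)=z_0-\kappa\varphi^p \quad\text{in }H^s .
\]
Here $S_\delta$ is the uncontrolled flow. The expected proof sets $w(t)=y(t)-\delta^{-1/p}\varphi$ and rescales time. Then $w$ solves
\[
\partial_t w+Aw+\delta^{-1/p}A\varphi+\kappa\big(w+\delta^{-1/p}\varphi\big)^p=0 .
\]
The dominant term is $\kappa\delta^{-1}\varphi^p$, integrated over $[0,\delta]$. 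The cross terms are $O(\delta^{-(p-1)/p})$, hence $o(\delta^{-1})$. The term $\delta^{-1/p}A\varphi$ is fine because $\varphi$ is smooth. A Gronwall estimate in $H^s$ then gives the limit.

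\emph{Step 3: saturation and the main obstacle.} For odd $p$, $\pm\varphi\in\mathcal T$ is inherited, and the kicks with $\pm\varphi$ yield $\mp\kappa\varphi^p$, so $\mathcal T$ contains the linear space $\mathsf E_{n+1}^\R$ by induction. For even $p$, only $-\kappa\varphi^p$ is obtained. Assume first $\kappa>0$. Then $\mathcal T\supset\mathsf C_n$, the closedness from Step 1 gives $\mathcal T\supset\overline{\mathsf C_n}\cap(-\overline{\mathsf C_n})=\mathsf F_{n+1}^\R$, and $\mathsf F_{n+1}^\R$ consists of smooth functions as noted after \Cref{th:odd-saturation-density}. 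For $\kappa<0$, use $\varphi^p$ in place of $-\varphi^p$; the largest subspace of that cone is the same, since $\mathsf C_n$ is built from $\mathsf F_n^\R$, a vector space.

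The main obstacle is Step 2: justifying the limit uniformly in $H^s$, and checking that the solution exists on $[0,\delta]$ despite the large data $\delta^{-1/p}\varphi$. One must ensure the solution does not blow up before time $\delta$. Rescaled local existence handles this: the lifespan for data of size $\delta^{-1/p}$ is of order $\delta^{(p-1)/p}\gg\delta$.
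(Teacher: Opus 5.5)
Your proposal is correct and follows essentially the paper's route. You use direct translations with $\eta=\delta^{-1}\xi$ on $[0,\delta]$, then the renormalized large-amplitude limit with $w=y-\delta^{-1/p}\varphi$, which yields $-\kappa\varphi^p$. You then induct on $\mathsf E_n^\R$ for odd $p$, and for even $p$ on $\operatorname{sgn}(\kappa)\overline{\mathsf C_n}\supset\mathsf F_{n+1}^\R$, using stability under sums and limits, and conclude with the density theorem.

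The only thing to state precisely is the hypothesis of the kick: it needs $\pm\delta^{-1/p}\alpha\varphi\in\mathcal T$ for all $\delta>0$ and $\alpha\in\R$, not just $\pm\varphi\in\mathcal T$. This is available because you only apply the kick to $\varphi$ in the vector spaces $\mathsf E_n^\R$ and $\mathsf F_n^\R$ contained in $\mathcal T$.
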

The complete proof of \Cref{thm:heat-STAC} is given in \Cref{sec:additive-Heat}. The main steps in the cubic case are summarized in \Cref{fig:saturation-mechanism-parabolic}.

\begin{figure}[H]
\centering
\resizebox{0.8\textwidth}{!}{%
\begin{tikzpicture}[
    font=\small,
    box/.style={
        draw,
        align=center,
        inner xsep=6pt,
        inner ysep=6pt,
        minimum height=1.65cm,
        text width=4.5cm
    },
    arrow/.style={->, thick, >=stealth}
]
\node[box] (setup) at (0,0) {
    \textbf{Setup}\\
    Cubic parabolic equation\\
    controls in $L^\infty(0,T;E^\R)$
};
\node[box] (direct) at (5.5,0) {
    \textbf{Direct translations}\\
    for $\xi\in E^\R$,\\
    $\tau_\xi$ is realizable
};
\node[box] (asymptotic) at (11,0) {
    \textbf{Asymptotic limit}\\
    translation of size $\delta^{-1/3}$\\
    free evolution on $[0,\delta]$\\
    remove translation
};
\node[box] (kick) at (11,-2.9) {
    \textbf{Cubic kick}\\
    $\tau_{\pm\delta^{-1/3}\varphi}$ realizable\\
    $\Longrightarrow$ $\tau_{\varphi^3}$ realizable
};
\node[box] (saturation) at (5.5,-2.9) {
    \textbf{Saturation criterion}\\
    $\mathsf E_0^\R=E^\R$\\
    iterate the cubic kick and\\
    take real linear combinations
};
\node[box] (conclusion) at (0,-2.9) {
    \textbf{Conclusion}\\
    cubic parabolic equation\\
    is STAC in $H^s(M;\R)$
};
\draw[arrow] (setup) -- (direct);
\draw[arrow] (direct) -- (asymptotic);
\draw[arrow] (asymptotic) -- (kick);
\draw[arrow] (kick) -- (saturation);
\draw[arrow] (saturation) -- (conclusion);
\end{tikzpicture}}
\caption{\small Structure of the proof for the cubic parabolic equation.}
\label{fig:saturation-mechanism-parabolic}
\end{figure}

Another purpose of the article is to show that the preceding mechanism is not restricted to the Sobolev-algebra setting or to boundaryless manifolds. We consider compact manifolds with boundary and the main additional difficulties come from the compatibility conditions. We start by analyzing the Neumann case, where the saturation mechanism remains valid by choosing sufficiently smooth control profiles satisfying the boundary conditions. After, we show that the construction provided in the Neumann case is not valid for the Dirichlet boundary conditions by proving some partial obstruction to the saturation property. The approximation theorem is proved in \Cref{sec:admissible_control_boundary}, and its application to the Neumann Schr\"odinger equation is given in \Cref{sec:manifold_boundary}.\medskip

\noindent Finally, we discuss less regular initial data for the cubic Schr\"odinger and Klein--Gordon equations in \Cref{sec:less_regular}. The assumption $s>d/2$ used in the above results is mainly imposed in order to work in a Sobolev-algebra and to handle the nonlinear terms in a straightforward way. We explain how this regularity threshold can be lowered by combining the fixed-point argument with suitable Strichartz estimates, as in the corresponding well-posedness theory.\medskip

\noindent Each case requires a different additional ingredient, but the underlying principle remains the same: large short-time additive translations generate new effective directions through the leading nonlinear term, and the density of the resulting directions is obtained by an algebraic saturation argument. \medskip

\noindent We now summarize the second part of our main results. In \Cref{tab:STAC-algebra}, we present the small-time approximate controllability results on compact manifolds.

\begin{table}[H]
\centering
\resizebox{0.9\textwidth}{!}{%
\renewcommand{\arraystretch}{1.25}
\begin{tabular}{
p{0.15\textwidth}
p{0.16\textwidth}
p{0.17\textwidth}
p{0.27\textwidth}
p{0.13\textwidth}}
\hline
\textbf{Equation}
&\textbf{Manifold}
& \textbf{Nonlinearity}
& \textbf{State space}
& \textbf{Result} \\
\hline
Schr\"odinger
& Boundaryless 
& $|u|^{p-1}u$,
  $p$ odd\newline 
  $|u|^{p-1}u$,
  $p$ even
& $H^s(M;\C)$,
  $s>\tfrac d2$\newline
  $H^s(M;\C)$, $\tfrac d2<s<p-1$
& \Cref{thm:intro-NLS-STAC}. \\
\hline
Schr\"odinger
& With boundary
& $|u|^{p-1}u$, $p$ odd \newline 
  $|u|^{p-1}u$, $p$ even
& $H_N^s$, $s\neq \tfrac 32$,
$\tfrac d2<s<\tfrac 72-\tfrac 2p$\newline
$H_N^s$, $s\neq \tfrac 32$,
$\tfrac d2<s<p^{\sharp}$
& \Cref{th:bdry-manifold}. \\
\hline
Schr\"odinger
& Boundaryless
& $|u|^{2}u$. 
& $H^s(M;\C)$,\newline $s>\tfrac d2-\tfrac 1b$, $b>2$
& \Cref{thm:NLS-STAC2}. \\
\hline
Klein--Gordon
& Boundaryless 
& $y^{p}$
& {\small$H^s(M;\R)\times H^{s-1}(M;\R)$},\newline
  $s>\tfrac d2$
& \Cref{thm:intro-KG-STAC}. \\
\hline
Klein--Gordon
& Boundaryless
& $y^{3}$
& {\small$H^s(M;\R)\times H^{s-1}(M;\R)$},\newline
  $s>\tfrac d2-\tfrac 1b-\tfrac da$, $b>2$, $a>2d$
& \Cref{thm:KG-STAC2}. \\
\hline
Parabolic
& Boundaryless 
& $y^{p}$
& $H^s(M;\R)$,\newline
  $s>\tfrac d2$
& \Cref{thm:heat-STAC}. \\
\hline
\end{tabular}}
\caption{\small Small-time approximate controllability results on compact manifolds. Here $p^{\sharp}=\min\{7/2-2/p, p-1\}$. The parameters $a$ and $b$ are the Strichartz admissible exponents for the Schrödinger equation \eqref{eq:Strichartz}  and for the Klein--Gordon equation \eqref{eq:wave-Strichartz}.}
\label{tab:STAC-algebra}
\end{table}

\subsection{Random forcing}

\noindent As a final application of our density and deterministic controllability results, we derive two consequences for randomly forced equations: topological irreducibility and almost sure unboundedness of Sobolev norms. The relation between deterministic controllability and support properties of randomly forced equations is well-known; see, for instance, \cite{Barbu06,GHHM18}. For probabilistic consequences of low-mode controllability and saturation in nonlinear PDEs, we refer to \cite{AKSS07,Gao22,KNS20,Che25}. Here the randomness acts on the temporal coefficients of finitely many fixed spatial profiles. On each unit time interval, we randomly choose new time-dependent coefficients, with the same probability law and independently of the choices on the previous intervals.\medskip

\noindent Topological irreducibility means that every nonempty open subset of the phase space can be reached with positive probability from any initial state. Here, this holds at the end of a single forcing block. In the block-noise setting considered here, the passage to topological irreducibility is direct: the full support of the forcing law and the continuity of the resolving map imply that every deterministic controlled trajectory can be approximated with positive probability. \medskip

Almost sure unboundedness means that, for each initial state, the Sobolev norm of the solution reaches arbitrarily large values along almost every realization of the forcing. The proof uses compactness in a weaker Sobolev topology to obtain a uniform positive probability of leaving each ball in the stronger space. The Markov property then allows this estimate to be iterated over successive forcing blocks. The use of controllability to obtain almost sure unboundedness of Sobolev norms was developed for randomly perturbed Schr\"odinger equations in \cite{Ner09,DN24} through a compactness and stopping-time argument. We adapt this mechanism to finite-dimensional additive forcing for the Schr\"odinger and Klein--Gordon equations in the cubic defocusing case where we can ensure global well-posedness on compact manifolds without boundary. 

\begin{table}[H]
\centering
\resizebox{0.9\textwidth}{!}{%
\renewcommand{\arraystretch}{1.25}
\begin{tabular}{p{0.15\textwidth} p{0.14\textwidth} p{0.26\textwidth} p{0.32\textwidth}}
\hline
\textbf{Equation} 
& \textbf{Manifold} 
& \textbf{Hypotheses}
& \textbf{Result}\\
\hline
Schr\"odinger\newline
Klein--Gordon
& Boundaryless 
& {\small Global well-posedness\newline
  Controllability, full support}
& \Cref{prop:irreducibility} \newline Topological irreducibility. \\
\hline
Schr\"odinger\newline
Klein--Gordon
& Boundaryless 
& {\small Global well-posedness\newline
  Controllability, full support\newline
  Compatible indices $s_0<s$}
& \Cref{thm:unbounded-Sobolev-trajectories} \newline Unbounded Sobolev trajectory. \\
\hline
\end{tabular}}
\caption{\small Stochastic consequences for the cubic defocusing Schr\"odinger and Klein--Gordon equations on compact manifolds without boundary.}
\label{tab:extensions}
\end{table}
\FloatBarrier

\subsection{Open questions} 
The extensions considered above are not exhaustive. Other variants could be considered, for instance by combining the situations discussed above.
\begin{itemize}
\item A first question is which nonlinear maps, beyond the power maps considered here, generate dense spaces from finitely many smooth functions. For the cone saturation, one may also ask which initial families yield density and when the auxiliary profile can be omitted. A related question is how these approximation results can be used to prove controllability for other nonlinear PDEs, by identifying short-time limits that produce the corresponding nonlinear maps.

\item A second direction concerns the geometry of the underlying space. The present work is set on compact manifolds, with some extensions to manifolds with boundary. It would be natural to investigate non-compact manifolds, as well as more general boundary conditions, where both the density argument and the well-posedness theory may require substantial modifications.

\item A final direction concerns the functional framework. We have kept the basic arguments in Sobolev spaces above the algebra threshold, except when Strichartz estimates are explicitly used. It would be interesting to determine sharp regularity thresholds for the saturation mechanism, both in the dispersive case and in the parabolic case.
\end{itemize}
\section{Density of saturation spaces}
\label{sec:prelim}

\subsection{Polynomial approximation and algebraic saturation}
\label{sec:closed-approximation}

In this subsection, let $(M,g)$ be a smooth compact connected Riemannian manifold without boundary and set
$d:=\dim M$. Let $-\Delta_g$ be the nonnegative Laplace--Beltrami operator. It is self-adjoint on $L^2(M)$ with compact resolvent. Hence, there exist eigenvalues $(\lambda_k)_{k\ge0}$ and an orthonormal basis $(\phi_k)_{k\ge0}$ of $L^2(M)$ consisting of real-valued functions such that
\[-\Delta_g\phi_k=\lambda_k\phi_k; \qquad 0=\lambda_0<\lambda_1\le\lambda_2\le\cdots.\]
where $\phi_0$ is constant. For $r\in\R$, define
\[H^r(M)=\{u \in \mathcal{D}'(M): \ \|u\|_{H^r(M)}<\infty\}, \qquad \text{ where } \qquad
\|u\|_{H^r(M)}^2:=\sum_{k\ge0}(1+\lambda_k)^r
|\langle u,\phi_k\rangle_{L^2}|^2.\]
By Whitney's embedding theorem \cite[Theorem 10.15]{LeeSM}, there exist $m\in\N$ and functions
$V_1,\dots,V_m\in C^\infty(M;\R)$ such that
\[\mathcal V:M\to\R^m, \qquad \mathcal V(x):=(V_1(x),\dots,V_m(x)),\]
is a smooth embedding. We shall use the following elementary consequence of
\Cref{thm:Nachbin}.

\begin{lemma}[Polynomial density along an embedding]
\label{lem:polynomial-density-embedding}
Let $V_1,\dots,V_m\in C^\infty(M;\R)$ and assume that $\mathcal V:M\to\R^m$, $\mathcal V(x):=(V_1(x),\dots,V_m(x))$, is a smooth embedding. Let $\mathcal A$ be the real unital algebra generated by
$\{1,V_1,\dots,V_m\}$. Then, for every $r\in\R$,
\[\overline{\mathcal A}^{H^r(M;\R)}=H^r(M;\R).\]
Consequently,
\[\overline{\mathcal A\otimes_\R\C}^{H^r(M;\C)}=H^r(M;\C).\]
\end{lemma}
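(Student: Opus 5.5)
We verify the two hypotheses of \Cref{thm:Nachbin} for $\mathcal A$, deduce density of $\mathcal A$ in $C^\infty(M;\R)$ with its Fréchet topology, and then pass to every Sobolev space $H^r(M;\R)$ using continuous and dense embeddings.

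\emph{Step 1: the Nachbin conditions.} Since $\mathcal V$ is an embedding, it is injective. So for $x\neq y$ there is some $j$ with $V_j(x)\neq V_j(y)$, and $V_j\in\mathcal A$ separates $x$ and $y$; this is condition (i). Since $\mathcal V$ is an immersion, $d_x\mathcal V:T_xM\to\R^m$ is injective. So for $\xi\neq0$ in $T_xM$ we have $d_x\mathcal V(\xi)=(d_xV_1(\xi),\dots,d_xV_m(\xi))\neq0$, and some $d_xV_j(\xi)\neq0$; this is condition (ii). The algebra $\mathcal A$ is unital and contained in $C^\infty(M;\R)$ by construction. Hence \Cref{thm:Nachbin} gives that $\mathcal A$ is dense in $C^\infty(M;\R)$ for the topology of uniform convergence of all derivatives, i.e.\ the topology given by the seminorms $\|\cdot\|_{C^k}$ in a fixed finite atlas.

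\emph{Step 2: passage to $H^r$.} Fix $r\in\R$ and $u\in H^r(M;\R)$, and let $\varepsilon>0$.
\begin{itemize}
\item Smooth functions are dense in $H^r$. For instance, the truncations $u_K=\sum_{k\le K}\langle u,\phi_k\rangle\phi_k$ converge to $u$ in $H^r$ by the spectral definition of the norm, and each $\phi_k$ is smooth by elliptic regularity. So we may pick $f\in C^\infty(M;\R)$ with $\|u-f\|_{H^r}<\varepsilon/2$.
\item The inclusion $C^\infty(M)\hookrightarrow H^r(M)$ is continuous. If $r\le 0$ this holds because $\|\cdot\|_{H^r}\le\|\cdot\|_{L^2}\lesssim\|\cdot\|_{C^0}$. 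If $r>0$, take an integer $k\ge r$: then $\|v\|_{H^r}\le\|v\|_{H^k}\lesssim\|v\|_{C^{2k}}$, since $\|v\|_{H^k}^2\lesssim\|(1-\Delta_g)^{k}v\|_{L^2}\|v\|_{L^2}$, or equivalently because $H^k$ coincides with the usual Sobolev space defined in charts.
\item By Step 1 there is $a\in\mathcal A$ with $\|f-a\|_{C^{2k}}$ small enough that $\|f-a\|_{H^r}<\varepsilon/2$.
\end{itemize}
Combining these gives $\|u-a\|_{H^r}<\varepsilon$.

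\emph{Step 3: complexification.} Write $u\in H^r(M;\C)$ as $u=\operatorname{Re}u+i\operatorname{Im}u$. Both parts lie in $H^r(M;\R)$, because the $\phi_k$ are real. Approximate each part separately by elements of $\mathcal A$; the combination lies in $\mathcal A\otimes_\R\C$ and approximates $u$ in $H^r(M;\C)$.

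The only point needing care is the continuity of $C^\infty\hookrightarrow H^r$ for the spectrally defined norm, i.e.\ identifying $H^k$ (integer $k$) with the chart-based Sobolev space via elliptic regularity of $1-\Delta_g$. This is standard, so I expect no real obstacle; the substance of the lemma lies in checking the Nachbin hypotheses, which comes directly from the embedding property.
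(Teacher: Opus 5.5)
Your proposal is correct and follows the same route as the paper: Nachbin's theorem gives density of $\mathcal A$ in $C^\infty(M;\R)$, density and continuity of $C^\infty(M)\hookrightarrow H^r(M)$ transfer this to every $H^r$, and the complex case follows by splitting into real and imaginary parts. You simply write out the details that the paper leaves implicit. Note also that your identity $\|v\|_{H^k}^2=\langle(1-\Delta_g)^kv,v\rangle_{L^2}$, combined with Cauchy--Schwarz, already gives the continuity bound directly, so the elliptic-regularity identification you flag as the delicate point is not actually needed.
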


\begin{proof}
From \Cref{thm:Nachbin}, $\mathcal A$ is dense in $C^\infty(M)$. We conclude by using the fact that $C^\infty(M)$ is dense in $H^r(M)$. The complex-valued statement follows immediately from the real-valued one.
\end{proof}

\noindent We now prove the algebraic inclusions needed to apply \Cref{lem:polynomial-density-embedding}. We begin with the polarization identity.
\begin{lemma}[Polarization]
\label{lem:odd-polarization}
Let $p\geq2$ be an integer and let $f_1,\ldots,f_p$ be real-valued functions. Then
\[f_1\cdots f_p=\frac{1}{2^p p!}\sum_{\varepsilon\in\{-1,1\}^p}\left(\prod_{j=1}^p\varepsilon_j\right)\left(\sum_{j=1}^p\varepsilon_j f_j\right)^p, \qquad \varepsilon=(\varepsilon_1,\ldots,\varepsilon_p)\in\{-1,1\}^p.\]
Consequently, if $f_1,\ldots,f_p\in{\mathsf E_n^\R}$, then $f_1\cdots f_p\in{\mathsf E_{n+1}^\R}$.
\end{lemma}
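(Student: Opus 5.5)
Expanding the right-hand side by the multinomial theorem and reading off the coefficient of each monomial reduces the identity to a sign-averaging computation. Write
\[\Big(\sum_{j=1}^p\varepsilon_j f_j\Big)^p=\sum_{|\alpha|=p}\binom{p}{\alpha}\,\varepsilon^\alpha f^\alpha,\qquad \varepsilon^\alpha:=\prod_{j}\varepsilon_j^{\alpha_j},\quad f^\alpha:=\prod_j f_j^{\alpha_j},\]
where $\alpha=(\alpha_1,\dots,\alpha_p)\in\N^p$ runs over multi-indices with $|\alpha|=p$. Multiplying by $\prod_j\varepsilon_j$ and summing over $\varepsilon\in\{-1,1\}^p$, the coefficient of $f^\alpha$ becomes
\[\binom{p}{\alpha}\sum_{\varepsilon}\prod_{j=1}^p\varepsilon_j^{\alpha_j+1}=\binom{p}{\alpha}\prod_{j=1}^p\Big(\sum_{\varepsilon_j=\pm1}\varepsilon_j^{\alpha_j+1}\Big).\]
The factor for index $j$ equals $2$ if $\alpha_j$ is odd and $0$ if $\alpha_j$ is even.

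So only multi-indices with every $\alpha_j$ odd survive, in particular $\alpha_j\ge1$ for all $j$. Combined with $|\alpha|=p$, this forces $\alpha=(1,\dots,1)$. Its multinomial coefficient is $p!$, so the total equals $2^p p!\,f_1\cdots f_p$. Dividing by $2^pp!$ gives the identity. The computation is purely algebraic and holds pointwise for real (indeed commutative-ring-valued) functions.

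For the consequence, let $f_1,\dots,f_p\in\mathsf E_n^\R$. Since $\mathsf E_n^\R$ is a real vector space, each $\varphi_\varepsilon:=\sum_j\varepsilon_jf_j$ lies in $\mathsf E_n^\R$. By the recursion \eqref{eq:odd-saturation}, $\varphi_\varepsilon^p\in\mathsf E_{n+1}^\R$. Since $\mathsf E_{n+1}^\R$ is a real span, the real linear combination on the right-hand side, with coefficients $\pm1/(2^pp!)$, also belongs to $\mathsf E_{n+1}^\R$. This gives $f_1\cdots f_p\in\mathsf E_{n+1}^\R$.

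The only step needing care is the coefficient extraction: confirming that the parity condition together with $|\alpha|=p$ leaves exactly one multi-index. There is no genuine obstacle here.
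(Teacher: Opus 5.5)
Your proposal is correct and follows the same route as the paper: expand the $p$-th powers to verify the identity, then use that $\sum_j\varepsilon_jf_j\in\mathsf E_n^\R$, so its $p$-th power lies in $\mathsf E_{n+1}^\R$, and that $\mathsf E_{n+1}^\R$ is a real span. You simply spell out the coefficient extraction that the paper leaves implicit: the factor $\sum_{\varepsilon_j=\pm1}\varepsilon_j^{\alpha_j+1}$ vanishes unless $\alpha_j$ is odd, which together with $|\alpha|=p$ forces $\alpha=(1,\dots,1)$, with multinomial coefficient $p!$.
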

\begin{proof}
The identity follows by expanding the powers in the sum. Since $\mathsf E_n^\R$ is a vector space, for every $\varepsilon\in\{-1,1\}^p$ the function 
\[\sum_{j=1}^p\varepsilon_jf_j\]
belongs to ${\mathsf E_n^\R}$. By definition, its $p$-th power belongs to ${\mathsf E_{n+1}^\R}$. The polarization identity therefore gives $f_1\cdots f_p\in{\mathsf E_{n+1}^\R}$.
\end{proof}

\begin{lemma}
\label{lem:algebra-odd-sat}
Let $p\ge2$ be an integer and let $\mathcal A$ be the real unital
algebra generated by $1,V_1,\dots,V_m$.
\begin{itemize}
\item The space $\mathsf E_\infty^\R$ defined by \eqref{eq:odd-saturation}, with $\mathsf E_0^\R=\operatorname{span}_{\R}\{1,V_1,\dots,V_m\}$,  satisfies $\mathcal A\subset\mathsf E_\infty^\R$.
\item If $p$ is even, the space $\mathsf F_\infty^\R$ defined by \eqref{eq:even-saturation}, with $\mathsf F_0^\R=\operatorname{span}_{\R}\{1,V_1,\dots,V_{m+1}\}$, where $V_{m+1}$ is given by \eqref{eq:V_m+1}, satisfies $\mathcal A\subset\mathsf F_\infty^\R$.
\end{itemize} 
\end{lemma}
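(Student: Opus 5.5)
The proof of \Cref{lem:algebra-odd-sat} splits into the two saturations. The power case is a direct induction on the degree using \Cref{lem:odd-polarization}. The cone case uses a second-order expansion to produce negative squares, and then the identity $\sum_{j=1}^{m+1}V_j^2=R^2$ to recover positive squares and hence products. In both cases, let $P_k$ denote the real span of the monomials of degree at most $k$ in the relevant generators.

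\emph{Power saturation.} Here $P_k$ is taken in $V_1,\dots,V_m$, and the claim is $P_{p^n}\subset\mathsf E_n^\R$ for all $n$.
\begin{itemize}
\item The case $n=0$ is the definition of $\mathsf E_0^\R$.
\item For the inductive step, take a monomial $q$ of degree at most $p^{n+1}$. Write $q=f_1\cdots f_p$ with each $f_j$ a monomial of degree at most $p^n$, padding with $f_j=1$ where needed; this is possible since $1\in\mathsf E_n^\R$.
\item \Cref{lem:odd-polarization} then gives $q\in\mathsf E_{n+1}^\R$.
\end{itemize}
Since $\mathcal A=\bigcup_k P_k$, this yields $\mathcal A\subset\mathsf E_\infty^\R$.

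\emph{Cone saturation, preliminaries.} Let $p$ be even and write $K_n:=\overline{\mathsf C_n}^{H^r}$. Three facts about $K_n$ are needed.
\begin{itemize}
\item $K_n$ is a closed convex cone. It is stable under sums, since $\mathsf C_n+\mathsf C_n\subset\mathsf C_n$. It is stable under multiplication by $\lambda>0$: replace $\eta$ by $\lambda\eta$ and each $\varphi_j$ by $\lambda^{1/p}\varphi_j$.
\item $K_n\supset\mathsf F_n^\R$. Taking $\varphi_1=0$ gives $\eta\in\mathsf C_n$ for every $\eta\in\mathsf F_n^\R$. Since $\mathsf F_n^\R$ is a vector space, it follows that $\mathsf F_n^\R\subset\mathsf F_{n+1}^\R$.
\item $-\varphi^2\in K_n$ for every $\varphi\in\mathsf F_n^\R$. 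Fix $c=1$ and $t>0$. The function
\[ g_t:=t^{-2}\Big(1+pt\varphi-(1+t\varphi)^p\Big)=-\tbinom p2\varphi^2+O(t)\]
lies in $\mathsf C_n$, because $1+pt\varphi\in\mathsf F_n^\R$ and $1+t\varphi\in\mathsf F_n^\R$. The remainder is a fixed finite combination of smooth functions times powers of $t$, so $g_t\to-\binom p2\varphi^2$ in $H^r$. Dividing by $\binom p2>0$ gives the claim.
\end{itemize}

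\emph{Cone saturation, induction.} Now $P_k$ is taken in $V_1,\dots,V_{m+1}$, and the claim is $P_{n+1}\subset\mathsf F_n^\R$ for all $n$. The case $n=0$ is the definition of $\mathsf F_0^\R$. Assume $P_k\subset\mathsf F_n^\R$ with $k=n+1$, and set $K:=\lfloor k/2\rfloor+1$.
\begin{itemize}
\item \emph{Positive squares.} Let $a$ be a monomial of degree at most $K$, written $a=V_ib$ with $\deg b\le K-1$. Then
\[a^2=R^2b^2+\sum_{j\ne i}\big(-(bV_j)^2\big).\]
Each $bV_j$ has degree at most $K\le k$, so $-(bV_j)^2\in K_n$. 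Also $b^2$ has degree at most $2K-2\le k$, so $R^2b^2\in\mathsf F_n^\R\subset K_n$. Hence $a^2\in K_n$. Together with $-a^2\in K_n$, this gives $a^2\in\mathsf F_{n+1}^\R$.
\item \emph{Products.} Let $a,b$ be monomials of degree at most $K$. Then
\[\pm2ab=a^2+b^2-(a\mp b)^2\in K_n,\]
so $ab\in\mathsf F_{n+1}^\R$.
\item \emph{Degree count.} A monomial of degree $D\le k+1$ factors as a product of two monomials of degree at most $\lceil D/2\rceil\le K$. Hence $P_{k+1}\subset\mathsf F_{n+1}^\R$.
\end{itemize}
This completes the induction, and $\mathcal A\subset\bigcup_k P_k\subset\mathsf F_\infty^\R$.

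The main obstacle is the sign recovery in the positive-squares step. The cone only produces $-\varphi^2$ directly. The positive square must be rebuilt from the sphere identity, which requires $R^2b^2$ to already lie in $\mathsf F_n^\R$. This is why the degree bookkeeping uses $K=\lfloor k/2\rfloor+1$ and why the induction advances by only one degree per step, rather than doubling as in the power case.
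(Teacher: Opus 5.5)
Your proof is correct. The power case is the paper's argument with different bookkeeping: the paper shows $\mathsf E_\infty^\R$ is closed under products by applying \Cref{lem:odd-polarization} to $(f,g,1,\dots,1)$, while you induct on $P_{p^n}\subset\mathsf E_n^\R$ using the same padding by $1$.

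In the cone case you use the same ingredients as the paper, but you iterate them differently.
\begin{itemize}
\item \textbf{Negative squares.} Your $g_t$ is the paper's expression $\varepsilon^{-\frac{2p}{p-2}}+p\varepsilon^{-\frac{p}{p-2}}\varphi-(\varepsilon^{-\frac{2}{p-2}}+\varepsilon\varphi)^p$ after the substitution $t=\varepsilon^{p/(p-2)}$. Your parametrization has the small advantage of covering $p=2$ without a separate case.
\item \textbf{The paper's iteration.} The paper only uses families with \emph{constant} sum of squares. From $\sum_j f_j^2=c$ it gets $f_i^2\in\mathsf F_{n+1}^\R$, then $f_if_j\in\mathsf F_{n+2}^\R$. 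It then observes that the family of all pairwise products $(f_if_j)_{i,j}$ again has constant sum of squares $c^2$, so the argument restarts on that family. This doubles the degree every two stages: products of $2^k$ profiles lie in $\mathsf F_{2k}^\R$.
\item \textbf{Your iteration.} You keep the original sphere identity and multiply it by $b^2$. The role of the constant is then played by $R^2b^2$, which is usable only once $b^2\in\mathsf F_n^\R$. This is what forces your bookkeeping with $K=\lfloor k/2\rfloor+1$, and it gives only linear growth, $P_{n+1}\subset\mathsf F_n^\R$.
\item \textbf{Trade-off.} The paper's route is shorter and quantitatively faster. Yours avoids the large product families with repetitions and gives an explicit elementary invariant. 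Either suffices for the qualitative inclusion $\mathcal A\subset\mathsf F_\infty^\R$.
\end{itemize}

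Your observation that $a^2$ already lies in $\overline{\mathsf C_n}$ lets products land in $\mathsf F_{n+1}^\R$ rather than $\mathsf F_{n+2}^\R$. This applies verbatim to the paper's argument too.

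One trivial omission: the monomial $a=1$ cannot be written as $V_ib$. It is covered directly by $1\in\mathsf F_n^\R\subset\mathsf F_{n+1}^\R$.
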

\begin{proof}
\noindent\textit{Power saturation.}
First, we prove that $\mathsf E_\infty^\R$ is stable under pointwise multiplication. Let $f,g\in\mathsf E_\infty^\R$. Since the spaces $\mathsf E_n^\R$ are nested, there exists $n\geq0$ such that $f,g\in\mathsf E_n^\R$. Moreover, $1\in\mathsf E_0^\R\subset\mathsf E_n^\R$.  Applying \Cref{lem:odd-polarization} to $(f_1,\dots,f_p)=(f,g,1,\ldots,1)$, we obtain $fg\in{\mathsf E_{n+1}^\R\subset\mathsf E_\infty^\R}$. Since $\mathsf E_\infty^\R$ contains $1,V_1,\dots,V_m$, it follows that it contains all monomials
\[V_1^{\alpha_1}\cdots V_m^{\alpha_m},\qquad \alpha_j\in\N.\]
By linearity, it contains every finite linear combination of such monomials. Hence $\mathcal A\subset\mathsf E_\infty^\R$.
\medskip

\noindent\textit{Cone saturation.}
First, the spaces $\mathsf F_n^\R$ are nested and contain $1$. Moreover, they are finite-dimensional spaces of smooth functions. Indeed, by induction, $\mathsf C_n$ is contained in the finite-dimensional space spanned by $\mathsf F_n^\R$ and the products of $p$ basis elements, and its closure remains in that space. Since all Sobolev norms induce the same topology on this finite-dimensional space, the cone saturation is independent of $r$. We first show that, for every $\varphi\in\mathsf F_n^\R$,
\[-\varphi^2\in\overline{\mathsf C_n}^{H^r(M;\R)}.\]
For $p=2$, this follows directly from the definition of $\mathsf C_n$. For even $p\geq4$, the binomial formula gives
\[\begin{aligned}
\varepsilon^{-\frac{2p}{p-2}}+p\varepsilon^{-\frac{p}{p-2}}\varphi-\left(\varepsilon^{-\frac{2}{p-2}}+\varepsilon\varphi\right)^p&=-\frac{p(p-1)}2\varphi^2-\sum_{k=3}^p\binom{p}{k}\varepsilon^{\frac{p(k-2)}{p-2}}\varphi^k\\
&=-\frac{p(p-1)}2\varphi^2+O_{H^r}\left(\varepsilon^{\frac{p}{p-2}}\right).
\end{aligned}\]
Thus
\[\varepsilon^{-\frac{2p}{p-2}}+p\varepsilon^{-\frac{p}{p-2}}\varphi-\left(\varepsilon^{-\frac{2}{p-2}}+\varepsilon\varphi\right)^p\longrightarrow-\frac{p(p-1)}2\varphi^2\qquad\text{in }H^r(M;\R)\]
as $\varepsilon\to0^+$. Each expression on the left belongs to $\mathsf C_n$, which proves the claim. Now let $f_1,\ldots,f_N\in\mathsf F_n^\R$ satisfy
$\sum_{j=1}^N f_j^2=c$, where $c$ is constant. Then
\[f_i^2=c+\sum_{j\neq i}(-f_j^2)\in\overline{\mathsf C_n}^{H^r(M;\R)}.\]
Since both signs belong to this closed cone, we obtain
$f_i^2\in\mathsf F_{n+1}^\R$. Applying the preceding claim at the next stage gives
\[\mp2f_if_j=f_i^2+f_j^2-(f_i\pm f_j)^2\in\overline{\mathsf C_{n+1}}^{H^r(M;\R)}.\]
Hence $f_if_j\in\mathsf F_{n+2}^\R$. Moreover,
\[\left(\sum_{j=1}^Nf_j^2\right)^2=\sum_{i,j=1}^N(f_if_j)^2=c^2,\]
so the family of all pairwise products satisfies the same constant-square property. We apply this argument first to the family $1,V_1,\ldots,V_{m+1}$, whose sum of squares is $1+R^2$, and then repeatedly to the indexed families of all pairwise products, retaining repetitions. Thus every product of $2^k$
initial profiles belongs to $\mathsf F_{2k}^\R$. Since the initial family contains $1$, every monomial in $V_1,\ldots,V_m$ belongs to some $\mathsf F_{2k}^\R$. By linearity,
$\mathcal A\subset\mathsf F_\infty^\R$.
\end{proof}

We are now in a position to prove \Cref{th:odd-saturation-density}.
\medskip

\textit{Proof of \Cref{th:odd-saturation-density}: }
By \Cref{lem:algebra-odd-sat}, $\mathsf E_\infty^\R$ contains the real
algebra generated by $1,V_1,\dots,V_m$. For even $p$, the same holds
for $\mathsf F_\infty^\R$. The conclusion follows from
\Cref{lem:polynomial-density-embedding}.
\qed
\medskip

\subsection{Characterization and stability of saturating families}
\label{sec:characterization}
We now show that the embedding condition is also necessary for density in $C^\infty(M)$ and deduce stability under small $C^1$ perturbations of the generating functions.
\begin{proposition}[Characterization of power saturation]
\label{prop:power-saturation-characterization}
Let $M$ be a smooth compact manifold without boundary, let $p\geq2$ be an integer, and let
\[E^\R=\operatorname{span}_{\R}\{1,V_1,\ldots,V_m\}\subset C^\infty(M;\R).\]
The following assertions are equivalent:
\begin{enumerate}[(i)]
\item $\mathcal V=(V_1,\ldots,V_m):M\to\R^m$ is a smooth embedding;
\item the space $\mathsf E_\infty^\R$ is dense in $C^\infty(M;\R)$;
\item $\mathsf E_\infty^\R$ is dense in $H^r(M;\R)$ for every $r\in\R$.
\end{enumerate}
\end{proposition}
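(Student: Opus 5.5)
We will prove the cycle (i)$\Rightarrow$(iii)$\Rightarrow$(ii)$\Rightarrow$(i). The implication (i)$\Rightarrow$(iii) is exactly the first assertion of \Cref{th:odd-saturation-density}, already established via \Cref{lem:algebra-odd-sat} and \Cref{lem:polynomial-density-embedding}.

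For (iii)$\Rightarrow$(ii), recall that the Fréchet topology of $C^\infty(M)$ is generated by the norms of $H^r(M)$, $r\ge0$. By Sobolev embedding, each $C^k$ norm is controlled by the $H^r$ norm for $r>k+d/2$. So it suffices to show that $\mathsf E_\infty^\R$ is dense in $H^r$ for each single $r$, with approximation performed inside $C^\infty$.

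To see this, take $f\in C^\infty(M)$ and $k\in\N$, and fix $r>k+d/2$. Density in $H^r$ gives $g\in\mathsf E_\infty^\R$ with $\|f-g\|_{H^r}$ small, hence $\|f-g\|_{C^k}$ small. Since $\mathsf E_\infty^\R\subset C^\infty(M)$, this yields density in $C^\infty(M)$.

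The substantive step is (ii)$\Rightarrow$(i). First we identify $\mathsf E_\infty^\R$ with the algebra $\mathcal A$ generated by $1,V_1,\dots,V_m$. \Cref{lem:algebra-odd-sat} gives $\mathcal A\subset\mathsf E_\infty^\R$. Conversely, an induction on $n$ shows $\mathsf E_n^\R\subset\mathcal A$, since $\mathcal A$ is a vector space stable under $\varphi\mapsto\varphi^p$. Hence $\mathsf E_\infty^\R=\mathcal A$, whose elements are the functions $P\circ\mathcal V$ with $P\in\R[X_1,\dots,X_m]$.

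Now assume $\mathcal A$ is dense in $C^\infty(M)$. The necessity half of \Cref{thm:Nachbin} yields point separation and condition (ii) on differentials; we also give the direct argument. If $\mathcal V(x)=\mathcal V(y)$ with $x\ne y$, then every $P\circ\mathcal V$ takes equal values at $x$ and $y$. This equality passes to uniform limits, contradicting the existence of a smooth $f$ with $f(x)\ne f(y)$. So $\mathcal V$ is injective.

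If $d_x\mathcal V(\xi)=0$ for some $\xi\ne0$, then $d_x(P\circ\mathcal V)(\xi)=\nabla P(\mathcal V(x))\cdot d_x\mathcal V(\xi)=0$ for all $P$. This identity is preserved under $C^1$ limits, contradicting the existence of a smooth $f$ with $d_xf(\xi)\ne0$, which can be built in a chart. So $\mathcal V$ is an immersion.

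Finally, an injective immersion from a compact manifold is a homeomorphism onto its image, because a continuous bijection from a compact space to a Hausdorff space is closed. Hence $\mathcal V$ is a smooth embedding.

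The main obstacle is the identification $\mathsf E_\infty^\R=\mathcal A$, in particular the reverse inclusion. Without it, density of $\mathsf E_\infty^\R$ would not constrain $\mathcal V$, since $\mathsf E_\infty^\R$ might a priori contain functions not of the form $P\circ\mathcal V$. The inductive stability argument settles this cleanly. The remaining steps are routine continuity-of-limits arguments together with compactness.
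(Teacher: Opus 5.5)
Your proof is correct and follows essentially the paper's argument. You identify $\mathsf E_\infty^\R$ with the unital algebra $\mathcal A$ via the two inclusions, obtain sufficiency from Nachbin's theorem (through \Cref{th:odd-saturation-density}), and obtain necessity by noting that the point-difference and directional-derivative functionals vanishing on $\mathcal A$ pass to the limit, with compactness upgrading the injective immersion to an embedding. The only difference is the order of the cycle: you argue (i)$\Rightarrow$(iii)$\Rightarrow$(ii)$\Rightarrow$(i) and extract necessity from $C^1$-density, whereas the paper argues (i)$\Rightarrow$(ii)$\Rightarrow$(iii)$\Rightarrow$(i) and extracts it from density in a single $H^r$ with $r>\dim M/2+1$. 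Your argument also gives that sharper statement, since it only uses $C^1$-density.
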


\begin{proof}
Let $\mathcal A$ be the real unital algebra generated by $V_1,\ldots,V_m$. By \Cref{lem:algebra-odd-sat}, $\mathcal A\subset\mathsf E_\infty^\R$. Conversely, $\mathcal A$ contains $E^\R$ and is closed under linear combinations and powers, so induction gives $\mathsf E_\infty^\R\subset\mathcal A$. Thus $\mathsf E_\infty^\R=\mathcal A$. Assertion (i) implies (ii) by \Cref{thm:Nachbin}, and (ii) implies (iii) by the density of smooth functions in Sobolev spaces. Assume (iii) and choose $r>\dim M/2+1$, so that $H^r(M)\hookrightarrow C^1(M)$. If $\mathcal V(x)=\mathcal V(y)$ for two distinct points, then every element of $\mathcal A$ has the same values at $x$ and $y$. The continuous functional $u\mapsto u(x)-u(y)$ would therefore vanish on all of $H^r(M)$, a contradiction. If $d_x\mathcal V(\xi)=0$ for some nonzero $\xi\in T_xM$, the chain rule gives $d_x a(\xi)=0$ for every $a\in\mathcal A$. The continuous functional $u\mapsto d_x u(\xi)$ would again vanish on $H^r(M)$, a contradiction. Hence $\mathcal V$ is an injective immersion. Compactness of $M$ makes it a smooth embedding, proving (i).
\end{proof}

The identity $\mathsf E_\infty^\R=\mathcal A$ holds for every initial family containing $1$, without an embedding assumption. It is density with derivatives that forces the initial map to be an embedding. In fact, the proof shows that density in a single $H^r(M)$ with $r>\dim M/2+1$ already suffices.

\begin{corollary}[Stability and number of generators]
\label{cor:stable-minimal-generators}The following assertions holds.

\begin{itemize}
\item Let $\mathcal V=(V_1,\ldots,V_m):M\to\R^m$ be a smooth embedding. Then every smooth perturbation, sufficiently small for the $C^1$--topology, of the profiles $V_1,\ldots,V_m$ has dense power saturation. 
\item Let $e(M)$ be the least integer $m$ for which $M$ admits a smooth embedding into $\R^m$. The smallest possible dimension of a real space containing $1$ whose power saturation is dense in $C^\infty(M;\R)$ is $e(M)+1$.
\end{itemize}
\end{corollary}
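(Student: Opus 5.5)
Both assertions will be reduced to \Cref{prop:power-saturation-characterization}, which says that for a family $E^\R=\operatorname{span}_{\R}\{1,V_1,\dots,V_m\}$ the power saturation is dense in $C^\infty(M;\R)$ exactly when $(V_1,\dots,V_m)$ is a smooth embedding. The whole corollary then becomes a statement about embeddings.

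\textbf{Stability.} I will show that smooth embeddings of a compact manifold form an open set in the $C^1$ topology. Let $\mathcal V$ be an embedding and $\widetilde{\mathcal V}$ a smooth map with $\|\widetilde{\mathcal V}-\mathcal V\|_{C^1}<\delta$, where $\delta$ is to be chosen.
\begin{enumerate}[(a)]
\item \emph{Immersion.} Fix a Riemannian metric on $M$. The function $(x,\xi)\mapsto|d_x\mathcal V(\xi)|$ is continuous and positive on the compact unit sphere bundle, so it is bounded below by some $c>0$. If $\delta<c$, then $\widetilde{\mathcal V}$ is still an immersion.
\item \emph{Injectivity.} This is the main step, because the $C^0$ separation $|\mathcal V(x)-\mathcal V(y)|$ degenerates as $y\to x$. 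I will handle it in two regimes.
\begin{itemize}
\item \emph{Near the diagonal.} Work in finitely many coordinate charts and use a Lebesgue number $\rho$. For $\operatorname{dist}(x,y)<\rho$, write
\[\mathcal V(y)-\mathcal V(x)=\int_0^1 d\mathcal V\,(y-x)\,dt.\]
By uniform continuity of $d\mathcal V$ and the lower bound $c$, this gives $|\mathcal V(y)-\mathcal V(x)|\geq (c/2)|y-x|$ once $\rho$ is small. The same integral estimate for $\widetilde{\mathcal V}-\mathcal V$ gives $|(\widetilde{\mathcal V}-\mathcal V)(y)-(\widetilde{\mathcal V}-\mathcal V)(x)|\leq C\delta|y-x|$. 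Hence $\widetilde{\mathcal V}(x)\neq\widetilde{\mathcal V}(y)$ for $x\neq y$ once $C\delta<c/2$.
\item \emph{Away from the diagonal.} On the compact set $\{\operatorname{dist}(x,y)\geq\rho\}$, the quantity $|\mathcal V(x)-\mathcal V(y)|$ has a positive minimum $\mu$. The perturbation moves each value by at most $\delta$, so taking $\delta<\mu/2$ keeps these pairs separated.
\end{itemize}
\end{enumerate}
Thus $\widetilde{\mathcal V}$ is an injective immersion, and compactness of $M$ makes it an embedding. \Cref{prop:power-saturation-characterization} then gives density of its power saturation.

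\textbf{Minimal dimension.} I will compare the dimension of a space with the length of an embedding it produces.
\begin{enumerate}[(a)]
\item \emph{Upper bound.} Take an embedding $\mathcal V$ into $\R^{e(M)}$. The space $\operatorname{span}\{1,V_1,\dots,V_{e(M)}\}$ has dimension at most $e(M)+1$, and by \Cref{prop:power-saturation-characterization} its saturation is dense.
\item \emph{Lower bound.} Let $E^\R\ni1$ have dimension $k$ and dense saturation in $C^\infty(M;\R)$. Choose a basis $1,V_1,\dots,V_{k-1}$. The saturation depends only on the space $E^\R$, so \Cref{prop:power-saturation-characterization} shows that $(V_1,\dots,V_{k-1})$ is an embedding. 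Therefore $k-1\geq e(M)$.
\item \emph{Exact count.} In the upper-bound family, the functions $1,V_1,\dots,V_{e(M)}$ must be linearly independent to get dimension exactly $e(M)+1$. Suppose instead some nontrivial combination of the $V_j$ is constant. Then $\mathcal V(M)$ lies in an affine hyperplane, and composing with an affine map of that hyperplane onto $\R^{e(M)-1}$ gives an embedding into $\R^{e(M)-1}$. This contradicts the minimality of $e(M)$.
\end{enumerate}
Hence the smallest possible dimension is exactly $e(M)+1$.
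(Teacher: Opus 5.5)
Your proposal is correct and follows the paper's route: both assertions are reduced to \Cref{prop:power-saturation-characterization}, and the paper simply cites the $C^1$-openness of embeddings of a compact manifold, which you prove by hand. Your step (c) is correct but unnecessary, since your lower bound (b), applied to the family built in (a), already forces that family's dimension to be exactly $e(M)+1$; this is how the paper concludes.
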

\begin{proof}
Smooth embeddings of a compact manifold are stable under sufficiently small $C^1$ perturbations. The first assertion therefore follows from \Cref{prop:power-saturation-characterization}.
If a subspace containing 1 of dimension $k$ has dense power saturation, choose a basis $1,V_1,\ldots,V_{k-1}$. The same proposition gives an embedding into $\R^{k-1}$, so $k\geq e(M)+1$. Conversely, the coordinates of an embedding into $\R^{e(M)}$, together with $1$, span a space of dimension at most $e(M)+1$ whose power saturation is dense in $C^\infty(M;\R)$.
\end{proof}

\subsection{Finite spectral families}
\label{sec:spectral-families}
We now prove \Cref{cor:eigenfunction-controls}, whose statement was given in the introduction.

\begin{proof}[Proof of \Cref{cor:eigenfunction-controls}]
Let $\mathcal V=(V_1,\dots,V_m):M\to\R^m$ be a smooth embedding. Let $\Pi_N$ be the orthogonal projection onto $\operatorname{span}\{\phi_0,\phi_1,\dots,\phi_N\}$. Since $\Pi_N V_j\to V_j$ in $C^1(M)$, by \Cref{cor:stable-minimal-generators}, for $N$ sufficiently large, the map $\mathcal V_N(x):=(\Pi_N V_1(x),\dots,\Pi_N V_m(x))$, is a smooth embedding.
\end{proof}
\subsection{Approximation with Neumann-compatible profiles}
\label{sec:admissible_control_boundary}

\noindent Let $(M,g)$ be a smooth compact connected Riemannian manifold with boundary, and let $A_N=-\Delta_{g,N}+W$ be the Neumann realization of $-\Delta_g+W$, where
$W\in C^\infty(\overline M;\R)$. Choose $c=1+\|W\|_{L^\infty(\overline M)}$; then $cI+A_N\ge I$, and set
\[\Lambda:=(cI+A_N)^{1/2}.\]
We denote by
\[H_N^s:=D(\Lambda^s),\qquad\|u\|_{H_N^s}:=\|\Lambda^s u\|_{L^2(M)},\qquad s\ge0,\]
the corresponding Neumann Sobolev scale. It is classical that (see \textit{e.g.} \cite{Grisvard,Seeley,Grubb}), for $0\leq s<7/2$, $s\neq3/2$, the Neumann scale is characterized by the usual boundary compatibility conditions:
\[H_N^s=H^s(M),\qquad 0\le s<\frac32,\]
whereas
\[H_N^s=\{u\in H^s(M):\partial_\nu u=0\text{ on }\partial M\},\qquad \frac32<s<\frac72.\]
No higher compatibility condition such as $\partial_\nu\Delta_g u=0$ is needed as long as $s<7/2$.  We now construct the finite-dimensional spaces in \Cref{th:Neumann-approximation}, using boundary and interior profiles.\medskip

\noindent Let $\mathfrak r:\partial M\times[0,3\eps)\longrightarrow U\subset M$, $\mathfrak r(y,r):=\exp_y(-r\nu_y)$, be a smooth normal collar neighbourhood of the boundary, with
$0<\eps<1/2$ sufficiently small. The existence of such a collar follows from \cite[Theorem~5.25 and Example~6.44]{Lee2018}. In particular,
\[\partial_r\mathfrak r(y,0)=-\nu_y,\qquad y\in\partial M.\]
For every $x\in U$, we write $\mathfrak r^{-1}(x)=(\pi(x),\rho(x))$, where $\pi(x)$ and $\rho(x)$ denote the corresponding tangential and normal variables, respectively. Let $\varphi^\partial=(\varphi_1^\partial,\ldots,\varphi_\ell^\partial):\partial M\longrightarrow\R^\ell$ be a smooth embedding of the boundary. We choose two smooth cut-off functions depending only on the collar variable 
\[\chi^\partial,\chi^M\in C^\infty([0,\infty);[0,1])\]
such that
\[\chi^\partial(r)=1\quad \text{for } r\leq\eps, \qquad \chi^\partial(r)=0 \quad \text{for } r\geq2\eps,\]
and
\[\chi^M(r)=0\quad\text{for }r\leq\eps/4, \qquad \chi^M(r)>0\quad\text{for }r>\eps/4, \qquad \chi^M(r)=1\quad\text{for }r\geq\eps/2.\]
We define the boundary profiles
\[\Phi_j(x)=
\begin{cases}
\chi^\partial(\rho(x))
\varphi_j^\partial(\pi(x)),&x\in U,\\
0,&x\notin U,
\end{cases} \qquad 1\leq j\leq\ell,\]
and the normal profile
\[R(x)=
\begin{cases}
\chi^\partial(\rho(x))(1+\rho(x)^2),&x\in U,\\
0,&x\notin U.
\end{cases}\]
Also, let $\mathcal V=(V_1,\ldots,V_m):M\longrightarrow\R^m$ be a smooth embedding of the whole manifold with boundary. We define
\[\Theta(x)=
\begin{cases}
\chi^M(\rho(x)),&x\in U,\\
1,&x\notin U,
\end{cases}
\qquad
\Psi_k(x):=\Theta(x)V_k(x),
\qquad 1\leq k\leq m.\]
Note that $\Phi_j$, $R$, $\Theta$ and $\Psi_k$ are smooth on $M$. Finally, set
\begin{equation}
\label{eq:Neumann-control-family}
E_N^\R :=\operatorname{span}_{\R}\{1,\Phi_1,\ldots,\Phi_\ell,R,\Theta, \Psi_1,\ldots,\Psi_m\}.
\end{equation}
We also write $E_N^\C:=E_N^\R\otimes_{\R}\C$. Every element of $E_N^\R$ is Neumann-compatible. Indeed, for
$y\in\partial M$ and $0\leq r\leq\eps/4$, one has
\[\Phi_j(\mathfrak r(y,r))=\varphi_j^\partial(y), \qquad R(\mathfrak r(y,r))=1+r^2, \qquad \Theta(\mathfrak r(y,r))=0, \qquad \Psi_k(\mathfrak r(y,r))=0.\]
Therefore,
\[\partial_\nu\Phi_j=0, \qquad \partial_\nu R=0, \qquad \partial_\nu\Theta=0, \qquad \partial_\nu\Psi_k=0 \quad\text{on }\partial M.\]
Thus,
\[E_N^\R \subset C^\infty(M;\R)\cap H_N^s(M;\R), \qquad 0\leq s<\frac72.\]
We next prove that this family separates points. Consider the map
\[\mathcal G:M\longrightarrow\R^{\ell+m+2}, \qquad \mathcal G(x)
:= \bigl(\Phi_1(x),\ldots,\Phi_\ell(x), R(x),\Theta(x), \Psi_1(x),\ldots,\Psi_m(x)\bigr).\]
Let $x,y\in M$, $x\neq y$ and assume that $\mathcal G(x)=\mathcal G(y)$.

\begin{itemize}
\item If $\Theta(x)=\Theta(y)>0$, then
\[\Theta(x)V_k(x)=\Psi_k(x)=\Psi_k(y)=\Theta(y)V_k(y), \qquad 1\leq k\leq m.\]
Hence $V_k(x)=V_k(y)$ for every $k$. Since $V$ is injective, we obtain $x=y$.

\item If $\Theta(x)=\Theta(y)=0$, then $x,y\in U$ and $\rho(x),\rho(y)\leq\eps/4$. Consequently, for $z=x,y$,
\[\Phi_j(z)=\varphi_j^\partial(\pi(z)), \qquad R(z)=1+\rho(z)^2.\]
The equality $R(x)=R(y)$ yields $\rho(x)=\rho(y)$, while the equalities $\Phi_j(x)=\Phi_j(y)$, $1\leq j\leq\ell$, yield
\[\varphi^\partial(\pi(x))=\varphi^\partial(\pi(y)).\]
Since $\varphi^\partial$ is an embedding of $\partial M$, $\pi(x)=\pi(y)$. Hence $x=y$. 
\end{itemize}

Thus $\mathcal G$ is injective, or equivalently, its components separate the points of $M$.
\medskip

\noindent Let $\mathcal A_N$ be the real polynomial algebra generated by the functions in \eqref{eq:Neumann-control-family}:

\[\mathcal A_N=\Bigl\{P(\Phi_1,\ldots,\Phi_\ell,R,\Theta,\Psi_1,\ldots,\Psi_m): P\in\R[X_1,\ldots,X_{\ell+m+2}]\Bigr\}.\]

Let $\mathsf E_{N,\infty}^\R$ denote the power saturation of $E_N^\R$ of degree $p\geq2$, as in \eqref{eq:odd-saturation}. Since $1\in E_N^\R$, the
polarization identity yields $\mathcal A_N\subset\mathsf E_{N,\infty}^\R$. The reverse inclusion follows because $\mathcal A_N$ contains the initial family and is closed under powers. Thus $\mathsf E_{N,\infty}^\R=\mathcal A_N$.
\begin{theorem}[Smooth closure of the Neumann algebra]
\label{th:Neumann-smooth-closure}
For the fixed normal collar $\mathfrak r$, set
\[\mathcal Y_N:=\left\{u\in C^\infty(\overline M;\R):
\partial_r^{2k+1}(u\circ\mathfrak r)(y,0)=0
\text{ for all }y\in\partial M,\ k\geq0\right\}.\]
The space $E_N^\R$ in \eqref{eq:Neumann-control-family} is contained in $\mathcal Y_N$, and its generated unital algebra satisfies
\[\overline{\mathcal A_N}^{C^\infty(\overline M;\R)}=\mathcal Y_N.\]
\end{theorem}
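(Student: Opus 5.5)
The plan is to reduce the statement to \Cref{thm:Nachbin} on a modified smooth structure on $M$. Near the boundary we replace the collar coordinate $r$ by $t=r^2$.

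\emph{Step 1: $\overline{\mathcal A_N}\subset\mathcal Y_N$.}
\begin{itemize}
\item For $0\le r\le \eps/4$, each generator in \eqref{eq:Neumann-control-family} composed with $\mathfrak r$ is an even function of $r$: $\Phi_j$ is independent of $r$, $R=1+r^2$, and $\Theta=\Psi_k=0$. Hence $E_N^\R\subset\mathcal Y_N$.
\item $\mathcal Y_N$ is a unital subalgebra of $C^\infty(\overline M;\R)$. By Leibniz' rule, each term of $\partial_r^{2k+1}(uv)$ contains one factor differentiated an odd number of times, so it vanishes at $r=0$.
\item $\mathcal Y_N$ is closed in the $C^\infty$ topology, since it is an intersection of kernels of continuous functionals.
\end{itemize}
Hence $\overline{\mathcal A_N}\subset\mathcal Y_N$.

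\emph{Step 2: a new smooth structure.} Let $\widetilde M$ be the topological manifold $M$ equipped with the smooth atlas consisting of the interior charts of $M$, together with the boundary charts $(y,t)=(\pi(x),\rho(x)^2)$ on $\mathfrak r(\partial M\times[0,\eps))$. The two atlases are compatible on $\rho>0$, since $r\mapsto r^2$ is a diffeomorphism there. So $\widetilde M$ is a compact smooth manifold with boundary.

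The key claim is that $\mathcal Y_N=C^\infty(\widetilde M)$, and that the $C^\infty(\overline M)$ and $C^\infty(\widetilde M)$ topologies agree on this space.
\begin{itemize}
\item One inclusion is immediate: if $g$ is smooth in $(y,t)$, then $g(y,r^2)$ is smooth and even in $r$.
\item The converse is Whitney's theorem on even functions, in the version with parameters $y$. If $u\circ\mathfrak r$ has vanishing odd $r$-derivatives at $r=0$, extend it evenly to $r\in(-\eps,\eps)$. This extension is smooth, because all derivatives match from both sides. It then has the form $g(y,r^2)$ with $g$ smooth.
\item Whitney's theorem also gives continuity of the linear map $u\mapsto g$ between Fréchet spaces. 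Alternatively, this follows from the closed graph theorem, since both spaces are Fréchet and the inverse map $g\mapsto g(y,r^2)$ is obviously continuous.
\end{itemize}
This identification, especially the topological part, is the step I expect to be the main technical point.

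\emph{Step 3: Nachbin on $\widetilde M$.} Near $t=0$ the generators are smooth on $\widetilde M$:
\begin{itemize}
\item $\Phi_j=\varphi^\partial_j(y)$, because $\chi^\partial$ is constant for $\rho\le\eps$;
\item $R=1+t$;
\item $\Theta=\Psi_k=0$.
\end{itemize}
Point separation was already established above via the injectivity of $\mathcal G$.

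For condition (ii) of \Cref{thm:Nachbin}, split into two regions.
\begin{itemize}
\item Where $\rho\le\eps/4$: in the coordinates $(y,t)$ we have $dR=dt$, and the differentials $d\Phi_j=d\varphi^\partial_j$ are jointly injective on $T\partial M$. So $d\mathcal G$ is injective, including at $t=0$.
\item Where $\Theta>0$: take a vector $\xi$ annihilated by $d\Theta$ and by all $d\Psi_k$. From $d\Psi_k(\xi)=\Theta\, dV_k(\xi)+V_k\, d\Theta(\xi)=\Theta\, dV_k(\xi)$ we get $dV_k(\xi)=0$ for all $k$, hence $\xi=0$ because $\mathcal V$ is an immersion.
\end{itemize}
The two regions cover $M$.

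Therefore \Cref{thm:Nachbin}, in its version for manifolds with boundary, shows that $\mathcal A_N$ is dense in $C^\infty(\widetilde M)$. By Step 2 this means $\mathcal A_N$ is dense in $\mathcal Y_N$ for the $C^\infty(\overline M)$ topology. Combined with Step 1, this gives $\overline{\mathcal A_N}^{C^\infty(\overline M;\R)}=\mathcal Y_N$.
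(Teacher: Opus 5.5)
Your proposal is correct and follows essentially the paper's route. The paper realizes your modified smooth structure $\widetilde M$ concretely through a homeomorphism $D(\mathfrak r(y,r))=\mathfrak r(y,h(r))$ with $h(r)=r^2$ near $0$. It applies \Cref{thm:Nachbin} to $\{f\circ D^{-1}: f\in\mathcal A_N\}$, identifies $\mathcal Y_N$ with $\{u: u\circ D^{-1}\in C^\infty\}$ via the Whitney--Schwarz even-function theorem, and transfers density back using continuity of the pullback $D^*$.

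As in the paper, only this easy direction (continuity of $g\mapsto g(y,r^2)$) is needed for the final step. So the two-sided topological identification via the open mapping theorem, which you flag as the main technical point, is not actually required.
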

\color{black}

\begin{lemma}[Sobolev density of the Neumann algebra]
\label{lem:Neumann-sobolev-density}
Let $0\leq s<7/2$, $s\neq3/2$. Then
\[\overline{\mathcal A_N}^{H_N^s(M;\R)}=H_N^s(M;\R)\]
Consequently,
\[\overline{\mathsf E_{N,\infty}^\R}^{H_N^s(M;\R)}=H_N^s(M;\R),\quad \text{and}\quad\overline{\mathsf E_{N,\infty}^\C}^{H_N^s(M;\C)}=H_N^s(M;\C).\]
\end{lemma}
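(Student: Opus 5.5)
We deduce the lemma from \Cref{th:Neumann-smooth-closure}, which we take as given: every $u\in\mathcal Y_N$ is a $C^\infty(\overline M)$-limit of elements of $\mathcal A_N$. Since $C^\infty(\overline M)$ convergence implies $H^s(M)$ convergence for every $s$, and $\mathcal A_N\subset H_N^s$ whenever $0\le s<7/2$, the $H_N^s$-closure of $\mathcal A_N$ contains $\mathcal Y_N$. Indeed, for $s\neq 3/2$ in this range the $H_N^s$ norm is equivalent to the $H^s(M)$ norm on $H_N^s$, by the characterization recalled above. It therefore suffices to prove that $\mathcal Y_N$ is dense in $H_N^s$ for $0\le s<7/2$, $s\ne 3/2$.

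\textbf{Case $0\le s<3/2$.} Here $H_N^s=H^s(M)$. The space $\mathcal Y_N$ contains $C_c^\infty(\mathrm{int}\,M)$, and $C_c^\infty(\mathrm{int}\,M)$ is dense in $H^s(M)$ for $s<1/2$. This is not enough for $1/2\le s<3/2$, so we argue directly instead. Given $u\in C^\infty(\overline M)$, which is dense in $H^s$, we modify $u$ in the collar by replacing $u\circ\mathfrak r(y,r)$ with its even part near $r=0$. Concretely, we take a smooth function $v$ that coincides with $u$ for $r\ge\delta$ and equals $\tilde u(y,r)=\sum_{k}\partial_r^{2k}(u\circ\mathfrak r)(y,0)\,r^{2k}/(2k)!$ (a Borel-type even extension) near $r=0$. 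The point is that only the odd normal derivatives must vanish, and at order $s<3/2$ the first normal derivative carries no trace constraint.

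\textbf{A uniform alternative.} A cleaner route, which handles all $s$ at once, is to use the spectral basis of $A_N$. Its eigenfunctions $e_k$ are dense in $H_N^s$ for every $s$ by the definition $H_N^s=D(\Lambda^s)$. It then suffices to show that each $e_k\in\mathcal Y_N$, or at least that each $e_k$ lies in the $H_N^s$-closure of $\mathcal Y_N$. The eigenfunctions are smooth and satisfy $\partial_\nu e_k=0$, but their higher odd normal derivatives in the geodesic collar need not vanish. For instance, $\partial_r^3 e_k$ involves $\partial_\nu\Delta e_k$ together with curvature terms. So $e_k\notin\mathcal Y_N$ in general, and we must approximate.

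\textbf{Main step and obstacle: the approximation for $3/2<s<7/2$.} Given $u\in C^\infty(\overline M)$ with $\partial_\nu u=0$ (such functions are dense in $H_N^s$ in this range, e.g.\ by the spectral argument above), we construct $u_\delta\in\mathcal Y_N$ with $u_\delta\to u$ in $H^s$. Write $u\circ\mathfrak r(y,r)=a(y,r^2)+r^3b(y,r)$, using $\partial_r u(y,0)=0$, where $a$ collects the even Taylor part. Set
\[u_\delta:=u-\chi(r/\delta)\,r^3b(y,r)+\text{(the higher odd Taylor terms cut off similarly)},\]
or more simply subtract $\chi(r/\delta)\,w(y,r)$, where $w$ is the odd part of $u\circ\mathfrak r$, which is $O(r^3)$. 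Scaling gives $\|\chi(r/\delta)w\|_{H^s}\lesssim\delta^{3+1/2-s}$, which tends to $0$ exactly when $s<7/2$. This explains the threshold $7/2$, and it is the computation we expect to require the most care, including an interpolation between integer orders. The same estimate with $w=O(r)$ gives $\delta^{3/2-s}$, which handles $s<3/2$ and covers the first case as well.

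Finally, the statements for $\mathsf E_{N,\infty}^\R$ follow from the identity $\mathsf E_{N,\infty}^\R=\mathcal A_N$ established just before \Cref{th:Neumann-smooth-closure}, and the complex statement follows by complexification.
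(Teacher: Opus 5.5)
Your proposal is correct. Its skeleton matches the paper's: both reduce the lemma to two ingredients, the $C^\infty$-density of $\mathcal A_N$ in $\mathcal Y_N$ (\Cref{th:Neumann-smooth-closure}) and the density of $\mathcal Y_N$ in $H_N^s$, with the equivalence of the $H_N^s$ and $H^s$ norms on $H_N^s$ (for $s<7/2$, $s\neq 3/2$) linking them.

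The difference lies in how you obtain the second ingredient. The paper cites \cite[Remark~A.2]{MullerSchweigerSuli2020} with $K=\{1,3,5,\dots\}$ in boundary normal charts. It then glues with a partition of unity whose boundary cut-offs are independent of the normal variable, so that the vanishing of odd normal derivatives survives.

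You instead give a direct, global construction in the single collar:
\begin{enumerate}
\item Finite combinations of Neumann eigenfunctions are dense in $H_N^s=D(\Lambda^s)$, and they are smooth with $\partial_\nu u=0$.
\item You subtract $\chi(r/\delta)w$, where $w$ is the odd part of $u\circ\mathfrak r$. Since $w$ is odd and $\partial_r w(y,0)=\partial_r(u\circ\mathfrak r)(y,0)=0$, it is $O(r^3)$.
\item The scaling bound $\|\chi(r/\delta)w\|_{H^m}\lesssim\delta^{7/2-m}$ at integer orders, after interpolation, gives an error of order $\delta^{7/2-s}\to0$.
\end{enumerate}
This route is self-contained, avoids the partition-of-unity bookkeeping, and shows exactly where the threshold $s<7/2$ enters. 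The paper's route is shorter but outsources the analysis to the reference.

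Two small points to tidy. First, $u\circ\mathfrak r$ is only defined for $r\geq0$, so forming its odd part requires a smooth extension across $r=0$, for instance a Seeley extension in $r$ uniformly in $y$. Any such extension works, because $u-\chi(r/\delta)w$ then agrees near $\partial M$ with the even part, whose odd $r$-derivatives vanish. Second, the Borel-type construction in your first case is unnecessary and, as written, comes with no estimate. The same subtraction with $w=O(r)$ and the bound $\delta^{3/2-s}$, or simply the eigenfunction approximation again, already settles $0\le s<3/2$.
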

\begin{proof}[Proof of \Cref{th:Neumann-smooth-closure,lem:Neumann-sobolev-density}]
For $\eps>0$ small enough, define
\[\mathcal V_\eps:=\mathfrak r\bigl(\partial M\times[0,\eps)\bigr),\qquad \mathcal U_\eps:=\mathfrak r\bigl(\partial M\times[0,2\eps)\bigr),\]
so that $\overline{\mathcal V_\eps}\subset\mathcal U_\eps$. Choose a smooth strictly increasing function $h:[0,3\eps)\longrightarrow[0,3\eps)$ such that
\[h(r)=r^2\quad\text{for }0\leq r\leq\eps, \qquad h(r)=r\quad\text{for }2\eps\leq r<3\eps, \qquad h'(r)>0\quad\text{for }r>0.\]
Define $D:M\to M$ by
\[D(x)=
\begin{cases}
\mathfrak r(y,h(r)),&x=\mathfrak r(y,r)\in U,\\
x,&x\notin U.
\end{cases}\]
Then $D$ is a homeomorphism of $M$, $D=\operatorname{Id}\quad\text{on }M\setminus\mathcal U_\eps$ and $D|_{M^\circ}:M^\circ\longrightarrow M^\circ$ is a smooth diffeomorphism. Moreover,
\[D(x)=\mathfrak r\bigl(\pi(x),\rho(x)^2\bigr),
\qquad x\in\mathcal V_\eps.\]
In particular, for every $K\Subset M^\circ$, the restriction $D|_K$ is a smooth diffeomorphism from $K$ onto $D(K)$. \medskip

\noindent We introduce the real algebra $\widetilde{\mathcal A}_N:=\bigl\{f\circ D^{-1}:f\in\mathcal A_N\bigr\}$. Set $\widetilde\Phi_j:=\Phi_j\circ D^{-1}$, $\widetilde R:=R\circ D^{-1}$, $\widetilde\Theta:=\Theta\circ D^{-1}$, $\widetilde\Psi_k:=\Psi_k\circ D^{-1}$ and $\widetilde{\mathcal G}:=\mathcal G\circ D^{-1}$. Since composition with $D^{-1}$ commutes with polynomial operations, one has
\[\widetilde{\mathcal A}_N=\Bigl\{P(\widetilde\Phi_1,\ldots,\widetilde\Phi_\ell, \widetilde R,\widetilde\Theta, \widetilde\Psi_1,\ldots,\widetilde\Psi_m): P\in\R[X_1,\ldots,X_{\ell+m+2}]\Bigr\}.\]
We first verify that $\widetilde{\mathcal A}_N\subset C^\infty(M;\mathbb R)$. The only point to check is the regularity at the boundary, since $D^{-1}$ is a smooth diffeomorphism in $M^\circ$. For $r\geq0$ sufficiently small, $D^{-1}(\mathfrak r(y,r))=\mathfrak r(y,\sqrt r)$. Hence, near the boundary,
\[\widetilde\Phi_j(\mathfrak r(y,r))=\varphi_j^\partial(y),\qquad \widetilde R(\mathfrak r(y,r))=1+r, \qquad \widetilde\Theta(\mathfrak r(y,r))=\widetilde\Psi_k(\mathfrak r(y,r))=0.\]
Thus all the generators of $\widetilde{\mathcal A}_N$ are smooth up to the boundary. Since every element of $\widetilde{\mathcal A}_N$ is a polynomial in these
generators, it follows that $\widetilde{\mathcal A}_N\subset C^\infty(M;\mathbb R)$.\medskip

\noindent We would like to apply \Cref{thm:Nachbin} to the unital algebra $\widetilde{\mathcal A}_N$. For this, we verify the following conditions:

\begin{enumerate}[(i)]
\item for every $x,y\in M$, $x\neq y$, there exists $\widetilde f\in\widetilde{\mathcal A}_N$ such that $\widetilde f(x)\neq \widetilde f(y)$;
\item for every $x\in M$ and every $\xi\in T_xM\setminus\{0\}$, there exists $\widetilde f\in\widetilde{\mathcal A}_N$ such that $d_x\widetilde f(\xi)\neq0$.
\end{enumerate}
For \emph{(i)}, we recall that the map $\mathcal G$ is injective and that $D$ is a homeomorphism. In particular, $D^{-1}$ is injective, therefore $\widetilde{\mathcal G}:=\mathcal G\circ D^{-1}$ is injective.  \medskip

\noindent It remains to verify condition \emph{(ii)}. Let
$x\in M$ and $\xi\in T_xM\setminus\{0\}$, and set
$z:=D^{-1}(x)$. We distinguish two cases.

\medskip

\noindent\textbf{Case 1: $z\in\mathcal V_\eps$.}
Equivalently, $x\in D(\mathcal V_\eps)=\mathfrak r\bigl(\partial M\times[0,\eps^2)\bigr)$.
Write $x=\mathfrak r(y,r)$, with $0\leq r<\eps^2$, and $\xi=d_{(y,r)}\mathfrak r(\xi',\xi_d)$, $\xi'\in T_y\partial M$, $\xi_d\in\R$. Since
\[D^{-1}(\mathfrak r(y,r))=\mathfrak r(y,\sqrt r),\]
we have
\[\widetilde\Phi_j(\mathfrak r(y,r))=\varphi_j^\partial(y), \qquad \widetilde R(\mathfrak r (y,r))=1+r.\]
If $\xi_d\neq0$, then $d_x\widetilde R(\xi)=\xi_d\neq0$. If $\xi_d=0$, then $\xi'\neq0$. Since $\varphi^\partial$ is an immersion, there exists $1\leq j\leq\ell$ such that $d_y\varphi^\partial_{j}(\xi')\neq0$. Consequently, $d_x\widetilde\Phi_{j}(\xi)\neq0$.

\medskip

\noindent\textbf{Case 2: $z\notin\mathcal V_\eps$.}
In this case, $\Theta(z)\equiv1$ near $z$, and therefore
\[\widetilde\Theta(x)=1, \qquad \widetilde\Psi_k(x)=V_k(z)=(V_k\circ D^{-1})(x), \qquad 1\leq k\leq m.\]
Moreover, $x,z\in M^\circ$, and $D^{-1}$ is a smooth diffeomorphism in the interior. Since $\mathcal V$ is an immersion,
$\mathcal V\circ D^{-1}$ is also an immersion. Hence there exists $1\leq k\leq m$ such that $d_x\widetilde\Psi_{k}(\xi)\neq0$.\medskip

\noindent The two cases cover $M$ because $D$ is a homeomorphism.
Therefore condition \emph{(ii)} holds.\par\medskip

\noindent \Cref{thm:Nachbin} therefore yields
\[\overline{\widetilde{\mathcal A}_N}^{C^\infty(M;\mathbb R)}=C^\infty(M;\mathbb R).\]
We now return to the original variables. We set
\[X:=\left\{u\in C^\infty(M;\mathbb R):u\circ D^{-1}\in C^\infty(M;\mathbb R)\right\}.\]
Let $u\in X$ and set $\widetilde u:=u\circ D^{-1}$. By the density of $\widetilde{\mathcal A}_N$ in $C^\infty(M;\mathbb R)$, there exists a sequence $(f_n)_{n\geq1}\subset\mathcal A_N$ such that
\[f_n\circ D^{-1}\longrightarrow \widetilde u \qquad\text{in }C^\infty(M;\mathbb R).\]
We have
\[f_n-u=\bigl(f_n\circ D^{-1}-\widetilde u\bigr)\circ D.\]
Although $D$ is not a diffeomorphism at the boundary, it is a smooth map on $M$. Therefore, its pullback
\[ D^*:C^\infty(M;\mathbb R)\longrightarrow C^\infty(M;\mathbb R),\qquad D^*v:=v\circ D,\]
is continuous. Therefore
\[f_n\longrightarrow u\qquad\text{in }C^\infty(M;\mathbb R).\]
Consequently, $\mathcal A_N$ is dense in $X$ for the topology induced
by $C^\infty(M;\mathbb R)$; equivalently,
\[X\subset\overline{\mathcal A_N}^{C^\infty(M;\mathbb R)}.\]
Let us define 
\[ Y:= \left\{ u\in C^\infty (M;\mathbb R): \partial_\nu^{2k+1}u\big|_{\partial M}=0, \quad \text{for all } k\geq0 \right\}. \] 
Let us show that $X=Y$.\medskip

\noindent We first prove that $X\subset Y$.   Let $u\in X$ and set $\widetilde u:=u\circ D^{-1}\in C^\infty(M;\mathbb R)$. Since $u=\widetilde u\circ D$, in the normal collar we have
\[u\bigl(\mathfrak r(y,r)\bigr)=\widetilde u\bigl(\mathfrak r(y,r^2)\bigr).\]
The right-hand side is a smooth even function of $r$. Therefore,
\[\partial_r^{2k+1}(u\circ\mathfrak r)(y,0)=0\qquad\text{for every }k\geq0.\]
Since $\mathfrak r$ is a normal collar, this yields
\[\partial_\nu^{2k+1}u\big|_{\partial M}=0\qquad\text{for every }k\geq0.\]
Thus $u\in Y$.

\medskip
\noindent We now prove that $Y\subset X$. Let $u\in Y$. Since $D^{-1}$ is smooth in the interior, it is enough to prove that $u\circ D^{-1}$ is smooth near the boundary. Fix $y_0\in\partial M$, and let $V\subset\partial M$ be an open neighbourhood of $y_0$. Let $\varphi:V\longrightarrow W\subset\mathbb R^{d-1}$ be a boundary chart containing $y_0$. Choose an open set $V_0\Subset V$ containing $y_0$, and set $W_0:=\varphi(V_0)$. Let  $\chi_1\in C_c^\infty((-\varepsilon,\varepsilon);[0,1])$, $\chi_2\in C_c^\infty(W;[0,1])$ be such that 
\[\chi_1 \text{ even}, \quad \chi_1=1\quad\text{on }(-\varepsilon_0,\varepsilon_0), \qquad \chi_2=1\quad\text{on }W_0\]
for some $\varepsilon_0\in(0,\varepsilon)$. For $(z,r)\in W\times[0,\varepsilon)$, set
\[u_\varphi(z,r):=u\bigl(\mathfrak r(\varphi^{-1}(z),r)\bigr),\]
and define $U$ on $\mathbb R^{d-1}\times[0,\infty)$ by
\[U(z,r):=
\begin{cases}
\chi_2(z)\chi_1(r)u_\varphi(z,r),& (z,r)\in W\times[0,\varepsilon),\\
0,& \text{otherwise}.
\end{cases}\]
We then define the even extension
\[\widehat U(z,r):=U(z,|r|),\qquad (z,r)\in\mathbb R^{d-1}\times\mathbb R.\]
Since $u\in Y$ and $\mathfrak r$ is a normal collar, all the odd
derivatives of $U$ with respect to $r$ vanish at $r=0$. Therefore, $\widehat U\in C_c^\infty(\mathbb R^{d-1}\times\mathbb R)$. Moreover, $\widehat U$ is invariant under the action
\[(z,r)\longmapsto(z,-r).\]
The polynomial invariants of this action are
\[z_1,\ldots,z_{d-1},r^2.\]
Hence, by Whitney--Schwarz's theorem \cite[Theorem~1]{Schwarz1975}, applied to the action of $\mathbb Z_2$ given by $(y,r)\mapsto(y,-r)$, there exists $H\in C^\infty(\mathbb R^{d-1}\times\mathbb R)$ such that $\widehat U(z,r)=H(z,r^2)$. For $z\in W_0$ and $0\leq r<\varepsilon_0$, we have
$\chi_1(r)=\chi_2(z)=1$, and therefore $u_\varphi(z,r)=H(z,r^2)$. Consequently, for $z\in W_0$ and $0\leq\rho<\varepsilon_0^2$,
\[\begin{aligned}
(u\circ D^{-1})\bigl(\mathfrak r(\varphi^{-1}(z),\rho)\bigr)&=u\bigl(\mathfrak r(\varphi^{-1}(z),\sqrt{\rho})\bigr)\\
&=u_\varphi(z,\sqrt{\rho})\\
&=H(z,\rho).
\end{aligned}\]
Thus $u\circ D^{-1}$ is smooth near $y_0$. Since $y_0\in\partial M$
was arbitrary, $u\circ D^{-1}\in C^\infty(M;\mathbb R)$.
Therefore $u\in X$, and hence $Y\subset X$.
Therefore, $X=Y$.
This space $Y$ is precisely $\mathcal Y_N$ in \Cref{th:Neumann-smooth-closure}, with the normal derivatives understood in the fixed collar.

\medskip
\noindent Every element of $\mathcal A_N$ belongs to $Y$. Indeed, near the boundary, every $f\in\mathcal A_N$ has the form
\[f\bigl(\mathfrak r(y,r)\bigr)=P\bigl(\varphi_1^\partial(y),\ldots,\varphi_\ell^\partial(y),1+r^2,0,\ldots,0\bigr),\]
and is therefore even in $r$. Since $Y$ is closed in $C^\infty(M;\mathbb R)$, we conclude that $\overline{\mathcal A_N}^{C^\infty(M;\mathbb R)}=Y$.
\medskip

Finally, the density of $Y$ in $H_N^s(M;\R)$ follows from \cite[Remark~A.2]{MullerSchweigerSuli2020}, applied with $K=\{1,3,5,\ldots\}$. Indeed, we use a finite smooth partition of unity subordinate to interior charts and boundary normal charts, choosing the cut-off functions associated with the boundary charts to be independent of the normal variable near $\partial M$. Multiplication by these cut-off functions therefore preserves the vanishing of all odd normal derivatives. \cite[Remark~A.2]{MullerSchweigerSuli2020} applies in each boundary chart, while the usual density of smooth functions applies in the interior charts. Summing the resulting local approximations gives
\[\overline{Y}^{H^s(M;\R)}=H_N^s(M;\R).\]
Since $\mathcal A_N$ is dense in $X=Y$ for the $C^\infty$ topology, it is also dense in $Y$ for the $H^s$ topology. Consequently,
\[H_N^s(M;\R)=\overline{Y}^{H^s(M;\R)} \subset\overline{\mathcal A_N}^{H_N^s(M;\R)} \subset H_N^s(M;\R),\]
and hence
\[\overline{\mathcal A_N}^{H_N^s(M;\R)}=H_N^s(M;\R).\]
Since $\mathcal A_N\subset \mathsf E_{N,\infty}^{\R}$, we finally obtain
\[\overline{\mathsf E_{N,\infty}^{\R}}^{H_N^s(M;\R)}=H_N^s(M;\R).\]
\end{proof}

\begin{remark}
The algebra $\mathcal A_N$ does not satisfy the hypotheses of Nachbin's theorem. Indeed, every $f\in\mathcal A_N$ satisfies $\partial_\nu f\big|_{\partial M}=0$. Hence, for every $x\in\partial M$\footnote{The tangent space $T_xM$ should not be confused with
$T_x\partial M$. In particular $\nu(x)\in T_xM$ but $\nu(x)\notin T_x\partial M$ .} one has
\[d_xf(\nu(x))=0 \qquad\text{for every }f\in\mathcal A_N.\]
Therefore, $\mathcal A_N$ does not separate tangent vectors at the boundary, and condition \emph{(ii)} in Nachbin's theorem fails.
\end{remark}
\begin{corollary}[Density of the Neumann cone saturation]
\label{cor:Neumann-even-saturation}
Let $p\geq2$ be even. Define the additional profile
\[\Psi:=\sqrt{K^2-\sum_{j=1}^{\ell}\Phi_j^2-R^2-\Theta^2-\sum_{k=1}^{m}\Psi_k^2},\]
where
\[K^2>\max_{x\in M}\left(\sum_{j=1}^{\ell}\Phi_j(x)^2+R(x)^2+\Theta(x)^2+\sum_{k=1}^{m}\Psi_k(x)^2\right).\]
Let $\mathsf F_{N,\infty}^\R$ be the cone saturation of $E_N^\R+\operatorname{span}_{\R}\{\Psi\}$ defined by \eqref{eq:even-saturation}, with closures taken in $H_N^s(M;\R)$. Then
\[\overline{\mathsf F_{N,\infty}^\R}^{H_N^s(M;\R)}
=H_N^s(M;\R)\]
for $0\leq s<7/2$, $s\neq3/2$.
\end{corollary}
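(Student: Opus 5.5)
The plan is to reduce the statement to \Cref{lem:Neumann-sobolev-density} by showing $\mathcal A_N\subset\mathsf F_{N,\infty}^\R$. The argument is the cone-saturation part of the proof of \Cref{lem:algebra-odd-sat}, transplanted to the Neumann setting. The first task is to check that the enlarged initial space is admissible. By the choice of $K$, the radicand
\[K^2-\sum_{j}\Phi_j^2-R^2-\Theta^2-\sum_k\Psi_k^2\]
is bounded below by a positive constant on $M$, so $\Psi$ is smooth. Near the boundary, for $0\le r\le\eps/4$, every generator is even in the collar variable $r$, so the radicand and its square root are too. In fact $\Psi$ lies in $\mathcal Y_N$, and in particular $\partial_\nu\Psi=0$. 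Hence $E_N^\R+\operatorname{span}_\R\{\Psi\}\subset C^\infty(\overline M)\cap H_N^s$. Every power and linear combination generated from it also stays in $\mathcal Y_N\subset H_N^s$, since $\mathcal Y_N$ is an algebra. This ensures that the cones $\mathsf C_n$ and their closures make sense inside $H_N^s(M;\R)$.

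Next I would repeat the finite-dimensionality observation. By induction, each $\mathsf F_{N,n}^\R$ is a finite-dimensional subspace of $\mathcal Y_N$. Each $\mathsf C_n$ lies in the finite-dimensional span of $\mathsf F_{N,n}^\R$ and products of $p$ basis elements, and taking the closure does not leave that span. Hence the closures in $H_N^s$ coincide with closures in any norm on that finite-dimensional space.

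I would then run the two steps of \Cref{lem:algebra-odd-sat} verbatim. The rescaling identity around the constant $\varepsilon^{-2/(p-2)}$ gives $-\varphi^2\in\overline{\mathsf C_n}$ for every $\varphi\in\mathsf F_{N,n}^\R$; for $p=2$ this holds by definition. The family
\[1,\Phi_1,\ldots,\Phi_\ell,R,\Theta,\Psi_1,\ldots,\Psi_m,\Psi\]
has constant sum of squares $1+K^2$. This lets us recover both signs of each square, and then of each pairwise product via $\mp2f_if_j=f_i^2+f_j^2-(f_i\pm f_j)^2$. The family of all pairwise products again has constant sum of squares $(1+K^2)^2$. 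Iterating, every product of $2^k$ generators lies in $\mathsf F_{N,2k}^\R$. Since $1$ is among the generators, every monomial in $\Phi_j,R,\Theta,\Psi_k$ lies in $\mathsf F_{N,\infty}^\R$, and by linearity $\mathcal A_N\subset\mathsf F_{N,\infty}^\R$.

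Finally, \Cref{lem:Neumann-sobolev-density} gives $\overline{\mathcal A_N}^{H_N^s}=H_N^s$, which yields the density of $\mathsf F_{N,\infty}^\R$ for $0\le s<7/2$, $s\neq3/2$. The only genuinely new point, and the one I expect to need care, is the first step. One must verify that $\Psi$ is smooth up to the boundary and Neumann compatible. The evenness argument handles this, but the strict positivity of the radicand must be checked with the strict inequality defining $K$. Everything else is algebraic and independent of the boundary.
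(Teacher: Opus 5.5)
Your proposal is correct and follows the paper's proof. You check that $\Psi$ is smooth and Neumann-compatible, use the constant sum of squares $1+K^2$ of the enlarged family to run the cone-saturation argument of \Cref{lem:algebra-odd-sat} and obtain $\mathcal A_N\subset\mathsf F_{N,\infty}^\R$, and conclude with \Cref{lem:Neumann-sobolev-density}; beyond that, you only make explicit what the paper leaves implicit: $\Psi\in\mathcal Y_N$ by evenness in the collar variable, and each finite stage stays in a finite-dimensional subspace of $\mathcal Y_N\subset H_N^s$, so the closures are well defined.
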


\begin{proof}
The additional profile $\Psi$ is smooth and satisfies $\partial_\nu\Psi=0$ on $\partial M$. Moreover, the sum of the squares of the enlarged family of generating profiles, including $1$, is the constant $1+K^2$. The same argument as in the even-power case of \Cref{lem:algebra-odd-sat} therefore shows that $\mathsf F_{N,\infty}^\R$ contains the algebra generated by the original control profiles. The conclusion follows from \Cref{lem:Neumann-sobolev-density}.
\end{proof}

\begin{proof}[Proof of \Cref{th:Neumann-approximation}]
Choose $E_N^\R$ as in \eqref{eq:Neumann-control-family} and $\Psi$ as in \Cref{cor:Neumann-even-saturation}. The two density statements follow from \Cref{lem:Neumann-sobolev-density,cor:Neumann-even-saturation}. These profiles do not depend on $p$ or $s$. Each finite stage of the cone saturation is contained in a finite-dimensional space of smooth Neumann-compatible functions. All the norms $H_N^s$ in the stated range induce the same topology on this space, so the closures do not introduce a dependence on $s$. Complexification gives the corresponding complex-valued density statements.
\end{proof}

\subsection{A Dirichlet obstruction}
\label{sec:Dirichlet-obstruction}
The construction of finite families under Dirichlet boundary conditions is less clear. A natural idea is to choose a smooth function $\rho$ which is positive in $M^\circ$ and vanishes on $\partial M$, and to replace the embedding coordinates $1,V_1,\ldots,V_m$ by $\rho,\rho V_1,\ldots,\rho V_m$. This family separates the points of the quotient space $M/\partial M$, and the non-unital algebra
\[\mathcal A_D:=\left\{P(\rho,\rho V_1,\ldots,\rho V_m):P\in\mathbb R[X_0,\ldots,X_m],\ P(0)=0\right\}\]
is dense in the space of continuous functions vanishing on
the boundary. This observation, however, does not give the required density property. We identify two main complications:\par\medskip

\noindent The constant function $1$ is not an admissible Dirichlet profile. The polarization identity of degree $p>2$ gives products of $p$ elements of the power saturation, but it does not give products of two elements. In particular, there is no reason in general for even-degree monomials such as $\rho^2$ to be generated.\par\medskip

\noindent There is also a Sobolev obstruction when $s>3/2$, as stated in the following proposition.
Let $p\geq2$ be an integer and let $E_D^\R$ be a finite-dimensional subspace of
\[\{f\in C^\infty(\overline M;\R):f|_{\partial M}=0\}.\]
Define $\mathsf E_{D,\infty}^\R$ by the recursion \eqref{eq:odd-saturation}, starting from $\mathsf E_{D,0}^\R=E_D^\R$, without requiring the constant function $1$. For even $p$, also define $\mathsf F_{D,\infty}^\R$ by \eqref{eq:even-saturation}, starting from $\mathsf F_{D,0}^\R=E_D^\R$ and taking the closures in $H^s(M;\R)$. We define also the spaces 
\[\mathcal X_D^s:=\{u\in H^s(M;\R):u|_{\partial M}=0\}\]
and 
\[T_D:=\{\partial_\nu f|_{\partial M}:f\in E_D^\R\}.\]
\begin{proposition}[Normal-trace obstruction]
\label{prop:Dirichlet-power-obstruction}
Let $(M,g)$ be a smooth compact manifold with boundary and dimension $d\geq2$ and $s>3/2$. Then, $\mathsf E_{D,\infty}^\R$ is not dense, in the $H^s$ topology, in $\mathcal X_D^s$. More precisely, 
\[\overline{\mathsf E_{D,\infty}^\R}^{H^s(M;\R)}
\subset\{u\in\mathcal X_D^s:\partial_\nu u|_{\partial M}\in T_D\} \subsetneq\mathcal X_D^s.\]
For even $p$, the same inclusions hold with $\mathsf F_{D,\infty}^\R$ in place of $\mathsf E_{D,\infty}^\R$.
\end{proposition}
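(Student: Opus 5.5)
The plan is to show that the set $\mathcal Z:=\{u\in\mathcal X_D^s:\partial_\nu u|_{\partial M}\in T_D\}$ is a closed subspace of $H^s(M;\R)$ that contains every finite stage of the saturation. Closedness is immediate. For $s>3/2$, the trace maps $u\mapsto u|_{\partial M}$ and $u\mapsto\partial_\nu u|_{\partial M}$ are continuous from $H^s(M)$ into $H^{s-1/2}(\partial M)$ and $H^{s-3/2}(\partial M)$, respectively. Moreover, $T_D$ is finite-dimensional, hence closed in $H^{s-3/2}(\partial M)$. So $\mathcal Z$ is the intersection of preimages of closed sets under continuous linear maps, and the first inclusion reduces to showing $\mathsf E_{D,\infty}^\R\subset\mathcal Z$.

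For that, I would argue by induction on $n$, proving that every $f\in\mathsf E_{D,n}^\R$ is smooth, vanishes on $\partial M$, and satisfies $\partial_\nu f|_{\partial M}\in T_D$. The base case is the definition of $T_D$. For the inductive step, take $\varphi\in\mathsf E_{D,n}^\R$. It is smooth and vanishes on the boundary, so $\varphi^p$ is smooth and vanishes there, and
\[\partial_\nu(\varphi^p)=p\varphi^{p-1}\partial_\nu\varphi=0\quad\text{on }\partial M,\]
since $p\ge2$ and $\varphi|_{\partial M}=0$. Because $0\in T_D$ and $T_D$ is a vector space, both properties survive taking spans, so $\mathsf E_{D,n+1}^\R\subset\mathcal Z$.

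For the cone saturation with $p$ even, I would show by induction that each $\mathsf F_{D,n}^\R\subset\mathcal Z$. If $\mathsf F_{D,n}^\R\subset\mathcal Z$, then elements of $\mathsf C_n$ have the form $\eta-\sum_j\varphi_j^p$, which lie in $\mathcal Z$ by the same computation. Since $\mathcal Z$ is a closed subspace, $\overline{\mathsf C_n}^{H^s}\subset\mathcal Z$, and therefore $\mathsf F_{D,n+1}^\R\subset\mathcal Z$. The closure of the union then stays in $\mathcal Z$. Note that this step uses only that $\mathcal Z$ is closed, not finite-dimensionality.

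It remains to check the strict inclusion $\mathcal Z\subsetneq\mathcal X_D^s$, which I expect to be the only point needing care. Since $d\ge2$, $\partial M$ is a manifold of positive dimension, so $C^\infty(\partial M)$ is infinite-dimensional and we can pick a smooth $g$ on $\partial M$ with $g\notin T_D$. Then take $u(\mathfrak r(y,r))=\chi(r)\,(-r)\,g(y)$ in the collar, with $\chi$ a cutoff equal to $1$ near $0$, extended by zero. This gives $u\in C^\infty(\overline M)\subset H^s$, $u|_{\partial M}=0$, and $\partial_\nu u=g$ because $\partial_r\mathfrak r=-\nu$. Hence $u\in\mathcal X_D^s\setminus\mathcal Z$, which proves the non-density.
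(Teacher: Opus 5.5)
Your argument for the power saturation and for the strict inclusion is correct and coincides with the paper's. The ingredients are the same: closedness of $\mathcal Z$ via continuity of the two traces for $s>3/2$ and finite-dimensionality of $T_D$, the computation $\partial_\nu(\varphi^p)=p\varphi^{p-1}\partial_\nu\varphi=0$ on $\partial M$, and the collar function $-r\chi(r)h(y)$ with $h\notin T_D$.

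The cone step, however, has a gap, and it sits exactly where you say finite-dimensionality is not needed. Your inductive hypothesis is only $\mathsf F_{D,n}^\R\subset\mathcal Z$. The computation you reuse for $\eta-\sum_j\varphi_j^p$ needs more: each $\varphi_j$ must be smooth, or at least regular enough that $\varphi_j^p\in H^s$ and the product rule for the normal trace holds.

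For $n\geq1$, $\mathsf F_{D,n}^\R$ is a subset of an $H^s$-closure. Closedness of $\mathcal Z$ gives the trace conditions for its elements, but no regularity beyond $H^s$. The statement allows $3/2<s\leq d/2$ once $d\geq4$, and there $H^s$ is not an algebra. An element $\varphi\in\mathcal Z$, for instance one with an interior singularity, can then have $\varphi^p\notin H^s$. In that case your hypothesis alone gives neither $\varphi^p\in\mathcal Z$ nor a well-defined $H^s$-closure of $\mathsf C_n$.

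The missing ingredient is the one the paper uses: prove by induction that $\mathsf F_{D,n}^\R\subset\mathsf E_{D,n}^\R$.
\begin{itemize}
\item The base case is $\mathsf F_{D,0}^\R=E_D^\R=\mathsf E_{D,0}^\R$.
\item If the inclusion holds at stage $n$, every element of $\mathsf C_n$ lies in $\mathsf E_{D,n+1}^\R$.
\item That space is finite-dimensional and consists of smooth functions, so it is closed in $H^s$.
\item Hence $\mathsf F_{D,n+1}^\R\subset\overline{\mathsf C_n}^{H^s}\subset\mathsf E_{D,n+1}^\R$.
\end{itemize}
This keeps every stage smooth and reduces the cone case to the power case, with no separate computation needed.

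In the algebra range $s>d/2$ your closedness-only scheme could instead be rescued. You would approximate $\varphi\in\mathcal Z$ by smooth zero-trace functions and pass to the limit in the trace identities. That does not cover all $s>3/2$, however.
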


\begin{proof}
All functions in $\mathsf E_{D,n}^\R$ vanish on the boundary. For each such function,
\[\partial_\nu(f^p)|_{\partial M}=p f^{p-1}\partial_\nu f|_{\partial M}=0.\]
Induction therefore gives $\partial_\nu\mathsf E_{D,n}^\R|_{\partial M}\subset T_D$ for every $n$. For even $p$, induction also gives $\mathsf F_{D,n}^\R\subset\mathsf E_{D,n}^\R$. Indeed, the cone generated from $\mathsf F_{D,n}^\R$ is contained in $\mathsf E_{D,n+1}^\R$, and this last space is finite-dimensional and hence closed in $H^s(M;\R)$. In particular, the cone saturation cannot create any new normal trace either:
\[\partial_\nu\left(\eta-\sum_{j=1}^N\varphi_j^p\right)\Big|_{\partial M}=\partial_\nu\eta\big|_{\partial M}.\]
For $s>3/2$, the boundary trace and the normal trace are continuous on $H^s(M)$, with the latter taking values in $H^{s-3/2}(\partial M)$. Since $T_D$ is finite-dimensional, it is closed in this space. The restrictions to zero boundary trace and to normal trace in $T_D$ therefore persist in the $H^s$ closures of both saturations. This proves the asserted inclusions.
Since $\dim\partial M\geq1$, choose $h\in C^\infty(\partial M;\R)\setminus T_D$. In a normal collar, the function $u(\mathfrak r(y,r))=-r\chi(r)h(y)$, with a smooth cut-off $\chi$ equal to $1$ near $0$ and supported in the collar, extends smoothly by zero to $M$ and satisfies $u|_{\partial M}=0$ and $\partial_\nu u|_{\partial M}=h$.
Thus $u\in\mathcal X_D^s$ lies outside the right-hand set, proving strictness.
\end{proof}

Proposition \ref{prop:Dirichlet-power-obstruction} concerns both saturations (power \eqref{eq:odd-saturation} and cone \eqref{eq:even-saturation}). For $3/2<s<5/2$, $\mathcal X_D^s$ also agrees, with equivalent norms, with the spectral Dirichlet space $D((I-\Delta_{g,D})^{s/2})$; see \cite{Grisvard,Seeley,Grubb}.

\section{Controllability in the Sobolev-algebra regime}
\label{sec:odd_nonlinearity}
In this section, we prove the small-time approximate controllability results for nonlinear Schr\"odinger, Klein--Gordon, and parabolic equations in the Sobolev-algebra regime. Throughout this section, we fix an integer $p\geq2$ and assume that $s>d/2$. The algebraic saturation argument has already been established in the previous section. It remains to prove the short-time dynamical limits for each equation.

\subsection{Schr\"odinger equation}
\label{sec:additive-NLS}
Let $(M,g)$ be a smooth compact connected Riemannian manifold without boundary. Let $W\in C^\infty(M;\R)$, $\kappa\in\R\setminus\{0\}$. We consider the nonlinear Schr\"odinger equation with gauge-invariant nonlinearity $|u|^{p-1}u$, for both odd and even $p$.
\begin{equation}
\label{eq:NLS}
\left\{
\begin{aligned}
&i\partial_t u=(-\Delta_g+W)u+\kappa |u|^{p-1}u+\eta(t,x),\qquad (t,x)\in[0,T]\times M,\\
&u(0)=u_0\in H^s(M;\C),
\end{aligned}
\right.
\end{equation}
where $s>d/2$. For odd $p$, the nonlinearity is polynomial in $u$ and $\overline u$.
For even $p$, it is not smooth at the origin, and we impose $s<p-1$ in order to use the composition estimates. Set $A:=-\Delta_g+W$ and $U(t):=e^{-itA}$. Since $W\in C^\infty(M;\R)$, the operator $A$ is self-adjoint on $L^2(M)$ and $U(t)$ defines a strongly continuous Schrödinger group. Moreover, for every $s\ge 0$ and every $T>0$,
\[\sup_{|t|\le T}\|U(t)\|_{\mathcal L(H^s,H^s)}<\infty .\]
We say that $u\in C([0,T_*];H^s(M;\C))$ is a mild solution of \eqref{eq:NLS} if
\begin{equation}
\label{eq:NLS-mild}
u(t)=\Gamma_{u_0,\eta}(u)(t):=U(t)u_0-i\kappa\int_0^t U(t-\sigma)\bigl(|u(\sigma)|^{p-1}u(\sigma)\bigr)d\sigma
-i\int_0^t U(t-\sigma)\eta(\sigma)d\sigma .
\end{equation}
For such $s$, let $u_0\in H^s(M;\C)$ and $\eta\in L^1(0,T;H^s(M;\C))$.
By the algebra property of $H^s(M)$ when $p$ is odd, and by \cite[Corollary~23(ii)]{Dinh2018} when $p$ is even, the nonlinear map $F(u):=|u|^{p-1}u$ is locally Lipschitz from $H^s(M;\C)$ into itself. More precisely, for every $R>0$, there exists $C_R>0$ such that
\[\|F(u)-F(v)\|_{H^s}\le C_R\|u-v\|_{H^s},\qquad\|u\|_{H^s},\|v\|_{H^s}\le R.\]
By homogeneity, the estimate on the unit ball also gives, for every $u,v\in H^s(M;\C)$,
\begin{equation}
\label{eq:NLS-nonlinear-estimates}
\begin{aligned}
\|F(u)\|_{H^s}&\le C\|u\|_{H^s}^{p},\\
\|F(u)-F(v)\|_{H^s}&\le C\bigl(\|u\|_{H^s}+\|v\|_{H^s}\bigr)^{p-1}\|u-v\|_{H^s}.
\end{aligned}
\end{equation}
Therefore, applying the contraction mapping theorem to \eqref{eq:NLS-mild}, there exists $T_*>0$ such that \eqref{eq:NLS} admits a unique mild solution $u\in C([0,T_*];H^s(M;\C))$. Furthermore, the solution depends continuously on the initial datum $u_0$ and on the control $\eta$. Using standard arguments, we obtain the following result.
\begin{proposition}[Local well-posedness for the Schrödinger equation]
\label{prop:NLS-LWP}
Let $T>0$, $W\in C^\infty(M;\R)$, $\kappa\in\R$ and $s>d/2$, with $s<p-1$ when $p$ is even. Let $u_0\in H^s(M;\C)$, and $\eta\in L^1(0,T;H^s(M;\C))$.
Then there exists $T_*\in(0,T]$ such that \eqref{eq:NLS} admits a unique mild solution $u\in C([0,T_*];H^s(M;\C))$. Moreover, the following properties hold:
\begin{itemize}
\item The following uniform local bound holds. For every $R>0$, there exist $T_R\in(0,T]$ and $C_R>0$ such that, for every $v_0\in H^s(M;\C)$ and every $\zeta\in L^1(0,T_R;H^s(M;\C))$ satisfying $\|v_0\|_{H^s}+\|\zeta\|_{L^1(0,T_R;H^s)}\le R$,
the corresponding solution is defined on $[0,T_R]$ and satisfies
\[\|v\|_{C([0,T_R];H^s)}\le C_R.\]
\item The solution depends continuously on $u_0$ and $\eta$; i.e.
for any $\varkappa\in(0,T_*)$, there exist $\rho=\rho(\varkappa,\Lambda)>0$ and $C=C(\varkappa,\Lambda)>0$, with $\Lambda=\|u\|_{C([0,\varkappa];H^s)}+\|\eta\|_{L^1(0,\varkappa;H^s)}$,
such that the following holds. If $v_0\in H^s(M;\C)$ and $\zeta\in L^1(0,\varkappa;H^s(M;\C))$ satisfy
\[\|v_0-u_0\|_{H^s}+\|\zeta-\eta\|_{L^1(0,\varkappa;H^s)}<\rho,\]
then the solution $v$ associated with $(v_0,\zeta)$ is well defined on $[0,\varkappa]$ and satisfies
\[\|v-u\|_{C([0,\varkappa];H^s)}\le C\left(
\|v_0-u_0\|_{H^s}+\|\zeta-\eta\|_{L^1(0,\varkappa;H^s)}\right).\]
\end{itemize}
\end{proposition}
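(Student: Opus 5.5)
The plan is a standard Banach fixed-point argument for the Duhamel map $\Gamma_{u_0,\eta}$ in \eqref{eq:NLS-mild}, on the space $X_{T'}:=C([0,T'];H^s(M;\C))$. Set $K:=\sup_{|t|\le T}\|U(t)\|_{\mathcal L(H^s,H^s)}$. The first step is to show that $\Gamma_{u_0,\eta}$ maps the closed ball $B_\Lambda:=\{u\in X_{T'}:\|u\|_{X_{T'}}\le \Lambda\}$ into itself, where $\Lambda:=2K(\|u_0\|_{H^s}+\|\eta\|_{L^1(0,T;H^s)})+1$. By \eqref{eq:NLS-nonlinear-estimates},
\[
\|\Gamma_{u_0,\eta}(u)(t)\|_{H^s}\le K\|u_0\|_{H^s}+K\|\eta\|_{L^1(0,T;H^s)}+K|\kappa| C T'\Lambda^p,
\]
and, for $u,v\in B_\Lambda$,
\[
\|\Gamma_{u_0,\eta}(u)-\Gamma_{u_0,\eta}(v)\|_{X_{T'}}\le K|\kappa| C T'(2\Lambda)^{p-1}\|u-v\|_{X_{T'}}.
\]
Choosing $T'$ small, depending only on $\Lambda$, makes $\Gamma_{u_0,\eta}$ a contraction of $B_\Lambda$. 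Continuity in time of $\Gamma_{u_0,\eta}(u)$ follows from strong continuity of $U$ together with dominated convergence for the two Duhamel integrals. For even $p$ we need $s<p-1$ so that \eqref{eq:NLS-nonlinear-estimates} holds.

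Uniqueness in the whole of $X_{T_*}$, and not only in the ball, comes from a Gronwall argument. Two mild solutions are bounded on $[0,T_*]$ by some common constant, and the Lipschitz estimate then gives
\[
\|u(t)-v(t)\|_{H^s}\le C'\int_0^t\|u-v\|_{H^s}\,d\sigma,
\]
which forces $u=v$. The uniform local bound is read off directly from the construction. If $\|v_0\|_{H^s}+\|\zeta\|_{L^1(0,T_R;H^s)}\le R$, the radius $\Lambda_R=2KR+1$ and the time $T_R$ depend only on $R$. Hence the solution exists on $[0,T_R]$ and satisfies $\|v\|_{C([0,T_R];H^s)}\le \Lambda_R=:C_R$.

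The main obstacle is the continuous dependence statement on an arbitrary interval $[0,\varkappa]$ with $\varkappa<T_*$, because it goes beyond a single contraction step. Set $\Lambda=\|u\|_{C([0,\varkappa];H^s)}+\|\eta\|_{L^1(0,\varkappa;H^s)}$ and let $v$ be the solution for $(v_0,\zeta)$, defined on its maximal interval. Subtracting the two Duhamel formulas gives, as long as $\|v\|_{H^s}\le 2\Lambda+1$,
\[
\|v(t)-u(t)\|_{H^s}\le K\bigl(\|v_0-u_0\|_{H^s}+\|\zeta-\eta\|_{L^1(0,\varkappa;H^s)}\bigr)+K|\kappa|C(3\Lambda+1)^{p-1}\int_0^t\|v-u\|_{H^s}\,d\sigma .
\]
Gronwall then yields $\|v-u\|\le K e^{K|\kappa|C(3\Lambda+1)^{p-1}\varkappa}\,\delta$, where $\delta$ is the size of the perturbation. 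Choosing $\rho$ so that this bound is smaller than $\Lambda+1$ makes the a priori assumption self-improving, and a continuity (bootstrap) argument closes it. The local bound shows that $v$ cannot blow up while its norm stays bounded, since the existence time depends only on the norm. This extends $v$ to all of $[0,\varkappa]$ and gives the Lipschitz estimate with $C=K e^{K|\kappa|C(3\Lambda+1)^{p-1}\varkappa}$.
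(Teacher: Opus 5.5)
Your proposal is correct and follows the route the paper has in mind: a contraction argument for the Duhamel map based on \eqref{eq:NLS-nonlinear-estimates}, with Gronwall for uniqueness and continuous dependence. The paper only cites ``standard arguments'' here, and your bootstrap/continuation step supplies the detail needed to show that the perturbed solution $v$ actually exists on all of $[0,\varkappa]$.
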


Whenever the solution of \eqref{eq:NLS} with initial datum $u_0$ and control $\eta$ is defined, we denote by $\mathcal R_t(u_0,\eta)$ its value at time $t$. The uncontrolled flow is given by $\mathcal R_t(u_0,0)$. For $\xi\in H^s(M;\C)$, we define the translation $\tau_\xi(u):=u+\xi$.
\medskip

\noindent The flow satisfies the following concatenation property. If $\eta$ is defined by
\[\eta(t)=
\begin{cases}
\eta_1(t), & t\in[0,t_1],\\
\eta_2(t-t_1), & t\in[t_1,t_1+t_2],
\end{cases}\]
then
\[\mathcal R_{t_1+t_2}(u_0,\eta)=\mathcal R_{t_2}\bigl(\mathcal R_{t_1}(u_0,\eta_1),\eta_2\bigr).\]
Let $V_0=1$ and let $V_1,\dots,V_m\in C^\infty(M;\R)$. We define the finite-dimensional control space $E:=\operatorname{span}_{\C}\{V_0,V_1,\dots,V_m\}
\subset C^\infty(M;\C)$. Thus the additive controls are of the form  $\eta\in L^\infty(0,T;E)$, 
\[\eta(t,x)=\sum_{j=0}^m \alpha_j(t)V_j(x),
\qquad \alpha_j\in L^\infty(0,T;\C).\]
Even when $p$ is even, we use the same control profiles. In particular, we do not need to add the extra profile $V_{m+1}$. Indeed, the map $\alpha\mapsto-i\kappa\alpha|\alpha|^{p-1}$ is onto $\C$ for both parities, so the nonlinear kick recovers all complex multiples of the generated profile, as used in \Cref{lem:NLS-saturated-directions}.

We now prove \Cref{thm:intro-NLS-STAC}. The proof is based on three steps: small-time realizability of direct translations, a short-time asymptotic limit for large translations, and the nonlinear kick mechanism, which turns realizability in the direction $\varphi$ into realizability in the direction ${|\varphi|^{p-1}\varphi}$.
\begin{lemma}[Direct translations]
\label{lem:NLS-direct-translations}
For every $\xi\in E$, the translation $\tau_\xi$ is small-time approximately realizable.
\end{lemma}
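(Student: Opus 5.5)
The idea is to apply, on a very short time interval, a large constant control pointing in the direction $\xi$, so that the forcing term dominates both the linear evolution and the nonlinearity.

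Fix $\xi\in E$, $u_0\in H^s(M;\C)$, $\varepsilon>0$ and $\varkappa>0$. For $\delta\in(0,\varkappa)$ take the constant control
\[\eta_\delta(t):=\frac{i}{\delta}\,\xi,\qquad t\in[0,\delta].\]
It belongs to $L^\infty(0,\delta;E)$ because $E$ is a complex vector space. Its $L^1(0,\delta;H^s)$ norm equals $\|\xi\|_{H^s}$, which does not depend on $\delta$.

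First we secure a uniform bound. Set $R:=\|u_0\|_{H^s}+\|\xi\|_{H^s}$ and apply the first item of \Cref{prop:NLS-LWP}. For every $\delta\le T_R$, the solution $u_\delta=\mathcal R_\cdot(u_0,\eta_\delta)$ exists on $[0,\delta]$ and satisfies $\|u_\delta\|_{C([0,\delta];H^s)}\le C_R$.

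Next we compare $u_\delta(\delta)$ with $u_0+\xi$ using the mild formulation \eqref{eq:NLS-mild}:
\[u_\delta(\delta)-u_0-\xi=\bigl(U(\delta)u_0-u_0\bigr)-i\kappa\int_0^\delta U(\delta-\sigma)F(u_\delta(\sigma))\,d\sigma+\Bigl(\frac1\delta\int_0^\delta U(\delta-\sigma)\xi\,d\sigma-\xi\Bigr).\]
The three terms are handled separately.
\begin{itemize}
\item The first term tends to $0$ in $H^s$ as $\delta\to0$, by strong continuity of the group $U$.
\item The second term is bounded, via \eqref{eq:NLS-nonlinear-estimates} and the uniform bound on $U(t)$ in $\mathcal L(H^s)$, by $C\delta C_R^{p}$, which tends to $0$.
\item The third term is an average: it is bounded by $\sup_{0\le t\le\delta}\|U(t)\xi-\xi\|_{H^s}$, which tends to $0$ again by strong continuity.
\end{itemize}
Hence $\|\mathcal R_\delta(u_0,\eta_\delta)-(u_0+\xi)\|_{H^s}<\varepsilon$ once $\delta$ is small enough, with $\delta<\varkappa$.

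The only delicate point is the first step. The control amplitude $\delta^{-1}$ blows up, so the existence time of the solution must not shrink with $\delta$. This is exactly why we use the $L^1$-in-time formulation of \Cref{prop:NLS-LWP}, in which the control is measured only through its $\delta$-independent $L^1$ norm. When $p$ is even, the additional restriction $s<p-1$ is already built into \Cref{prop:NLS-LWP}, so the argument is unchanged.
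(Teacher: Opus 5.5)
Your proposal is correct and matches the paper's proof essentially step for step. You use the same constant control $\eta_\delta=\frac{i}{\delta}\xi$, the same $\delta$-independent $L^1(0,\delta;H^s)$ bound fed into the uniform local bound of \Cref{prop:NLS-LWP}, and the same three-term splitting of the Duhamel formula (free evolution, averaged control term, nonlinear term of size $O(\delta)$).
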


\begin{proof}
Fix $u_0\in H^s(M;\C)$ and $\xi\in E$. Let $\varkappa>0$. For $\delta>0$, set the constant control
\[\eta_\delta(t):=\frac{i}{\delta}\xi,
\qquad 0\le t\le\delta.\]
Then $\eta_\delta\in L^\infty(0,\delta;E)$ and, by the mild formulation,
\[\mathcal R_\delta(u_0,\eta_\delta)=U(\delta)u_0+\frac1\delta\int_0^\delta U(\delta-\sigma)\xi d\sigma-i\kappa\int_0^\delta U(\delta-\sigma)\left(|\mathcal R_\sigma(u_0,\eta_\delta)|^{p-1}\mathcal R_\sigma(u_0,\eta_\delta)\right)d\sigma.\]
By strong continuity of $U(t)$ in $H^s$ 
\[U(\delta)u_0\to u_0,
\qquad
\frac1\delta\int_0^\delta U(\delta-\sigma)\xi d\sigma\to \xi
\qquad \text{ in } H^s  \ \text{ as } \delta\to0^+.\]
Moreover,
\begin{equation}
\label{eq:eta_L1Hs}
\int_0^\delta \|\eta_\delta(t)\|_{H^s}dt=\|\xi\|_{H^s},  
\end{equation}
and by the uniform local bound in \Cref{prop:NLS-LWP}, since
\[\|u_0\|_{H^s}+\|\eta_\delta\|_{L^1(0,\delta;H^s)}
=\|u_0\|_{H^s}+\|\xi\|_{H^s},\]
we have, for all sufficiently small $\delta>0$,
\[\sup_{0\le t\le\delta}
\|\mathcal R_t(u_0,\eta_\delta)\|_{H^s}
\le C(u_0,\xi).\]
By \eqref{eq:NLS-nonlinear-estimates},
\[\left\|
\int_0^\delta U(\delta-\sigma)\left(|\mathcal R_\sigma(u_0,\eta_\delta)|^{p-1}\mathcal R_\sigma(u_0,\eta_\delta)\right)d\sigma\right\|_{H^s}\le C\delta\sup_{0\le\sigma\le\delta}
\|\mathcal R_\sigma(u_0,\eta_\delta)\|_{H^s}^{p}
\to0.\]
Hence, as $\delta$ goes to zero,
\[\mathcal R_\delta(u_0,\eta_\delta)\to u_0+\xi \qquad\text{in }H^s(M;\C).\]
Taking $\delta<\varkappa$ sufficiently small gives the result.
\end{proof}

We next prove the short-time asymptotic mechanism that produces new directions through the power nonlinearity. The main idea is to use the control to insert a large translation of size $\delta^{{-1/p}}$, let the uncontrolled equation evolve for a time $\delta$, and then use the control to remove the same translation. This produces a non-trivial commutator, yielding a \textit{nonlinear kick}. The scaling is chosen so that the nonlinear term has a finite contribution, since $\delta (\delta^{{-1/p}})^{p}=1$.
\begin{lemma}[Asymptotic limit]
\label{lem:NLS-asymptotic-limit}
Let $\varphi\in C^\infty(M;\R)$ and $\alpha\in\C$. Then, for every $u_0\in H^s(M;\C)$, there exists $\delta_0>0$ such that, for every $\delta\in(0,\delta_0)$,
\[t\mapsto \mathcal R_t\Big( u_0 +\delta^{-\frac{1}{p}}\alpha\varphi ,0\Big) \quad\text{ is well-defined in } C([0,\delta];H^s(M)).\]
Moreover,
\[\mathcal R_\delta\left(u_0+\delta^{-\frac1{p}}\alpha\varphi
,0\right)-\delta^{-\frac1{p}}\alpha\varphi\longrightarrow
u_0-i\kappa\alpha|\alpha|^{p-1}{|\varphi|^{p-1}\varphi} \quad \text{ in } H^s(M;\C) \quad  \text{ as } \delta\to0^+.\]
\end{lemma}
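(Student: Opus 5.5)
The plan is to write the solution as $u(t)=\delta^{-1/p}\alpha\varphi+w(t)$ and to study the remainder $w$ on the rescaled time interval $[0,\delta]$. Setting $\beta:=\delta^{-1/p}\alpha\varphi$, the mild formulation \eqref{eq:NLS-mild} with $\eta=0$ gives
\[
w(t)=U(t)u_0+\bigl(U(t)\beta-\beta\bigr)-i\kappa\int_0^tU(t-\sigma)F\bigl(\beta+w(\sigma)\bigr)\,d\sigma ,
\]
and we treat this as a fixed-point problem for $w$ in a ball of $C([0,\delta];H^s)$. The key point is that $\varphi$ is smooth, so $U(t)\beta-\beta=\delta^{-1/p}\alpha(U(t)-I)\varphi$ satisfies
\[
\|U(t)\beta-\beta\|_{H^s}\le t\,\delta^{-1/p}|\alpha|\,\|A\varphi\|_{H^s}\le C\delta^{1-1/p}\to0
\]
uniformly for $t\le\delta$. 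For the nonlinear term we use \eqref{eq:NLS-nonlinear-estimates}: for $\|w\|\le R_0:=2\|u_0\|_{H^s}+1$,
\[
\|F(\beta+w)\|_{H^s}\le C\bigl(\delta^{-1/p}+R_0\bigr)^p\lesssim\delta^{-1},
\]
so its time integral over $[0,\delta]$ is $O(1)$. This is where care is needed.

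\textbf{Step 1 (existence on $[0,\delta]$).} A naive contraction fails because the Lipschitz constant of $F$ near $\beta$ is of order $\delta^{-(p-1)/p}$. Multiplied by the time length $\delta$, however, it gives $\delta^{1/p}\to0$, so the map $w\mapsto\Gamma(w)$ is still a contraction on $[0,\delta]$ for small $\delta$. The bound on $\Gamma(w)$ is $\|u_0\|_{H^s}+o(1)+C\delta(\delta^{-1/p}+R_0)^p$. Since $(\delta^{-1/p}+R_0)^p\delta=(1+R_0\delta^{1/p})^p|\alpha|^p\|\varphi\|^p$ (up to constants), this is bounded, but not necessarily by $R_0$. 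We therefore choose $R_0:=\|u_0\|_{H^s}+1+C'|\alpha|^p\|\varphi\|_{H^s}^p$, which makes the ball invariant for $\delta$ small. Transferring back gives $u=\beta+w\in C([0,\delta];H^s)$, which is the well-definedness claim; uniqueness follows from \Cref{prop:NLS-LWP}.

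\textbf{Step 2 (the limit).} We write
\[
w(\delta)-u_0=(U(\delta)u_0-u_0)+(U(\delta)\beta-\beta)-i\kappa\int_0^\delta U(\delta-\sigma)F(\beta+w(\sigma))\,d\sigma .
\]
The first two terms tend to zero. For the integral we compare $F(\beta+w)$ with $F(\beta)=\delta^{-1}\alpha|\alpha|^{p-1}|\varphi|^{p-1}\varphi$, using homogeneity and the realness of $\varphi$, which gives $|\alpha\varphi|^{p-1}\alpha\varphi=\alpha|\alpha|^{p-1}|\varphi|^{p-1}\varphi$. By the Lipschitz estimate,
\[
\|F(\beta+w)-F(\beta)\|_{H^s}\le C\bigl(\delta^{-1/p}+R_0\bigr)^{p-1}R_0=O\bigl(\delta^{-(p-1)/p}\bigr),
\]
whose integral over $[0,\delta]$ is $O(\delta^{1/p})\to0$. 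What remains is $\delta^{-1}\int_0^\delta U(\delta-\sigma)G\,d\sigma\to G$, where $G:=\alpha|\alpha|^{p-1}|\varphi|^{p-1}\varphi\in H^s$; this follows from strong continuity of $U$. For even $p$, $G\in H^s$ is exactly what \cite[Corollary~23(ii)]{Dinh2018} provides under $s<p-1$. Combining the pieces yields the stated limit $u_0-i\kappa\alpha|\alpha|^{p-1}|\varphi|^{p-1}\varphi$.

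The main obstacle is Step 1: the large translation $\beta$ degrades the Lipschitz constants, and the argument relies on the exact scaling, namely a time $\delta$ against an amplitude $\delta^{-1/p}$. Everything else uses only the homogeneous estimates \eqref{eq:NLS-nonlinear-estimates} and the smoothness of $\varphi$, the latter to control $(U(t)-I)\beta$.
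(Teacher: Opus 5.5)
Your proposal is correct and follows essentially the same route as the paper. You use the renormalized Duhamel map for $w=u-\delta^{-1/p}\alpha\varphi$ and an invariant ball in $C([0,\delta];H^s)$ on which the map contracts thanks to $\delta\cdot\delta^{-(p-1)/p}=\delta^{1/p}$; uniqueness then identifies the fixed point with the flow, and the expansion $F(\beta+w)=\delta^{-1}\alpha|\alpha|^{p-1}|\varphi|^{p-1}\varphi+O_{H^s}(\delta^{-(p-1)/p})$ together with the strong continuity of $U$ gives the limit. Your explicit choice of the radius $R_0$, depending on $|\alpha|^p\|\varphi\|_{H^s}^p$ but not on $\delta$, is a more careful version of the paper's ``bounded contribution'' remark.
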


\begin{proof}
First, notice that \Cref{prop:NLS-LWP} does not directly ensure that $\mathcal R_\delta\left(u_0+\delta^{-\frac1{p}}\alpha\varphi,0\right)$ is well defined, since the corresponding initial datum may become unbounded in $H^s$ as $\delta\to0$. As part of the proof, we justify that this is indeed well-defined, by introducing a renormalized Duhamel map. Define 
\begin{equation}
 \widetilde{\Gamma}_\delta(w):=\Gamma_{u_0+\delta^{-\frac1{p}}\alpha\varphi,0}\left(w+\delta^{-\frac1{p}}\alpha\varphi\right)-\delta^{-\frac1{p}}\alpha\varphi,  
\end{equation}
where $\Gamma_{u_0,\eta}$ is the Duhamel map \eqref{eq:NLS-mild} associated with initial datum $u_0$ and control $\eta$. We therefore apply a fixed-point argument to the equation
\begin{align}
w(t)=\widetilde{\Gamma}_\delta(w){(t)}={}&U(t)u_0+\bigl(U(t)-I\bigr)\delta^{-\frac1{p}}\alpha\varphi\nonumber\\
&-i\kappa\int_0^tU(t-\sigma)\left(\left|\delta^{-\frac1{p}}\alpha\varphi+w(\sigma)\right|^{p-1}\left(\delta^{-\frac1{p}}\alpha\varphi+w(\sigma)\right)\right)d\sigma.
\label{eq:w_delta}
\end{align}
For this purpose, we need two estimates. The first one ensures that the
Duhamel map leaves a fixed ball of $C([0,\delta];H^s)$ invariant, while the second one ensures that it is a contraction. First, since $\varphi\in C^\infty(M)$,
\begin{equation}
\label{eq:U-I_delta}
\sup_{0\leq t\leq\delta}\left\|\bigl(U(t)-I\bigr)\delta^{-\frac1{p}}\alpha\varphi\right\|_{H^s}\leq C_\varphi|\alpha|\delta^{1-\frac1{p}}=C_\varphi|\alpha|\delta^{\frac{p-1}{p}}.
\end{equation}
By \eqref{eq:NLS-nonlinear-estimates}, for $\|w\|_{H^s}\leq R$,
\[\left\|\left|\delta^{-\frac1{p}}\alpha\varphi+w\right|^{p-1}\left(\delta^{-\frac1{p}}\alpha\varphi+w\right)\right\|_{H^s}\leq C(\delta^{-1}+R^{p}).\]
After integration in time over $[0,\delta]$, this estimate provides a bounded contribution and therefore allows one to choose a fixed ball of $C([0,\delta];H^s)$ which is invariant under the Duhamel map, for $\delta>0$ sufficiently small. Moreover, for
$\|w_1\|_{H^s},\|w_2\|_{H^s}\le R$,
\begin{multline*}
\left\|\left|\delta^{-\frac1{p}}\alpha\varphi+w_1\right|^{p-1}\left(\delta^{-\frac1{p}}\alpha\varphi+w_1\right)-\left|\delta^{-\frac1{p}}\alpha\varphi+w_2\right|^{p-1}\left(\delta^{-\frac1{p}}\alpha\varphi+w_2\right)\right\|_{H^s}\\
\leq C\left(\delta^{-\frac{p-1}{p}}+R^{p-1}\right)\|w_1-w_2\|_{H^s}.
\end{multline*}
Since
\[\delta\left(\delta^{-\frac{p-1}{p}}+R^{p-1}\right)=\delta^{\frac1{p}}+\delta R^{p-1}\longrightarrow0,\]
this Lipschitz estimate ensures that the same Duhamel map is a contraction on that ball, for $\delta>0$ sufficiently small. Hence, by the Banach fixed-point theorem, $\widetilde{\Gamma}_\delta$ admits a unique fixed point $w^\delta=\widetilde{\Gamma}_\delta(w^\delta) \in C([0,\delta];H^s)$. Moreover, there exists $C>0$ such that
\[\sup_{0\le t\le\delta}\|w^\delta(t)\|_{H^s}\le C,\qquad\text{for all sufficiently small }\delta.\]
For $0\le t\le\delta$, $v^\delta(t):=w^\delta(t)+\delta^{-\frac1{p}}\alpha\varphi$ satisfies Duhamel's formulation \eqref{eq:NLS-mild} with initial datum $u_0+\delta^{-\frac1{p}}\alpha\varphi$ and no control. Therefore by uniqueness $v^\delta(t){=}\mathcal R_t\left(u_0+\delta^{-\frac1{p}}\alpha\varphi,0\right)$. We now take the limit $\delta\to0$ in \eqref{eq:w_delta}. As before, $U(\delta)u_0\to u_0$ in $H^s$, and by \eqref{eq:U-I_delta}
\[U(\delta)\left(u_0+\delta^{-\frac1{p}}\alpha\varphi\right)-\delta^{-\frac1{p}}\alpha\varphi\longrightarrow u_0 \qquad\text{in }H^s.\]
Moreover{, by \eqref{eq:NLS-nonlinear-estimates}, uniformly for $0\le t\le\delta$,}
\begin{align*}
|v^\delta(t)|^{p-1}v^\delta(t)&=\left|\delta^{-\frac1{p}}\alpha\varphi+w^\delta(t)\right|^{p-1}\left(\delta^{-\frac1{p}}\alpha\varphi+w^\delta(t)\right) =\delta^{-1}\alpha|\alpha|^{p-1}{|\varphi|^{p-1}\varphi}+O_{H^s}\left(\delta^{-\frac{p-1}{p}}\right).
\end{align*}
Thus
\[\int_0^\delta U(\delta-\sigma)\left(|v^\delta(\sigma)|^{p-1}v^\delta(\sigma)\right)d\sigma=\delta^{-1}\alpha|\alpha|^{p-1}\int_0^\delta U(\delta-\sigma){|\varphi|^{p-1}\varphi} d\sigma+O_{H^s}\left(\delta^{\frac1{p}}\right).\]
The profile $|\varphi|^{p-1}\varphi$ belongs to $H^s(M)$ by \eqref{eq:NLS-nonlinear-estimates}. By the strong continuity of $U(t)$ on $H^s$,
\[\frac1\delta\int_0^\delta U(\delta-\sigma){|\varphi|^{p-1}\varphi} d\sigma\longrightarrow{|\varphi|^{p-1}\varphi}\qquad\text{in }H^s.\]
Therefore
\[\int_0^\delta U(\delta-\sigma)\left(|v^\delta(\sigma)|^{p-1}v^\delta(\sigma)\right)d\sigma\longrightarrow\alpha|\alpha|^{p-1}{|\varphi|^{p-1}\varphi}\qquad\text{in }H^s.\]
We conclude that
\[w^\delta(\delta)\longrightarrow u_0-i\kappa\alpha|\alpha|^{p-1}{|\varphi|^{p-1}\varphi},\]
which provides the desired limit.
\end{proof}
We now prove the first nonlinear saturation step which turns realizability of large translations in the direction $\phi$ into realizability of the nonlinear direction ${|\phi|^{p-1}\phi}$.
\begin{lemma}[Nonlinear kick]
\label{lem:NLS-odd-kick}
Let $\varphi\in C^\infty(M;\R)$ and $\alpha\in\C$. Assume that, for every $\delta>0$, the translations $\tau_{\pm\delta^{-\frac1{p}}\alpha\varphi}$ are small-time approximately realizable. Then the translation $\tau_{-i\kappa\alpha|\alpha|^{p-1}{|\varphi|^{p-1}\varphi}}$ is small-time approximately realizable.
\end{lemma}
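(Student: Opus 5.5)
The proof is a three-step concatenation, glued together by the continuous dependence in \Cref{prop:NLS-LWP}. Fix $u_0\in H^s(M;\C)$, $\varepsilon>0$ and $\varkappa>0$, and set
\[\zeta:=-i\kappa\alpha|\alpha|^{p-1}|\varphi|^{p-1}\varphi,\qquad\theta_\delta:=\delta^{-\frac1p}\alpha\varphi.\]
The scheme has three phases:
\begin{enumerate}[(1)]
\item approximately realize $\tau_{\theta_\delta}$ in a short time $T_1$;
\item let the uncontrolled flow run on $[0,\delta]$;
\item approximately realize $\tau_{-\theta_\delta}$ in a short time $T_2$.
\end{enumerate}
By the concatenation property, the total control is a piecewise control in $L^\infty(0,T_1+\delta+T_2;E)$. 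Every phase duration can be made smaller than $\varkappa/3$.

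\textbf{Choice of $\delta$.} By \Cref{lem:NLS-asymptotic-limit}, $\mathcal R_\delta(u_0+\theta_\delta,0)$ is well defined for small $\delta$, and
\[\mathcal R_\delta(u_0+\theta_\delta,0)-\theta_\delta\to u_0+\zeta \qquad\text{in }H^s.\]
Choose $\delta<\varkappa/3$ so that this difference is within $\varepsilon/3$ of $u_0+\zeta$. Then fix this $\delta$; from now on $\theta_\delta$ is a fixed element of $H^s$.

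\textbf{Phase 2, the main obstacle.} The step needing care is the stability of the middle phase: an $H^s$-small error at the end of phase 1 must stay small after the free evolution on $[0,\delta]$. Since $\delta$ is fixed, the solution $t\mapsto\mathcal R_t(u_0+\theta_\delta,0)$ is a genuine mild solution on $[0,\delta]$. I would apply the continuous-dependence statement of \Cref{prop:NLS-LWP} on that interval, with $\eta=0$.

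A technical point is that the proposition is stated for $\varkappa<T_*$. To handle this, either note that the solution exists slightly beyond $\delta$ (so $\delta<T_*$ can be arranged), or run the same Gronwall argument on the compact interval $[0,\delta]$, using the local Lipschitz bound \eqref{eq:NLS-nonlinear-estimates} on a neighbourhood of the trajectory. This yields $\rho>0$ and $C>0$ such that
\[\|v_0-(u_0+\theta_\delta)\|_{H^s}<\rho\quad\Longrightarrow\quad\|\mathcal R_\delta(v_0,0)-\mathcal R_\delta(u_0+\theta_\delta,0)\|_{H^s}\le C\|v_0-(u_0+\theta_\delta)\|_{H^s}.\]

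\textbf{Phase 1.} By hypothesis, $\tau_{\theta_\delta}$ is small-time approximately realizable. So there exist $T_1<\varkappa/3$ and a control $\eta_1$ with
\[\|\mathcal R_{T_1}(u_0,\eta_1)-(u_0+\theta_\delta)\|_{H^s}<\min\{\rho,\ \varepsilon/(3C)\}.\]
Set $v_1:=\mathcal R_\delta(\mathcal R_{T_1}(u_0,\eta_1),0)$. Combining phase 2 with the choice of $\delta$, we get
\[\|v_1-\theta_\delta-(u_0+\zeta)\|_{H^s}<2\varepsilon/3.\]

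\textbf{Phase 3.} Apply realizability of $\tau_{-\theta_\delta}$ with initial datum $v_1$ and tolerance $\varepsilon/3$. This gives $T_2<\varkappa/3$ and $\eta_2$ with
\[\|\mathcal R_{T_2}(v_1,\eta_2)-(v_1-\theta_\delta)\|_{H^s}<\varepsilon/3.\]
Realizability is required from arbitrary initial data, so it applies here even though $v_1$ depends on the earlier choices.

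\textbf{Conclusion.} By the triangle inequality, the final state is within $\varepsilon$ of $u_0+\zeta$, and the total time $T_1+\delta+T_2$ is less than $\varkappa$. This proves that $\tau_\zeta$ is small-time approximately realizable. Note that the hypothesis is used for the fixed $\delta$ only, which the assumption "for every $\delta>0$" covers.
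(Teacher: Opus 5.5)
Your proposal is correct and follows the paper's proof essentially step for step. You fix $\delta<\varkappa/3$ via \Cref{lem:NLS-asymptotic-limit}, take a stability radius $\rho$ for the free flow on $[0,\delta]$ from continuous dependence, and then concatenate an approximate realization of $\tau_{\delta^{-1/p}\alpha\varphi}$, the free evolution, and an approximate realization of $\tau_{-\delta^{-1/p}\alpha\varphi}$, splitting the error as $\varepsilon/3$ per piece. The only differences are cosmetic: you track an explicit Lipschitz constant $C$, and you spell out why continuous dependence applies on the whole interval $[0,\delta]$, which the paper covers by appealing to ``the same fixed-point argument''.
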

\begin{proof}
Set $\chi:=-i\kappa\alpha|\alpha|^{p-1}{|\varphi|^{p-1}\varphi}$. Fix $u_0\in H^s(M;\C)$, $\eps>0$ and $\varkappa>0$. By \Cref{lem:NLS-asymptotic-limit}, we can choose $\delta\in(0,\varkappa/3)$ such that
\[\left\|\mathcal R_\delta(u_0+\delta^{-\frac1{p}}\alpha\varphi,0)-\delta^{-\frac1{p}}\alpha\varphi-(u_0+\chi)\right\|_{H^s}<\frac{\eps}{3}.\]
By the same fixed-point argument and continuity of $\mathcal R_\delta(\cdot,0)$ with respect to the initial datum, there exists $\rho>0$ such that, if
\[\|z-(u_0+\delta^{-\frac1{p}}\alpha\varphi)\|_{H^s}<\rho,\]
then the uncontrolled solution with initial datum $z$ is well defined on $[0,\delta]$ and
\[\|\mathcal R_\delta(z,0)-\mathcal R_\delta(u_0+\delta^{-\frac1{p}}\alpha\varphi,0)\|_{H^s}<\frac{\eps}{3}.\]
Since $\tau_{\delta^{-\frac1{p}}\alpha\varphi}$ is small-time approximately realizable, there exist $T_1\in(0,\varkappa/3)$ and a control $\eta_1\in L^\infty(0,T_1;E)$ such that
\[\left\|\mathcal R_{T_1}(u_0,\eta_1)-(u_0+\delta^{-\frac1{p}}\alpha\varphi)\right\|_{H^s}<\rho.\]
Set $z:=\mathcal R_{T_1}(u_0,\eta_1)$. On the interval $[T_1,T_1+\delta]$, we take zero control, so the state becomes $\mathcal R_\delta(z,0)$. Finally, since $\tau_{-\delta^{-\frac1{p}}\alpha\varphi}$ is small-time approximately realizable, there exist $T_2\in(0,\varkappa/3)$ and a control $\eta_2\in L^\infty(0,T_2;E)$ such that
\[\left\|\mathcal R_{T_2}(\mathcal R_\delta(z,0),\eta_2)-\left(\mathcal R_\delta(z,0)-\delta^{-\frac1{p}}\alpha\varphi\right)\right\|_{H^s}<\frac{\eps}{3}.\]
Concatenating the three pieces, with total time $T:=T_1+\delta+T_2<\varkappa$, we obtain
\[\left\|\mathcal R_T(u_0,\eta)-(u_0+\chi)\right\|_{H^s}<\eps.\]
Thus $\tau_\chi$ is small-time approximately realizable.
\end{proof}
We also prove that small-time approximate realizability is stable under sums and limits.
\begin{lemma}[Stability under sums and limits]
\label{lem:NLS-sums}
If $\tau_{\xi_1}$ and $\tau_{\xi_2}$ are small-time approximately realizable, then $\tau_{\xi_1+\xi_2}$ is small-time approximately realizable.
Moreover, if $\xi_j\to\xi$ in $H^s(M;\C)$ and every $\tau_{\xi_j}$ is small-time approximately realizable, then so is $\tau_\xi$.
\end{lemma}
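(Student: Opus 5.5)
For the sum property I would concatenate two approximate realizations, using the continuity of the flow from \Cref{prop:NLS-LWP} to absorb the error made in the first step. Fix $u_0\in H^s(M;\C)$, $\eps>0$ and $\varkappa>0$. First I would realize $\tau_{\xi_2}$ starting from the auxiliary point $u_0+\xi_1$: there are $T_2\in(0,\varkappa/2)$ and $\eta_2\in L^\infty(0,T_2;E)$ with
\[\|\mathcal R_{T_2}(u_0+\xi_1,\eta_2)-(u_0+\xi_1+\xi_2)\|_{H^s}<\eps/2.\]
Since $E\subset C^\infty$ is finite-dimensional, $\eta_2\in L^1(0,T_2;H^s)$. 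The continuous dependence part of \Cref{prop:NLS-LWP}, applied on $[0,T_2]$ to the trajectory issued from $u_0+\xi_1$ with control $\eta_2$, then yields $\rho>0$ and $C>0$ with the following property. If $\|z-(u_0+\xi_1)\|_{H^s}<\rho$, then $\mathcal R_t(z,\eta_2)$ is defined on $[0,T_2]$ and
\[\|\mathcal R_{T_2}(z,\eta_2)-\mathcal R_{T_2}(u_0+\xi_1,\eta_2)\|_{H^s}\le C\|z-(u_0+\xi_1)\|_{H^s}.\]
If $T_2$ happens to equal the endpoint of the existence interval, I would shrink it slightly; the proposition is stated for $\varkappa<T_*$, and realizability allows any smaller time.

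Next I would use realizability of $\tau_{\xi_1}$ from $u_0$ with tolerance $\min(\rho,\eps/(2C))$. This gives $T_1<\varkappa/2$ and a control $\eta_1$ with $z:=\mathcal R_{T_1}(u_0,\eta_1)$ inside that tolerance of $u_0+\xi_1$. Concatenating $\eta_1$ and $\eta_2$ through the concatenation property gives a control in $L^\infty(0,T_1+T_2;E)$ with total time below $\varkappa$. The final error is then less than $\eps/2+\eps/2$ by the triangle inequality.

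For limits I would argue similarly. Choose $j$ with $\|\xi_j-\xi\|_{H^s}<\eps/2$, and realize $\tau_{\xi_j}$ from $u_0$ with tolerance $\eps/2$ in time less than $\varkappa$. Then $\|\mathcal R_T(u_0,\eta)-(u_0+\xi)\|_{H^s}<\eps$.

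The only delicate point is the order of the choices in the sum argument. The second control $\eta_2$ must be fixed before the first step's tolerance is chosen, so that the stability constants $\rho$ and $C$ are determined by a fixed reference trajectory rather than by one that depends on $\eps$ in a circular way. With this ordering, \Cref{prop:NLS-LWP} applies directly.
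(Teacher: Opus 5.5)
Your proof is correct, but for the sum property it takes a more roundabout route than the paper. The paper never uses continuous dependence of the flow. Realizability of $\tau_{\xi_2}$ holds from \emph{every} initial state, so the paper applies it directly at the point $z=\mathcal R_{T_1}(u_0,\eta_1)$ actually reached after the first step. Translation by $\xi_2$ is an isometry, so $\|z+\xi_2-(u_0+\xi_1+\xi_2)\|_{H^s}=\|z-(u_0+\xi_1)\|_{H^s}$, and two triangle inequalities finish the argument.

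You instead fix $\eta_2$ at the reference point $u_0+\xi_1$ and push the first-step error through the flow via \Cref{prop:NLS-LWP}. That choice is what creates the ordering constraint you single out and the technicality about the existence interval. Your fix there, shrinking $T_2$, is valid because of the strict inequality and the continuity of $t\mapsto\mathcal R_t$. So the ``delicate point'' is an artefact of your decomposition, not of the lemma.

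What your version gains is a second control that does not depend on the exact intermediate state, which would matter if realizability were known only from particular states. What the paper's version gains is that the lemma needs no well-posedness input beyond the concatenation property. That is why the same proof is reused verbatim for Klein--Gordon and the parabolic equation. Your treatment of limits coincides with the paper's.
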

\begin{proof}
Fix $u_0\in H^s(M;\C)$, $\eps>0$ and $\varkappa>0$. First realize $\tau_{\xi_1}$ from $u_0$ in time $T_1<\varkappa/2$ with sufficiently small error. From the obtained point, realize $\tau_{\xi_2}$ in time $T_2<\varkappa/2$ with sufficiently small error. By concatenation and the triangle inequality, the final point is arbitrarily close to $u_0+\xi_1+\xi_2$ in time $T_1+T_2<\varkappa$. For the second assertion, choose $j$ such that $\|\xi_j-\xi\|_{H^s}<\eps/2$, and then realize $\tau_{\xi_j}$ from $u_0$ in time smaller than $\varkappa$, with error smaller than $\eps/2$. The triangle inequality gives the result.
\end{proof}
From now on we assume $\kappa\neq0$ and we recall the definition of the saturation spaces $\mathsf E_n^\R$ and $\mathsf E_n$ given in \eqref{eq:odd-saturation}, which we use for both odd and even $p$.

\begin{lemma}[Realizability of saturated directions]
\label{lem:NLS-saturated-directions} 
Let $\xi\in\mathsf E_\infty$. Then the translation $\tau_\xi$ is small-time approximately realizable.
\end{lemma}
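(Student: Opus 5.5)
The argument is an induction on $n$, showing that every $\xi\in\mathsf E_n$ gives a small-time approximately realizable translation. Call $\mathcal T$ the set of $\xi\in H^s(M;\C)$ for which $\tau_\xi$ is small-time approximately realizable. By \Cref{lem:NLS-sums}, $\mathcal T$ is closed under addition and under $H^s$-limits. It is not obviously closed under complex scalar multiplication, so the induction hypothesis will be taken in the stronger form $\mathsf E_n\subset\mathcal T$, where $\mathsf E_n=\mathsf E_n^\R\otimes_\R\C$ is a complex vector space. The base case $n=0$ is \Cref{lem:NLS-direct-translations}, since $\mathsf E_0=E$.

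For the inductive step, assume $\mathsf E_n\subset\mathcal T$ and fix $\varphi\in\mathsf E_n^\R$, which is real-valued and smooth. For every $\alpha\in\C$ and every $\delta>0$, the vectors $\pm\delta^{-1/p}\alpha\varphi$ lie in $\mathsf E_n$, hence in $\mathcal T$. \Cref{lem:NLS-odd-kick} then gives
\[
-i\kappa\alpha|\alpha|^{p-1}|\varphi|^{p-1}\varphi\in\mathcal T .
\]
The map $\alpha\mapsto -i\kappa\alpha|\alpha|^{p-1}$ is onto $\C$ because $\kappa\neq0$: write $\alpha=re^{i\theta}$, so the image is $-i\kappa r^pe^{i\theta}$, which sweeps every modulus and every argument. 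Therefore $\beta|\varphi|^{p-1}\varphi\in\mathcal T$ for every $\beta\in\C$.

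For odd $p$ we have $|\varphi|^{p-1}\varphi=\varphi^p$, so $\beta\varphi^p\in\mathcal T$ for every $\beta\in\C$. Every element of $\mathsf E_{n+1}$ is a finite sum $\sum_k\beta_k\psi_k$, where $\beta_k\in\C$ and each $\psi_k$ is either an element of $\mathsf E_n^\R$ or some $\varphi^p$ with $\varphi\in\mathsf E_n^\R$. Each summand lies in $\mathcal T$, so closure under sums gives $\mathsf E_{n+1}\subset\mathcal T$. Taking the union over $n$ yields the claim for $\mathsf E_\infty$.

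The main obstacle is even $p$, where the kick produces $|\varphi|^{p-1}\varphi=\varphi^{p-1}|\varphi|$ rather than $\varphi^p$. We therefore need to show that $\varphi^p$ still lies in the closed additive set generated by the kicks. I would apply the kick to the shifted function $c+\varphi$, with $c$ a large constant; this is allowed because $1\in\mathsf E_n^\R$, so $c+\varphi\in\mathsf E_n^\R$. Once $c>\|\varphi\|_\infty$, the function $c+\varphi$ is positive, and hence
\[
|c+\varphi|^{p-1}(c+\varphi)=(c+\varphi)^p .
\]
This puts $\beta(c+\varphi)^p$ in $\mathcal T$ for all $\beta\in\C$ and all large $c$. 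Now take $p+1$ distinct large values $c_0,\dots,c_p$ and expand $(c_k+\varphi)^p$ binomially in powers of $\varphi$. The coefficient matrix $\bigl(\binom{p}{j}c_k^{p-j}\bigr)_{k,j}$ is a Vandermonde matrix up to a diagonal scaling, hence invertible. A complex linear combination of the $(c_k+\varphi)^p$ therefore isolates $\varphi^p$, which gives $\beta\varphi^p\in\mathcal T$.

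This only requires $(c+\varphi)^p\in H^s$ for smooth $\varphi$, which is automatic, together with the hypotheses of \Cref{lem:NLS-asymptotic-limit}, which hold for smooth real $\varphi$ under $s<p-1$. The induction then closes exactly as in the odd case.
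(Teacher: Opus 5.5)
Your proof is correct and follows the paper's argument essentially step by step. Both use induction on the complex spaces $\mathsf E_n$, the nonlinear kick combined with surjectivity of $\alpha\mapsto -i\kappa\alpha|\alpha|^{p-1}$, and, for even $p$, shifts $c_j+\varphi$ with $c_j>\|\varphi\|_{L^\infty}$ together with a Vandermonde inversion to isolate $\varphi^p$.
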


\begin{proof}
We argue by induction on $n$. For $n=0$, the result follows from \Cref{lem:NLS-direct-translations}, since $\mathsf E_0=E$. Assume that every translation $\tau_\xi$, with $\xi\in \mathsf E_n$, is small-time approximately realizable. Let $\varphi\in \mathsf E_n^{\R}$. Then, for every $\delta>0$ and every $\alpha\in\C$, $\pm\delta^{-\frac1{p}}\alpha\varphi\in \mathsf E_n$, and therefore the translations $\tau_{\pm\delta^{-\frac1{p}}\alpha\varphi}$ are small-time approximately realizable. By \Cref{lem:NLS-odd-kick}, the translation $\tau_{-i\kappa\alpha|\alpha|^{p-1}{|\varphi|^{p-1}\varphi}}$ is small-time approximately realizable. Since $\kappa\neq0$ and the map $\alpha\mapsto -i\kappa\alpha|\alpha|^{p-1}$ is onto $\C$, it follows that $\tau_{\beta{|\varphi|^{p-1}\varphi}}$ is small-time approximately realizable for every $\beta\in\C$. If $p$ is odd, since $\varphi$ is real, we have $|\varphi|^{p-1}\varphi=\varphi^p$. Combining this with the induction hypothesis and \Cref{lem:NLS-sums}, we obtain in this case that $\tau_\xi$ is small-time approximately realizable for every $\xi\in \mathsf E_{n+1}$.

If $p$ is even, choose distinct constants $c_0,\ldots,c_p>\|\varphi\|_{L^\infty}$. Since $1\in \mathsf E_n^\R$, we have $c_j+\varphi\in \mathsf E_n^\R$ and $c_j+\varphi>0$ on $M$. Applying the preceding argument to these profiles, we obtain that $\tau_{\beta(c_j+\varphi)^p}$ is small-time approximately realizable for every $\beta\in\C$. By the binomial formula and the invertibility of the corresponding Vandermonde matrix,
\[\varphi^p\in\operatorname{span}_{\R}\{(c_0+\varphi)^p,\ldots,(c_p+\varphi)^p\}.\]
Hence $\tau_{\beta\varphi^p}$ is small-time approximately realizable
for every $\beta\in\C$. Combining this with the induction hypothesis
and \Cref{lem:NLS-sums}, we obtain that $\tau_\xi$ is small-time
approximately realizable for every $\xi\in \mathsf E_{n+1}$.
\end{proof}
\begin{proposition}[Saturation criterion]
\label{prop:NLS-saturation-criterion}
Assume that $\mathsf E_\infty$ is dense in $H^s(M;\C)$. Then \eqref{eq:NLS} is small-time approximately controllable in $H^s(M;\C)$.
\end{proposition}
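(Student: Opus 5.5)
The plan is to combine \Cref{lem:NLS-saturated-directions} with the density hypothesis, using translations to move from $u_0$ to the target. The only point needing care is to realize the translation approximately while staying in small time.

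Fix $u_0,u_1\in H^s(M;\C)$, $\eps>0$ and $\varkappa>0$, and set $\xi:=u_1-u_0\in H^s(M;\C)$. Since $\mathsf E_\infty$ is dense in $H^s(M;\C)$, there is a sequence $\xi_j\in\mathsf E_\infty$ with $\xi_j\to\xi$ in $H^s$. By \Cref{lem:NLS-saturated-directions}, every translation $\tau_{\xi_j}$ is small-time approximately realizable. The limit part of \Cref{lem:NLS-sums} then shows that $\tau_\xi$ is small-time approximately realizable.

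Applying the definition of realizability to $\tau_\xi$ at the point $u_0$, with tolerances $\eps$ and $\varkappa$, gives $T\in(0,\varkappa)$ and $\eta\in L^\infty(0,T;E)$. The corresponding solution is defined on $[0,T]$ and satisfies
\[
\|\mathcal R_T(u_0,\eta)-(u_0+\xi)\|_{H^s}=\|\mathcal R_T(u_0,\eta)-u_1\|_{H^s}<\eps.
\]
This is exactly \Cref{def:STAC}.

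There is no real obstacle here, since the dynamical work is already done in \Cref{lem:NLS-asymptotic-limit,lem:NLS-odd-kick}. Two points should still be checked explicitly.

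\textbf{Well-definedness along concatenations.} \Cref{lem:NLS-sums} concatenates two realizations, and this is legitimate because the definition of realizability is uniform over the starting point. The second translation is realized from whatever state the first one reached, and that state lies in $H^s$ by \Cref{prop:NLS-LWP}.

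\textbf{Control class.} All controls used are piecewise constant combinations of the profiles in $E$: the direct translations of \Cref{lem:NLS-direct-translations}, zero control during the free evolution, and their concatenations. Their concatenation is therefore in $L^\infty(0,T;E)$, as \Cref{def:STAC} requires.
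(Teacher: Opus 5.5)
Your proposal is correct and is essentially the paper's argument. The paper picks a single $\xi\in\mathsf E_\infty$ with $\|u_1-u_0-\xi\|_{H^s}<\eps/2$ and realizes $\tau_\xi$ with error $\eps/2$, which is the same triangle-inequality argument you invoke through the limit clause of \Cref{lem:NLS-sums}; your remarks on concatenation and the control class are harmless but unnecessary, since realizability already supplies a control in $L^\infty(0,T;E)$ by definition.
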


\begin{proof}
Let $u_0,u_1\in H^s(M;\C)$, $\eps>0$ and $\varkappa>0$. By density of $\mathsf E_\infty$, there exists $\xi\in \mathsf E_\infty$ such that
\[\|u_1-u_0-\xi\|_{H^s}<\frac{\eps}{2}.\]
By \Cref{lem:NLS-saturated-directions}, the translation $\tau_\xi$ is small-time approximately realizable. Hence there exist $T\in(0,\varkappa)$ and $\eta\in L^\infty(0,T;E)$ such that
\[\|\mathcal R_T(u_0,\eta)-(u_0+\xi)\|_{H^s}<\frac{\eps}{2}.\]
Therefore
\[\|\mathcal R_T(u_0,\eta)-u_1\|_{H^s}<\eps.\]
\end{proof}

\textit{Proof of \Cref{thm:intro-NLS-STAC}:}
By \Cref{th:odd-saturation-density}, the power saturation $\mathsf E_\infty$ of $E$ is dense in $H^s(M;\C)$. The conclusion follows directly from \Cref{prop:NLS-saturation-criterion}.
\qed
\begin{remark}
The same argument also applies to the less physical polynomial
nonlinearity $u^p$, which is not gauge-invariant. In this case,
the nonlinear kick generates the direction
$-i\kappa\alpha^p\varphi^p$. Since the map
$\alpha\mapsto-i\kappa\alpha^p$ is onto $\C$ for every integer
$p\ge2$, the same control space
$E=\operatorname{span}_{\C}\{1,V_1,\dots,V_m\}$ suffices for both
parities, without adding the profile $V_{m+1}$.
Moreover, since $H^s(M)$ is an algebra for $s>d/2$,
the map $u\mapsto u^p$ is locally Lipschitz on $H^s(M;\C)$,
so the local well-posedness and controllability arguments apply
for every $s>d/2$.
\end{remark}
\subsection{Klein--Gordon equation}
\label{sec:additive-KG}
Let $(M,g)$ be a smooth compact connected Riemannian manifold without boundary. Let $W\in C^\infty(M;\R)$ and $\kappa\in\R\setminus\{0\}$. We consider the controlled Klein--Gordon equation with polynomial nonlinearity
\begin{equation}
\label{eq:KG}
\begin{cases}
\partial_t^2 y-\Delta_g y+Wy+\kappa y^{p}=\eta(t,x), &(t,x)\in(0,T)\times M,\\
(y(0),\partial_t y(0))=(y_0,y_1).
\end{cases}
\end{equation}
We take $E^\R=\operatorname{span}_{\R}\{1,V_1,\ldots,V_m\}$ when $p$ is odd, and $E^\R=\operatorname{span}_{\R}\{1,V_1,\ldots,V_{m+1}\}$ when $p$ is even, where $V_{m+1}$ is defined in \eqref{eq:V_m+1}. Let $\Lambda:=(-\Delta_g+W)^{1/2}$, and assume, for simplicity, that $-\Delta_g+W$ is positive. For $s>d/2$, we work in the Sobolev energy space
\[\mathcal H^s:=H^{s}(M;\R)\times H^{s-1}(M;\R).\]
We write Klein--Gordon states as column vectors, using $Y=(y,v)^\top$ in inline notation; along solutions, $v=\partial_t y$. We denote the linear Klein--Gordon group by
\[S(t)\binom{y_0}{y_1}:=\binom{\cos(t\Lambda)y_0+\Lambda^{-1}\sin(t\Lambda)y_1}{-\Lambda\sin(t\Lambda)y_0+\cos(t\Lambda)y_1}.\]
A mild solution of \eqref{eq:KG} is a function $Y=(y,\partial_t y)^\top\in C([0,T_*];\mathcal H^s)$ such that
\begin{equation}
\label{eq:mildformulationwaves}
\begin{cases}
y(t)=\cos(t\Lambda)y_0+\Lambda^{-1}\sin(t\Lambda)y_1+\dis\int_0^t \Lambda^{-1}\sin((t-\sigma)\Lambda)
\left(\eta(\sigma)-\kappa y(\sigma)^{p}\right)d\sigma\\
\partial_t y(t)=-\Lambda\sin(t\Lambda)y_0+\cos(t\Lambda)y_1+\dis\int_0^t \cos((t-\sigma)\Lambda)
\left(\eta(\sigma)-\kappa y(\sigma)^{p}\right)d\sigma.\end{cases}
\end{equation}

\begin{proposition}[Local well-posedness for the Klein--Gordon equation]
\label{prop:KG-LWP}
Let $T>0$, $W\in C^\infty(M;\R)$, $\kappa\in\R$ and $s>d/2$. Let $(y_0,y_1)^\top\in\mathcal H^s$, $\eta\in L^1(0,T;H^{s-1}(M;\R))$. Then there exists $T_*\in(0,T]$ such that \eqref{eq:KG} admits a unique mild solution
\[Y=\binom{y}{\partial_t y}\in C([0,T_*];\mathcal H^s).\]
Moreover, the following properties hold:
\begin{itemize}
\item For every $R>0$, there exist $T_R\in(0,T]$ and $C_R>0$ such that, for every $(z_0,z_1)^\top\in\mathcal H^s$ and every $\zeta\in L^1(0,T_R;H^{s-1}(M;\R))$ satisfying
\[\left\|\binom{z_0}{z_1}\right\|_{\mathcal H^s}+\|\zeta\|_{L^1(0,T_R;H^{s-1})}\le R,\]
the corresponding solution is defined on $[0,T_R]$ and satisfies
\[\left\|\binom{z}{\partial_t z}\right\|_{C([0,T_R];\mathcal H^s)}\le C_R.\]
\item The solution depends continuously on the initial datum and on the control. More precisely, if $Y=(y,\partial_t y)^\top$ and $Z=(z,\partial_t z)^\top$ are two solutions on $[0,\varkappa]$, associated with data $((y_0,y_1)^\top,\eta)$ and $((z_0,z_1)^\top,\zeta)$, and if
\[\|Y\|_{C([0,\varkappa];\mathcal H^s)}+\|Z\|_{C([0,\varkappa];\mathcal H^s)}\le R,\]
then there exists $C_R>0$ such that
\[\|Y-Z\|_{C([0,\varkappa];\mathcal H^s)}\le C_R\left(\left\|\binom{y_0}{y_1}-\binom{z_0}{z_1}\right\|_{\mathcal H^s}
+\|\eta-\zeta\|_{L^1(0,\varkappa;H^{s-1})}\right).\]
\end{itemize}
\end{proposition}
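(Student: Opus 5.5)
The plan is to run a contraction argument on the mild formulation \eqref{eq:mildformulationwaves}, written as $Y(t)=S(t)Y_0+\int_0^tS(t-\sigma)\bigl(0,\eta(\sigma)-\kappa y(\sigma)^p\bigr)^\top d\sigma$. The first step is the linear estimate. Since $-\Delta_g+W$ is positive and self-adjoint, $\Lambda^{s}$ commutes with $\cos(t\Lambda)$ and $\sin(t\Lambda)$, and with the equivalent norm $\|(y,v)\|_{\mathcal H^s}^2=\|\Lambda^s y\|_{L^2}^2+\|\Lambda^{s-1}v\|_{L^2}^2$ the group $S(t)$ is unitary on $\mathcal H^s$. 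Hence $\|S(t)\|_{\mathcal L(\mathcal H^s)}\le C$ uniformly in $t$, and
\[\Bigl\|\int_0^t S(t-\sigma)\binom{0}{f(\sigma)}d\sigma\Bigr\|_{\mathcal H^s}\le C\|f\|_{L^1(0,t;H^{s-1})}.\]

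The second step is the nonlinear estimate. For $s>d/2$, $H^s(M)$ is an algebra, so $\|y^p\|_{H^{s-1}}\le\|y^p\|_{H^s}\le C\|y\|_{H^s}^p$ and $\|y^p-z^p\|_{H^{s-1}}\le C(\|y\|_{H^s}+\|z\|_{H^s})^{p-1}\|y-z\|_{H^s}$. This is exactly analogous to \eqref{eq:NLS-nonlinear-estimates}. We even gain one derivative, which we do not need.

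Next comes the fixed point. Fix $R\ge\|Y_0\|_{\mathcal H^s}+\|\eta\|_{L^1H^{s-1}}$ and consider the Duhamel map $\Gamma$ on the ball $B=\{Y\in C([0,T_R];\mathcal H^s):\|Y\|\le 2CR\}$. The two estimates give
\[\|\Gamma Y\|\le CR+C|\kappa|T_R(2CR)^p,\qquad\|\Gamma Y-\Gamma Z\|\le C|\kappa|T_R(4CR)^{p-1}\|Y-Z\|.\]
Choosing $T_R$ small, depending only on $R$, makes $\Gamma$ a contraction of $B$. This yields existence, with $T_*=T_R$, together with the uniform local bound with $C_R=2CR$, since $T_R$ depends only on $R$. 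Uniqueness in all of $C([0,T_*];\mathcal H^s)$ follows by applying the same Lipschitz estimate on short subintervals to two solutions, which are bounded by continuity, and then a continuation argument.

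For continuous dependence, subtract the two Duhamel formulas for $Y$ and $Z$ on $[0,\varkappa]$, with both bounded by $R$. This gives
\[\|Y(t)-Z(t)\|_{\mathcal H^s}\le C\|Y_0-Z_0\|_{\mathcal H^s}+C\|\eta-\zeta\|_{L^1H^{s-1}}+C|\kappa|(2R)^{p-1}\int_0^t\|Y-Z\|_{\mathcal H^s}d\sigma,\]
and Gronwall's lemma yields the claimed estimate with $C_R=Ce^{C|\kappa|(2R)^{p-1}\varkappa}$. I expect no real obstacle here. The only point needing care is the uniform-in-time bound on $S(t)$ in $\mathcal H^s$ for negative $s-1$ (e.g.\ $d=1$ is excluded, but $s-1$ may be below $0$ when $d\le 2$). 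This is handled by defining the $H^{s-1}$ norm spectrally through $\Lambda$, so that all powers of $\Lambda$ commute with the group.
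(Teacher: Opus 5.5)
Your proposal is correct and is essentially the paper's proof: a contraction for the Duhamel map in $C([0,T_R];\mathcal H^s)$, using the uniform bound on $S(t)$ and the algebra property of $H^s(M)$, followed by Gronwall for the stability estimate. You should also state, as the paper does, that the second component of a fixed point is the time derivative of the first, so the fixed point really has the form $(y,\partial_t y)^\top$; and your concern about negative indices is moot, since $d\geq 2$ and $s>d/2$ force $s-1>0$.
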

\begin{proof}
We use a fixed-point argument in $C([0,T];\mathcal H^s)$. Since $W\in C^\infty(M;\mathbb R)$ and $-\Delta_g+W$ is positive, $S(t)$ is strongly continuous on $\mathcal H^s$ and, for every $T>0$,
\[\sup_{0\le t\le T}\|S(t)\|_{\mathcal L(\mathcal H^s,\mathcal H^s)}<\infty.\]
For $Y=(y,v)^\top\in C([0,T];\mathcal H^s)$, define the Duhamel map by
\[\Gamma(Y)(t){:=\Gamma_{(y_0,y_1)^\top,\eta}(Y)(t)}=S(t)\binom{y_0}{y_1}+\int_0^t
S(t-\sigma)\binom{0}{\eta(\sigma)-\kappa y(\sigma)^{p}}d\sigma.\]
The second component of $\Gamma(Y)$ is the time derivative of its first component, so every fixed point satisfies $v=\partial_t y$.
Since $s>d/2$, the space $H^s(M)$ is an algebra. Hence, for $y,z$ bounded in $H^{s}(M)$,
\[\|y^{p}-z^{p}\|_{H^{s-1}} \leq \|y^{p}-z^{p}\|_{H^s} \leq C_R\|y-z\|_{H^s}.\]
Moreover,
\[\|y^{p}\|_{H^{s-1}} \leq C\|y\|_{H^s}^{p}.\]
Let $R>0$ be fixed and assume
\[\left\|\binom{y_0}{y_1}\right\|_{\mathcal H^s}+\|\eta\|_{L^1(0,T;H^{s-1})}\le R.\]
For $Y\in C([0,T];\mathcal H^s)$ with $\|Y\|_{C([0,T];\mathcal H^s)}\le M$, the previous estimates yield
\[\|\Gamma(Y)\|_{C([0,T];\mathcal H^s)}\leq C_T R+C_T T M^{p}.\]
Choosing $M$ large enough and then $T>0$ small enough, $\Gamma$ maps the closed ball of radius $M$ in $C([0,T];\mathcal H^s)$ into itself. Similarly, if $Y,Z$ belong to this ball, then
\[\|\Gamma(Y)-\Gamma(Z)\|_{C([0,T];\mathcal H^s)} \leq C_T T M^{p-1}\|Y-Z\|_{C([0,T];\mathcal H^s)}.\]
Taking $T>0$ smaller if necessary, $\Gamma$ is a contraction. This gives a unique mild solution $Y=(y,\partial_t y)^\top\in C([0,T];\mathcal H^s)$. This proves local existence and uniqueness. The uniform local bound follows from the same construction, with $T_R$ and $C_R$ depending only on $R$.
\medskip

\noindent It remains to prove the stability estimate. Let $Y=(y,\partial_t y)^\top$ and $Z=(z,\partial_t z)^\top$ be two solutions on $[0,\varkappa]$, associated with data $((y_0,y_1)^\top,\eta)$ and $((z_0,z_1)^\top,\zeta)$, and assume
\[\|Y\|_{C([0,\varkappa];\mathcal H^s)}+\|Z\|_{C([0,\varkappa];\mathcal H^s)}\le R.\]
By Duhamel's formula,
\[Y(t)-Z(t)=S(t)\left(\binom{y_0}{y_1}-\binom{z_0}{z_1}\right)+\int_0^t S(t-\sigma)
\binom{0}{\eta(\sigma)-\zeta(\sigma)-\kappa(y(\sigma)^{p}-z(\sigma)^{p})}d\sigma.\]
Using the boundedness of $S(t)$ on $\mathcal H^s$ and the algebra estimate,
\[\|Y(t)-Z(t)\|_{\mathcal H^s}\le C_R\left\|\binom{y_0}{y_1}-\binom{z_0}{z_1}\right\|_{\mathcal H^s}+C_R\|\eta-\zeta\|_{L^1(0,t;H^{s-1})}+C_R\int_0^t\|Y(\sigma)-Z(\sigma)\|_{\mathcal H^s}d\sigma.\]
Gronwall's inequality yields
\[\|Y-Z\|_{C([0,\varkappa];\mathcal H^s)}\le C_R
\left(\left\|\binom{y_0}{y_1}-\binom{z_0}{z_1}\right\|_{\mathcal H^s}+\|\eta-\zeta\|_{L^1(0,\varkappa;H^{s-1})}\right).\]
This completes the proof.
\end{proof}
For $Y_0=(y_0,y_1)^\top\in\mathcal H^s$, we denote by $\mathcal R_t(Y_0,\eta)$ the value at time $t$ of the solution of \eqref{eq:KG} with initial datum $Y_0$ and control $\eta$. The uncontrolled Klein--Gordon flow is given by
$\mathcal R_t(Y_0,0)$. Finally, for $\Xi=(\xi_0,\xi_1)^\top\in\mathcal H^s$, we define the translation
\[\tau_\Xi(Y):=Y+\Xi.\]
As in \Cref{sec:additive-NLS}, the flow satisfies the following concatenation property. If $\eta$ is defined by
\[\eta(t)=
\begin{cases}
\eta_1(t), & t\in[0,t_1],\\
\eta_2(t-t_1), & t\in[t_1,t_1+t_2],
\end{cases}\]
then
\[\mathcal R_{t_1+t_2}(Y_0,\eta)
=\mathcal R_{t_2}\bigl(\mathcal R_{t_1}(Y_0,\eta_1),\eta_2\bigr).\]
In order to prove \Cref{thm:intro-KG-STAC}, we first show that the control directly generates translations in the velocity component. This is the analogue of the direct translations for the Schrödinger equation, but here the forcing enters the second-order equation, hence it acts at leading order on $\partial_t y$. More precisely, a control of size $\delta^{-1}$ acting during a time $\delta$ produces a finite displacement in the velocity variable, while its effect on the position variable vanishes as $\delta\to0^+$.
\begin{lemma}[Direct velocity translations]
\label{lem:KG-direct-velocity-translations}
For every $\xi\in E^\R$, the translation $\tau_{(0,\xi)^\top}$ is small-time approximately realizable for \eqref{eq:KG}.
\end{lemma}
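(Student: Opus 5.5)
The plan mirrors \Cref{lem:NLS-direct-translations}, with the control acting on the velocity component. Fix $Y_0=(y_0,y_1)^\top\in\mathcal H^s$, $\xi\in E^\R$ and $\varkappa>0$. For $\delta>0$ take the constant control
\[\eta_\delta(t):=\delta^{-1}\xi,\qquad 0\le t\le\delta .\]
It lies in $L^\infty(0,\delta;E^\R)$ and satisfies $\|\eta_\delta\|_{L^1(0,\delta;H^{s-1})}=\|\xi\|_{H^{s-1}}$, which is independent of $\delta$. The uniform local bound of \Cref{prop:KG-LWP} applies with $R=\|Y_0\|_{\mathcal H^s}+\|\xi\|_{H^{s-1}}$. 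Hence, for $\delta<T_R$, the solution $Y^\delta=\mathcal R_t(Y_0,\eta_\delta)$ exists on $[0,\delta]$ and satisfies $\sup_{t\le\delta}\|Y^\delta(t)\|_{\mathcal H^s}\le C_R$.

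Next, split the Duhamel formula of \Cref{prop:KG-LWP} at $t=\delta$ into three terms:
\[Y^\delta(\delta)=S(\delta)Y_0+\frac1\delta\int_0^\delta S(\delta-\sigma)\binom{0}{\xi}d\sigma-\kappa\int_0^\delta S(\delta-\sigma)\binom{0}{y^\delta(\sigma)^p}d\sigma .\]

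\emph{First term.} By strong continuity of $S$ on $\mathcal H^s$, $S(\delta)Y_0\to Y_0$.

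\emph{Second term.} Since $\xi$ is smooth, $(0,\xi)^\top\in\mathcal H^s$. By strong continuity, the average $\frac1\delta\int_0^\delta S(\delta-\sigma)(0,\xi)^\top d\sigma$ converges to $(0,\xi)^\top$ in $\mathcal H^s$. One can also see this explicitly: the position part is $\delta^{-1}\int_0^\delta\Lambda^{-1}\sin((\delta-\sigma)\Lambda)\xi\,d\sigma=O(\delta)$ in $H^s$, and the velocity part tends to $\xi$ in $H^{s-1}$.

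\emph{Third term.} The algebra property gives $\|y^\delta(\sigma)^p\|_{H^{s-1}}\le C\|y^\delta(\sigma)\|_{H^s}^p\le CC_R^p$. With $\sup_{t\le\delta}\|S(t)\|_{\mathcal L(\mathcal H^s)}<\infty$, the nonlinear integral is therefore $O(\delta)$ in $\mathcal H^s$.

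Combining the three terms,
\[\mathcal R_\delta(Y_0,\eta_\delta)\to Y_0+(0,\xi)^\top\quad\text{in }\mathcal H^s.\]
Choosing $\delta<\varkappa$ small enough that the error is below $\eps$ gives the realizability of $\tau_{(0,\xi)^\top}$.

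There is no real obstacle here. The only points to check are that the control's $L^1$ norm stays bounded, so the uniform existence time applies, and that $(0,\xi)^\top\in\mathcal H^s$, which requires only $\xi\in H^{s-1}$; this holds since $\xi$ is smooth.
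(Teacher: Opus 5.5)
Your proposal is correct and follows essentially the same route as the paper. You use the constant control $\delta^{-1}\xi$ on $[0,\delta]$ and invoke the uniform local bound of \Cref{prop:KG-LWP}, since the $L^1(0,\delta;H^{s-1})$ norm of the control is independent of $\delta$; you then pass to the limit term by term in the Duhamel formula, where the free part tends to $Y_0$, the control part tends to $(0,\xi)^\top$, and the nonlinear part is $O(\delta)$ in $\mathcal H^s$.
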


\begin{proof}
Fix $Y_0=(y_0,y_1)^\top\in\mathcal H^s$ and $\xi\in E^\R$. Let $\varkappa>0$. For $\delta>0$, set $\eta_\delta(t):=\frac1\delta\xi$,  $0\le t\le\delta$. Then $\eta_\delta\in L^\infty(0,\delta;E^\R)$ and $\|\eta_\delta\|_{L^1(0,\delta;H^{s-1})}=\|\xi\|_{H^{s-1}}$. By the uniform local bound in \Cref{prop:KG-LWP}, the corresponding solution satisfies
\[\sup_{0\le t\le\delta}\left\|\binom{y(t)}{\partial_t y(t)}\right\|_{\mathcal H^s}
\le C(Y_0,\xi)\]
for all sufficiently small $\delta>0$. Using the mild formulation \eqref{eq:mildformulationwaves}, the contribution of the control to the position is, as $\delta$ goes to zero,
\[\frac1\delta
\int_0^\delta \Lambda^{-1}\sin((\delta-\sigma)\Lambda)\xi d\sigma
\longrightarrow 0
\qquad\text{in }H^{s},\]
whereas the contribution of the control to the velocity is
\[\frac1\delta\int_0^\delta \cos((\delta-\sigma)\Lambda)\xi d\sigma\longrightarrow \xi\qquad\text{in }H^{s-1}.\]
The linear free part converges to $(y_0,y_1)^\top$ in $\mathcal H^s$, and the nonlinear contribution converges to zero in $\mathcal H^s$, since
\[\int_0^\delta \|y(\sigma)^{p}\|_{H^{s-1}}d\sigma\le C\delta
\sup_{0\le\sigma\le\delta}\|y(\sigma)\|_{H^{s}}^{p}
\to0.\]
Therefore, we proved that
\[\mathcal R_\delta(Y_0,\eta_\delta)\longrightarrow Y_0+\binom{0}{\xi}
\qquad\text{in }\mathcal H^s.\]
Taking $\delta<\varkappa$ sufficiently small yields the result.
\end{proof}

We now recover translations in the position component. This is specific to the second-order nature of the Klein--Gordon equation. The idea is to first create a large velocity translation of size $\delta^{-1}\xi$, let the uncontrolled dynamics evolve during a time $\delta$, and then cancel the large velocity. At leading order, the position gains the finite displacement $\xi$, while the velocity returns to its original value.

\begin{lemma}[Position translations]
\label{lem:KG-position-translations}
For every $\xi\in E^\R$, the translation $\tau_{(\xi,0)^\top}$ is small-time approximately realizable for \eqref{eq:KG}.
\end{lemma}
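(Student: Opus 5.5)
The idea is to mimic the nonlinear kick construction of \Cref{lem:NLS-odd-kick}, with the large translation placed in the velocity variable and the nonlinearity replaced by free linear transport. Fix $Y_0=(y_0,y_1)^\top\in\mathcal H^s$, $\xi\in E^\R$, $\eps>0$ and $\varkappa>0$. For $\delta>0$ the scheme has three pieces. First, by \Cref{lem:KG-direct-velocity-translations} applied to $\delta^{-1}\xi\in E^\R$, we approximately realize $\tau_{(0,\delta^{-1}\xi)^\top}$ in time $T_1<\varkappa/3$. Next we let the uncontrolled flow run for time $\delta$. Finally, again by \Cref{lem:KG-direct-velocity-translations} applied to $-\delta^{-1}\xi$, we approximately realize $\tau_{(0,-\delta^{-1}\xi)^\top}$ in time $T_2<\varkappa/3$. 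Concatenation and the triangle inequality then reduce everything to an asymptotic limit plus continuity of $\mathcal R_\delta(\cdot,0)$ near the large datum, exactly as in \Cref{lem:NLS-odd-kick}.

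The core step is the asymptotic statement
\[
\mathcal R_\delta\Bigl(Y_0+\binom{0}{\delta^{-1}\xi},0\Bigr)-\binom{0}{\delta^{-1}\xi}\longrightarrow Y_0+\binom{\xi}{0}\qquad\text{in }\mathcal H^s,\quad\delta\to0^+.
\]
To prove it, I would write the solution as $y(t)=y_0+t\delta^{-1}\xi+w(t)$ with $\partial_t y=\delta^{-1}\xi+v(t)$. The large datum is not bounded uniformly in $\mathcal H^s$, so \Cref{prop:KG-LWP} does not apply directly; instead I would run a renormalized fixed point on $[0,\delta]$ for the remainder, as in \Cref{lem:NLS-asymptotic-limit}.

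The linear part gives
\[
S(t)\binom{0}{\delta^{-1}\xi}=\binom{\delta^{-1}\Lambda^{-1}\sin(t\Lambda)\xi}{\delta^{-1}\cos(t\Lambda)\xi}.
\]
Since $\xi$ is smooth, $\Lambda^{-1}\sin(t\Lambda)\xi=t\xi+O_{H^s}(t^3)$ and $\cos(t\Lambda)\xi=\xi+O_{H^{s-1}}(t^2)$. At $t=\delta$ this yields a position shift $\xi+O(\delta^2)$ and a velocity $\delta^{-1}\xi+O(\delta)$.

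The key observation is that the position $y(t)$ stays bounded in $H^s$ on $[0,\delta]$, because $t\delta^{-1}\le 1$. Hence the nonlinear forcing $\kappa y^p$ is bounded in $H^{s-1}$ by the algebra property, and its Duhamel contribution over $[0,\delta]$ is $O(\delta)$ in $\mathcal H^s$. The contraction estimate on a ball in $C([0,\delta];\mathcal H^s)$ then closes with constants like $\delta\,C(\|y_0\|_{H^s}+\|\xi\|_{H^s}+M)^{p-1}$. This also gives local Lipschitz dependence on data near $Y_0+(0,\delta^{-1}\xi)^\top$, which is what the concatenation step needs. Finally, $S(\delta)Y_0\to Y_0$ by strong continuity.

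I expect the main obstacle to be carrying out this renormalized well-posedness and continuity carefully on $[0,\delta]$ with large velocity data. The fixed point should be set up in the variable $Y-S(t)(0,\delta^{-1}\xi)^\top$, checking that the explicit large linear term enters the nonlinearity only through a position part bounded uniformly in $\delta$. Once this is done, the rest is the same three-step concatenation argument.
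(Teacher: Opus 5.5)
Your proposal is correct and follows essentially the same route as the paper: realize the large velocity translation $\delta^{-1}\xi$, let the system evolve freely on $[0,\delta]$, remove the velocity translation, and justify the asymptotic limit with a fixed point that uses the uniform bound $\|\delta^{-1}\Lambda^{-1}\sin(t\Lambda)\xi\|_{H^s}\le C\|\xi\|_{H^s}$ on the position. The only cosmetic difference is that the paper runs the contraction on the position component alone in $C([0,\delta];H^s)$ and reads off the velocity from the mild formula, whereas you renormalize the full state by $S(t)(0,\delta^{-1}\xi)^\top$.
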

\begin{proof}
Fix $Y_0=(y_0,y_1)^\top\in\mathcal H^s$, $\xi\in E^\R$, $\eps>0$ and $\varkappa>0$. Let $\delta>0$ be small. By \Cref{lem:KG-direct-velocity-translations}, the translations $\tau_{(0,\delta^{-1}\xi)^\top}$ and $\tau_{(0,-\delta^{-1}\xi)^\top}$ are small-time approximately realizable. We first prove that, for all sufficiently small $\delta>0$, the uncontrolled solution with initial datum $(y_0,y_1+\delta^{-1}\xi)^\top$ is well-defined on $[0,\delta]$ and 
\begin{equation}
\label{eq:limit_1_wave}
\mathcal R_\delta\left(\binom{y_0}{y_1+\delta^{-1}\xi},0\right)+\binom{0}{-\delta^{-1}\xi}
\longrightarrow
\binom{y_0+\xi}{y_1}
\qquad\text{in }\mathcal H^s.
\end{equation}
First, notice that local well-posedness does not directly ensure existence on $[0,\delta]$, since the initial datum may become unbounded as $\delta\to 0$. We therefore apply a fixed-point argument to the Duhamel map for the position component (as in \Cref{lem:NLS-asymptotic-limit}). Define, for $y\in C([0,\delta];H^{s})$
\begin{align*}
\Gamma_\delta(y)(t):=\cos(t\Lambda)y_0+\Lambda^{-1}\sin(t\Lambda)y_1+\delta^{-1}\Lambda^{-1}\sin(t\Lambda)\xi-\kappa\int_0^t\Lambda^{-1}\sin((t-\sigma)\Lambda)y(\sigma)^{p}d\sigma.
\end{align*}
Since $\xi\in E^\R\subset C^\infty(M)$, we have $\delta^{-1}\Lambda^{-1}\sin(\delta\Lambda)\xi\to \xi$ in $H^{s}$. Also, $\cos(\delta\Lambda)y_0\to y_0$ in $H^{s}$ and $\Lambda^{-1}\sin(\delta\Lambda)y_1\to0$ in $H^{s}$. We now show that the Duhamel map admits a fixed point in a fixed ball of $C([0,\delta];H^{s})$. Indeed, the only large linear term is $\delta^{-1}\Lambda^{-1}\sin(t\Lambda)\xi$, and for $0\le t\le\delta$,
\[\|\delta^{-1}\Lambda^{-1}\sin(t\Lambda)\xi\|_{H^{s}}\le C\delta^{-1}t\|\xi\|_{H^{s}}
\le C\|\xi\|_{H^{s}}.\]
Although the full linear state has velocity of size $\delta^{-1}$, the position component is uniformly bounded on $[0,\delta]$. Indeed, the nonlinear term only depends on $y$, and the linear position satisfies
\[\sup_{0\le t\le\delta}
\left\|\cos(t\Lambda)y_0+\Lambda^{-1}\sin(t\Lambda)y_1+\delta^{-1}\Lambda^{-1}\sin(t\Lambda)\xi\right\|_{H^{s}}
\le C(Y_0,\xi).\]
Since $H^s(M)$ is an algebra, for $\|y\|_{C([0,\delta];H^{s})}\leq R$
\[\|\Gamma_\delta(y)\|_{C([0,\delta];H^{s})}\leq C(Y_0,\xi)+C\delta R^{p}.\]
Moreover, for $\|y\|_{C([0,\delta];H^{s})},\|z\|_{C([0,\delta];H^{s})}\leq R$,
\[\|\Gamma_\delta(y)-\Gamma_\delta(z)\|_{C([0,\delta];H^{s})}\leq C\delta R^{p-1}\|y-z\|_{C([0,\delta];H^{s})}.\]
Choosing $R$ sufficiently large and then $\delta>0$ sufficiently small, the Duhamel map leaves the closed ball of radius $R$ invariant and it is a contraction there. Hence, by the Banach fixed-point theorem, $\Gamma_\delta$ admits a unique fixed point on that ball and 
\[\sup_{0\le t\le\delta}\|y(t)\|_{H^{s}} \le C(Y_0,\xi).\]
The integral formulation also gives $y\in C^{1}([0,\delta];H^{s-1})$. Therefore, $(y,\partial_t y)^\top \in C([0,\delta];\mc H^{s})$ and satisfies the mild formulation with initial datum $(y_0,y_1+\delta^{-1}\xi)^\top$ and no control. By uniqueness, 
\[\binom{y(t)}{\partial_t y(t)}=\mathcal R_t\left(\binom{y_0}{y_1+\delta^{-1}\xi},0\right), \qquad 0\leq t\leq \delta.\]
The uniform bound on $y$ gives 
\[\left\|\int_0^\delta\Lambda^{-1}\sin((\delta-\sigma)\Lambda)y(\sigma)^{p}d\sigma\right\|_{H^{s}}\le C\delta\sup_{0\le\sigma\le\delta}\|y(\sigma)\|_{H^{s}}^{p}\to0.\]
Therefore $y(\delta)\to y_0+\xi$ in $H^{s}$. For the velocity component, again by the mild formulation,
\[\partial_t y(\delta)=-\Lambda\sin(\delta\Lambda)y_0+\cos(\delta\Lambda)y_1+\delta^{-1}\cos(\delta\Lambda)\xi-\kappa\int_0^\delta\cos((\delta-\sigma)\Lambda)y(\sigma)^{p}d\sigma.
\]
Hence
\[\partial_t y(\delta)-\delta^{-1}\xi=-\Lambda\sin(\delta\Lambda)y_0+\cos(\delta\Lambda)y_1+\delta^{-1}(\cos(\delta\Lambda)-I)\xi-\kappa\int_0^\delta\cos((\delta-\sigma)\Lambda)y(\sigma)^{p}d\sigma.\]
The first term tends to $0$ in $H^{s-1}$, the second one tends to $y_1$ in $H^{s-1}$, and the nonlinear term tends to $0$ in $H^{s-1}$. Finally, since $\xi$ is smooth,
\[\delta^{-1}(\cos(\delta\Lambda)-I)\xi\to0
\qquad\text{in }H^{s-1}.\]
Thus $\partial_t y(\delta)-\delta^{-1}\xi\to y_1$ in $H^{s-1}$, and \eqref{eq:limit_1_wave} follows. Choose $\delta\in(0,\varkappa/3)$ small enough so that
\[\left\|\mathcal R_\delta\left(\binom{y_0}{y_1+\delta^{-1}\xi},0\right)+\binom{0}{-\delta^{-1}\xi}
-\binom{y_0+\xi}{y_1}\right\|_{\mathcal H^s}<\frac{\eps}{3}.\]
By the same fixed-point argument and continuous dependence on the initial datum, there exists $\rho>0$ such that, if $\|Z-(y_0,y_1+\delta^{-1}\xi)^\top\|_{\mathcal H^s}<\rho$, then the uncontrolled solution with initial datum $Z$ is well defined on $[0,\delta]$ and
\[\left\|\mathcal R_\delta(Z,0)-\mathcal R_\delta\left(\binom{y_0}{y_1+\delta^{-1}\xi},0\right)\right\|_{\mathcal H^s}<\frac{\eps}{3}.\]
Since $\tau_{(0,\delta^{-1}\xi)^\top}$ is small-time approximately realizable, there exist $T_1\in(0,\varkappa/3)$ and $\eta_1\in L^\infty(0,T_1;\allowbreak E^\R)$ such that
\[\left\|\mathcal R_{T_1}(Y_0,\eta_1)
-\binom{y_0}{y_1+\delta^{-1}\xi}
\right\|_{\mathcal H^s}<\rho.\]
Set $Z:=\mathcal R_{T_1}(Y_0,\eta_1)$. On $[T_1,T_1+\delta]$, we take zero control, so the state becomes $\mathcal R_\delta(Z,0)$. Finally, since $\tau_{(0,-\delta^{-1}\xi)^\top}$ is small-time approximately realizable, there exist $T_2\in(0,\varkappa/3)$ and $\eta_2\in L^\infty(0,T_2;E^\R)$ such that
\[\left\|\mathcal R_{T_2}(\mathcal R_\delta(Z,0),\eta_2)-\left(\mathcal R_\delta(Z,0)+\binom{0}{-\delta^{-1}\xi}\right)\right\|_{\mathcal H^s}<\frac{\eps}{3}.\]
Concatenating the three pieces, with $T:=T_1+\delta+T_2<\varkappa$, we obtain
\[\left\|\mathcal R_T(Y_0,\eta)-\left(Y_0+\binom{\xi}{0}\right)\right\|_{\mathcal H^s}<\eps.\]
Therefore $\tau_{(\xi,0)^\top}$ is small-time approximately realizable.
\end{proof}
We now prove the nonlinear generation step. Since the Klein--Gordon equation is real-valued, the parameter $\alpha$ is real. The idea is to insert a large position translation of size $\delta^{{-1/p}}\alpha\varphi$, let the uncontrolled equation evolve during time $\delta$, and then remove the same position translation. The position returns to its initial value, while the velocity gains a finite contribution generated by the polynomial term.

\begin{lemma}[Nonlinear kick]
\label{lem:KG-odd-kick}
Let $\varphi\in C^\infty(M;\R)$ and $\alpha\in\R$. Assume that, for every $\delta>0$, the translations $\tau_{(\pm\delta^{-\frac1{p}}\alpha\varphi,0)^\top}$ are small-time approximately realizable for \eqref{eq:KG}. Then the translation $\tau_{(0,-\kappa\alpha^{p}\varphi^{p})^\top}$ is small-time approximately realizable for \eqref{eq:KG}.
\end{lemma}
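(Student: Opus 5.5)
The proof follows the same three-piece concatenation scheme as \Cref{lem:NLS-odd-kick}. The new ingredient is an asymptotic limit for the Klein--Gordon flow with a large position datum. Set $\Xi:=(0,-\kappa\alpha^p\varphi^p)^\top$ and fix $Y_0=(y_0,y_1)^\top\in\mathcal H^s$. The first step is to show that, for $\delta>0$ small, the uncontrolled solution with initial datum $(y_0+\delta^{-1/p}\alpha\varphi,y_1)^\top$ exists on $[0,\delta]$, and that
\[
\mathcal R_\delta\left(\binom{y_0+\delta^{-\frac1p}\alpha\varphi}{y_1},0\right)-\binom{\delta^{-\frac1p}\alpha\varphi}{0}\longrightarrow \binom{y_0}{y_1-\kappa\alpha^p\varphi^p}\qquad\text{in }\mathcal H^s .
\]
Because the initial datum is unbounded in $\mathcal H^s$, \Cref{prop:KG-LWP} does not apply directly. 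I would instead write $y=\delta^{-1/p}\alpha\varphi+w$ and run a fixed point for $w$ in $C([0,\delta];H^s)$, as in \Cref{lem:NLS-asymptotic-limit}. The renormalized Duhamel map is
\[
w(t)=\cos(t\Lambda)y_0+\Lambda^{-1}\sin(t\Lambda)y_1+(\cos(t\Lambda)-I)\delta^{-\frac1p}\alpha\varphi-\kappa\int_0^t\Lambda^{-1}\sin((t-\sigma)\Lambda)\bigl(\delta^{-\frac1p}\alpha\varphi+w(\sigma)\bigr)^pd\sigma .
\]
Since $\varphi$ is smooth, $\|(\cos(t\Lambda)-I)\varphi\|_{H^s}\le Ct^2$, so the large linear term is $O(\delta^{2-1/p})$.

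The nonlinear source is handled with the algebra property. Expanding the binomial gives $(\delta^{-1/p}\alpha\varphi+w)^p=\delta^{-1}\alpha^p\varphi^p+O_{H^s}(\delta^{-(p-1)/p}(1+R)^{p})$ for $\|w\|_{H^s}\le R$. In the position component it is smoothed by $\Lambda^{-1}\sin((t-\sigma)\Lambda)$, whose norm is $O(t-\sigma)$, so after integration over $[0,\delta]$ its size is $O(\delta)$. This gives invariance of a fixed ball for small $\delta$. The Lipschitz constant is $C\delta(\delta^{-(p-1)/p}+R^{p-1})\to0$, which gives the contraction. Uniqueness then identifies the fixed point with the true solution, and the limit $w(\delta)\to y_0$ follows. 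The velocity is obtained by differentiating the Duhamel formula:
\[
\partial_t y(\delta)=-\Lambda\sin(\delta\Lambda)(y_0+\delta^{-\frac1p}\alpha\varphi)+\cos(\delta\Lambda)y_1-\kappa\int_0^\delta\cos((\delta-\sigma)\Lambda)y(\sigma)^pd\sigma .
\]
The term $\delta^{-1/p}\Lambda\sin(\delta\Lambda)\varphi$ is $O(\delta^{1-1/p})\to0$ in $H^{s-1}$ because $\varphi$ is smooth. The leading nonlinear part is $-\kappa\alpha^p\delta^{-1}\int_0^\delta\cos((\delta-\sigma)\Lambda)\varphi^p\,d\sigma$, which tends to $-\kappa\alpha^p\varphi^p$ by strong continuity. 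The remainder is $O(\delta^{1/p})$.

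With the limit in hand, the concatenation is exactly as in \Cref{lem:NLS-odd-kick}. Choose $\delta<\varkappa/3$ so that the limit holds within $\eps/3$. Continuous dependence on the datum holds near the large initial datum, via the same fixed-point argument, and yields a radius $\rho$. Then I concatenate three pieces. First, realize $\tau_{(\delta^{-1/p}\alpha\varphi,0)^\top}$ within $\rho$ in time $T_1<\varkappa/3$. Next, let the uncontrolled flow run for time $\delta$. Finally, realize $\tau_{(-\delta^{-1/p}\alpha\varphi,0)^\top}$ within $\eps/3$ in time $T_2<\varkappa/3$. The triangle inequality then gives the final state within $\eps$ of $Y_0+\Xi$.

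The main obstacle is the fixed-point step. There one must check that the large position datum contributes no unbounded terms. In the position component this is $(\cos(t\Lambda)-I)$ acting on a smooth profile. In the velocity component it is $\Lambda\sin(t\Lambda)$ acting on a smooth profile, and this is where the smoothness of $\varphi$ is essential. One must also check the continuous dependence near that unbounded datum with a $\delta$-dependent but fixed radius $\rho$. This follows from the contraction estimate, as $\delta$ is fixed before $\rho$ is chosen.
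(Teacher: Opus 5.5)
Your proposal is correct and follows the paper's proof essentially step for step: a renormalized Duhamel fixed point around the large position datum, then the asymptotic limit in which the velocity picks up $-\kappa\alpha^p\varphi^p$ through strong continuity of $\cos(t\Lambda)$, then the same three-piece concatenation. The only cosmetic difference is where you run the contraction. You do it for the position component alone in $C([0,\delta];H^s)$ and recover the velocity from the differentiated Duhamel formula, as the paper does in its position-translation lemma, whereas the paper's proof of this lemma works on the full state in $C([0,\delta];\mathcal H^s)$.
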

\begin{proof}
Set $\chi:=(0,-\kappa\alpha^{p}\varphi^{p})^\top$ and fix $Y_0=(y_0,y_1)^\top\in\mathcal H^s$, $\eps>0$ and $\varkappa>0$. We first prove that, for all sufficiently small $\delta>0$, the uncontrolled solution with initial datum $(y_0+\delta^{-\frac1{p}}\alpha\varphi,y_1)^\top$ is well defined on $[0,\delta]$ and
\begin{equation}
\label{eq:KG-odd-kick-limit}
\mathcal R_\delta\left(\binom{y_0+\delta^{-\frac1{p}}\alpha\varphi}{y_1},0\right)-\binom{\delta^{-\frac1{p}}\alpha\varphi}{0}\longrightarrow Y_0+\chi\qquad\text{in }\mathcal H^s.
\end{equation}
As in the proof of \Cref{lem:NLS-asymptotic-limit}, we first construct a solution of the renormalized equation. Define on $C([0,\delta];\mathcal H^s)$
\[\widetilde{\Gamma}_\delta(Z):=\Gamma_{(y_0+\delta^{-\frac1{p}}\alpha\varphi,y_1)^\top,0}\left(Z+\binom{\delta^{-\frac1{p}}\alpha\varphi}{0}\right)-\binom{\delta^{-\frac1{p}}\alpha\varphi}{0},\]
where $\Gamma_{Y_0,\eta}$ is the Duhamel map associated with initial datum $Y_0$ and control $\eta$. Since $\varphi$ is smooth, the linear contribution of the translation satisfies
\[\sup_{0\le t\le\delta}\left\|\binom{(\cos(t\Lambda)-I)\delta^{-\frac1{p}}\alpha\varphi}{-\Lambda\sin(t\Lambda)\delta^{-\frac1{p}}\alpha\varphi}\right\|_{\mathcal H^s}\le C\delta^{\frac{p-1}{p}}.\]
Moreover, since $H^s(M)$ is an algebra, for $\|z\|_{H^s}\le R$,
\[\left\|(\delta^{-\frac1{p}}\alpha\varphi+z)^{p}\right\|_{H^{s-1}}\le C(\delta^{-1}+R^{p}),\]
while its Lipschitz constant on such a ball is bounded by $C(\delta^{-\frac{p-1}{p}}+R^{p-1})$. After integration on $[0,\delta]$, these estimates give
\[\|\widetilde{\Gamma}_\delta(Z)\|_{C([0,\delta];\mathcal H^s)}\le C(Y_0,\alpha,\varphi)+C\delta R^{p}, \quad \text{ for }\|Z\|_{C([0,\delta];\mathcal H^s)}\le R\] 
and
\[\|\widetilde{\Gamma}_\delta(Z_1)-\widetilde{\Gamma}_\delta(Z_2)\|_{C([0,\delta];\mathcal H^s)}\le C\left(\delta^{\frac1{p}}+\delta R^{p-1}\right)\|Z_1-Z_2\|_{C([0,\delta];\mathcal H^s)}\]
for $\|Z_1\|_{C([0,\delta];\mathcal H^s)},\|Z_2\|_{C([0,\delta];\mathcal H^s)}\le R$. Choosing $R$ sufficiently large and then $\delta>0$ sufficiently small, the map leaves the closed ball of radius $R$ invariant and is a contraction there. Hence it admits a unique fixed point in that ball. By the integral formulation, its first component $z^\delta$ belongs to $C^1([0,\delta];H^{s-1})$, and its second component is $\partial_t z^\delta$.
\noindent  Set $y^\delta(t):=z^\delta(t)+\delta^{-\frac1{p}}\alpha\varphi$. Then $(y^{\delta},\partial_t y^{\delta})^\top$ satisfies the mild formulation with initial datum $(y_0+\delta^{-\frac1{p}}\alpha\varphi,y_1)^\top$ and no control. By uniqueness
\[\binom{y^\delta(t)}{\partial_t y^\delta(t)}=\mathcal R_t\left(\binom{y_0+\delta^{-\frac1{p}}\alpha\varphi}{y_1},0\right), \quad 0\leq t\leq \delta.\] 
Moreover
\[\sup_{0\le t\le\delta}\left\|\binom{z^\delta(t)}{\partial_t y^\delta(t)}\right\|_{\mathcal H^s}\le C\]
for all sufficiently small $\delta$. Therefore, uniformly for
$0\le t\le\delta$,
\[(y^\delta(t))^{p}=\delta^{-1}\alpha^{p}\varphi^{p}+O_{H^s}\left(\delta^{-\frac{p-1}{p}}\right).\]
For the position component, the linear contribution satisfies
\[\cos(\delta\Lambda)(y_0+\delta^{-\frac1{p}}\alpha\varphi)+\Lambda^{-1}\sin(\delta\Lambda)y_1-\delta^{-\frac1{p}}\alpha\varphi\longrightarrow y_0\qquad\text{in }H^{s},\]
and the nonlinear contribution tends to zero in $H^{s}$, since
\[\int_0^\delta\left\|\Lambda^{-1}\sin((\delta-\sigma)\Lambda)(y^\delta(\sigma))^{p}\right\|_{H^{s}}d\sigma\le C\int_0^\delta(\delta-\sigma)\|(y^\delta(\sigma))^{p}\|_{H^s}d\sigma\to0.\]
Thus the first component converges to $y_0$. For the velocity component,
\[\partial_t y^\delta(\delta)=-\Lambda\sin(\delta\Lambda)(y_0+\delta^{-\frac1{p}}\alpha\varphi)+\cos(\delta\Lambda)y_1-\kappa\int_0^\delta\cos((\delta-\sigma)\Lambda)(y^\delta(\sigma))^{p}d\sigma.\]
The linear terms converge to $y_1$ in $H^{s-1}$. Moreover,
\[\int_0^\delta\cos((\delta-\sigma)\Lambda)(y^\delta(\sigma))^{p}d\sigma=\delta^{-1}\alpha^{p}\int_0^\delta\cos((\delta-\sigma)\Lambda)\varphi^{p}d\sigma+O_{H^{s-1}}\left(\delta^{\frac1{p}}\right).\]
By strong continuity of $\cos(t\Lambda)$ on $H^{s-1}$,
\[\frac1\delta\int_0^\delta\cos((\delta-\sigma)\Lambda)\varphi^{p}d\sigma\longrightarrow\varphi^{p}\qquad\text{in }H^{s-1}.\]
Hence
\[\partial_t y^\delta(\delta)\longrightarrow y_1-\kappa\alpha^{p}\varphi^{p}\qquad\text{in }H^{s-1}.\]
This proves \eqref{eq:KG-odd-kick-limit}.
\medskip

\noindent Choose $\delta\in(0,\varkappa/3)$ small enough so that the error in \eqref{eq:KG-odd-kick-limit} is smaller than $\eps/3$. By the same fixed-point argument and continuous dependence on the initial datum, there exists $\rho>0$ such that if
\[\left\|Z-\binom{y_0+\delta^{-\frac1{p}}\alpha\varphi}{y_1}\right\|_{\mathcal H^s}<\rho,\]
then the uncontrolled solution with initial datum $Z$ is well defined on $[0,\delta]$ and
\[\left\|\mathcal R_\delta(Z,0)-\mathcal R_\delta\left(\binom{y_0+\delta^{-\frac1{p}}\alpha\varphi}{y_1},0\right)\right\|_{\mathcal H^s}<\frac{\eps}{3}.\]
Since $\tau_{(\delta^{-\frac1{p}}\alpha\varphi,0)^\top}$ is small-time approximately realizable, there exist $T_1\in(0,\varkappa/3)$ and $\eta_1\in L^\infty(0,T_1;\allowbreak E^\R)$ such that
\[\left\|\mathcal R_{T_1}(Y_0,\eta_1)-\binom{y_0+\delta^{-\frac1{p}}\alpha\varphi}{y_1}\right\|_{\mathcal H^s}<\rho.\]
Set $Z:=\mathcal R_{T_1}(Y_0,\eta_1)$. On $[T_1,T_1+\delta]$, we take zero control, so the state becomes $\mathcal R_\delta(Z,0)$. Finally, since
$\tau_{(-\delta^{-\frac1{p}}\alpha\varphi,0)^\top}$ is small-time approximately realizable, there exist $T_2\in(0,\varkappa/3)$ and $\eta_2\in L^\infty(0,T_2;E^\R)$ such that
\[\left\|\mathcal R_{T_2}(\mathcal R_\delta(Z,0),\eta_2)-\left(\mathcal R_\delta(Z,0)-\binom{\delta^{-\frac1{p}}\alpha\varphi}{0}\right) \right\|_{\mathcal H^s}<\frac{\eps}{3}.\]
Concatenating the three pieces, with
$T:=T_1+\delta+T_2<\varkappa$, we obtain
\[\|\mathcal R_T(Y_0,\eta)-(Y_0+\chi)\|_{\mathcal H^s}<\eps.\]
Therefore $\tau_\chi$ is small-time approximately realizable.
\end{proof}
\begin{lemma}[Velocity-to-position  translations]
\label{lem:KG-velocity-position-translations}
Let $\xi\in C^\infty(M;\R)$. Assume that, for every $\delta>0$, the translations $\tau_{(0,\pm\delta^{-1}\xi)^\top}$ are small-time approximately realizable for \eqref{eq:KG}. Then the translation $\tau_{(\xi,0)^\top}$ is small-time approximately realizable for \eqref{eq:KG}.
\end{lemma}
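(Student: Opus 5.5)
The argument will follow the proof of \Cref{lem:KG-position-translations} almost word for word. The only change is that the velocity translations $\tau_{(0,\pm\delta^{-1}\xi)^\top}$ are now supplied by the hypothesis instead of by \Cref{lem:KG-direct-velocity-translations}. The one ingredient that used $\xi\in E^\R$ there was the smoothness of $\xi$, and that is still available since $\xi\in C^\infty(M;\R)$.

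Fix $Y_0=(y_0,y_1)^\top\in\mathcal H^s$, $\eps>0$ and $\varkappa>0$. The first and main step is the asymptotic limit
\[\mathcal R_\delta\left(\binom{y_0}{y_1+\delta^{-1}\xi},0\right)+\binom{0}{-\delta^{-1}\xi}\longrightarrow\binom{y_0+\xi}{y_1}\qquad\text{in }\mathcal H^s.\]
Existence on $[0,\delta]$ is not given by \Cref{prop:KG-LWP}, because the data blow up as $\delta\to0$. I will therefore run the fixed-point argument on the position component only, using the map $\Gamma_\delta$ from the proof of \Cref{lem:KG-position-translations}. The only large linear term is $\delta^{-1}\Lambda^{-1}\sin(t\Lambda)\xi$, and for $0\le t\le\delta$ it satisfies
\[\|\delta^{-1}\Lambda^{-1}\sin(t\Lambda)\xi\|_{H^s}\le C\|\xi\|_{H^s}.\]
The algebra property of $H^s$ then gives invariance of a fixed ball and a contraction for small $\delta$. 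This yields a solution with $y$ uniformly bounded in $H^s$ and with $\partial_ty$ recovered from the integral formulation. By uniqueness it coincides with $\mathcal R_t$.

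Passing to the limit uses three facts:
\begin{itemize}
\item $\delta^{-1}\Lambda^{-1}\sin(\delta\Lambda)\xi\to\xi$ in $H^s$ and $\delta^{-1}(\cos(\delta\Lambda)-I)\xi\to0$ in $H^{s-1}$; this is where smoothness of $\xi$ enters.
\item The free terms converge to $(y_0,y_1)$ by strong continuity.
\item The nonlinear Duhamel terms are $O(\delta)$ by the uniform bound on $y$.
\end{itemize}
This a priori control of a solution with blowing-up velocity is the only delicate point, and it is handled exactly as before.

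The second step is the concatenation. Choose $\delta<\varkappa/3$ so that the error in the limit above is below $\eps/3$. Continuous dependence from the same fixed-point construction gives $\rho>0$ such that uncontrolled solutions starting $\rho$-close to $(y_0,y_1+\delta^{-1}\xi)^\top$ exist on $[0,\delta]$ and stay $\eps/3$-close at time $\delta$. The steps are then:
\begin{enumerate}
\item Realize $\tau_{(0,\delta^{-1}\xi)^\top}$ within $\rho$ in time $T_1<\varkappa/3$.
\item Let the system evolve freely for time $\delta$.
\item Realize $\tau_{(0,-\delta^{-1}\xi)^\top}$ within $\eps/3$ in time $T_2<\varkappa/3$.
\end{enumerate}
The concatenation property and the triangle inequality then give a state $\eps$-close to $Y_0+(\xi,0)^\top$ at time $T_1+\delta+T_2<\varkappa$.
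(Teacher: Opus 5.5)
Your proposal is correct and matches the paper's proof: it reruns the argument of \Cref{lem:KG-position-translations}, with the velocity translations $\tau_{(0,\pm\delta^{-1}\xi)^\top}$ now supplied by the hypothesis. You rightly note that the only property of $\xi$ used there is its smoothness. The fixed point on the position component, the three limits, and the three-step concatenation are exactly those of the paper.
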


\begin{proof}
The proof is the same as the proof of \Cref{lem:KG-position-translations}, replacing the use of \Cref{lem:KG-direct-velocity-translations} by the present assumption.
\end{proof}

\begin{lemma}[Stability under sums and limits]
\label{lem:KG-sums}
If $\tau_{\Xi_1}$ and $\tau_{\Xi_2}$ are small-time approximately realizable for \eqref{eq:KG}, then $\tau_{\Xi_1+\Xi_2}$ is small-time approximately realizable. Moreover, if $\Xi_j\to\Xi$ in $\mathcal H^s$ and every $\tau_{\Xi_j}$ is small-time approximately realizable, then $\tau_\Xi$ is small-time approximately realizable.
\end{lemma}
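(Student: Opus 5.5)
The plan is to copy the proof of \Cref{lem:NLS-sums}, replacing $H^s(M;\C)$ by the energy space $\mathcal H^s$ and using the concatenation property of the Klein--Gordon flow. One point needs more care than in the Schrödinger case: realizability of $\tau_{\Xi_2}$ is a statement starting from an arbitrary initial state, but the intermediate state is only known approximately. So I would use realizability of $\tau_{\Xi_2}$ from the intermediate state actually reached, rather than from $Y_0+\Xi_1$; this avoids needing any continuity of the controlled flow.

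\emph{Sums.} Fix $Y_0\in\mathcal H^s$, $\eps>0$ and $\varkappa>0$.
\begin{enumerate}
\item Since $\tau_{\Xi_1}$ is small-time approximately realizable, choose $T_1\in(0,\varkappa/2)$ and $\eta_1\in L^\infty(0,T_1;E^\R)$ with $\|\mathcal R_{T_1}(Y_0,\eta_1)-(Y_0+\Xi_1)\|_{\mathcal H^s}<\eps/2$.
\item Set $Z:=\mathcal R_{T_1}(Y_0,\eta_1)$. Applying realizability of $\tau_{\Xi_2}$ at the initial state $Z$, choose $T_2\in(0,\varkappa/2)$ and $\eta_2\in L^\infty(0,T_2;E^\R)$ with $\|\mathcal R_{T_2}(Z,\eta_2)-(Z+\Xi_2)\|_{\mathcal H^s}<\eps/2$.
\item Let $\eta$ be the concatenation of $\eta_1$ and $\eta_2$; it lies in $L^\infty(0,T_1+T_2;E^\R)$. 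The concatenation property gives $\mathcal R_{T_1+T_2}(Y_0,\eta)=\mathcal R_{T_2}(Z,\eta_2)$, and this solution is defined on the whole interval because each piece is.
\item By the triangle inequality, $\|\mathcal R_{T_1+T_2}(Y_0,\eta)-(Y_0+\Xi_1+\Xi_2)\|_{\mathcal H^s}$ is at most $\|\mathcal R_{T_2}(Z,\eta_2)-(Z+\Xi_2)\|_{\mathcal H^s}+\|Z-(Y_0+\Xi_1)\|_{\mathcal H^s}<\eps$, with total time $T_1+T_2<\varkappa$.
\end{enumerate}

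\emph{Limits.} Suppose $\Xi_j\to\Xi$ in $\mathcal H^s$ and every $\tau_{\Xi_j}$ is realizable. Choose $j$ with $\|\Xi_j-\Xi\|_{\mathcal H^s}<\eps/2$. Then realize $\tau_{\Xi_j}$ from $Y_0$ in some time $T<\varkappa$ with error less than $\eps/2$. The triangle inequality gives error less than $\eps$ with respect to $Y_0+\Xi$.

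There is no real obstacle; the argument is pure bookkeeping. The only care needed is the order of choices in the sum step: the error tolerance for the second translation must be applied at the actual intermediate state $Z$. This is legitimate because realizability holds for every initial state.
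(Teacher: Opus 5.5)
Your proof is correct and follows essentially the same route as the paper, which likewise realizes $\tau_{\Xi_2}$ from the point actually reached after the first step and concludes by concatenation and the triangle inequality. The limit part is identical to the paper's argument.
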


\begin{proof}
Fix $Y_0\in\mathcal H^s$, $\eps>0$ and $\varkappa>0$. First realize $\tau_{\Xi_1}$ from $Y_0$ in time $T_1<\varkappa/2$ with error small enough. From the obtained point, realize $\tau_{\Xi_2}$ in time $T_2<\varkappa/2$ with error small enough. By concatenation and the triangle inequality, the final point is arbitrarily close to $Y_0+\Xi_1+\Xi_2$ in time $T_1+T_2<\varkappa$. For the second assertion, choose $j$ such that $\|\Xi_j-\Xi\|_{\mathcal H^s}<\eps/2$, and realize $\tau_{\Xi_j}$ from $Y_0$ in time smaller than $\varkappa$, with error smaller than $\eps/2$. The triangle inequality gives the result.
\end{proof}

From now on we assume $\kappa\neq0$ and use the saturation spaces $\mathsf E_n^\R$ defined in \eqref{eq:odd-saturation} when $p$ is odd, and $\mathsf F_n^\R$ defined in \eqref{eq:even-saturation}, with $r=s$, when $p$ is even.

\begin{lemma}[Realizability of saturated Klein--Gordon directions]
\label{lem:KG-saturated-directions}
Let $\xi\in\mathsf E_\infty^\R$ if $p$ is odd, and
$\xi\in\mathsf F_\infty^\R$ if $p$ is even. Then both translations $\tau_{(\xi,0)^\top}$ and $\tau_{(0,\xi)^\top}$ are small-time approximately realizable for \eqref{eq:KG}.
\end{lemma}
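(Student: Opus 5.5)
The argument is an induction on the saturation level $n$, parallel to \Cref{lem:NLS-saturated-directions}. The claim at level $n$ is that both $\tau_{(\xi,0)^\top}$ and $\tau_{(0,\xi)^\top}$ are small-time approximately realizable for every $\xi\in\mathsf E_n^\R$ (odd $p$) or every $\xi\in\mathsf F_n^\R$ (even $p$). The base case $n=0$ is exactly \Cref{lem:KG-direct-velocity-translations} together with \Cref{lem:KG-position-translations}, since $\mathsf E_0^\R=\mathsf F_0^\R=E^\R$. A useful general remark: once velocity translations $\tau_{(0,\xi)^\top}$ are realizable for every $\xi$ in a vector space $L$, the same holds for $\pm\delta^{-1}\xi$. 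Then \Cref{lem:KG-velocity-position-translations} gives position translations for all of $L$. So it suffices to propagate velocity translations and convert them at each stage.

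\textbf{Odd $p$.} Assume the claim at level $n$ and let $\varphi\in\mathsf E_n^\R$, $\alpha\in\R$. Since $\pm\delta^{-1/p}\alpha\varphi\in\mathsf E_n^\R$, the position translations $\tau_{(\pm\delta^{-1/p}\alpha\varphi,0)^\top}$ are realizable. \Cref{lem:KG-odd-kick} then yields $\tau_{(0,-\kappa\alpha^p\varphi^p)^\top}$. Because $p$ is odd and $\kappa\neq0$, the map $\alpha\mapsto-\kappa\alpha^p$ is onto $\R$, so $\tau_{(0,\beta\varphi^p)^\top}$ is realizable for all $\beta\in\R$. Combining these with the induction hypothesis through the sum part of \Cref{lem:KG-sums} gives velocity translations for all of $\mathsf E_{n+1}^\R=\operatorname{span}(\mathsf E_n^\R\cup\{\varphi^p\})$. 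The remark above then upgrades them to position translations.

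\textbf{Even $p$.} Now $-\kappa\alpha^p$ has the fixed sign $-\operatorname{sign}\kappa$. The kick lemma therefore only gives $\tau_{(0,-c\varphi^p)^\top}$ for $c$ ranging over a half-line with sign of $\kappa$. I will treat $\kappa>0$; the case $\kappa<0$ reduces to it by replacing the cone construction with its mirror image, or equivalently by noting that the definition of $\mathsf F_{n+1}^\R$ is symmetric (it is the largest subspace in the closed cone, and $-\overline{\mathsf C_n}$ gives the same subspace). With $\kappa>0$, all velocity translations by elements $\eta-\sum_j c_j\varphi_j^p$ with $c_j\geq0$ and $\eta,\varphi_j\in\mathsf F_n^\R$ are realizable, by sums; rescaling $\varphi_j$ absorbs $c_j$, so this covers all of $\mathsf C_n$. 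By the limit part of \Cref{lem:KG-sums} (the $H^{s-1}$ component of the $\mathcal H^s$ norm is controlled by $H^s$, and $r=s$), the velocity translations extend to $\overline{\mathsf C_n}^{H^s}$. In particular they hold on $\mathsf F_{n+1}^\R\subset\overline{\mathsf C_n}$. Since $\mathsf F_{n+1}^\R$ is a vector space, both $\pm\delta^{-1}\xi$ lie in it, and \Cref{lem:KG-velocity-position-translations} yields the position translations.

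\textbf{Main obstacle.} The delicate point is the sign issue for even $p$ when $\kappa<0$. For $\kappa<0$ one directly obtains the cone $\{\eta+\sum\varphi_j^p\}=-\mathsf C_n$ (using $-\eta\in\mathsf F_n^\R$). Its closure is $-\overline{\mathsf C_n}$, whose maximal linear subspace is again $\mathsf F_{n+1}^\R$, so the argument goes through verbatim. One also needs to check that the limit step uses $H^s$-convergence of the velocity datum, which is stronger than required. Finally, $\mathsf F_n^\R$ consists of smooth functions, as shown in the proof of \Cref{lem:algebra-odd-sat}, so the smoothness hypotheses of the kick and conversion lemmas are satisfied.
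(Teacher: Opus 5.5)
Your proposal is correct and follows essentially the same induction as the paper. The base case comes from the direct velocity and position translation lemmas. For odd $p$, the nonlinear kick combined with the surjectivity of $\alpha\mapsto-\kappa\alpha^p$ does the work. For even $p$, you obtain realizability on $\operatorname{sgn}(\kappa)\mathsf C_n$, then on its $H^s$-closure, hence on $\mathsf F_{n+1}^\R$, and finish with the velocity-to-position conversion.

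The differences are cosmetic. You assemble all velocity translations at level $n+1$ before converting them to position translations once, whereas the paper converts each $\beta\varphi^p$ individually and then sums. Your separate treatment of $\kappa<0$ via $-\overline{\mathsf C_n}$ is exactly the paper's $\operatorname{sgn}(\kappa)$ bookkeeping.
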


\begin{proof}
We argue by induction on $n$. For $n=0$, the translation $\tau_{(0,\xi)^\top}$ follows from \Cref{lem:KG-direct-velocity-translations}, while $\tau_{(\xi,0)^\top}$ follows from \Cref{lem:KG-position-translations}. Assume that the result holds for some $n\ge0$.
Let $\varphi\in\mathsf E_n^\R$ if $p$ is odd, and $\varphi\in\mathsf F_n^\R$ if $p$ is even.
Then, for every $\delta>0$ and every $\alpha\in\R$, the induction hypothesis gives the realizability of the translations $\tau_{(\pm\delta^{-1/p}\alpha\varphi,0)^\top}$. By \Cref{lem:KG-odd-kick}, the translation $\tau_{(0,-\kappa\alpha^{p}\varphi^{p})^\top}$ is small-time approximately realizable. If $p$ is odd, since $\kappa\neq0$ and the map $\alpha\mapsto-\kappa\alpha^{p}$ is onto $\R$, it follows that $\tau_{(0,\beta\varphi^{p})^\top}$ is small-time approximately realizable for every $\beta\in\R$. Applying \Cref{lem:KG-velocity-position-translations}, we also get that $\tau_{(\beta\varphi^{p},0)^\top}$ is small-time approximately realizable for every $\beta\in\R$. Combining this with the induction hypothesis and \Cref{lem:KG-sums}, we obtain that both $\tau_{(\xi,0)^\top}$ and $\tau_{(0,\xi)^\top}$ are small-time approximately realizable for every $\xi\in \mathsf E_{n+1}^\R$.

If $p$ is even, taking $\alpha=|\kappa|^{-1/p}$ gives the realizability of $\tau_{(0,-\operatorname{sgn}(\kappa)\varphi^p)^\top}$. Since $\mathsf F_n^\R$ is a vector space, the induction hypothesis and \Cref{lem:KG-sums} give the realizability of $\tau_{(0,\operatorname{sgn}(\kappa)\psi)^\top}$ for every $\psi\in\mathsf C_n$. Stability under limits gives the same conclusion for $\psi\in\overline{\mathsf C_n}^{H^s(M;\R)}$. For $\xi\in \mathsf F_{n+1}^\R$, both $\xi$ and $-\xi$ belong to this closure, so $\tau_{(0,\xi)^\top}$ is small-time approximately realizable. Since $\mathsf F_{n+1}^\R$ is a vector space of smooth functions, this also applies to $\pm\delta^{-1}\xi$ for every $\delta>0$. Applying \Cref{lem:KG-velocity-position-translations}, we obtain that $\tau_{(\xi,0)^\top}$ is small-time approximately realizable. This completes the induction in the even case.
\end{proof}

\begin{proposition}[Klein--Gordon saturation criterion]
\label{prop:KG-saturation-criterion}

Assume that the corresponding saturation, $\mathsf E_\infty^\R$ when $p$ is odd or $\mathsf F_\infty^\R$ when $p$ is even, is dense in $H^s(M;\R)$. Then \eqref{eq:KG} is small-time approximately controllable in $\mathcal H^s$.
\end{proposition}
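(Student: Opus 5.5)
The plan mirrors the proof of \Cref{prop:NLS-saturation-criterion}, now in the product space $\mathcal H^s=H^s\times H^{s-1}$. Fix $Y_0=(y_0,y_1)^\top$ and $Y_1=(z_0,z_1)^\top$ in $\mathcal H^s$, together with $\eps>0$ and $\varkappa>0$. Write $\mathsf S$ for the relevant saturation: $\mathsf S=\mathsf E_\infty^\R$ if $p$ is odd, and $\mathsf S=\mathsf F_\infty^\R$ if $p$ is even. The target displacement is $Y_1-Y_0=(z_0-y_0,\,z_1-y_1)^\top$. I will approximate each component separately by an element of $\mathsf S$.

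First, by the density hypothesis in $H^s(M;\R)$, choose $\xi_0\in\mathsf S$ with
\[\|z_0-y_0-\xi_0\|_{H^s}<\eps/4.\]
Second, I need to approximate the velocity component, which only lies in $H^{s-1}$. Since $H^s$ is dense in $H^{s-1}$ and the embedding $H^s\hookrightarrow H^{s-1}$ is continuous, density of $\mathsf S$ in $H^s$ implies density in $H^{s-1}$. So choose $\xi_1\in\mathsf S$ with
\[\|z_1-y_1-\xi_1\|_{H^{s-1}}<\eps/4.\]
This is the only point where the argument differs from the Schrödinger case. It is harmless, because density is only needed for the topology of the second factor of $\mathcal H^s$.

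Third, \Cref{lem:KG-saturated-directions} shows that both $\tau_{(\xi_0,0)^\top}$ and $\tau_{(0,\xi_1)^\top}$ are small-time approximately realizable. Then \Cref{lem:KG-sums} shows that their sum $\tau_{(\xi_0,\xi_1)^\top}$ is also small-time approximately realizable. Applying this realizability from $Y_0$ with tolerance $\eps/2$ and time bound $\varkappa$ gives $T\in(0,\varkappa)$ and $\eta\in L^\infty(0,T;E^\R)$ such that
\[\|\mathcal R_T(Y_0,\eta)-Y_0-(\xi_0,\xi_1)^\top\|_{\mathcal H^s}<\eps/2.\]
Finally, the triangle inequality yields $\|\mathcal R_T(Y_0,\eta)-Y_1\|_{\mathcal H^s}<\eps$, which is the STAC property.

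No step is a real obstacle here, since all the dynamical work is already contained in \Cref{lem:KG-saturated-directions}. The only point requiring care is the mismatch of regularity between the position and velocity components, and it is resolved by the density of $H^s$ in $H^{s-1}$ noted in the second step.
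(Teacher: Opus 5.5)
Your proposal is correct and follows the same route as the paper: approximate each component of $Y_1-Y_0$ by an element of the saturation, apply \Cref{lem:KG-saturated-directions} and \Cref{lem:KG-sums}, and conclude with the triangle inequality. You also deduce $H^{s-1}$-density from $H^s$-density, using that $H^s$ is dense in $H^{s-1}$ and the embedding is continuous; this fills in a step the paper's proof leaves implicit when it simply says ``by density''.
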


\begin{proof}
Let $Y_0=(y_0,y_1)^\top$, $Y_1=(z_0,z_1)^\top\in\mathcal H^s$,
$\eps>0$ and $\varkappa>0$. By density, there exist $\xi_0,\xi_1\in\mathsf E_\infty^\R$ if $p$ is odd, or $\xi_0,\xi_1\in\mathsf F_\infty^\R$ if $p$ is even, such that 
\[\|z_0-y_0-\xi_0\|_{H^{s}}+\|z_1-y_1-\xi_1\|_{H^{s-1}}<\frac{\eps}{2}.\]
By \Cref{lem:KG-saturated-directions}, the translations $\tau_{(\xi_0,0)^\top}$ and $\tau_{(0,\xi_1)^\top}$ are small-time approximately realizable. By \Cref{lem:KG-sums}, the translation $\tau_{(\xi_0,\xi_1)^\top}$ is small-time approximately realizable. Hence there exist $T\in(0,\varkappa)$ and $\eta\in L^\infty(0,T;E^\R)$ such that
\[\left\|\mathcal R_T(Y_0,\eta)-\left(Y_0+\binom{\xi_0}{\xi_1}\right)\right\|_{\mathcal H^s}<\frac{\eps}{2}.\]
Therefore
\[\|\mathcal R_T(Y_0,\eta)-Y_1\|_{\mathcal H^s}<\eps.\]
\end{proof}

\textit{Proof of \Cref{thm:intro-KG-STAC}:}
By \Cref{th:odd-saturation-density}, the corresponding saturation of $E^\R$, namely $\mathsf E_\infty^\R$ when $p$ is odd and $\mathsf F_\infty^\R$ when $p$ is even, is dense in $H^s(M;\R)$ and in $H^{s-1}(M;\R)$. The conclusion follows directly from \Cref{prop:KG-saturation-criterion}.
\qed
\subsection{Parabolic equation}
\label{sec:additive-Heat}
Let $(M,g)$ be a smooth compact connected Riemannian manifold without boundary, and let $W\in C^\infty(M;\R)$. Let $P\in\R[X]$ be a real polynomial such that
\[P(\lambda)\to+\infty
\qquad\text{as }\lambda\to+\infty,\]
and set
\[A:=P(-\Delta_g)+W.\]
Then $A$ is self-adjoint and bounded from below on $L^2(M)$, and $-A$
generates a strongly continuous analytic semigroup $S(t):=e^{-tA}$. In particular, for every $s\ge0$ and every $T>0$,
\[\sup_{0\le t\le T}\|S(t)\|_{\mathcal L(H^s,H^s)}<\infty.\]
As in \Cref{sec:additive-KG}, we take $E^\R=\operatorname{span}_{\R}\{1,V_1,\ldots,V_m\}$ when $p$ is odd, and $E^\R=\operatorname{span}_{\R}\{1,V_1,\ldots,V_{m+1}\}$ when $p$ is even, where $V_{m+1}$ is defined in \eqref{eq:V_m+1}. We consider the controlled polynomial parabolic equation
\begin{equation}
\label{eq:heat}
\left\{
\begin{aligned}
&\partial_t y+Ay+\kappa y^{p}=\eta(t,x),\qquad (t,x)\in(0,T)\times M,\\
&y(0)=y_0,
\end{aligned}
\right.
\end{equation}
where $\kappa\in\R\setminus\{0\}$. Equivalently,
\[\partial_t y+P(-\Delta_g)y+Wy+\kappa y^{p}=\eta(t,x).\]
\begin{proposition}[Local well-posedness for the parabolic equation]
\label{prop:heat-LWP}
Let $s>d/2$, $T>0$, $y_0\in H^s(M;\R)$ and
$\eta\in L^1(0,T;H^s(M;\R))$. Then there exists $T_*\in(0,T]$ such that
\eqref{eq:heat} admits a unique mild solution $y\in C([0,T_*];H^s(M;\R))$, given by
\[y(t)=S(t)y_0+\int_0^t S(t-\sigma)\eta(\sigma)d\sigma-\kappa\int_0^t S(t-\sigma)y(\sigma)^{p}d\sigma.\]
Moreover, for every $R>0$, there exist ${T_R\in(0,T]}$ and $C_R>0$ such that, if
\[\|y_0\|_{H^s}+\|\eta\|_{L^1(0,T_R;H^s)}\le R,\]
then the corresponding solution is defined on $[0,T_R]$ and satisfies
\[\|y\|_{C([0,T_R];H^s)}\le C_R.\]
Finally, the solution depends continuously on the initial datum and on the
control in the following sense: if $y,z$ are two solutions on $[0,\varkappa]$ with
\[\|y\|_{C([0,\varkappa];H^s)}+\|z\|_{C([0,\varkappa];H^s)}\le R,\]
associated with data $(y_0,\eta)$ and $(z_0,\zeta)$, then
\[\|y-z\|_{C([0,\varkappa];H^s)}\le C_R\left(\|y_0-z_0\|_{H^s}+\|\eta-\zeta\|_{L^1(0,\varkappa;H^s)}\right).\]
\end{proposition}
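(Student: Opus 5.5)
The plan is a Banach fixed-point argument for the Duhamel map
\[\Gamma(y)(t):=S(t)y_0+\int_0^t S(t-\sigma)\eta(\sigma)\,d\sigma-\kappa\int_0^t S(t-\sigma)y(\sigma)^{p}\,d\sigma\]
on a closed ball of $C([0,T];H^s(M;\R))$, following the proof of \Cref{prop:KG-LWP}. The linear ingredients come first. Since $A$ is self-adjoint and bounded from below, and its eigenfunctions are the $\phi_k$ with eigenvalues $P(\lambda_k)$ plus the bounded perturbation $W$, the semigroup $S(t)$ is strongly continuous on $H^s$. It also satisfies $\sup_{0\le t\le T}\|S(t)\|_{\mathcal L(H^s)}\le C_T$, as recorded before the statement. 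Hence $t\mapsto S(t)y_0$ lies in $C([0,T];H^s)$, and so does the forcing term $\int_0^t S(t-\sigma)\eta(\sigma)d\sigma$ for $\eta\in L^1(0,T;H^s)$ (strong continuity plus dominated convergence), with norm at most $C_T\|\eta\|_{L^1(0,T;H^s)}$.

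For the nonlinearity I use that $H^s(M)$ is an algebra for $s>d/2$. This gives $\|y^p\|_{H^s}\le C\|y\|_{H^s}^p$ and $\|y^p-z^p\|_{H^s}\le C(\|y\|_{H^s}+\|z\|_{H^s})^{p-1}\|y-z\|_{H^s}$. Fix $R$ with $\|y_0\|_{H^s}+\|\eta\|_{L^1(0,T_R;H^s)}\le R$ and set $M_R:=2C_1R$, where $C_1:=\sup_{0\le t\le 1}\|S(t)\|_{\mathcal L(H^s)}$. On the ball of radius $M_R$ one gets
\[\|\Gamma(y)\|_{C([0,T_R];H^s)}\le C_1R+C_1CT_RM_R^p
\quad\text{and}\quad
\|\Gamma(y)-\Gamma(z)\|\le C_1C T_R(2M_R)^{p-1}\|y-z\|.\]
Choosing $T_R\le\min(1,T)$ small enough, depending only on $R$, makes $\Gamma$ a self-map and a contraction. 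This yields existence and the uniform bound $C_R=M_R$ in one step. Uniqueness in all of $C([0,T_*];H^s)$ (not only in the ball) follows from a Gronwall argument on the difference of two solutions, each of which is bounded on the compact interval.

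For continuous dependence, take two solutions $y,z$ on $[0,\varkappa]$ with $\|y\|+\|z\|\le R$ and subtract their Duhamel formulas:
\[\|y(t)-z(t)\|_{H^s}\le C_\varkappa\|y_0-z_0\|_{H^s}+C_\varkappa\|\eta-\zeta\|_{L^1(0,t;H^s)}+C_\varkappa CR^{p-1}\int_0^t\|y-z\|_{H^s}\,d\sigma.\]
Gronwall's inequality then gives the stated estimate with $C_R$ depending on $R$ and $\varkappa$.

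I expect no serious obstacle, since the algebra property removes any need for parabolic smoothing. The only point needing care is the continuity in time of the Duhamel integrals in $H^s$. I would verify this by splitting $\int_0^{t+h}-\int_0^t$ into a short-interval piece and a piece controlled by the strong continuity of $S(\cdot)$, using dominated convergence.
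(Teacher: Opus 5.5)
Your proposal is correct and follows the paper's argument: a contraction for the Duhamel map in $C([0,T];H^s)$ using the algebra property of $H^s(M)$ and the uniform bound on $S(t)$, then Gronwall for uniqueness and continuous dependence, with considerably more detail than the paper's short sketch. One small slip that your argument does not actually use: for non-constant $W$ the $\phi_k$ are not eigenfunctions of $A=P(-\Delta_g)+W$, so the $H^s$ bound and strong continuity of $S(t)$ should come from the standing assertion before the statement (or from identifying $H^s$ with domains of powers of $A+c$), not from an eigenfunction expansion.
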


\begin{proof}
Since $s>d/2$, $H^s(M)$ is an algebra and the map $y\mapsto y^{p}$ is locally
Lipschitz from $H^s(M)$ into itself. The result follows from the standard fixed
point argument applied to the mild formulation, using the boundedness of
$S(t)$ on $H^s(M)$ on bounded time intervals. The uniform bound and continuous
dependence estimate follow from the same contraction argument and Gronwall's
inequality.
\end{proof}

Whenever the solution of \eqref{eq:heat} with initial datum $y_0$ and control $\eta$ is defined, we denote by $\mathcal R_t(y_0,\eta)$ its value at time $t$. The uncontrolled flow is given by $\mathcal R_t(y_0,0)$.
\begin{lemma}[Direct translations for the parabolic equation]
\label{lem:heat-direct-translations}
For every $\xi\in E^\R$, the translation $\tau_\xi$ is
small-time approximately realizable for \eqref{eq:heat}.
\end{lemma}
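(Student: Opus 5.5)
The proof will follow the Schr\"odinger case, \Cref{lem:NLS-direct-translations}, almost verbatim, using the mild formulation from \Cref{prop:heat-LWP}. Fix $y_0\in H^s(M;\R)$, $\xi\in E^\R$ and $\varkappa>0$. For $\delta>0$ take the constant control
\[\eta_\delta(t):=\frac1\delta\xi,\qquad 0\le t\le\delta.\]
This control is real-valued, since the equation is real and no factor $i$ is needed, and it lies in $L^\infty(0,\delta;E^\R)$. Its norm satisfies $\|\eta_\delta\|_{L^1(0,\delta;H^s)}=\|\xi\|_{H^s}$, and this bound does not depend on $\delta$.

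The first step is existence on $[0,\delta]$ with a uniform bound. Set $R:=\|y_0\|_{H^s}+\|\xi\|_{H^s}$. The uniform local bound in \Cref{prop:heat-LWP} gives $T_R>0$ and $C_R>0$. Hence for $\delta<\min(T_R,\varkappa)$ the solution $\mathcal R_t(y_0,\eta_\delta)$ exists on $[0,\delta]$ and satisfies
\[\sup_{0\le t\le\delta}\|\mathcal R_t(y_0,\eta_\delta)\|_{H^s}\le C_R.\]

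Next I write Duhamel's formula at time $\delta$:
\[\mathcal R_\delta(y_0,\eta_\delta)=S(\delta)y_0+\frac1\delta\int_0^\delta S(\delta-\sigma)\xi\,d\sigma-\kappa\int_0^\delta S(\delta-\sigma)\mathcal R_\sigma(y_0,\eta_\delta)^p\,d\sigma,\]
and treat the three terms one at a time.
\begin{itemize}
\item Strong continuity of the analytic semigroup $S(t)=e^{-tA}$ on $H^s$ gives $S(\delta)y_0\to y_0$.
\item The time average of $S(\delta-\sigma)\xi$ converges to $\xi$ for the same reason.
\item For the nonlinear term I use that $H^s$ is an algebra, the bound $\sup_{t\le T}\|S(t)\|_{\mathcal L(H^s)}<\infty$, and the uniform bound from the first step. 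Its norm is then at most $C\delta C_R^p$, which tends to $0$.
\end{itemize}
Together these give $\mathcal R_\delta(y_0,\eta_\delta)\to y_0+\xi$ in $H^s$. Choosing $\delta<\varkappa$ small enough then yields the approximate realizability of $\tau_\xi$.

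There is no real obstacle here. The only point needing care is that the existence time from the local theory must be independent of $\delta$. That is exactly why the $L^1$ norm of the control is kept constant: the uniform bound of \Cref{prop:heat-LWP} then applies without a renormalized fixed point. This is unlike the kick lemma, where the initial datum blows up as $\delta\to0$.
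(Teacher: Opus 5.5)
Your proposal is correct and follows the paper's proof essentially step by step. Both use the constant control $\delta^{-1}\xi$ with $\delta$-independent $L^1(0,\delta;H^s)$ norm, the uniform local bound of \Cref{prop:heat-LWP}, and the three-term Duhamel analysis: semigroup continuity, the time average converging to $\xi$, and an $O(\delta)$ nonlinear term via the algebra property.
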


\begin{proof}
Fix $y_0\in H^s(M;\R)$ and $\xi\in E^\R$. For $\delta>0$, set $\eta_\delta(t):=\delta^{-1}\xi$, $0\le t\le\delta$. Then $\eta_\delta\in L^\infty(0,\delta;E^\R)$ and
$\|\eta_\delta\|_{L^1(0,\delta;H^s)}=\|\xi\|_{H^s}$. By \Cref{prop:heat-LWP}, the corresponding solution is uniformly bounded in $C([0,\delta];H^s)$ for $\delta>0$ small. Hence, using the mild formulation,
\[y(\delta)=S(\delta)y_0+\frac1\delta\int_0^\delta S(\delta-\sigma)\xi d\sigma-\kappa\int_0^\delta S(\delta-\sigma)y(\sigma)^{p}d\sigma.\]
By the strong continuity of $S(t)$ on $H^s$,
$S(\delta)y_0\to y_0$, and $\dis\frac1\delta\int_0^\delta S(\delta-\sigma)\xi d\sigma\to\xi$
in $H^s$. Moreover, since $H^s(M)$ is an algebra,
\[\left\|\int_0^\delta S(\delta-\sigma)y(\sigma)^{p}d\sigma\right\|_{H^s}\le C\delta\sup_{0\le\sigma\le\delta}\|y(\sigma)\|_{H^s}^{p}\to0.\]
Therefore $y(\delta)\to y_0+\xi$ in $H^s(M;\R)$.
\end{proof}

\begin{lemma}[Parabolic nonlinear kick]
\label{lem:heat-odd-kick}
Let $\varphi\in C^\infty(M;\R)$ and $\alpha\in\R$. Assume that, for every $\delta>0$, the translations $\tau_{\pm\delta^{-\frac1{p}}\alpha\varphi}$ are small-time approximately realizable for \eqref{eq:heat}. Then the translation $\tau_{-\kappa\alpha^{p}\varphi^{p}}$ is small-time approximately realizable.
\end{lemma}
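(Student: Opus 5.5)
The proof follows the scheme of \Cref{lem:NLS-asymptotic-limit,lem:NLS-odd-kick}, with the group $U(t)$ replaced by the analytic semigroup $S(t)=e^{-tA}$. Fix $y_0\in H^s(M;\R)$, $\eps>0$, $\varkappa>0$ and set $\chi:=-\kappa\alpha^p\varphi^p$. The first step is an asymptotic limit: for $\delta>0$ small, the uncontrolled solution starting from $y_0+\delta^{-1/p}\alpha\varphi$ is defined on $[0,\delta]$ and
\[
\mathcal R_\delta\bigl(y_0+\delta^{-\frac1p}\alpha\varphi,0\bigr)-\delta^{-\frac1p}\alpha\varphi\longrightarrow y_0+\chi\qquad\text{in }H^s(M;\R).
\]
To prove it, write $y=w+\delta^{-1/p}\alpha\varphi$. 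The renormalized Duhamel map is
\[
\widetilde\Gamma_\delta(w)(t)=S(t)y_0+\bigl(S(t)-I\bigr)\delta^{-\frac1p}\alpha\varphi-\kappa\int_0^tS(t-\sigma)\bigl(\delta^{-\frac1p}\alpha\varphi+w(\sigma)\bigr)^pd\sigma ,
\]
and we seek a fixed point of it in a ball of $C([0,\delta];H^s)$.

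For the estimates, smoothness of $\varphi$ gives $\|(S(t)-I)\varphi\|_{H^s}\le Ct\|A\varphi\|_{H^s}$. Hence the second term is $O(\delta^{(p-1)/p})$ uniformly on $[0,\delta]$. Since $H^s(M)$ is an algebra for $s>d/2$, we have $\|(\delta^{-1/p}\alpha\varphi+w)^p\|_{H^s}\le C(\delta^{-1}+R^p)$ for $\|w\|_{H^s}\le R$. Its Lipschitz constant on that ball is $C(\delta^{-(p-1)/p}+R^{p-1})$. Together with the uniform bound on $\|S(t)\|_{\mathcal L(H^s)}$ on $[0,1]$, these estimates give invariance of a fixed ball and a contraction constant $C(\delta^{1/p}+\delta R^{p-1})\to0$. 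The Banach fixed point theorem and uniqueness in \Cref{prop:heat-LWP} then identify $w+\delta^{-1/p}\alpha\varphi$ with the true solution, with $\sup_{[0,\delta]}\|w^\delta\|_{H^s}\le C$.

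To pass to the limit, expand binomially:
\[
\bigl(\delta^{-\frac1p}\alpha\varphi+w^\delta\bigr)^p=\delta^{-1}\alpha^p\varphi^p+O_{H^s}\bigl(\delta^{-\frac{p-1}{p}}\bigr).
\]
Strong continuity of $S$ on $H^s$ then gives $\delta^{-1}\int_0^\delta S(\delta-\sigma)\varphi^pd\sigma\to\varphi^p$, because $\varphi^p$ is smooth. It also gives $S(\delta)y_0\to y_0$. Together these prove the limit.

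The second step is the three-piece concatenation, exactly as in \Cref{lem:NLS-odd-kick}. Choose $\delta<\varkappa/3$ with limit error below $\eps/3$. Continuous dependence of $\mathcal R_\delta(\cdot,0)$ near $y_0+\delta^{-1/p}\alpha\varphi$ comes from the same contraction argument, which is stable under perturbations of the initial datum; it yields $\rho>0$ with output error below $\eps/3$. Next realize $\tau_{\delta^{-1/p}\alpha\varphi}$ within $\rho$ in time $T_1<\varkappa/3$, and evolve freely for time $\delta$. Finally realize $\tau_{-\delta^{-1/p}\alpha\varphi}$ within $\eps/3$ in time $T_2<\varkappa/3$. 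The triangle inequality gives a final error below $\eps$ in total time below $\varkappa$.

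The main obstacle is the first step: justifying existence on $[0,\delta]$ despite the unbounded initial datum. This hinges on the estimate $\|(S(t)-I)\varphi\|_{H^s}=O(t)$ for smooth $\varphi$, which holds since $\varphi\in D(A)$ in every $H^s$ scale. The rest is routine.
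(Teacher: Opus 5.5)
Your proposal is correct and follows essentially the same route as the paper. It uses the renormalized Duhamel map around the large translation $\delta^{-1/p}\alpha\varphi$, the bound $\|(S(t)-I)\delta^{-1/p}\alpha\varphi\|_{H^s}\le C\delta^{(p-1)/p}$, the algebra-based fixed point, and the binomial expansion yielding the limit $y_0-\kappa\alpha^p\varphi^p$, followed by the same three-piece concatenation with zero control in the middle. Your explicit justification of the linear estimate through $\varphi\in D(A)$ is exactly what the paper uses implicitly when it writes ``since $\varphi$ is smooth''.
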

\begin{proof}
We first prove that, for every $y_0\in H^s(M;\R)$ and all sufficiently small $\delta>0$, the uncontrolled solution with initial datum $y_0+\delta^{-\frac1{p}}\alpha\varphi$ is well defined on $[0,\delta]$ and
\[\mathcal R_\delta\left(y_0+\delta^{-\frac1{p}}\alpha\varphi,0\right)-\delta^{-\frac1{p}}\alpha\varphi\longrightarrow y_0-\kappa\alpha^{p}\varphi^{p}\quad\text{in }H^s(M;\R).\]
Define on $C([0,\delta];H^s)$
\[\widetilde{\Gamma}_\delta(w):=\Gamma_{y_0+\delta^{-\frac1{p}}\alpha\varphi,0}\left(w+\delta^{-\frac1{p}}\alpha\varphi\right)-\delta^{-\frac1{p}}\alpha\varphi,\]
where $\Gamma_{y_0,\eta}$ is the Duhamel map for \eqref{eq:heat} associated with initial datum $y_0$ and control $\eta$. Since $\varphi$ is smooth,
\[\sup_{0\le t\le\delta}\left\|(S(t)-I)\delta^{-\frac1{p}}\alpha\varphi\right\|_{H^s}\le C\delta^{\frac{p-1}{p}}.\]
Using the boundedness of $S(t)$ on $H^s$ for $0\le t\le1$, the same fixed-point argument as in \Cref{lem:NLS-asymptotic-limit}, using the algebra property of $H^s(M)$ for the polynomial nonlinearity $y^p$, gives a fixed point $w^\delta\in C([0,\delta];H^s)$ satisfying
\[\sup_{0\le t\le\delta}\|w^\delta(t)\|_{H^s}\le C\]
for all sufficiently small $\delta>0$, with $C$ independent of $\delta$.
Set $y^\delta(t):=w^\delta(t)+\delta^{-\frac1{p}}\alpha\varphi$. By construction, $y^\delta$ satisfies the mild formulation with initial datum $y_0+\delta^{-\frac1{p}}\alpha\varphi$ and no control. By uniqueness,
\[y^\delta(t)=\mathcal R_t\left(y_0+\delta^{-\frac1{p}}\alpha\varphi,0\right),\qquad 0\le t\le\delta.\]

Therefore, uniformly for $0\le t\le\delta$,
\[(y^\delta(t))^{p}=\left(\delta^{-\frac1{p}}\alpha\varphi+w^\delta(t)\right)^{p}=
\delta^{-1}\alpha^{p}\varphi^{p}+O_{H^s}\left(\delta^{-\frac{p-1}{p}}\right).\]
It follows that
\[\int_0^\delta S(\delta-\sigma)(y^\delta(\sigma))^{p}d\sigma=\delta^{-1}\alpha^{p}\int_0^\delta S(\delta-\sigma)\varphi^{p}d\sigma+O_{H^s}\left(\delta^{\frac1{p}}\right).\]
By the strong continuity of $S(t)$ on $H^s$,
\[\frac1\delta\int_0^\delta S(\delta-\sigma)\varphi^{p}d\sigma\longrightarrow\varphi^{p}\quad\text{in }H^s.\]
Thus the desired asymptotic limit follows. Now fix
$y_0\in H^s(M;\R)$, $\varepsilon>0$ and $\varkappa>0$. Choose
$\delta\in(0,\varkappa/3)$ such that
\[\left\|\mathcal R_\delta\left(y_0+\delta^{-\frac1{p}}\alpha\varphi,0\right)-\delta^{-\frac1{p}}\alpha\varphi-\left(y_0-\kappa\alpha^{p}\varphi^{p}\right)\right\|_{H^s}<\frac{\varepsilon}{3}.\]
By the same fixed-point argument and continuous dependence of the flow, choose $\rho>0$ such that if
\[\left\|z-\left(y_0+\delta^{-\frac1{p}}\alpha\varphi\right)\right\|_{H^s}<\rho,\]
then the uncontrolled solution with initial datum $z$ is well defined on $[0,\delta]$ and
\[\left\|\mathcal R_\delta(z,0)-\mathcal R_\delta\left(y_0+\delta^{-\frac1{p}}\alpha\varphi,0\right)\right\|_{H^s}<\frac{\varepsilon}{3}.\]
Using the realizability of
$\tau_{\delta^{-\frac1{p}}\alpha\varphi}$, we reach such a point $z$ from $y_0$ in time $T_1<\varkappa/3$. Then we let the system evolve freely during time $\delta$. Finally, using the realizability of $\tau_{-\delta^{-\frac1{p}}\alpha\varphi}$, we remove the large
translation in time $T_2<\varkappa/3$. By concatenation, the final state is within $\varepsilon$ of $y_0-\kappa\alpha^{p}\varphi^{p}$. Hence $\tau_{-\kappa\alpha^{p}\varphi^{p}}$ is small-time approximately realizable.
\end{proof}
\textit{Proof of \Cref{thm:heat-STAC}.}
We use the saturation spaces $\mathsf E_n^\R$ defined in \eqref{eq:odd-saturation} for odd $p$, and $\mathsf F_n^\R$ defined in \eqref{eq:even-saturation}, with $r=s$, for even $p$. The direct translations in the directions of $E^\R$ are realizable by \Cref{lem:heat-direct-translations}.
We argue by induction on the saturation spaces. Assume that every direction in the corresponding space at step $n$ is small-time approximately realizable, and let $\varphi$ belong to this space. Then, for every $\delta>0$ and every
$\alpha\in\R$, the translations $\tau_{\pm\delta^{-\frac1{p}}\alpha\varphi}$ are small-time approximately realizable. Hence, by
\Cref{lem:heat-odd-kick}, the direction $-\kappa\alpha^{p}\varphi^{p}$ is realizable. If $p$ is odd, since $\kappa\ne0$ and the map
$\alpha\mapsto-\kappa\alpha^{p}$ is onto $\R$, all real multiples of $\varphi^{p}$ are realizable. By stability under sums as in \Cref{lem:NLS-sums}, every direction in $\mathsf E_{n+1}^\R$ is small-time approximately realizable. If $p$ is even, taking $\alpha=|\kappa|^{-1/p}$ gives the realizability of the direction $-\operatorname{sgn}(\kappa)\varphi^p$. Since $\mathsf F_n^\R$ is a vector space, the induction hypothesis and stability under sums give realizability in every direction of $\operatorname{sgn}(\kappa)\mathsf C_n$. By stability under limits in $H^s$, as in \Cref{lem:NLS-sums}, this also holds for every direction of $\operatorname{sgn}(\kappa)\overline{\mathsf C_n}^{H^s(M;\R)}$. Since $\mathsf F_{n+1}^\R\subset\operatorname{sgn}(\kappa)\overline{\mathsf C_n}^{H^s(M;\R)}$, every direction in $\mathsf F_{n+1}^\R$ is small-time approximately realizable.\par\medskip

\noindent Therefore, by induction, every direction in $\mathsf E_\infty^\R$ when $p$ is odd, or in $\mathsf F_\infty^\R$ when $p$ is even, is small-time approximately realizable. By \Cref{th:odd-saturation-density}, this space is dense in $H^s(M;\R)$. Hence, for any $y_0,y_1\in H^s(M;\R)$ and any $\varepsilon>0$, we choose $\xi$ in this space such that
\[\|y_1-y_0-\xi\|_{H^s}<\frac{\varepsilon}{2}.\]
Realizing the translation $\tau_\xi$ with error $\varepsilon/2$ gives a final
state within $\varepsilon$ of $y_1$, in arbitrarily small time.
\qed

\section{Controllability under Neumann boundary conditions}
\label{sec:manifold_boundary}
We now explain how the previous saturation argument can be adapted to compact manifolds with boundary under homogeneous Neumann boundary conditions. The literature on finite-dimensional saturation methods for PDEs on manifolds with boundary is considerably scarcer than in the boundaryless setting. For the Navier--Stokes system, explicit saturating families have been constructed for some particular geometries and boundary conditions; see, for instance, \cite{Rodrigues2006,PhanRodrigues2019,Phan2021} for rectangles and cylinders under Lions boundary conditions. For the viscous Burgers equation on bounded intervals with Dirichlet or Neumann boundary conditions, a recent construction based on bilinear controls was obtained in \cite{DucaTakahashi2025}. We also mention the paper on the two-dimensional Navier-Stokes equation \cite{Rodrigues2007}, where sufficient conditions for saturation are derived on compact connected Riemannian surfaces, with or without smooth boundary, although explicit saturating sets are only given for the round sphere $\mathbb{S}^2$, the flat torus $\mathbb{T}^2$ and rectangles.\medskip
\addtocontents{toc}{\protect\setcounter{tocdepth}{1}}

\subsection{Main result}
We use the Neumann operator $A_N$, the positive operator $\Lambda$ and the scale $H_N^s$ introduced in \Cref{sec:admissible_control_boundary}. By spectral calculus, $\Lambda$ commutes with the Schr\"odinger group $e^{-itA_N}$.  For an integer $p\geq2$, we consider
\begin{equation}
\label{eq:NLS-boundary-Neumann}
\begin{cases}
i\partial_t u=(-\Delta_g+W)u+\kappa |u|^{p-1}u+\eta(t,x),
& (t,x)\in[0,T]\times M,\\
\partial_\nu u=0,
& (t,x)\in[0,T]\times \partial M,\\
u(0)=u_0\in H_N^s(M;\C).
\end{cases}
\end{equation}
The linear part is handled as in the boundaryless case. Indeed, $A_N$ is
self-adjoint on $L^2(M)$ and the group $e^{-itA_N}$ is strongly continuous on
$H_N^s$. We work in the range
\begin{equation}
\label{eq:admissible-s}
\frac d2<s<
\begin{cases}
\frac72-\frac{2}{p}, & p \text{ odd},\\
\min\left\{p-1,\frac72-\frac{2}{p}\right\},
& p \text{ even},
\end{cases}
\qquad s\neq\frac32.
\end{equation}
For instance, in the cubic case, the admissible range is nonempty for $2\leq d\leq5$. The characterization of $H_N^s$ was recalled in \Cref{sec:admissible_control_boundary}. Since $s>d/2$, the space $H^s(M)$ is a Banach algebra. Recall that, when $p$ is even, the restriction $s<p-1$ ensures that the nonlinearity is locally Lipschitz on $H^s(M;\C)$, as explained in \Cref{sec:additive-NLS}.
Hence the map $u\mapsto |u|^{p-1}u$ is locally Lipschitz from $H^s(M)$ into itself. Moreover, if
$\partial_\nu u=0$, then
\[\partial_\nu(|u|^{p-1}u)=(p-1)|u|^{p-3}\operatorname{Re}(\partial_\nu u\overline u)u+|u|^{p-1}\partial_\nu u=0\quad\text{on }\partial M.\]
Therefore, for every $s$ satisfying \eqref{eq:admissible-s},
\[u\in H_N^s\quad\Longrightarrow\quad|u|^{p-1}u\in H_N^s.\]
Assuming \eqref{eq:admissible-s}, the nonlinear map is consequently locally Lipschitz from $H_N^s$ into itself. Thus, for $u_0\in H_N^s$ and $\eta\in L^1(0,T;H_N^s)$, the local well-posedness theory follows by the same fixed-point argument as in the closed-manifold case \Cref{prop:NLS-LWP}.\medskip

The approximation results of \Cref{sec:admissible_control_boundary} give the additional regularity needed for the dynamical limit.
\begin{corollary}[Regular profiles for the Neumann dynamics]
\label{th:Neumann-saturating-family}
Let $s$ satisfy \eqref{eq:admissible-s}. Then there exist a finite-dimensional real space
$E^\R\subset C^\infty(\overline M;\R)$ containing $1$ and $\gamma>\frac{2}{p}$ such that, setting
$E:=E^\R\otimes_\R\C$, its power saturation $\mathsf E_\infty$ of degree $p$ satisfies
\[\mathsf E_\infty\subset H_N^{s+\gamma}(M;\C),
\qquad \overline{\mathsf E_\infty}^{H_N^s(M;\C)}=H_N^s(M;\C).\]
\end{corollary}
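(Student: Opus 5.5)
The plan is to take $E^\R:=E_N^\R$ from \eqref{eq:Neumann-control-family}. We already know $\mathsf E_\infty^\R=\mathcal A_N$, and by \Cref{th:Neumann-smooth-closure} we have $\mathcal A_N\subset\mathcal Y_N\subset C^\infty(\overline M;\R)$. The density statement $\overline{\mathsf E_\infty}^{H_N^s(M;\C)}=H_N^s(M;\C)$ is then exactly \Cref{lem:Neumann-sobolev-density} after complexification, because $s$ in \eqref{eq:admissible-s} satisfies $0\le s<7/2$ and $s\neq 3/2$. What remains is to produce $\gamma>2/p$ with $\mathsf E_\infty\subset H_N^{s+\gamma}(M;\C)$.

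To do this, I would use that every element of $\mathcal A_N$ is smooth up to the boundary and satisfies $\partial_\nu f|_{\partial M}=0$, since odd normal derivatives vanish in the collar. By the characterization of the Neumann scale recalled in \Cref{sec:admissible_control_boundary}, this gives $f\in H_N^{\sigma}$ for every $\sigma<7/2$ with $\sigma\neq 3/2$. The index $\sigma=3/2$ is only excluded from the characterization. It causes no trouble, because $H_N^{\sigma}\subset H_N^{\sigma'}$ for $\sigma'<\sigma$ by the spectral definition, so membership in some $H_N^{\sigma}$ with $\sigma>3/2$ is enough.

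The key arithmetic is then the upper bound in \eqref{eq:admissible-s}, namely $s<7/2-2/p$, or equivalently $7/2-s>2/p$. I would choose
\[\gamma:=\tfrac12\Bigl(\tfrac2p+\bigl(\tfrac72-s\bigr)\Bigr),\]
so that $2/p<\gamma<7/2-s$ and hence $s+\gamma<7/2$. If $s+\gamma=3/2$ happens to occur, I would perturb $\gamma$ slightly inside the open interval $(2/p,\,7/2-s)$. This gives $\mathsf E_\infty^\R\subset H_N^{s+\gamma}$, and complexifying yields $\mathsf E_\infty\subset H_N^{s+\gamma}(M;\C)$.

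The only delicate point is identifying smooth functions with vanishing normal derivative as elements of $D(\Lambda^{s+\gamma})$ for $s+\gamma$ up to $7/2$. This is the classical characterization cited from \cite{Grisvard,Seeley,Grubb}. It holds precisely because no condition of the form $\partial_\nu\Delta_g u=0$ is required below $7/2$, which is why the admissible range \eqref{eq:admissible-s} is capped at $7/2-2/p$. I would state this explicitly and not reprove it. The remaining steps are just the bookkeeping above.
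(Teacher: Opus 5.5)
Your proposal is correct and matches the paper's proof: take $E^\R=E_N^\R$, get density in $H_N^s$ from \Cref{lem:Neumann-sobolev-density}, and choose $\gamma\in(2/p,\,7/2-s)$, which is possible because $s<7/2-2/p$. The inclusion $\mathcal A_N\subset H_N^{s+\gamma}$ then follows from the classical characterization of the Neumann scale, and your explicit midpoint choice gives $s+\gamma=\tfrac s2+\tfrac1p+\tfrac74>\tfrac32$, so the perturbation away from $3/2$ is never actually needed.
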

\begin{proof}
Choose $2/p<\gamma<7/2-s$ with $s+\gamma\neq3/2$. This is possible by \eqref{eq:admissible-s}. The family in \Cref{th:Neumann-approximation} has power saturation $\mathcal A_N\subset H_N^{s+\gamma}$, and \Cref{lem:Neumann-sobolev-density} gives density in $H_N^s$.
\end{proof}
The corresponding controllability result is the following.
\begin{theorem}
\label{th:bdry-manifold}
Assume $\kappa\neq0$. Let $W\in C^\infty(\overline M;\R)$, and let $s>\tfrac d2$ satisfy \eqref{eq:admissible-s}, and let $E$ be a control space provided by \Cref{th:Neumann-saturating-family}. Then \eqref{eq:NLS-boundary-Neumann} is small-time approximately controllable in $H_N^s(M;\C)$ with controls $\eta\in L^\infty(0,T;E)$.
\end{theorem}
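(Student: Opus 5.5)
We follow the scheme of \Cref{sec:additive-NLS} and transfer each step to the Neumann setting. The only genuinely new point is the asymptotic limit, because of the boundary. First we record local well-posedness in $H_N^s$ for $s$ in \eqref{eq:admissible-s}. It comes from the fixed-point argument of \Cref{prop:NLS-LWP}: the group $e^{-itA_N}$ is bounded on $H_N^s$, and $u\mapsto|u|^{p-1}u$ is locally Lipschitz from $H_N^s$ into itself. This gives the uniform local bound, continuous dependence, and the concatenation property.

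The direct translations $\tau_\xi$, $\xi\in E$, are then realizable by the constant control $\eta_\delta=i\delta^{-1}\xi$, exactly as in \Cref{lem:NLS-direct-translations}; this only uses $E\subset H_N^s$ and strong continuity. The stability under sums and limits in \Cref{lem:NLS-sums} is purely formal and carries over unchanged. Likewise, the kick lemma \Cref{lem:NLS-odd-kick} and the induction of \Cref{lem:NLS-saturated-directions} go through verbatim. This includes the Vandermonde trick for even $p$, which is available because $1\in E^\R$ and $c_j+\varphi>0$. Once they are combined with \Cref{th:Neumann-saturating-family}, they give realizability of every $\xi\in\mathsf E_\infty$. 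The saturation criterion \Cref{prop:NLS-saturation-criterion} then concludes by density in $H_N^s$.

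The main obstacle is the asymptotic limit \Cref{lem:NLS-asymptotic-limit} for $\varphi\in\mathsf E_\infty^\R$. The closed-manifold proof used the bound
\[\sup_{0\le t\le\delta}\|(U(t)-I)\delta^{-1/p}\alpha\varphi\|_{H^s}\le C\delta^{(p-1)/p},\]
which relied on $\varphi\in D(A)$ in the $H^s$ scale, that is, $\varphi\in H_N^{s+2}$. This now fails, since $s+2$ may exceed $7/2$ and would require the higher compatibility condition $\partial_\nu\Delta_g\varphi=0$. Instead we use the extra regularity $\varphi\in H_N^{s+\gamma}$ from \Cref{th:Neumann-saturating-family}, together with the interpolation estimate
\[\|(e^{-itA_N}-I)f\|_{H_N^s}\le C|t|^{\gamma/2}\|f\|_{H_N^{s+\gamma}},\qquad 0\le\gamma\le2,\]
which follows by spectral calculus from $|e^{-it\lambda}-1|\le C(|t|\lambda)^{\gamma/2}$. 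This yields
\[\sup_{0\le t\le\delta}\|(e^{-itA_N}-I)\delta^{-1/p}\alpha\varphi\|_{H_N^s}\le C\delta^{\gamma/2-1/p},\]
which tends to $0$ precisely because $\gamma>2/p$. This is why that threshold, and hence the upper bound $7/2-2/p$ in \eqref{eq:admissible-s}, appears.

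With this replacement, the rest of the asymptotic argument is as before. The renormalized Duhamel map is a contraction on a fixed ball of $C([0,\delta];H_N^s)$, because the Lipschitz constant is $O(\delta^{-(p-1)/p})$ and is multiplied by $\delta$. The expansion $|v^\delta|^{p-1}v^\delta=\delta^{-1}\alpha|\alpha|^{p-1}|\varphi|^{p-1}\varphi+O_{H_N^s}(\delta^{-(p-1)/p})$ holds by the Lipschitz estimate in $H_N^s$. For the last step we need $|\varphi|^{p-1}\varphi\in H_N^s$, which holds by the Neumann-invariance computation above. Strong continuity then gives the averaged integral converging to $|\varphi|^{p-1}\varphi$. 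We also check the uniqueness identification with $\mathcal R_t(u_0+\delta^{-1/p}\alpha\varphi,0)$ in $H_N^s$.
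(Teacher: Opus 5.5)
Your proposal is correct and follows the paper's own argument. The paper likewise reduces everything to the closed-manifold scheme of \Cref{sec:additive-NLS}, with two new ingredients: the Neumann-compatible saturating family of \Cref{th:Neumann-saturating-family}, and the replacement of the $\delta^{(p-1)/p}$ bound by the spectral estimate $\sup_{0\le t\le\delta}\|(U_N(t)-I)\delta^{-1/p}\alpha\varphi\|_{H_N^s}\le C|\alpha|\delta^{\theta-1/p}\|\varphi\|_{H_N^{s+2\theta}}$ with $\theta=\gamma/2>1/p$, which is exactly where the restriction $s<7/2-2/p$ enters.
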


The proof follows the same saturation strategy as in the closed-manifold case \Cref{thm:intro-NLS-STAC}. There are two main differences:\par\medskip

\noindent The first one is to construct a finite-dimensional family of profiles which is both Neumann-compatible and saturating in $H_N^s$. This is provided by \Cref{sec:admissible_control_boundary}. The main additional restriction appears in the realization of translations. If $\xi$ is a control profile, then the translated state $u+\xi$ must still belong
to the phase space in which the equation is solved. Thus, under Neumann boundary conditions, one has to impose $\xi\in H_N^s$. Consequently, the saturation procedure must be formulated with profiles belonging to $H_N^s$, and the corresponding density property has to be proved inside $H_N^s$. This replaces, in the boundary case, the use of arbitrary smooth embedding coordinates on manifolds with boundary, since such coordinates can not satisfy the Neumann compatibility condition on $\partial M$. \par\medskip

\noindent The second difference concerns the large-amplitude limit, analogous to the one used in the proof of \Cref{lem:NLS-asymptotic-limit}, in particular estimate \eqref{eq:U-I_delta}. The Neumann Schr\"odinger group
$U_N(t):=e^{-itA_N}$ must satisfy
\[\sup_{0\leq t\leq\delta}\|(U_N(t)-I)\delta^{-1/p}\alpha\varphi\|_{H_N^s}\longrightarrow 0\quad \text{as }\delta\to0^+.\]
The profiles constructed in \Cref{sec:admissible_control_boundary} belong to $H_N^r$ for every $r<7/2$. However, working only at the phase-space regularity $H_N^s$ does not provide a sufficient rate of convergence for the previous limit. The price to pay is therefore to restrict slightly the regularity range of the phase space. More precisely, for $\theta\in[0,1]$ it is easy to show that 
\[\sup_{0\leq t\leq\delta}\|(U_N(t)-I)\delta^{-1/p}\alpha\varphi\|_{H_N^s}\leq C|\alpha|\delta^{\theta-1/p}\|\varphi\|_{H_N^{s+2\theta}}.\]
We thus need $\theta>1/p$. Since the profiles constructed in \Cref{sec:admissible_control_boundary} belong to $H_N^r$ for every $r<7/2$, we may choose
\[\frac{1}{p}<\theta<\min\left\{1,\frac{7/2-s}{2}\right\},\]
which is possible precisely when $s<\frac72-\frac{2}{p}$, as stated in \eqref{eq:admissible-s}.\medskip

\begin{remark}[Simpler Neumann control families]
When $0\leq s<3/2$, one has $H_N^s(M)=H^s(M)$, and the power saturation of the embedding coordinates $1,V_1,\ldots,V_m$ is dense in $H_N^s(M)$. However, for arbitrary smooth profiles, the preceding estimate gives the large-amplitude limit under the stronger restriction $s<\tfrac32-\tfrac{2}{p}$, since such profiles belong to $H_N^r$ for every $0\leq r<3/2$. For $s>3/2$, however, every translation direction must satisfy the Neumann compatibility condition $\partial_\nu\xi=0$ on $\partial M$. An arbitrary embedding does not have this property. Moreover, one cannot require all the coordinates of an ordinary smooth embedding to satisfy the Neumann condition: if $\partial_\nu V_j=0$ on $\partial M$ for every $j$, then
\[ d_x\mathcal V(\nu_x)=0,\qquad x\in\partial M,\]
and hence $\mathcal V$ is not an immersion at the boundary. For the same reason, projecting an arbitrary embedding onto finitely many Neumann eigenfunctions does not immediately solve the problem. Every such spectral projection has vanishing normal derivative, so it cannot converge in $C^1(\overline M)$ to an embedding having a nonzero normal derivative. This is why boundary and interior profiles are introduced separately.
\end{remark}
\color{black}
\section{Controllability for less regular data}
\label{sec:less_regular}
The condition $s>d/2$ used above is mainly imposed in order to work in a Sobolev-algebra and to keep the nonlinear estimates completely elementary. This assumption is not expected to be optimal. In fact, using Strichartz estimates one can lower the regularity threshold and treat rougher initial data, at the price of working in mixed space-time norms and modifying the large-amplitude estimates accordingly. On some specific geometries, such as flat tori, sharper dispersive or multilinear estimates may lead to even better ranges of admissible Sobolev exponents. We do not pursue these refinements here, since our purpose is to present the saturation mechanism in a robust functional framework. In this section, we restrict our attention to the cubic Schr\"odinger and Klein--Gordon equations. 

\subsection{Schr\"odinger equation}
\label{ssec:Stri-Sch}
Let $(M,g)$ be a compact Riemannian manifold without boundary of dimension $d$, let $W\in C^\infty(M;\mathbb R)$, and let $\kappa\in\mathbb R\setminus\{0\}$. We consider the controlled cubic Schr\"odinger equation
\begin{equation}
\label{eq:NLS-2}
\begin{cases}
i\partial_t u=(-\Delta_g+W)u+\kappa |u|^2u+\eta(t,x),& (t,x)\in[0,T]\times M,\\
u(0)=u_0\in H^s(M;\C).
\end{cases}\end{equation}
Let $s\geq 0$ and set $\Lambda=(1-\Delta_g)^{\frac 12}$. We define the space $H^s (M;\C)$ by 
\[H^s (M;\C):=\{u \in L^2(M;\C) : \|u\|_{H^s}=\|\Lambda^s u\|_{L^2}<\infty\}.\]
The regularity estimates and the well-posedness argument in this section are mainly based on the works \cite{burq2004strichartz,dehman2006stabilization}. The following Strichartz estimate is presented in \cite[Theorem 1]{burq2004strichartz}. 
\begin{theorem}
 Let $2\leq b, a < \infty$ satisfy the Schr\"odinger admissibility condition
\begin{equation}
\label{eq:p-q}
\frac{2}{b}+\frac{d}{a}=\frac{d}{2}, \qquad (b,a)\neq (2,\infty).
\end{equation}
Then, for every finite time interval $I$, there exists $C_I>0$ such that
\begin{equation}
\label{eq:Strichartz}
\left\|e^{it\Delta_g}v_0\right\|_{L^b(I;L^a(M))}
\leq C_I\|v_0\|_{H^{1/b}(M)}
\end{equation}
for every $v_0\in H^{1/b}(M)$.
\end{theorem}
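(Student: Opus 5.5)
The plan is to reproduce the argument of Burq--G\'erard--Tzvetkov: prove a frequency-localized Strichartz estimate that is lossless on time intervals of length comparable to the inverse frequency, then glue these short intervals and the dyadic blocks together. The loss of $1/b$ derivatives in \eqref{eq:Strichartz} comes from this gluing.

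First I reduce to dyadic frequencies. Fix $\psi\in C_c^\infty((1/4,4))$ forming a Littlewood--Paley partition of unity, and set $\psi(h^2(-\Delta_g))$ for $h=2^{-k}$; these operators commute with $e^{it\Delta_g}$. For $2\le a<\infty$, the Littlewood--Paley square-function estimate on compact manifolds follows from the $L^a$-boundedness of functions of $-\Delta_g$ given by symbols (Seeley / functional calculus). Combining it with Minkowski, since $b,a\ge2$, gives
\[\|e^{it\Delta_g}v_0\|_{L^b(I;L^a)}\lesssim \|e^{it\Delta_g}v_0\|_{L^b(I;L^a)}^{\mathrm{low}}+\Bigl(\sum_h\|\psi(h^2(-\Delta_g))e^{it\Delta_g}v_0\|_{L^b(I;L^a)}^2\Bigr)^{1/2}.\]
It therefore suffices to prove, for each dyadic $h$,
\[\|\psi(h^2(-\Delta_g))e^{it\Delta_g}v_0\|_{L^b(I;L^a)}\le C\,h^{-1/b}\|v_0\|_{L^2}.\]
The low-frequency part is handled by Sobolev embedding.

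Second, I prove the local estimate on a short interval $J$ with $|J|\le \alpha h$ for a small fixed $\alpha$. The key is a semiclassical WKB parametrix for $\psi(h^2(-\Delta_g))e^{it\Delta_g}$ valid for $|t|\le\alpha h$, working in local coordinates with a microlocal partition. This is an oscillatory integral $h^{-d}\int e^{i(S(t,x,\xi)-y\cdot\xi)/h}a(t,x,y,\xi,h)\,d\xi$, where the phase solves the eikonal (Hamilton--Jacobi) equation, modulo an $O(h^\infty)$ remainder. On this time scale the Hamiltonian flow has no caustics. Stationary phase then yields the dispersive bound
\[\|\psi(h^2(-\Delta_g))e^{i(t-s)\Delta_g}\psi(h^2(-\Delta_g))^*\|_{L^1\to L^\infty}\lesssim |t-s|^{-d/2},\qquad |t-s|\le\alpha h.\]
Together with $L^2$ unitarity, the abstract Keel--Tao $TT^*$ argument gives the lossless estimate on $J$; the endpoint is excluded by hypothesis. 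I expect this parametrix construction and the uniform stationary-phase bound to be the main obstacle. The substance is making the construction uniform in $h$ and in $x\in M$, and checking that the time scale $\alpha h$ (semiclassical time $\alpha$) is exactly where the phase remains nondegenerate.

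Third, I glue in time. Split $I$ into $N\simeq |I|/h$ intervals $J_k$ of length $\alpha h$, and apply the local estimate on each one starting from $e^{it_k\Delta_g}v_0$, whose $L^2$ norm is conserved. This gives
\[\|\cdot\|_{L^b(I;L^a)}^b\le\sum_k\|\cdot\|_{L^b(J_k;L^a)}^b\lesssim N\|v_0\|_{L^2}^b,\]
and since $N\simeq h^{-1}$ this is the $h^{-1/b}$ loss. Summing over dyadic blocks with the square function then converts $h^{-1/b}\|\psi(h^2(-\Delta_g))v_0\|_{L^2}$ into $\|v_0\|_{H^{1/b}}$. The constant depends on $|I|$ through $N$, which gives $C_I$.
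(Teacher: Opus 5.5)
Your proposal is correct: it is the Burq--G\'erard--Tzvetkov argument that the paper cites from \cite{burq2004strichartz} instead of proving (Littlewood--Paley reduction via the square-function estimate, a WKB parametrix giving the dispersive bound $|t-s|^{-d/2}$ on time intervals of length $\alpha h$, Keel--Tao on each such interval, and gluing $\sim |I|/h$ intervals to produce the $h^{-1/b}$ loss). One small correction: the hypothesis only rules out the forbidden endpoint $(2,\infty)$ in $d=2$. For $d\geq 3$ the pair $(2,\frac{2d}{d-2})$ is allowed, so you need the endpoint case of Keel--Tao, which applies since $\sigma=d/2>1$.
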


Finally, we also define the space $Y_T^s=L^{\infty}(0,T;H^{s}(M))\cap L^{b}(0,T;W^{s-\frac{1}{b},a}(M))$, endowed with the norm
\[\|u\|_{Y_T^s}=\|u\|_{L^{\infty}(0,T;H^{s}(M))}+\|\Lambda^{s-\frac{1}{b}}u\|_{L^{b}(0,T;L^{a}(M))}.\]

\noindent As we are interested in small-time controllability, we fix $0<T<1$, so that the Strichartz constants can be chosen uniformly by taking $I=(0,1)$. Similarly, since we work in short-time intervals, the sign of the nonlinearity is not relevant.

\begin{proposition}
\label{prop:NLS-LWP-2}
Let $b,a$ satisfy \eqref{eq:p-q}, $b>2$, $0<T<1$, $W\in C^\infty(M;\R)$, $\kappa\in\R$. Let $s>d/2 - 1/b$, $u_0\in H^s(M;\C)$, and $\eta\in L^1(0,T;H^s(M;\C))$.
Then there exists $T_*\in(0,T]$ such that \eqref{eq:NLS-2} admits a unique mild solution $u\in C([0,T_*];H^s(M))\cap Y_{T_*}^s$. Moreover, the solution depends locally Lipschitz continuously on the initial datum and on the source term, on a common interval of existence.
\end{proposition}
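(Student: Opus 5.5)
The proof is a contraction-mapping argument for the Duhamel map
\[\Gamma(u)(t)=U(t)u_0-i\kappa\int_0^tU(t-\sigma)\bigl(|u|^2u\bigr)(\sigma)\,d\sigma-i\int_0^tU(t-\sigma)\eta(\sigma)\,d\sigma,\]
carried out in a closed ball of $Y_{T_*}^s$, following \cite{burq2004strichartz}. Here $U(t)=e^{-it(-\Delta_g+W)}$.

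\textbf{Linear estimates.} First transfer \eqref{eq:Strichartz} from $e^{it\Delta_g}$ to $U(t)$. Since $W$ is smooth and bounded, the potential can be treated by Duhamel as a bounded perturbation on $[0,1]$. Because $\Lambda^s$ commutes with $e^{it\Delta_g}$, we obtain
\[\|\Lambda^{s-1/b}U(t)v_0\|_{L^b(0,1;L^a)}\le C\|v_0\|_{H^s}.\]
The $W$-term is controlled by the $L^\infty H^s$ norm through the same estimate applied inside the Duhamel integral. Conservation of the $H^s$ norm, up to a constant, gives the $L^\infty H^s$ part. For the inhomogeneous terms, Minkowski's inequality in $\sigma$ combined with the homogeneous estimate yields
\[\Bigl\|\int_0^tU(t-\sigma)F(\sigma)\,d\sigma\Bigr\|_{Y_T^s}\le C\|F\|_{L^1(0,T;H^s)}.\]
This handles the control term directly.

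\textbf{Nonlinear estimate.} This is the step I expect to be the main obstacle. We need
\[\||u|^2u\|_{L^1(0,T;H^s)}\le C\,T^{1-2/b}\|u\|_{Y_T^s}^3,\]
together with the corresponding difference estimate. I would use the fractional Leibniz (Kato--Ponce) rule on the compact manifold, via localization in charts:
\[\||u|^2u\|_{H^s}\le C\|u\|_{H^s}\|u\|_{L^\infty}^2.\]
Since $s-\tfrac1b>\tfrac da$ exactly when $s>\tfrac d2-\tfrac1b$ (using \eqref{eq:p-q}), Sobolev embedding gives $W^{s-1/b,a}\hookrightarrow L^\infty$. Hence $\|u\|_{L^\infty}\le C\|\Lambda^{s-1/b}u\|_{L^a}$. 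Hölder's inequality in time then gives
\[\int_0^T\|u\|_{L^\infty}^2\,dt\le T^{1-2/b}\|u\|_{L^b(0,T;L^\infty)}^2,\]
and $b>2$ ensures the positive power of $T$. For the difference estimate, write
\[|u|^2u-|v|^2v=O\bigl((|u|+|v|)^2|u-v|\bigr),\]
and estimate its $H^s$ norm by $\|u-v\|_{H^s}\bigl(\|u\|_{L^\infty}+\|v\|_{L^\infty}\bigr)^2+\bigl(\|u\|_{H^s}+\|v\|_{H^s}\bigr)\bigl(\|u\|_{L^\infty}+\|v\|_{L^\infty}\bigr)\|u-v\|_{L^\infty}$. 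Because the nonlinearity is an algebraic polynomial in $u$ and $\bar u$, the Leibniz rule applies cleanly here.

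\textbf{Conclusion.} Combining the two steps, with $R=\|u_0\|_{H^s}+\|\eta\|_{L^1H^s}$ we get
\[\|\Gamma(u)\|_{Y_{T}^s}\le CR+CT^{1-2/b}\|u\|_{Y_T^s}^3\]
and
\[\|\Gamma(u)-\Gamma(v)\|_{Y_T^s}\le CT^{1-2/b}\bigl(\|u\|_{Y_T^s}+\|v\|_{Y_T^s}\bigr)^2\|u-v\|_{Y_T^s}.\]
Choose a ball of radius $2CR$ and then $T_*$ small, depending only on $R$. The Banach fixed-point theorem gives a solution, and continuity in time in $H^s$ follows from the Duhamel formula. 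Uniqueness in $Y_{T_*}^s$ follows from the difference estimate on short subintervals. Applying the same difference estimate to two data pairs, and then iterating over subintervals covering a common existence interval, gives the local Lipschitz dependence on $u_0$ and $\eta$.
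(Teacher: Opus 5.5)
Your proposal is correct and follows essentially the paper's argument: a contraction in $Y_T^s$ using Strichartz with $\Lambda^{s-1/b}$ commuted through the group, Moser's estimate, the embedding $W^{s-1/b,a}\hookrightarrow L^\infty$ (valid exactly when $s>d/2-1/b$), and H\"older in time to produce the factor $T^{1-2/b}$. The only cosmetic difference is the treatment of $W$. The paper keeps the free group $e^{it\Delta_g}$ and treats $Wu$ as a source term bounded by $C_W T\|u\|_{Y_T^s}$, whereas you absorb $W$ into $U(t)$ and transfer the Strichartz estimate by Duhamel; the two routes amount to the same computation.
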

\begin{proof}
As already mentioned, the proof is mainly based on \cite{burq2004strichartz,dehman2006stabilization}. We explain here the main ideas and differences with respect to  \Cref{prop:NLS-LWP}. We recall that $e^{it\Delta_g }$ defines a strongly continuous Schrödinger group. Using Duhamel's formula, define $\Gamma(u)$ by
\begin{equation}
\label{eq:NLS-Duhamel}
\Gamma(u)=e^{it\Delta_g }u_0-i\int_0^t e^{i(t-\sigma)\Delta_g }\bigl(Wu+\kappa|u|^2 u+\eta\bigr)d\sigma.
\end{equation}
We estimate the $Y_T^s$ norm. Using \eqref{eq:NLS-Duhamel} and the unitarity of the Schrödinger group, we obtain 
\begin{equation}
\label{eq:Linfty-Hs}
\|\Gamma(u)\|_{L^{\infty}(0,T;H^{s}(M))}\leq \|u_0\|_{H^{s}(M)}+\|Wu+\kappa|u|^2 u +\eta\|_{L^{1}(0,T;H^{s}(M))}.
\end{equation}
Applying $\Lambda^{s-\frac{1}{b}}$ to \eqref{eq:NLS-Duhamel}, we get
\[\begin{aligned}
 \|\Lambda^{s-\frac{1}{b}}\Gamma(u)\|_{L^{b}(0,T;L^{a}(M))}\leq& \|\Lambda^{s-\frac{1}{b}}e^{it\Delta_g }u_0\|_{L^{b}(0,T;L^{a}(M))}\\
 &+\left\|\Lambda^{s-\frac{1}{b}}\int_0^t e^{i(t-\sigma)\Delta_g }(Wu+\kappa|u|^2u+\eta)d\sigma \right\|_{L^{b}(0,T;L^{a}(M))}.   
\end{aligned}\]
The left-hand side corresponds to the $L^{b}(0,T;W^{s-\frac{1}{b},a}(M))-$norm of $\Gamma(u)$. For the first term on the right-hand side, since $\Lambda^{s-\frac{1}{b}}$ commutes with $e^{it\Delta_g }$ and using Strichartz estimate \eqref{eq:Strichartz} we deduce
\[\begin{aligned}
\|\Lambda^{s-\frac{1}{b}}e^{it\Delta_g }u_0\|_{L^b (0,T;L^{a}(M))}=\|e^{it\Delta_g }\Lambda^{s-\frac{1}{b}}u_0\|_{L^{b}(0,T;L^{a}(M))}&\leq C\|\Lambda^{s-\frac{1}{b}}u_0\|_{H^{\frac{1}{b}}(M)}=C\|u_0\|_{H^{s}(M)}.
\end{aligned}\]
For the second term, Minkowski's inequality and the Strichartz estimate yield
\[\begin{aligned}
&\left\|\Lambda^{s-\frac{1}{b}}\int_0^t e^{i(t-\sigma)\Delta_g }(Wu+\kappa|u|^2u+\eta)d\sigma \right\|_{L^{b}(0,T;L^{a}(M))}\\ 
&\leq \int_0^T \left\|e^{i(t-\sigma)\Delta_g  }\Lambda^{s-\frac{1}{b}}(Wu+\kappa|u|^2u+\eta)\right\|_{L^{b}(0,T;L^{a}(M))} d\sigma \\
&\leq C\|Wu+\kappa|u|^2u+\eta\|_{L^1 (0,T;H^{s}(M))}.
\end{aligned}\]
Gathering the previous estimates with \eqref{eq:Linfty-Hs}, we obtain
\begin{equation}
\label{eq:Y_Ts}
\|\Gamma(u)\|_{Y_T^s}\leq C\left(\|u_0\|_{H^{s}(M)}+\|Wu+\kappa|u|^2 u+\eta\|_{L^{1}(0,T;H^{s}(M))}\right).
\end{equation}
Now, we need to estimate the $L^{1}(0,T;H^{s}(M))$ term on the right-hand side of the above estimate. First, as $W \in C^\infty (M)$,
\[\begin{aligned}
\|Wu\|_{L^{1}(0,T;H^{s}(M))}&\leq C_W T\|u\|_{L^{\infty}(0,T;H^{s}(M))}  \leq C_W T\|u\|_{Y_T^s}.
\end{aligned}\]
Second, using Moser's estimate and H\"older's inequality, we obtain
\[\begin{aligned}
\||u|^2u\|_{L^{1}(0,T;H^{s}(M))}&\leq C\int_0^T \|u\|_{L^{\infty}}^2 \|u\|_{H^s(M)}d\sigma \\
& \leq C T^{1-\frac{2}{b}}\|u\|_{L^{b}(0,T;L^{\infty}(M))}^2 \|u\|_{L^{\infty}(0,T;H^{s}(M))}.
\end{aligned}\]
Moreover, $W^{s-\frac{1}{b},a}(M)\hookrightarrow L^\infty(M)$ provided $s-\frac{1}{b}>\frac{d}{a}$. Combining this condition with \eqref{eq:p-q}, we obtain $s>\frac{d}{2}-\frac{1}{b}$. Thus, 
\[\|u\|_{L^{b}(0,T;L^{\infty}(M))}\leq C \|u\|_{L^{b}(0,T;W^{s-\frac{1}{b},a}(M))}.\]
Finally,
\[\|\Gamma(u)\|_{Y_T^s}\leq C\left(\|u_0\|_{H^{s}(M)}+\|\eta\|_{L^{1}(0,T;H^{s}(M))}+T\|u\|_{Y_T^s}+T^{1-\frac{2}{b}}\|u\|_{Y_T^s}^3\right).\]
Thus $\Gamma:Y_T^s \rightarrow Y_T^s$ is well-defined. Set $R=2C(\|u_0\|_{H^{s}(M)}+\|\eta\|_{L^{1}(0,1;H^{s}(M))})$. If $\|u\|_{Y_T^s}\leq R$ then
\begin{equation}\label{eq:uniformY_T}
\begin{aligned}
\|\Gamma(u)\|_{Y_T^s}&\leq \tfrac{R}{2}+CTR+CT^{1-\frac{2}{b}}R^3  \\
& \leq R, \qquad \text{ for } T \text{ small}. 
\end{aligned}\end{equation}
Similarly, for $u, v \in Y_T^s$ such that $\|u\|_{Y_T^s},\|v\|_{Y_T^s}\leq R$, we obtain
\[\|\Gamma(u)-\Gamma(v)\|_{Y_T^s}\leq C(T+T^{1-\frac{2}{b}}R^2)\|u-v\|_{Y_T^s}.\]
Thus, for $T>0$ sufficiently small, $\Gamma$ is a contraction. Therefore, there exists a unique fixed point $u\in Y_T^s$, which is a solution of \eqref{eq:NLS-2}. It remains to recover the continuity in $H^s(M)$. From \eqref{eq:NLS-Duhamel}, using an estimate analogous to \eqref{eq:Linfty-Hs} but in the norm $C([0,T];H^s(M))$, we get
\[\|u\|_{C([0,T];H^s(M))}\leq \|u_0\|_{H^{s}(M)}+C\|Wu+\kappa|u|^2 u +\eta\|_{L^{1}(0,T;H^{s}(M))}.\]
The right-hand side is finite, since it is controlled by the $Y_T^s$ norm and by $\|\eta\|_{L^1(0,T;H^s(M))}$. Hence $ u\in C([0,T];H^s(M))\cap Y_T^s.$ Finally, for sufficiently close initial data and source terms, the fixed-point argument gives solutions on a common interval $[0,T]$. Let $u$ and $v$ be two mild solutions with respective data $(u_0,\eta)$ and $(v_0,\zeta)$. Applying the previous difference estimates on a sufficiently short time interval, we obtain
\[\|u-v\|_{Y_T^s}\le C\left(\|u_0-v_0\|_{H^s}+\|\eta-\zeta\|_{L^1(0,T;H^s)}\right),\]
where $C$ depends on a bound for their $Y_T^s$ norms. This proves uniqueness and local Lipschitz dependence in $Y_T^s$ and in $C([0,T];H^s)$.
\end{proof}

In this section, $\mathsf E_n$ and $\mathsf E_\infty$ denote the cubic power saturation spaces, defined by
\eqref{eq:odd-saturation}. The following theorem is the main
result of this section.
\begin{theorem}
\label{thm:NLS-STAC2}
Let $(b,a)$ satisfy \eqref{eq:p-q}, $b>2$. Let $W\in C^\infty(M;\R)$, $\kappa\in\R\setminus\{0\}$ and  $s>d/2-1/b$. Assume that $\mathcal V:M\to\R^m$, $\mathcal V(x):=(V_1(x),\dots,V_m(x))$, is a smooth embedding, and let $E=\operatorname{span}_{\C}\{1,V_1,\dots,V_m\}$. Then \eqref{eq:NLS-2} is small-time approximately controllable in $H^s(M;\C)$ by controls
$\eta\in L^\infty(0,T;E).$
\end{theorem}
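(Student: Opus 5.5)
The plan follows the saturation scheme of \Cref{sec:additive-NLS} verbatim. The algebraic part is unchanged: by \Cref{th:odd-saturation-density} with $p=3$, the space $\mathsf E_\infty$ is dense in $H^s(M;\C)$. The structural lemmas only use concatenation, continuity of the flow and the triangle inequality, so they transfer directly once \Cref{prop:NLS-LWP-2} replaces \Cref{prop:NLS-LWP}. These are the saturation criterion \Cref{prop:NLS-saturation-criterion}, stability under sums and limits \Cref{lem:NLS-sums}, the induction \Cref{lem:NLS-saturated-directions}, and the kick concatenation \Cref{lem:NLS-odd-kick}. What must be redone are the two analytic inputs, the direct translations and the asymptotic limit, now in the space $Y_\delta^s$ rather than in $C([0,\delta];H^s)$.

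For direct translations, I take $\eta_\delta=i\delta^{-1}\xi$ on $[0,\delta]$ with $\xi\in E\subset C^\infty$. Then $\|\eta_\delta\|_{L^1(0,\delta;H^s)}=\|\xi\|_{H^s}$ is fixed. I also need a uniform existence time for data bounded in this norm, which follows from the choice of $R$ in \eqref{eq:uniformY_T}. The fixed point therefore exists on $[0,\delta]$ with $\|u\|_{Y_\delta^s}\le R$ uniformly in $\delta$. The nonlinear Duhamel term is bounded in $C([0,\delta];H^s)$ by $C\delta^{1-2/b}R^3\to0$. Strong continuity of the group gives $U(\delta)u_0\to u_0$ and $\delta^{-1}\int_0^\delta U(\delta-\sigma)\xi\,d\sigma\to\xi$, hence $\mathcal R_\delta(u_0,\eta_\delta)\to u_0+\xi$.

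The main obstacle is the asymptotic limit, i.e.\ the analogue of \Cref{lem:NLS-asymptotic-limit} with $p=3$. Here I would write $u=\delta^{-1/3}\alpha\varphi+w$, with $\varphi\in C^\infty(M;\R)$, and solve the renormalized Duhamel equation \eqref{eq:w_delta} for $w$ in a ball of $Y_\delta^s$. The key point is to treat the large smooth part $\delta^{-1/3}\alpha\varphi$ in $L^\infty_tW^{k,\infty}_x$ and not through Strichartz. Expanding $|\delta^{-1/3}\alpha\varphi+w|^2(\delta^{-1/3}\alpha\varphi+w)$ gives the following terms:
\begin{itemize}
\item the pure term $\delta^{-1}|\alpha|^2\alpha\varphi^3$;
\item terms of the form $\delta^{-2/3}\varphi^2w$, whose $H^s$ norms are $\lesssim\delta^{-2/3}\|w\|_{H^s}$ since $\varphi$ is a smooth multiplier, giving a time-integrated contribution $O(\delta^{1/3}R)$;
\item terms of the form $\delta^{-1/3}\varphi w^2$, bounded via Moser by $\delta^{-1/3}\|w\|_{L^\infty}\|w\|_{H^s}$, and after H\"older in time by $\delta^{-1/3}\delta^{1-1/b}\|w\|_{L^b L^\infty}\|w\|_{L^\infty H^s}$; this is $O(\delta^{2/3-1/b}R^2)$ and tends to $0$ since $b>2$;
\item the cubic term $|w|^2w$, which is $O(\delta^{1-2/b}R^3)$ as in \Cref{prop:NLS-LWP-2}.
\end{itemize}
The linear remainder $(U(t)-I)\delta^{-1/3}\alpha\varphi$ is $O(\delta^{2/3})$ in $L^\infty H^s$. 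It is also small in $L^bW^{s-1/b,a}$, because Strichartz applied to $\Lambda^{s-1/b}$ of it is bounded by its $H^s$ norm. The Lipschitz estimates have the same structure, so for $\delta$ small $\widetilde\Gamma_\delta$ is a contraction on a fixed ball of $Y_\delta^s$. Uniqueness in \Cref{prop:NLS-LWP-2} then identifies $\delta^{-1/3}\alpha\varphi+w^\delta$ with $\mathcal R_t(u_0+\delta^{-1/3}\alpha\varphi,0)$.

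Passing to the limit, all error terms vanish, and $\delta^{-1}\int_0^\delta U(\delta-\sigma)\varphi^3\,d\sigma\to\varphi^3$. Hence $w^\delta(\delta)\to u_0-i\kappa|\alpha|^2\alpha\varphi^3$ in $H^s$. Continuity of $\mathcal R_\delta(\cdot,0)$ near the large datum on $[0,\delta]$, needed in the kick lemma, follows from the same contraction applied to the difference of two renormalized solutions. The induction then proceeds as in \Cref{lem:NLS-saturated-directions}, using that $\alpha\mapsto-i\kappa|\alpha|^2\alpha$ is onto $\C$. Finally, \Cref{prop:NLS-saturation-criterion} concludes the proof.
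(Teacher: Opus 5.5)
Your proposal is correct and follows the paper's proof essentially step by step. It uses the same renormalization $u=\delta^{-1/3}\alpha\varphi+w$ with a fixed point in $Y_\delta^s$, the same bounds $O(\delta^{1/3}R)$, $O(\delta^{2/3-1/b}R^2)$ and $O(\delta^{1-2/b}R^3)$ for the expanded cubic terms, and the same transfer of the kick, sums and saturation lemmas from \Cref{sec:additive-NLS}.

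The only differences are minor. The paper uses the free group $e^{it\Delta_g}$, which commutes with $\Lambda^{s-1/b}$ so that the cited Strichartz estimate applies directly, and it puts $W$ into the source term. It also bounds the linear remainder in $L^b(0,\delta;W^{s-1/b,a})$ directly from the smoothness of $\varphi$, obtaining $O(\delta^{2/3+1/b})$; this is cleaner than your somewhat loose appeal to Strichartz for that term.
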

We only prove the direct translation property and the large-amplitude asymptotic limit in the Strichartz framework. The remaining steps, namely the cubic kick, the stability under sums, the saturation criterion and the final controllability argument, are unchanged and follow the proof of \Cref{sec:additive-NLS}.

\begin{lemma}[Direct translations]
\label{lem:NLS-direct-translations2}
For every $\xi\in E$, the translation $\tau_\xi$ is small-time approximately realizable.
\end{lemma}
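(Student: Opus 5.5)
We follow the proof of \Cref{lem:NLS-direct-translations}, replacing the Sobolev-algebra estimates by the Strichartz framework of \Cref{prop:NLS-LWP-2}. Fix $u_0\in H^s(M;\C)$ and $\xi\in E$, and for $\delta\in(0,1)$ take the constant control $\eta_\delta(t):=\frac{i}{\delta}\xi$ on $[0,\delta]$. Then $\eta_\delta\in L^\infty(0,\delta;E)$ and $\|\eta_\delta\|_{L^1(0,\delta;H^s)}=\|\xi\|_{H^s}$ is independent of $\delta$.

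\textbf{Step 1: uniform existence on $[0,\delta]$.} With $R:=2C(\|u_0\|_{H^s}+\|\xi\|_{H^s})$ as in \eqref{eq:uniformY_T}, the Duhamel map \eqref{eq:NLS-Duhamel} sends the ball of radius $R$ in $Y_T^s$ into itself and contracts there once $CT+CT^{1-2/b}R^2<1/2$. This threshold $T_R$ depends only on $R$ and not on the particular source term; it uses only the $L^1_tH^s$ size of $\eta$. Hence, for every $\delta<T_R$, the solution $u^\delta$ exists on $[0,\delta]$ and satisfies
\[
\|u^\delta\|_{Y^s_\delta}\le R.
\]

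\textbf{Step 2: passage to the limit.} Write
\[
u^\delta(\delta)=e^{i\delta\Delta_g}u_0+\frac1\delta\int_0^\delta e^{i(\delta-\sigma)\Delta_g}\xi\,d\sigma-i\int_0^\delta e^{i(\delta-\sigma)\Delta_g}\bigl(Wu^\delta+\kappa|u^\delta|^2u^\delta\bigr)d\sigma .
\]
By strong continuity of the group on $H^s$, the first two terms converge to $u_0$ and $\xi$ respectively. By unitarity in $H^s$, the Moser and H\"older estimates of \Cref{prop:NLS-LWP-2}, and the embedding $W^{s-1/b,a}\hookrightarrow L^\infty$, the last term is bounded in $H^s$ by
\[
C\bigl(\delta R+\delta^{1-2/b}R^3\bigr),
\]
which tends to $0$ because $b>2$.

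Therefore $u^\delta(\delta)\to u_0+\xi$ in $H^s$. Choosing $\delta<\varkappa$ small enough gives the claim.

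\textbf{Main obstacle.} The only delicate point is Step 1: one must check that the contraction time is uniform in $\delta$ even though $\|\eta_\delta\|_{L^\infty_t}$ blows up. This holds because the estimates in the proof of \Cref{prop:NLS-LWP-2} involve $\eta$ only through $\|\eta\|_{L^1(0,T;H^s)}$, both in \eqref{eq:Linfty-Hs} and in the Strichartz bound. That bound uses Minkowski's inequality and \eqref{eq:Strichartz} applied to each slice $\Lambda^{s-1/b}\eta(\sigma)$, so it requires no $L^\infty_t$ control of $\eta$.
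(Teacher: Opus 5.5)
Your proposal is correct and follows the paper's proof step by step: the same constant control $\eta_\delta=\frac{i}{\delta}\xi$, the same uniform $Y^s_\delta$ bound from \eqref{eq:uniformY_T}, strong continuity for the two linear terms, and the same bound $C(\delta R+\delta^{1-2/b}R^3)$ for the potential and cubic Duhamel terms. Your remark that the existence time depends on $\eta$ only through $\|\eta\|_{L^1(0,T;H^s)}$ simply spells out what the paper uses implicitly when it invokes \eqref{eq:uniformY_T}.
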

\begin{proof}
Fix $u_0\in H^s(M;\C)$ and $\xi\in E$. For $\delta>0$, set $\eta_\delta(t):=\frac{i}{\delta}\xi$, $0\le t\le\delta$. Clearly, $\eta_\delta\in L^\infty(0,\delta;E)$. Moreover, by \eqref{eq:eta_L1Hs}, $\|\eta_\delta\|_{L^1(0,\delta;H^s)}$ is uniformly bounded with respect to $\delta$. Let $u$ be the solution of \eqref{eq:NLS-2} with initial datum $u_0$ and control $\eta_\delta$. By Duhamel's formula,
\[u(\delta)=e^{i\delta \Delta_g }u_0
+\frac1\delta\int_0^\delta e^{i (\delta-\sigma)\Delta_g}\xi d\sigma-i\int_0^\delta
e^{i(\delta-\sigma)\Delta_g }\left(Wu(\sigma)+\kappa|u(\sigma)|^2u(\sigma)\right)d\sigma.\]
By strong continuity of $e^{it\Delta_g}$ in $H^s$,
\[e^{i\delta\Delta_g }u_0\to u_0, \qquad
\frac1\delta\int_0^\delta e^{i(\delta-\sigma)\Delta_g }\xi d\sigma\to \xi \qquad \text{ in } H^s  \ \text{ as } \delta\to0^+.\]
Moreover, by the uniform local bound obtained in \eqref{eq:uniformY_T}, $\|u\|_{Y_\delta^s}\le C(u_0,\xi)$ for all sufficiently small $\delta>0$. For the last term, using the same estimates as those obtained in the well-posedness argument, we have
\[\begin{aligned}
 \left\|\int_0^\delta
e^{i(\delta-\sigma)\Delta_g }\left(Wu(\sigma)+\kappa|u(\sigma)|^2u(\sigma)\right)d\sigma\right\|_{H^s}&\le \|Wu+\kappa|u|^2u\|_{L^{1}(0,\delta;H^{s}(M))}\\
&\le C\left(\delta \|u\|_{Y_\delta^s}+\delta^{1-\frac{2}{b}}\|u\|_{Y_\delta^s}^3\right).   
\end{aligned}\]
Using the uniform bound in $Y_\delta^s$, the right-hand side tends to zero. Hence $u(\delta) \to u_0+\xi$ in $H^s(M)$.
\end{proof}

Whenever the solution of \eqref{eq:NLS-2} with initial datum $u_0$ and control $\eta$ is defined, we denote by $\mathcal R_t(u_0,\eta)$ its value at time $t$. The uncontrolled flow is given by $\mathcal R_t(u_0,0)$.

\begin{lemma}[Asymptotic limit]
\label{lem:NLS-asymptotic-limit2}
Let $\varphi\in C^\infty(M;\R)$ and $\alpha\in\C$.
Then, for every $\rho>0$, there exists $\delta_0>0$ such that, for every $\delta\in(0,\delta_0)$ and every $u_0\in H^s(M;\C)$ with $\|u_0\|_{H^s}\leq \rho$,
\[t\mapsto\mathcal R_t(u_0+\delta^{-1/3}\alpha\varphi,0) \quad \text{is well defined in } C([0,\delta];H^s(M;\C)).\]
 Moreover,
\[\mathcal R_\delta(u_0+\delta^{-1/3}\alpha\varphi,0)-\delta^{-1/3}\alpha\varphi \longrightarrow u_0-i\kappa\alpha|\alpha|^2\varphi^3
\qquad\text{in }H^s(M;\C) \qquad \text{ as } \delta\to0^+.\]
\end{lemma}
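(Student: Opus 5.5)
We follow the scheme of \Cref{lem:NLS-asymptotic-limit}, replacing the Sobolev-algebra fixed point by the Strichartz fixed point of \Cref{prop:NLS-LWP-2}. Write the solution as $v=w+\delta^{-1/3}\alpha\varphi$. The remainder $w$ must then solve the renormalized Duhamel equation
\[
w(t)=e^{it\Delta_g}u_0+\bigl(e^{it\Delta_g}-I\bigr)\delta^{-1/3}\alpha\varphi
-i\int_0^t e^{i(t-\sigma)\Delta_g}\Bigl(W v(\sigma)+\kappa|v(\sigma)|^2v(\sigma)\Bigr)d\sigma .
\]
We seek a fixed point of this map in a ball of radius $R$ of $Y^s_\delta$, with $R$ depending only on $\rho$, $\alpha$ and $\varphi$.

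\textbf{Bounds on the linear terms.} The linear part is handled as follows.
\begin{itemize}
\item By the Strichartz estimate \eqref{eq:Strichartz}, $\|e^{it\Delta_g}u_0\|_{Y^s_\delta}\le C\rho$.
\item Since $\varphi$ is smooth, $(e^{it\Delta_g}-I)\varphi=i\int_0^t e^{i\sigma\Delta_g}\Delta_g\varphi\,d\sigma$. Hence this term is $O(\delta^{2/3})$ in $L^\infty H^s$ and also in the Strichartz component, using Minkowski and Strichartz applied to $\Delta_g\varphi\in H^{s}$.
\end{itemize}

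\textbf{Bounds on the nonlinear term.} Expand $|v|^2v$ in powers of $\delta^{-1/3}$. This gives $\delta^{-1}|\alpha|^2\alpha\varphi^3$, terms of order $\delta^{-2/3}$ that are linear in $w$ (with smooth coefficients $\varphi^2$), terms of order $\delta^{-1/3}$ that are quadratic in $w$ (coefficient $\varphi$), and $|w|^2w$.

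By Moser's estimate, the product with a smooth function is bounded in $H^s$, and $\|w\|_{L^\infty}$ is controlled through the $L^b W^{s-1/b,a}$ norm. Integrating on $[0,\delta]$ and using Hölder in time as in the proof of \Cref{prop:NLS-LWP-2}, we get:
\begin{itemize}
\item the leading term contributes $O(1)$;
\item the linear-in-$w$ terms contribute $O(\delta^{1/3}R)$;
\item the quadratic terms contribute $\delta^{-1/3}\int_0^\delta\|w\|_{L^\infty}\|w\|_{H^s}\le C\delta^{-1/3}\delta^{1-1/b}R^2$, which tends to zero because $1-1/b>1/3$;
\item the cubic term contributes $\delta^{1-2/b}R^3$;
\item the potential term $Wv$ contributes $O(\delta^{2/3})$.
\end{itemize}
The same bookkeeping gives Lipschitz constants tending to zero. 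So for $\delta$ small the map leaves a ball of $Y^s_\delta$ invariant and is a contraction there, uniformly in $\|u_0\|_{H^s}\le\rho$. Uniqueness in \Cref{prop:NLS-LWP-2} identifies $v$ with $\mathcal R_t(u_0+\delta^{-1/3}\alpha\varphi,0)$.

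\textbf{Passage to the limit.} At $t=\delta$, we have $e^{i\delta\Delta_g}u_0\to u_0$ and the translation correction tends to zero. All subleading nonlinear terms are $o(1)$ in $H^s$ by the bounds above. The leading term equals $-i\kappa|\alpha|^2\alpha\,\delta^{-1}\int_0^\delta e^{i(\delta-\sigma)\Delta_g}\varphi^3d\sigma$, which converges to $-i\kappa\alpha|\alpha|^2\varphi^3$ by strong continuity.

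\textbf{Main obstacle.} The delicate step is the estimate of the cross terms in the Strichartz norm without a Sobolev algebra. The large profile must always be put in $L^\infty H^s$, using its smoothness, while $w$ carries the $L^bL^\infty$ norm. One must check that the exponent of $\delta$ remains positive in each term, in particular $-1/3+1-1/b>0$ for the quadratic terms. This holds because $b>2$.
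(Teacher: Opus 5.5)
Your proposal is correct and follows essentially the same route as the paper. Both arguments use a contraction for the renormalized Duhamel map on a ball of $Y^s_\delta$, obtained by expanding $|\delta^{-1/3}\alpha\varphi+w|^2(\delta^{-1/3}\alpha\varphi+w)$ and applying Moser and H\"older in time, which yields the factors $\delta^{1/3}$, $\delta^{2/3-1/b}$ and $\delta^{1-2/b}$; the limit is then identified through strong continuity of the group acting on $\varphi^3$. Two small points: the potential term is $O(\delta^{2/3}+\delta R)$ rather than $O(\delta^{2/3})$, which is harmless, and the continuity of the fixed point in $C([0,\delta];H^s)$ should be read off from the Duhamel formula, as the paper does.
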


\begin{proof}
As in the proof of \Cref{lem:NLS-asymptotic-limit}, we first construct a solution of the renormalized equation. Define on $Y_\delta^s$
\[\widetilde{\Gamma}_\delta(w):=\Gamma_{u_0+\delta^{-1/3}\alpha\varphi,0}\left(w+\delta^{-1/3}\alpha\varphi\right)-\delta^{-1/3}\alpha\varphi,\]
where $\Gamma_{u_0,\eta}$ is the Duhamel map associated with \eqref{eq:NLS-2}. By Duhamel's formula,
\begin{equation}
\label{eq:w-delta-duhamel}
\begin{aligned}
\widetilde{\Gamma}_\delta(w)(t)={}&e^{it\Delta_g}u_0+\bigl(e^{it\Delta_g}-I\bigr)\delta^{-1/3}\alpha\varphi-i\int_0^t e^{i(t-\sigma)\Delta_g}W(\delta^{-1/3}\alpha\varphi+w(\sigma))d\sigma\\
&-i\kappa\int_0^t e^{i(t-\sigma)\Delta_g}\left(
|\delta^{-1/3}\alpha\varphi+w(\sigma)|^2
(\delta^{-1/3}\alpha\varphi+w(\sigma))
\right)d\sigma.
\end{aligned}
\end{equation}
We apply the same strategy as in \Cref{lem:NLS-asymptotic-limit}, but in the Strichartz norm. First, by the Strichartz estimates,
\begin{equation}
\label{eq:w-delta-Ys}
\begin{aligned}
\|\widetilde{\Gamma}_\delta(w)\|_{Y_\delta^s}\le C\Big(
&\|u_0\|_{H^s}+\|\bigl(e^{it\Delta_g}-I\bigr)\delta^{-1/3}\alpha\varphi\|_{Y_\delta^s}+\|W(\delta^{-1/3}\alpha\varphi+w)\|_{L^1(0,\delta;H^s)}\\
&+\||\delta^{-1/3}\alpha\varphi+w|^2
(\delta^{-1/3}\alpha\varphi+w)\|_{L^1(0,\delta;H^s)}
\Big).
\end{aligned}
\end{equation}
We now collect estimates on the right-hand side of the above inequality. We have
\[\sup_{0\le t\le \delta}\|\bigl(e^{it\Delta_g }-I\bigr)\delta^{-1/3}\alpha\varphi\|_{H^s}\le C\delta^\frac{2}{3},\]
\[\begin{aligned}
\|\bigl(e^{it\Delta_g }-I\bigr)\delta^{-1/3}\alpha\varphi\|_{L^b(0,\delta,W^{s-\frac{1}{b},a})}&\le C\delta^{-1/3}\left(\int_0^\delta t^b dt\right)^{\frac{1}{b}}
&\lesssim \delta^{\frac{2}{3}+\frac{1}{b}},
\end{aligned}\]
and
\[\begin{aligned}
\|W(\delta^{-1/3}\alpha\varphi+w)\|_{L^1 (0,\delta;H^s)}&\le C\int_0^\delta \|\delta^{-1/3}\alpha\varphi+w(t)\|_{H^s}dt\\
&\le C\left(\delta \delta^{\frac{-1}{3}}+\delta \|w\|_{L^\infty (0,\delta;H^s)}\right)\\
&\le C\delta^{\frac{2}{3}}+C\delta\|w\|_{Y_\delta^s}.
\end{aligned}\]
Observe that 
\[\begin{aligned}
\left|\delta^{-1/3}\alpha\varphi+w\right|^2\left(\delta^{-1/3}\alpha\varphi+w\right)=&\delta^{-1}\alpha|\alpha|^2\varphi^3
+\delta^{-2/3}\varphi^2\left(2|\alpha|^2w+\alpha^2\overline{w}\right)\\
&+\delta^{-1/3}\varphi
\left(2\alpha|w|^2+\overline\alpha(w)^2\right)
+|w|^2w.
\end{aligned}\]
By Moser's and Hölder's inequalities,
\[\begin{aligned}
\left\||\delta^{-1/3}\alpha\varphi+w|^2(\delta^{-1/3}\alpha\varphi+w) \right\|_{L^1 (0,\delta;H^s)}
&\le C\left(1+\delta^{-2/3}\int_0^\delta\|w\|_{H^s}+ \delta^{-1/3}\int_0^\delta\|w\|_{L^\infty}\|w\|_{H^s}\right.\\
& \quad\left.+\int_0^\delta\|w\|^2_{L^\infty}\|w\|_{H^s}\right)\\
&\le C\left(1+\delta^{1/3}\|w\|_{Y_\delta^s}+ \delta^{2/3-1/b}\|w\|^2_{Y_\delta^s}+ \delta^{1-2/b}\|w\|^3_{Y_\delta^s}\right).\end{aligned}\]
Let $X_\delta=\|w\|_{Y_\delta^s}$. The previous estimates show that 
\[\|\widetilde{\Gamma}_\delta(w)\|_{Y_\delta^s}\leq C\left(1+\delta^{2/3}+\delta^{2/3+1/b}+(\delta+\delta^{1/3})X_\delta+\delta^{2/3-1/b}X_\delta^2 +\delta^{1-2/b}X_\delta^3\right)\]
Since $b>2$, all powers of $\delta$ appearing in front of $X_\delta$, $X_\delta^2$ and $X_\delta^3$ are positive.  Hence, choosing $R$ sufficiently large and then $\delta>0$ sufficiently small, the map leaves the closed ball of radius $R$ in $Y_\delta^s$ invariant. Moreover, the same estimates applied to a difference give, for $\|w_1\|_{Y_\delta^s},\|w_2\|_{Y_\delta^s}\le R$,
\[\|\widetilde{\Gamma}_\delta(w_1)-\widetilde{\Gamma}_\delta(w_2)\|_{Y_\delta^s}\le C\left(\delta+\delta^{1/3}+\delta^{2/3-1/b}R+\delta^{1-2/b}R^2\right)\|w_1-w_2\|_{Y_\delta^s}.\]
Taking $\delta>0$ smaller if necessary, the map is a contraction on that ball. Hence it admits a unique fixed point $w^\delta$ in that ball, and $\|w^\delta\|_{Y_\delta^s}\le R$ for all sufficiently small $\delta>0$. The integral formulation also gives $w^\delta\in C([0,\delta];H^s)$, since the potential and nonlinear terms belong to $L^1(0,\delta;H^s)$. Set
\[v^\delta(t):=w^\delta(t)+\delta^{-1/3}\alpha\varphi.\]
By construction, $v^\delta$ satisfies the mild formulation with initial datum $u_0+\delta^{-1/3}\alpha\varphi$ and no control. By uniqueness,
\[v^\delta(t)=\mathcal R_t(u_0+\delta^{-1/3}\alpha\varphi,0),\qquad 0\le t\le\delta.\]
Clearly, $e^{i\delta\Delta_g}u_0\to u_0$ in $H^s$. The estimates above also give
\[\|\bigl(e^{i\delta\Delta_g}-I\bigr)\delta^{-1/3}\alpha\varphi\|_{H^s}\to0\]
and
\[\left\|\int_0^\delta e^{i(\delta-\sigma)\Delta_g}W(\delta^{-1/3}\alpha\varphi+w^\delta(\sigma))d\sigma\right\|_{H^s}\le C(\delta^{2/3}+\delta R)\to0.\]
It remains to identify the limit of the nonlinear term. We write
\[|\delta^{-1/3}\alpha\varphi+w^\delta|^2(\delta^{-1/3}\alpha\varphi+w^\delta)
=\delta^{-1}\alpha|\alpha|^2\varphi^3+\mathcal E_\delta .\]
Using the uniform bound of $w^\delta$ in $Y_\delta^s$ and the same estimates
as above, one obtains
\[\left\|\int_0^\delta e^{i(\delta-\sigma)\Delta_g}\mathcal E_\delta(\sigma)d\sigma
\right\|_{H^s}\to0.\]
On the other hand, since $\varphi^3\in C^\infty(M)\subset H^s(M)$ and
$e^{it\Delta_g}$ is strongly continuous in $H^s(M)$,
\[\delta^{-1}\int_0^\delta e^{i(\delta-\sigma)\Delta_g}\varphi^3d\sigma\to\varphi^3\quad\text{in }H^s(M).\]
Therefore, $w^\delta(\delta) \to u_0-i\kappa\alpha|\alpha|^2\varphi^3$ in $H^s(M;\C)$, which proves the claim.
\end{proof}
The analogues of \Cref{lem:NLS-odd-kick,lem:NLS-sums,lem:NLS-saturated-directions} and \Cref{prop:NLS-saturation-criterion} follow directly.
\begin{lemma}
\label{lem:NLS2}
The following assertions hold:
\begin{itemize}
\item Let $\varphi\in  C^\infty(M;\R)$ and $\alpha\in\C$. Assume that, for every $\delta>0$, the translations $\tau_{\pm\delta^{-1/3}\alpha\varphi}$ are small-time approximately realizable. Then the translation $\tau_{-i\kappa\alpha|\alpha|^2\varphi^3}$ is small-time approximately realizable.
\item If $\tau_{\xi_1}$ and $\tau_{\xi_2}$ are small-time approximately realizable, then $\tau_{\xi_1+\xi_2}$ is small-time approximately realizable.
\item Let $\xi\in \mathsf E_\infty$. Then the translation $\tau_\xi$ is small-time approximately realizable.
\end{itemize}
\end{lemma}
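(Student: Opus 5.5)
The plan is to repeat, word for word, the arguments of \Cref{lem:NLS-odd-kick,lem:NLS-sums,lem:NLS-saturated-directions}. The only inputs that change are the flow properties, which now come from \Cref{prop:NLS-LWP-2}, and the asymptotic limit, which is now \Cref{lem:NLS-asymptotic-limit2}. Those three proofs use only concatenation of the flow, continuous dependence of the uncontrolled flow on the initial datum on a fixed interval $[0,\delta]$, the asymptotic limit, and the triangle inequality in $H^s$. So I would check these properties in the Strichartz setting and then transcribe.

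\textbf{Kick.} Fix $u_0\in H^s$, $\eps>0$ and $\varkappa>0$, and set $\chi:=-i\kappa\alpha|\alpha|^2\varphi^3$.
\begin{enumerate}
\item By \Cref{lem:NLS-asymptotic-limit2} with $\rho=\|u_0\|_{H^s}+1$, choose $\delta<\varkappa/3$ such that $\mathcal R_\delta(u_0+\delta^{-1/3}\alpha\varphi,0)-\delta^{-1/3}\alpha\varphi$ lies within $\eps/3$ of $u_0+\chi$.
\item Obtain a radius $\rho'>0$ of stability around the large initial datum. Initial data $z$ with $\|z-(u_0+\delta^{-1/3}\alpha\varphi)\|_{H^s}<\rho'$ can be written $z=z'+\delta^{-1/3}\alpha\varphi$ with $\|z'\|_{H^s}\le\rho$. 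The same contraction argument for $\widetilde\Gamma_\delta$, now centred at $z'$, then applies on the same ball of $Y^s_\delta$. Its difference estimate gives Lipschitz dependence on $z'$, because the contraction constant is below $1/2$, say. This yields $\|\mathcal R_\delta(z,0)-\mathcal R_\delta(u_0+\delta^{-1/3}\alpha\varphi,0)\|_{H^s}<\eps/3$.
\item Reach such a $z$ from $u_0$ by realizing $\tau_{\delta^{-1/3}\alpha\varphi}$ in time $T_1<\varkappa/3$.
\item Evolve freely for time $\delta$.
\item Remove the large translation by realizing $\tau_{-\delta^{-1/3}\alpha\varphi}$ in time $T_2<\varkappa/3$.
\end{enumerate}
Concatenating these pieces and applying the triangle inequality gives the first item.

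\textbf{Sums.} The second item is exactly the proof of \Cref{lem:NLS-sums}. It uses only concatenation, which holds by uniqueness in \Cref{prop:NLS-LWP-2}.

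\textbf{Saturated directions.} For the third item I argue by induction on $n$, as in \Cref{lem:NLS-saturated-directions}.
\begin{itemize}
\item The base case $\mathsf E_0=E$ is \Cref{lem:NLS-direct-translations2}.
\item Assume every $\tau_\xi$ with $\xi\in\mathsf E_n$ is realizable. For $\varphi\in\mathsf E_n^\R$ and $\alpha\in\C$, the vectors $\pm\delta^{-1/3}\alpha\varphi$ lie in $\mathsf E_n$. The first item then gives $\tau_{-i\kappa\alpha|\alpha|^2\varphi^3}$.
\item Since $\kappa\ne0$ and $\alpha\mapsto-i\kappa\alpha|\alpha|^2$ is onto $\C$, every $\tau_{\beta\varphi^3}$ with $\beta\in\C$ is realizable.
\item Elements of $\mathsf E_{n+1}$ are finite complex combinations of elements of $\mathsf E_n$ and of cubes $\varphi^3$ with $\varphi\in\mathsf E_n^\R$. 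The second item therefore closes the induction.
\end{itemize}
Note that every profile involved lies in $\mathsf E_\infty\subset C^\infty(M)$, so the smoothness hypothesis of \Cref{lem:NLS-asymptotic-limit2} is met.

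\textbf{Main obstacle.} The only delicate step is the stability estimate in the kick. \Cref{prop:NLS-LWP-2} gives continuity only for bounded data, while here the datum has size $\delta^{-1/3}$. I would handle this through the renormalized map: the uniformity in $\|u_0\|_{H^s}\le\rho$ already built into \Cref{lem:NLS-asymptotic-limit2} is precisely what supplies both existence on $[0,\delta]$ and the Lipschitz bound for all nearby data.
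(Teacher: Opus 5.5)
Your proposal is correct and follows the paper's route. The paper simply states that the analogues of \Cref{lem:NLS-odd-kick,lem:NLS-sums,lem:NLS-saturated-directions} follow directly once \Cref{prop:NLS-LWP-2} and \Cref{lem:NLS-asymptotic-limit2} replace the Sobolev-algebra inputs, and that is exactly your transcription. Your explicit stability estimate near the large datum $u_0+\delta^{-1/3}\alpha\varphi$ is also sound: in the renormalized contraction only the term $e^{it\Delta_g}u_0$ depends on the datum, so you get a Lipschitz bound uniform for $\|u_0\|_{H^s}\le\rho$, which fills in the step the paper leaves as ``the same fixed-point argument''.
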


\textit{Proof of \Cref{thm:NLS-STAC2}:}
By \Cref{th:odd-saturation-density}, the power saturation $\mathsf E_\infty$ of $E$ is dense in $H^s(M;\C)$. The conclusion follows directly from \Cref{lem:NLS2}.
\qed
\subsection{Klein--Gordon equation}
\label{ssec:Stri-KG}
Let $(M,g)$ be a compact Riemannian manifold without boundary of dimension $d$, let $W\in C^\infty(M;\mathbb R)$, and let $\kappa\in\mathbb R\setminus\{0\}$. We consider the controlled cubic Klein--Gordon equation
\begin{equation}
\label{eq:KG-2}
\begin{cases}
\partial_t^2 y-\Delta_g y+Wy+\kappa y^3=\eta, &(t,x)\in[0,T]\times M,\\
y(0)=y_0, \quad \partial_t y(0)=y_1.
\end{cases}\end{equation}
Let $s\geq 0$ and set $\Lambda=(-\Delta_g+W)^{\frac 12}$. We assume, for simplicity, that $-\Delta_g+W$ is positive.  The linear Klein--Gordon group is denoted by
\[S(t)\binom{y_0}{y_1}:=
\binom{\cos(t\Lambda)y_0+\Lambda^{-1}\sin(t\Lambda)y_1}{-\Lambda\sin(t\Lambda)y_0+\cos(t\Lambda)y_1}.\]
A mild solution of \eqref{eq:KG-2} is a function $Y=(y,\partial_t y)^\top\in C([0,T_*];\mathcal H^s)$ such that
\[y(t)=\cos(t\Lambda)y_0+\Lambda^{-1}\sin(t\Lambda)y_1+\int_0^t \Lambda^{-1}\sin((t-\sigma)\Lambda)
\left(\eta(\sigma)-\kappa y(\sigma)^3\right)d\sigma,\]
and
\[\partial_t y(t)=-\Lambda\sin(t\Lambda)y_0+\cos(t\Lambda)y_1+\int_0^t \cos((t-\sigma)\Lambda)
\left(\eta(\sigma)-\kappa y(\sigma)^3\right)d\sigma.\]
We recall the following local Strichartz estimate for the wave equation on compact manifolds, due to Kapitanskii \cite{Kap89}. Let $2\le b,a<\infty$ be a non-endpoint wave-admissible pair, namely
\begin{equation}
\label{eq:wave-admissible}
\frac{2}{b}+\frac{d-1}{a}\le \frac{d-1}{2},
\end{equation}
and let
\begin{equation}
\label{eq:wave-compatibility}
\gamma=\frac d2-\frac1b-\frac d a.
\end{equation}
Then, for every $T>0$, the solution of
\[\begin{cases}
\partial_t^2 z-\Delta_g z+Wz=F,\\
z(0)=z_0,\quad \partial_t z(0)=z_1,
\end{cases}\]
satisfies
\begin{equation}
\label{eq:wave-Strichartz}
\|z\|_{L^b(0,T;L^a)} \le C_T\left(\|z_0\|_{H^{\gamma}}+\|z_1\|_{H^{\gamma-1}}+\|F\|_{L^1(0,T;H^{\gamma-1})}
\right).
\end{equation}
We shall also assume
\begin{equation}
\label{eq:KG-Strichartz-restriction}
b>2,\qquad a>2d.
\end{equation}
We define
\[\mathcal X_T^s:=C([0,T];H^{s}(M))\cap C^1([0,T];H^{s-1}(M))\cap L^b(0,T;L^a(M)),\]
with the natural norm
\[\left\|\binom{z}{\partial_t z}\right\|_{\mathcal X_T^s}:=\|z\|_{C([0,T];H^{s})}+\|\partial_tz\|_{C([0,T];H^{s-1})}+\|z\|_{L^b(0,T;L^a)}.\]
We identify each trajectory $z\in\mathcal X_T^s$ with its state vector $(z,\partial_tz)^\top$.
We first record the local well-posedness statement in this space.

\begin{proposition}[Strichartz local well-posedness for the Klein--Gordon equation]
\label{prop:KG-2-LWP}
Let $\gamma,b,a$ satisfy \eqref{eq:wave-admissible}, \eqref{eq:wave-compatibility} and \eqref{eq:KG-Strichartz-restriction}. Let $s>\gamma$ and $(y_0,y_1)^\top\in \mathcal{H}^s$, $\eta\in L^1(0,T;H^{s-1}(M;\R))$. Then there exists $T_*\in(0,T]$ such that the cubic Klein--Gordon equation admits a unique solution $y\in \mathcal X_{T_*}^s$. Moreover, for every $R>0$, there exist $T_R\in(0,T]$ and $C_R>0$ such that, if
\[\left\|\binom{y_0}{y_1}\right\|_{H^{s}\times H^{s-1}}+\|\eta\|_{L^1(0,T_R;H^{s-1})}\le R,\]
then the corresponding solution is defined on $[0,T_R]$ and satisfies $\|y\|_{\mathcal X_{T_R}^s}\le C_R$. The flow also depends continuously on the initial datum and on the source term
in the corresponding norms.
\end{proposition}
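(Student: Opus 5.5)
The plan is a contraction argument for the Duhamel map in $\mathcal X_T^s$, modelled on \Cref{prop:NLS-LWP-2}, with the Schr\"odinger Strichartz estimate replaced by \eqref{eq:wave-Strichartz}. For $z\in\mathcal X_T^s$ define
\[
\Gamma(z)(t):=\cos(t\Lambda)y_0+\Lambda^{-1}\sin(t\Lambda)y_1+\int_0^t\Lambda^{-1}\sin((t-\sigma)\Lambda)\bigl(\eta(\sigma)-\kappa z(\sigma)^3\bigr)d\sigma .
\]
The energy part of the norm is handled by the boundedness of $S(t)$ on $\mathcal H^s$ for $0\le t\le1$. This gives
\[
\|\Gamma(z)\|_{C([0,T];H^s)}+\|\partial_t\Gamma(z)\|_{C([0,T];H^{s-1})}\le C\Bigl(\|(y_0,y_1)\|_{\mathcal H^s}+\|\eta\|_{L^1H^{s-1}}+\|z^3\|_{L^1(0,T;H^{s-1})}\Bigr).
\]
For the $L^b(0,T;L^a)$ part, apply \eqref{eq:wave-Strichartz} with $F=\eta-\kappa z^3$. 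Since $s>\gamma$, we have $H^s\hookrightarrow H^\gamma$ and $H^{s-1}\hookrightarrow H^{\gamma-1}$, so the same right-hand side bounds this term as well. The estimate therefore closes once $\|z^3\|_{L^1(0,T;H^{s-1})}$ is controlled by a positive power of $T$ times a polynomial in $\|z\|_{\mathcal X_T^s}$.

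This cubic estimate is the main obstacle. The Sobolev algebra property is not available, since $s$ may be below $d/2$. The idea is to use the Strichartz norm to absorb two factors of $z$ in a Lebesgue space and keep one factor in $H^s$, via a fractional Leibniz/Moser-type product estimate of the form
\[
\|z^3\|_{H^{s-1}}\lesssim\|z\|_{L^a}^2\|z\|_{H^{s}}.
\]
To justify this bound, write $z^3=z^2\cdot z$. By H\"older, $z^2\in L^{a/2}$. The restriction $a>2d$ in \eqref{eq:KG-Strichartz-restriction} gives $a/2>d$, so multiplication by an $L^{a/2}$ function maps $H^{\sigma}$ into $H^{\sigma-1}$ for $\sigma\in[0,1]$, by duality and the Sobolev embedding $H^{1-\sigma}\hookrightarrow L^{q}$ with $\frac1q=\frac12-\frac{1}{a/2}\cdot$(appropriate interpolation). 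For $s>1$, one additionally uses the fractional Leibniz rule, distributing $\Lambda^{s-1}$ on one factor and estimating the others in $L^\infty$ or $L^a$. Below the algebra threshold this derivative distribution is the delicate point, and I would first try to reduce to the case $s\le1$ by this duality argument; this is where the proof is least certain.

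Granting the cubic estimate, H\"older in time gives
\[
\|z^3\|_{L^1(0,T;H^{s-1})}\le CT^{1-2/b}\|z\|_{L^b L^a}^2\|z\|_{C H^s},
\]
which uses $b>2$. The resulting bound is
\[
\|\Gamma(z)\|_{\mathcal X_T^s}\le C\bigl(R+T^{1-2/b}\|z\|_{\mathcal X_T^s}^3\bigr).
\]
Using the factorization $z^3-w^3=(z-w)(z^2+zw+w^2)$ and the same bounds, we get
\[
\|\Gamma(z)-\Gamma(w)\|_{\mathcal X_T^s}\le CT^{1-2/b}(\|z\|+\|w\|)^2\|z-w\|.
\]
Choose the radius $2CR$, then take $T_R$ small depending only on $R$; the Strichartz constants are uniform for $T\le1$. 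Then $\Gamma$ is a contraction on the ball, which gives existence, uniqueness in the ball, and the uniform bound $C_R$. Uniqueness in all of $\mathcal X_{T_*}^s$ follows by a continuity argument on shrinking intervals. Continuous dependence follows by applying the difference estimate to the fixed points with data $(Y_0,\eta)$ and $(Z_0,\zeta)$, absorbing the nonlinear term for small $T$, and iterating over a common existence interval.
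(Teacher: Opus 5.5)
Your overall scheme is the paper's: a contraction for the Duhamel map in $\mathcal X_T^s$, the energy estimate together with \eqref{eq:wave-Strichartz} (using $s>\gamma$), the trilinear bound $\|z^3\|_{H^{s-1}}\lesssim\|z\|_{H^s}\|z\|_{L^a}^2$, and H\"older in time producing the factor $T^{1-2/b}$. Your Sobolev--H\"older--duality argument for $0<s\le1$ is sound. It requires $\frac2a+\frac1{p_1}+\frac1{p_2}\le1$ with $H^{s}\hookrightarrow L^{p_1}$ and $H^{1-s}\hookrightarrow L^{p_2}$, i.e.\ $\frac2a\le\frac1d$, which is where $a>2d$ enters.

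The gap is the case $s>1$, which you leave open. This case is not marginal: since $b>2$ and $a>2d$, one has $\gamma>\frac d2-1$, so for $d\ge4$ every admissible $s$ exceeds $1$ and your argument covers nothing. Neither fallback you mention works. Duality does not reduce $s>1$ to $s\le1$: pairing against the negative-index space $H^{1-s}$ just puts the $s-1$ derivatives back on $z^3$. Estimating factors in $L^\infty$ is also unavailable for $s\le d/2$, since neither $H^s$ nor $L^b(0,T;L^a)$ with $a<\infty$ controls $\|z\|_{L^\infty}$. The missing step is the fractional Leibniz rule with the differentiated factor in $L^r$, $\frac1r=\frac12-\frac2a$, and the other two factors in $L^a$:
\[
\|\Lambda^{s-1}(fgh)\|_{L^2}\lesssim\|\Lambda^{s-1}f\|_{L^r}\|g\|_{L^a}\|h\|_{L^a}+\|f\|_{L^a}\|\Lambda^{s-1}g\|_{L^r}\|h\|_{L^a}+\|f\|_{L^a}\|g\|_{L^a}\|\Lambda^{s-1}h\|_{L^r}.
\]
You then apply the embedding $H^1\hookrightarrow L^r$ to $\Lambda^{s-1}f$, i.e.\ $H^s\hookrightarrow W^{s-1,r}$. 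This holds because $d\bigl(\frac12-\frac1r\bigr)=\frac{2d}{a}<1$, which is again exactly where $a>2d$ is used, and it is how the paper proves the estimate for $s>1$. With this bound, used in its trilinear form for the difference $z^3-w^3$, the rest of your argument goes through as written.
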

\begin{proof}
We argue by a fixed-point argument in $\mathcal X_T^s$. First, $S(t)$ is
strongly continuous on $\mathcal H^s$ and, for every $T>0$,
\[\sup_{0\le t\le T}\|S(t)\|_{\mathcal L(\mathcal H^s,\mathcal H^s)}<\infty.\]
For $Y=(y,\partial_t y)^\top$, define the Duhamel map by
\[\Gamma(Y)(t)=S(t)\binom{y_0}{y_1}+\int_0^t
S(t-\sigma)\binom{0}{\eta(\sigma)-\kappa y(\sigma)^3}d\sigma.\]
Equivalently, the first component of $\Gamma(Y)$ is
\[\Gamma_1(y)(t)=\cos(t\Lambda)y_0+\Lambda^{-1}\sin(t\Lambda)y_1+\int_0^t\Lambda^{-1}\sin((t-\sigma)\Lambda)\left(\eta(\sigma)-\kappa y(\sigma)^3\right)d\sigma .\]
By the energy estimate and the Strichartz estimate
\eqref{eq:wave-Strichartz}, there exists $C_T>0$ such that
\[\|\Gamma(Y)\|_{\mathcal X_T^s}
\le C_T\left(\left\|\binom{y_0}{y_1}\right\|_{\mathcal H^s}+\|\eta\|_{L^1(0,T;H^{s-1})}+\|y^3\|_{L^1(0,T;H^{s-1})}\right).\]
We now estimate the cubic term. For $s>1$, since $a>2d$ by \eqref{eq:KG-Strichartz-restriction}, the Sobolev embedding $H^{s}(M)\hookrightarrow W^{s-1,r}(M)$ holds for $\frac1r=\frac12-\frac2a$. Combining this embedding with the fractional Leibniz rule (see \cite[Chapter~4]{RunstSickel1996}) gives the estimate below. For $0<s\leq1$, the same estimate follows from Sobolev embeddings and H\"older's inequality, using duality when $s<1$ (see \cite[Appendix~A.1]{Taylor1991}):
\[\|fgh\|_{H^{s-1}}\le C\left(\|f\|_{H^{s}}\|g\|_{L^a}\|h\|_{L^a}
+\|g\|_{H^{s}}\|f\|_{L^a}\|h\|_{L^a}+\|h\|_{H^{s}}\|f\|_{L^a}\|g\|_{L^a}\right).\]
Thus $\|y^3\|_{H^{s-1}}\le C\|y\|_{H^{s}}\|y\|_{L^a}^2$. Therefore, by H\"older's inequality in time,
\[\|y^3\|_{L^1(0,T;H^{s-1})}\le CT^{1-\frac2b}\|y\|_{C([0,T];H^{s})}\|y\|_{L^b(0,T;L^a)}^2.\]
Hence
\begin{equation}
\label{eq:KG-Strichartz-cubic-bound}
\|y^3\|_{L^1(0,T;H^{s-1})}\le CT^{1-\frac2b}\|Y\|_{\mathcal X_T^s}^3.
\end{equation}
Similarly, for two functions $y,z\in \mathcal X_T^s$, we get
\[\|y^3-z^3\|_{H^{s-1}} \le C\left(\|y\|_{L^a}^2+\|z\|_{L^a}^2\right)\|y-z\|_{H^{s}}+C\left(\|y\|_{H^{s}}+\|z\|_{H^{s}}\right)\left(\|y\|_{L^a}+\|z\|_{L^a}\right)\|y-z\|_{L^a}.\]
Consequently, if $\|Y\|_{\mathcal X_T^s}+\|Z\|_{\mathcal X_T^s}\le R$, then
\begin{equation}
\label{eq:KG-Strichartz-cubic-Lipschitz}
\|y^3-z^3\|_{L^1(0,T;H^{s-1})}\le C_RT^{1-\frac2b}\|Y-Z\|_{\mathcal X_T^s}.
\end{equation}
Since $b>2$, the factor $T^{1-\frac2b}$ tends to zero as $T\to0$. Let $R>0$ and assume
\[\left\|\binom{y_0}{y_1}\right\|_{\mathcal H^s}+\|\eta\|_{L^1(0,T;H^{s-1})}\le R.\]
Choose $M>0$ large enough, depending only on $R$. By
\eqref{eq:KG-Strichartz-cubic-bound},
\[\|\Gamma(Y)\|_{\mathcal X_T^s}\le C_T R + C_T T^{1-\frac2b}M^3\]
for every $Y\in \mathcal X_T^s$ with $\|Y\|_{\mathcal X_T^s}\le M$. Taking $T>0$ small enough, depending only on $R$, $\Gamma$ maps the closed ball of radius $M$ in $\mathcal X_T^s$ into itself. Moreover, if $Y,Z$ belong to this ball, then by
\eqref{eq:KG-Strichartz-cubic-Lipschitz},
\[\|\Gamma(Y)-\Gamma(Z)\|_{\mathcal X_T^s}\le C_M T^{1-\frac2b}\|Y-Z\|_{\mathcal X_T^s}.\]
Taking $T>0$ smaller if necessary, $\Gamma$ is a contraction. Hence \eqref{eq:KG-2} admits a unique mild solution $Y=(y,\partial_t y)^\top\in\mathcal X_T^s$. The uniform local bound follows from the same construction, with an existence time depending only on $R$. Finally, let $Y=(y,\partial_t y)^\top$ and $Z=(z,\partial_t z)^\top$ be two solutions on $[0,\varkappa]$ associated with data $((y_0,y_1)^\top,\eta)$ and $((z_0,z_1)^\top,\zeta)$, and assume that both are bounded by $R$ in $\mathcal X_\varkappa^s$. Applying the energy and Strichartz estimates to the difference equation and using \eqref{eq:KG-Strichartz-cubic-Lipschitz} on small subintervals, then iterating finitely many times, gives
\[\|Y-Z\|_{\mathcal X_\varkappa^s}\le C_{R,\varkappa}\left(\left\|\binom{y_0}{y_1}-\binom{z_0}{z_1}\right\|_{\mathcal H^s}+\|\eta-\zeta\|_{L^1(0,\varkappa;H^{s-1})}\right).\]
This proves continuous dependence and completes the proof.
\end{proof}
The following theorem is the main result of this part.
\begin{theorem}
\label{thm:KG-STAC2}
Let $\gamma,b,a$ satisfy \eqref{eq:wave-admissible}, \eqref{eq:wave-compatibility} and \eqref{eq:KG-Strichartz-restriction}. Let $W\in C^\infty(M;\R)$, $\kappa\in\R\setminus\{0\}$ and  $s>d/2-1/b-d/a$. Assume that $\mathcal V:M\to\R^m$, $\mathcal V(x):=(V_1(x),\dots,V_m(x))$, is a smooth embedding, and let $E^\R=\operatorname{span}_{\R}\{1,V_1,\dots,V_m\}$. Then \eqref{eq:KG-2} is small-time approximately controllable in $\mathcal{H}^s$ by controls
$\eta\in L^\infty(0,T;E^\R).$
\end{theorem}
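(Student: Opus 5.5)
We follow the scheme of \Cref{sec:additive-KG}: realize velocity translations, convert them into position translations, generate new velocity directions through the cubic kick, and close with the saturation criterion. By \Cref{th:odd-saturation-density} (odd case, $p=3$), $\mathsf E_\infty^\R$ is dense in $H^s$ and $H^{s-1}$. The Klein--Gordon analogues of \Cref{lem:KG-sums}, \Cref{lem:KG-saturated-directions} and \Cref{prop:KG-saturation-criterion} use only concatenation, continuous dependence and the triangle inequality. They therefore transfer verbatim once \Cref{prop:KG-2-LWP} replaces \Cref{prop:KG-LWP}. The work is to redo, in the space $\mathcal X_T^s$, the three dynamical lemmas: direct velocity translations, position translations, and the cubic kick. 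In each we must check that every limit holds in $\mathcal H^s$.

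For the direct velocity translations we take $\eta_\delta=\delta^{-1}\xi$ on $[0,\delta]$, so that $\|\eta_\delta\|_{L^1(0,\delta;H^{s-1})}=\|\xi\|_{H^{s-1}}$. \Cref{prop:KG-2-LWP} then gives $\|y\|_{\mathcal X_\delta^s}\le C$ uniformly in $\delta$. The linear terms converge exactly as in \Cref{lem:KG-direct-velocity-translations}. For the nonlinear Duhamel term in $\mathcal H^s$ we use \eqref{eq:KG-Strichartz-cubic-bound}, which bounds it by $C\delta^{1-2/b}\|y\|_{\mathcal X_\delta^s}^3\to0$.

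For the position translations we renormalize the initial datum $(y_0,y_1+\delta^{-1}\xi)$. We subtract the free evolution $\delta^{-1}\Lambda^{-1}\sin(t\Lambda)\xi$, which is smooth and bounded in $C([0,\delta];H^\sigma)$ for all $\sigma$, hence also in $L^b(0,\delta;L^a)$ and with $L^b L^a$ norm $O(\delta^{1/b})$. We then run the fixed point for the remainder in $\mathcal X_\delta^s$. The trilinear estimate from the proof of \Cref{prop:KG-2-LWP} bounds the cubic term, since the large part has bounded $H^s$ and $L^a$ norms. The limit is identified as before.

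The main obstacle is the cubic kick. Write $y=\delta^{-1/3}\alpha\varphi+z$. The cubic term expands as
\[\delta^{-1}\alpha^3\varphi^3+3\delta^{-2/3}\alpha^2\varphi^2z+3\delta^{-1/3}\alpha\varphi z^2+z^3.\]
Because $\varphi$ is smooth, multiplication by $\varphi^k$ is bounded on $H^{s-1}$ and $H^s$. The terms then satisfy:
\begin{itemize}
\item The leading term integrates over $[0,\delta]$ to give the kick $-\kappa\alpha^3\varphi^3$ in the velocity, and $O(\delta)$ in the position after applying $\Lambda^{-1}\sin$.
\item The linear term $\delta^{-2/3}\varphi^2 z$ costs $\delta^{1/3}\|z\|_{C H^{s}}$ in $L^1(0,\delta;H^{s-1})$.
\item The quadratic term $\delta^{-1/3}\varphi z^2$ is bounded, via the trilinear estimate with one factor $\varphi$, by $\delta^{-1/3}\int_0^\delta(\|z\|_{H^s}\|z\|_{L^a}+\|z\|_{L^a}^2)\lesssim\delta^{2/3-1/b}X^2$, and $2/3-1/b>0$ because $b>2$.
\item The pure cubic term $z^3$ gives $\delta^{1-2/b}X^3$.
\end{itemize}
The linear correction $(S(t)-I)(\delta^{-1/3}\alpha\varphi,0)$ is $O(\delta^{2/3})$ in $\mathcal H^s$, and similarly small in $L^bL^a$. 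Since every factor in front of $X,X^2,X^3$ is a positive power of $\delta$, choosing $R$ large and then $\delta$ small yields invariance and contraction in the ball of $\mathcal X_\delta^s$. Passing to the limit, the remainder tends to $0$ in $\mathcal H^s$, which gives
\[\mathcal R_\delta\bigl((y_0+\delta^{-1/3}\alpha\varphi,y_1),0\bigr)-(\delta^{-1/3}\alpha\varphi,0)\to(y_0,\,y_1-\kappa\alpha^3\varphi^3).\]

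Two points remain. First, we need a form of continuous dependence that is uniform near the large datum; it follows from the same renormalized contraction. Second, the trilinear estimate must hold with a smooth factor of unbounded amplitude; we handle this by keeping that factor in $C^\infty$ and putting the $\delta$-powers in front. With the three lemmas in hand, surjectivity of $\alpha\mapsto-\kappa\alpha^3$, induction on $\mathsf E_n^\R$ and density conclude the proof.
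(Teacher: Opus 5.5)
Your architecture is the paper's: redo the direct velocity translation, the velocity-to-position conversion and the cubic kick in $\mathcal X_\delta^s$ via a renormalized fixed point, then run the unchanged saturation induction with the density from \Cref{th:odd-saturation-density}. The direct-translation and position-translation steps are fine.

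The gap is in the quadratic term of the cubic kick. You apply the symmetric trilinear estimate to $\varphi z^2$, so the piece $\|\varphi\|_{H^s}\|z\|_{L^a}^2$ appears. Hölder in time then only gives
\[\delta^{-1/3}\int_0^\delta\|z(\sigma)\|_{L^a}^2\,d\sigma\le\delta^{\frac23-\frac2b}\|z\|_{L^b(0,\delta;L^a)}^2,\]
not $\delta^{2/3-1/b}X^2$. The exponent $\frac23-\frac2b$ is nonpositive whenever $b\le3$, and such $b$ are admissible:
\begin{itemize}
\item for $d\ge4$, every $b>2$ is allowed with $a>2d$, and values of $s$ near $(d-2)/2$ require $b$ close to $2$;
\item for $d=3$, one can take for instance $b=5/2$, $a=20$.
\end{itemize}
You cannot replace $\|z\|_{L^a}$ by $\|z\|_{H^s}$ pointwise in time, because $s>\gamma$ allows $s<d/2-d/a$. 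So your claim that every coefficient of $X^2$ is a positive power of $\delta$ fails on part of the stated range. There, invariance of the ball, and hence the kick, is not established.

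The repair is to keep all derivatives off the smooth factor. Since multiplication by $\varphi$ is bounded on $H^{s-1}$, you have $\|\varphi z^2\|_{H^{s-1}}\le C_\varphi\|z^2\|_{H^{s-1}}$. The bilinear estimate $\|z^2\|_{H^{s-1}}\lesssim\|z\|_{H^s}\|z\|_{L^a}$ holds because $a\ge d$:
\begin{itemize}
\item for $s\ge1$, use the fractional Leibniz rule with the embedding $H^s\hookrightarrow W^{s-1,2a/(a-2)}$;
\item for $s<1$, use duality, Hölder and Sobolev.
\end{itemize}
This gives exactly $\delta^{2/3-1/b}\|z\|_{C([0,\delta];H^s)}\|z\|_{L^b(0,\delta;L^a)}$, which is the bound the paper uses. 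With it, your contraction and limit arguments go through as in the paper.
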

For $Y_0=(y_0,y_1)^\top\in\mathcal H^s$, we denote by $\mathcal R_t(Y_0,\eta)$ the value at time $t$ of the solution of \eqref{eq:KG-2} with initial datum $Y_0$ and control $\eta$. The uncontrolled Klein--Gordon flow is given by $\mathcal R_t(Y_0,0)$.
\begin{lemma}[Direct velocity translations]
\label{lem:KG-2-direct-velocity-translations}
For every $\xi\in E^\R$, the translation $\tau_{(0,\xi)^\top}$ is small-time
approximately realizable for \eqref{eq:KG-2}.
\end{lemma}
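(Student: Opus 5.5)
I will follow the proof of \Cref{lem:KG-direct-velocity-translations} and replace the Sobolev-algebra bounds by the Strichartz well-posedness of \Cref{prop:KG-2-LWP}. Fix $Y_0=(y_0,y_1)^\top\in\mathcal H^s$, $\xi\in E^\R$ and $\varkappa>0$, and take the constant control
\[\eta_\delta(t):=\delta^{-1}\xi,\qquad 0\le t\le\delta.\]
Then $\eta_\delta\in L^\infty(0,\delta;E^\R)$ and $\|\eta_\delta\|_{L^1(0,\delta;H^{s-1})}=\|\xi\|_{H^{s-1}}$, independently of $\delta$. With $R:=\|Y_0\|_{\mathcal H^s}+\|\xi\|_{H^{s-1}}$, the uniform local bound of \Cref{prop:KG-2-LWP} shows that for $\delta<T_R$ the solution $y$ exists on $[0,\delta]$ and satisfies $\|y\|_{\mathcal X_\delta^s}\le C_R$, uniformly in $\delta$.

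Next I split the Duhamel formula at time $\delta$ into three parts: the free evolution $S(\delta)Y_0$, the control contribution, and the cubic contribution. By strong continuity of $S(t)$ on $\mathcal H^s$, $S(\delta)Y_0\to Y_0$. The control contribution is
\[\Bigl(\delta^{-1}\int_0^\delta\Lambda^{-1}\sin((\delta-\sigma)\Lambda)\xi\,d\sigma,\ \delta^{-1}\int_0^\delta\cos((\delta-\sigma)\Lambda)\xi\,d\sigma\Bigr)^\top .\]
Since $\xi$ is smooth, $\|\Lambda^{-1}\sin(t\Lambda)\xi\|_{H^s}\le t\|\xi\|_{H^s}$, so the first component is $O(\delta)$ in $H^s$. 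The second component converges to $\xi$ in $H^{s-1}$ by strong continuity of $\cos(t\Lambda)$.

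The only step that differs from the algebra regime is the cubic contribution. Boundedness of $S(t)$ on $\mathcal H^s$ bounds it in $\mathcal H^s$ by $C\|y^3\|_{L^1(0,\delta;H^{s-1})}$. Estimate \eqref{eq:KG-Strichartz-cubic-bound} then gives
\[\|y^3\|_{L^1(0,\delta;H^{s-1})}\le C\delta^{1-\frac2b}\|y\|_{\mathcal X_\delta^s}^3\le C\delta^{1-\frac2b}C_R^3,\]
which tends to $0$ because $b>2$. The uniform $\mathcal X_\delta^s$ bound is the key input here, since it holds precisely because the control has $L^1_tH^{s-1}$ norm independent of $\delta$.

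Combining the three parts gives $\mathcal R_\delta(Y_0,\eta_\delta)\to Y_0+(0,\xi)^\top$ in $\mathcal H^s$. Choosing $\delta<\varkappa$ small enough to make the error below $\varepsilon$ concludes the proof. The main point to verify is that the hypothesis $s>\gamma$ of \Cref{prop:KG-2-LWP} holds under the standing assumption $s>d/2-1/b-d/a$; by \eqref{eq:wave-compatibility} these conditions are identical, so the proposition applies directly.
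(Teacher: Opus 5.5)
Your proof is correct and follows essentially the same route as the paper: the constant control $\delta^{-1}\xi$, the uniform $\mathcal X_\delta^s$ bound from \Cref{prop:KG-2-LWP}, convergence of the free and control parts by strong continuity, and the cubic Duhamel term made small by \eqref{eq:KG-Strichartz-cubic-bound} through the factor $\delta^{1-\frac2b}$ with $b>2$. Your check that $s>\gamma$ coincides with $s>d/2-1/b-d/a$ is a detail the paper leaves implicit.
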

\begin{proof}
Fix $Y_0=(y_0,y_1)^\top\in\mathcal H^s$ and $\xi\in E^\R$. For $\delta>0$, set $\eta_\delta(t):=\frac1\delta\xi$, $0\le t\le\delta$. Then $\eta_\delta\in L^\infty(0,\delta;E^\R)$ and $\|\eta_\delta\|_{L^1(0,\delta;H^{s-1})}=\|\xi\|_{H^{s-1}}$. By the uniform local bound in \Cref{prop:KG-2-LWP}, the corresponding
solution satisfies
\[\left\|\binom{y}{\partial_t y}\right\|_{\mathcal X_\delta^s}
\le C(Y_0,\xi)\]
for all sufficiently small $\delta>0$. Using the mild formulation, the contributions of the control to the position and velocity components satisfy, respectively,
\[\frac1\delta\int_0^\delta\Lambda^{-1}\sin((\delta-\sigma)\Lambda)\xi d\sigma
\longrightarrow 0
\quad\text{in }H^{s}\qquad \text{ and } \qquad 
\frac1\delta\int_0^\delta\cos((\delta-\sigma)\Lambda)\xi d\sigma\longrightarrow \xi\quad\text{in }H^{s-1}.\]
The linear free part converges to $(y_0,y_1)^\top$ in $\mathcal H^s$. It remains to estimate the nonlinear contribution. By
\eqref{eq:KG-Strichartz-cubic-bound},
\[\|y^3\|_{L^1(0,\delta;H^{s-1})}\le C\delta^{1-\frac2b}\left\|\binom{y}{\partial_t y}\right\|_{\mathcal X_\delta^s}^3\le C(Y_0,\xi)\delta^{1-\frac2b}\longrightarrow0.\]
Therefore, the two nonlinear Duhamel  terms,
\[\int_0^\delta\Lambda^{-1}\sin((\delta-\sigma)\Lambda)y(\sigma)^3d\sigma\quad\text{and}\quad\int_0^\delta\cos((\delta-\sigma)\Lambda)y(\sigma)^3d\sigma,\]
converge to $0$ in $H^{s}$ and in $H^{s-1}$, respectively. Hence $\mathcal R_\delta(Y_0,\eta_\delta) \longrightarrow Y_0+(0,\xi)^\top$ in $\mathcal H^s$. Taking $\delta<\varkappa$ small enough gives the result.
\end{proof}
\begin{lemma}[Position translations] 
\label{lem:KG-2-position-translations} 
For every $\xi\in E^\R$, the translation $\tau_{(\xi,0)^\top}$ is small-time approximately realizable for \eqref{eq:KG-2}. 
\end{lemma}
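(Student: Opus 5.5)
We follow the proof of \Cref{lem:KG-position-translations}, replacing the algebra estimates by the Strichartz framework of \Cref{prop:KG-2-LWP}. Fix $Y_0=(y_0,y_1)^\top\in\mathcal H^s$, $\xi\in E^\R$, $\eps>0$ and $\varkappa>0$. By \Cref{lem:KG-2-direct-velocity-translations}, the translations $\tau_{(0,\pm\delta^{-1}\xi)^\top}$ are small-time approximately realizable for every $\delta>0$. The core of the argument is the asymptotic limit
\[\mathcal R_\delta\left(\binom{y_0}{y_1+\delta^{-1}\xi},0\right)+\binom{0}{-\delta^{-1}\xi}\longrightarrow\binom{y_0+\xi}{y_1}\qquad\text{in }\mathcal H^s.\]
Once this limit is established, the three-step concatenation (realize the large velocity kick, let the system evolve freely for time $\delta$, remove the kick), together with continuous dependence on the initial datum, is word for word the same as before.

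To get the limit, we run a fixed point only on the position component, now in the space $C([0,\delta];H^s)\cap L^b(0,\delta;L^a)$. The map is
\[\Gamma_\delta(y)(t)=L_\delta(t)-\kappa\int_0^t\Lambda^{-1}\sin((t-\sigma)\Lambda)y(\sigma)^3d\sigma,\]
where $L_\delta$ is the free evolution of $(y_0,y_1+\delta^{-1}\xi)$. The key point is that $L_\delta$ is bounded uniformly in $\delta$ in this norm, even though its velocity is of size $\delta^{-1}$. First, $\cos(t\Lambda)y_0+\Lambda^{-1}\sin(t\Lambda)y_1$ is controlled in $L^b L^a$ by the Strichartz estimate \eqref{eq:wave-Strichartz}, since $s>\gamma$. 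Second, $\xi$ is smooth, so
\[\|\delta^{-1}\Lambda^{-1}\sin(t\Lambda)\xi\|_{H^s}\le C\delta^{-1}t\|\xi\|_{H^s}\le C\quad\text{for }t\le\delta.\]
Since $H^s\hookrightarrow L^a$ fails in general, we estimate this smooth term directly: it is bounded in $H^{\sigma}$ for $\sigma$ large, hence in $L^\infty_tL^a$, hence in $L^b(0,\delta;L^a)$.

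The cubic term is controlled by \eqref{eq:KG-Strichartz-cubic-bound} and \eqref{eq:KG-Strichartz-cubic-Lipschitz}, using the energy/Strichartz estimate for the inhomogeneous problem. On the ball of radius $R$ this gives
\[\|\Gamma_\delta(y)\|\le C(Y_0,\xi)+C\delta^{1-2/b}R^3,\]
with Lipschitz constant $C\delta^{1-2/b}R^2$. Since $b>2$, choosing $R$ large and then $\delta$ small yields a unique fixed point with a bound independent of $\delta$. We then need to identify this fixed point as the solution: the integral formula gives $y\in C^1([0,\delta];H^{s-1})$, so $(y,\partial_ty)$ is a mild solution, and uniqueness from \Cref{prop:KG-2-LWP} identifies it with $\mathcal R_t$.

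With the uniform bound in hand, we pass to the limit componentwise. In position, $\cos(\delta\Lambda)y_0\to y_0$, $\Lambda^{-1}\sin(\delta\Lambda)y_1\to0$ and $\delta^{-1}\Lambda^{-1}\sin(\delta\Lambda)\xi\to\xi$ in $H^s$, while the nonlinear term is $O(\delta^{1-2/b})$ in $H^s$ by \eqref{eq:KG-Strichartz-cubic-bound}. In velocity, we subtract $\delta^{-1}\xi$ and use $\delta^{-1}(\cos(\delta\Lambda)-I)\xi\to0$ in $H^{s-1}$, which holds because $\xi$ is smooth; the other terms behave as in the algebra case.

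The main obstacle is that $s$ may be below $d/2$, so the large linear term must be handled in the Strichartz norm rather than by a Sobolev embedding. The smoothness of $\xi\in E^\R$ resolves this. We also need the $L^b(0,\delta;L^a)$ norm of the whole linear part to stay bounded, not merely finite; this holds because the Strichartz constant is uniform for $T<1$.
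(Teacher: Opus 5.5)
Your proposal is correct and follows essentially the same route as the paper. Both run a fixed point on the position component alone in $C([0,\delta];H^s)\cap L^b(0,\delta;L^a)$, control the large term $\delta^{-1}\Lambda^{-1}\sin(t\Lambda)\xi$ through the smoothness of $\xi$ (the paper records the slightly sharper bound $C_\xi\delta^{1/b}$ in $L^b(0,\delta;L^a)$, though uniform boundedness suffices), then pass to the limit componentwise and finish with the same three-step concatenation.
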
 
\begin{proof} 
Fix $Y_0=(y_0,y_1)^\top\in\mathcal H^s$, $\xi\in E^\R$, $\eps>0$ and $\varkappa>0$. Let $\delta>0$ be small. By \Cref{lem:KG-2-direct-velocity-translations}, the translations $\tau_{(0,\delta^{-1}\xi)^\top}$ and $\tau_{(0,-\delta^{-1}\xi)^\top}$ are small-time approximately realizable. We first prove that, for all sufficiently small $\delta>0$, the uncontrolled solution with initial datum $(y_0,y_1+\delta^{-1}\xi)^\top$ is well defined on $[0,\delta]$ and
\begin{equation} 
\label{eq:KG-2-position-limit} 
\mathcal R_\delta\left(\binom{y_0}{y_1+\delta^{-1}\xi},0\right)+\binom{0}{-\delta^{-1}\xi} \longrightarrow \binom{y_0+\xi}{y_1} \qquad\text{in }\mathcal H^s . \end{equation} 
As in the proof of \Cref{lem:KG-position-translations}, we first construct the position component. Define on $C([0,\delta];H^s)\cap L^b(0,\delta;L^a)$
\[\Gamma_\delta(y)(t):=\cos(t\Lambda)y_0+\Lambda^{-1}\sin(t\Lambda)y_1+\delta^{-1}\Lambda^{-1}\sin(t\Lambda)\xi-\kappa\int_0^t\Lambda^{-1}\sin((t-\sigma)\Lambda)y(\sigma)^3d\sigma.\] 
Since $\xi\in E^\R\subset C^\infty(M)$, $ \delta^{-1}\Lambda^{-1}\sin(\delta\Lambda)\xi \longrightarrow \xi$ in $H^{s}$. Moreover,
\[\cos(\delta\Lambda)y_0\to y_0 \quad\text{in }H^{s}, \qquad \Lambda^{-1}\sin(\delta\Lambda)y_1\to0 \quad\text{in }H^{s}.\] 
The only point which is different from the case $s>d/2$ is the estimate of the nonlinear term. We cannot use a uniform bound for the full state in $\mathcal H^s$, because the initial velocity contains the term $\delta^{-1}\xi$. However, the position component remains uniformly bounded in the Strichartz space. Indeed, the large linear position term satisfies, for $0\le t\le\delta$, 
\[\|\delta^{-1}\Lambda^{-1}\sin(t\Lambda)\xi\|_{H^{s}} \le C\delta^{-1}t\|\xi\|_{H^{s}} \le C\|\xi\|_{H^{s}}, \] 
and also 
\[\|\delta^{-1}\Lambda^{-1}\sin(t\Lambda)\xi\|_{L^b(0,\delta;L^a)} \le C\delta^{-1}\left(\int_0^\delta t^bdt\right)^{1/b}
\le C_\xi\delta^{1/b}.\]
The other linear terms are uniformly bounded in $C([0,\delta];H^{s})\cap L^b(0,\delta;L^a)$. Using the same estimates as in \Cref{prop:KG-2-LWP}, but only for the position component, the nonlinear part of $\Gamma_\delta$ is bounded by $C\delta^{1-\frac2b}R^3$ on a ball of radius $R$ in $C([0,\delta];H^s)\cap L^b(0,\delta;L^a)$, and its Lipschitz constant on that ball is bounded by $C\delta^{1-\frac2b}R^2$. Since $b>2$, choosing $R$ sufficiently large, independently of $\delta$, and then $\delta>0$ sufficiently small, the map leaves that closed ball invariant and is a contraction there. Hence it admits a unique fixed point $y$ in that ball, and
\[\|y\|_{C([0,\delta];H^s)}+\|y\|_{L^b(0,\delta;L^a)}\le C(Y_0,\xi)\]
for all sufficiently small $\delta>0$. By the integral formulation, $y\in C^1([0,\delta];H^{s-1})$, and $(y,\partial_t y)^\top$ satisfies the mild formulation with initial datum $(y_0,y_1+\delta^{-1}\xi)^\top$ and no control. By uniqueness,
\[\binom{y(t)}{\partial_t y(t)}=\mathcal R_t\left(\binom{y_0}{y_1+\delta^{-1}\xi},0\right),\qquad 0\le t\le\delta.\]
Using the cubic estimate \eqref{eq:KG-Strichartz-cubic-bound}, we get 
\[\|y^3\|_{L^1(0,\delta;H^{s-1})} \le C\delta^{1-\frac2b} \|y\|_{C([0,\delta];H^{s})} \|y\|_{L^b(0,\delta;L^a)}^2 \longrightarrow0. \] 
Consequently, 
\[\left\|\int_0^\delta \Lambda^{-1}\sin((\delta-\sigma)\Lambda)y(\sigma)^3d\sigma \right\|_{H^{s}} \longrightarrow0. \] 
Hence $y(\delta)\to y_0+\xi$ in $H^{s}$. For the velocity component, again by the mild formulation, 
\[\partial_t y(\delta) = -\Lambda\sin(\delta\Lambda)y_0 +\cos(\delta\Lambda)y_1 +\delta^{-1}\cos(\delta\Lambda)\xi -\kappa\int_0^\delta \cos((\delta-\sigma)\Lambda)y(\sigma)^3d\sigma.\] 
Therefore 
\[\partial_t y(\delta)-\delta^{-1}\xi = -\Lambda\sin(\delta\Lambda)y_0 +\cos(\delta\Lambda)y_1 +\delta^{-1}(\cos(\delta\Lambda)-I)\xi-\kappa\int_0^\delta \cos((\delta-\sigma)\Lambda)y(\sigma)^3d\sigma . \]
The first term tends to $0$ in $H^{s-1}$, the second one tends to $y_1$ in $H^{s-1}$, and, since $\xi$ is smooth, 
\[\delta^{-1}(\cos(\delta\Lambda)-I)\xi\to0 \qquad\text{in }H^{s-1}. \] 
Finally, 
\[\left\| \int_0^\delta \cos((\delta-\sigma)\Lambda)y(\sigma)^3d\sigma \right\|_{H^{s-1}} \le C\|y^3\|_{L^1(0,\delta;H^{s-1})} \longrightarrow0. \]
Thus $\partial_t y(\delta)-\delta^{-1}\xi\to y_1$ in $H^{s-1}$ and \eqref{eq:KG-2-position-limit} follows. Choose $\delta\in(0,\varkappa/3)$ small enough so that 
\[\left\| \mathcal R_\delta\left(\binom{y_0}{y_1+\delta^{-1}\xi},0\right) +\binom{0}{-\delta^{-1}\xi} -\binom{y_0+\xi}{y_1} \right\|_{\mathcal H^s} < \frac{\eps}{3}. \] 
By the same fixed-point argument and continuous dependence on the initial datum, there exists $\rho>0$ such that, if
\[\left\|Z-\binom{y_0}{y_1+\delta^{-1}\xi}\right\|_{\mathcal H^s}<\rho, \]
then the uncontrolled solution with initial datum $Z$ is well defined on $[0,\delta]$ and
\[\left\|\mathcal R_\delta(Z,0)-\mathcal R_\delta\left(\binom{y_0}{y_1+\delta^{-1}\xi},0\right)\right\|_{\mathcal H^s} < \frac{\eps}{3}. \]
Since $\tau_{(0,\delta^{-1}\xi)^\top}$ is small-time approximately realizable, there exist $T_1\in(0,\varkappa/3)$ and $\eta_1\in L^\infty(0,T_1;\allowbreak E^\R)$ such that 
\[\left\| \mathcal R_{T_1}(Y_0,\eta_1) -\binom{y_0}{y_1+\delta^{-1}\xi} \right\|_{\mathcal H^s} < \rho. \]
Set $Z:=\mathcal R_{T_1}(Y_0,\eta_1)$. On $[T_1,T_1+\delta]$, we take zero control, so the state becomes $\mathcal R_\delta(Z,0)$. Finally, since $\tau_{(0,-\delta^{-1}\xi)^\top}$ is small-time approximately realizable, there exist $T_2\in(0,\varkappa/3)$ and $\eta_2\in L^\infty(0,T_2;E^\R)$ such that 
\[\left\| \mathcal R_{T_2}(\mathcal R_\delta(Z,0),\eta_2) - \left(\mathcal R_\delta(Z,0)+\binom{0}{-\delta^{-1}\xi}\right) \right\|_{\mathcal H^s} < \frac{\eps}{3}. \] 
Concatenating the three pieces, with $T:=T_1+\delta+T_2<\varkappa$, we obtain 
\[\left\|\mathcal R_T(Y_0,\eta)-\left(Y_0+\binom{\xi}{0}\right)\right\|_{\mathcal H^s} < \eps. \]
Therefore $\tau_{(\xi,0)^\top}$ is small-time approximately realizable. 
\end{proof}
\begin{lemma}[Cubic kick]
\label{lem:KG-2-cubic-kick}
Let $\varphi\in C^\infty(M;\R)$ and $\alpha\in\R$. Assume that, for every
$\delta>0$, the translations $\tau_{(\pm\delta^{-1/3}\alpha\varphi,0)^\top}$ are small-time approximately realizable for \eqref{eq:KG-2}. Then the translation $\tau_{(0,-\kappa\alpha^3\varphi^3)^\top}$ is small-time approximately realizable for \eqref{eq:KG-2}.
\end{lemma}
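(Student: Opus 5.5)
The plan is to follow the proof of \Cref{lem:KG-odd-kick} with $p=3$. The only change is that the renormalized fixed point is run in the Strichartz space, as in \Cref{lem:KG-2-position-translations}. The first step is to show the asymptotic limit: for $\delta>0$ small, the uncontrolled solution with datum $(y_0+\delta^{-1/3}\alpha\varphi,y_1)^\top$ exists on $[0,\delta]$, and
\[\mathcal R_\delta\left(\binom{y_0+\delta^{-1/3}\alpha\varphi}{y_1},0\right)-\binom{\delta^{-1/3}\alpha\varphi}{0}\longrightarrow\binom{y_0}{y_1-\kappa\alpha^3\varphi^3}\quad\text{in }\mathcal H^s.\]
To get this, write $y=\delta^{-1/3}\alpha\varphi+z$ and solve the fixed-point problem for $z$ in $C([0,\delta];H^s)\cap L^b(0,\delta;L^a)$, with the velocity recovered afterwards from the integral formulation.

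The linear part of the renormalized map is $\cos(t\Lambda)y_0+\Lambda^{-1}\sin(t\Lambda)y_1+(\cos(t\Lambda)-I)\delta^{-1/3}\alpha\varphi$. Because $\varphi$ is smooth, the last term is $O(\delta^{2/3})$ in $C([0,\delta];H^s)$ and $O(\delta^{2/3+1/b})$ in $L^b L^a$. The remaining linear terms are bounded in the Strichartz space by \eqref{eq:wave-Strichartz}.

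For the nonlinearity, expand
\[(\delta^{-1/3}\alpha\varphi+z)^3=\delta^{-1}\alpha^3\varphi^3+3\delta^{-2/3}\alpha^2\varphi^2z+3\delta^{-1/3}\alpha\varphi z^2+z^3\]
and estimate each piece in $L^1(0,\delta;H^{s-1})$.
\begin{itemize}
\item The first term contributes $O(1)$.
\item The second contributes $\le C\delta^{1/3}\|z\|_{C H^s}$, since multiplication by a smooth function is bounded on $H^{s-1}$.
\item The third is handled by the trilinear estimate from \Cref{prop:KG-2-LWP} with one factor $\varphi\in H^s\cap L^a$. It gives $\le C\delta^{-1/3}\int_0^\delta(\|z\|_{H^s}\|z\|_{L^a}+\|z\|_{L^a}^2)\,dt\le C\delta^{2/3-1/b}R^2$ by Hölder in time.
\item The fourth is bounded by $C\delta^{1-2/b}R^3$ via \eqref{eq:KG-Strichartz-cubic-bound}.
\end{itemize}
All of these powers of $\delta$ are positive because $b>2$. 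The Lipschitz differences are treated the same way. So, choosing $R$ large and then $\delta$ small, the map is a contraction on the ball of radius $R$. Uniqueness in \Cref{prop:KG-2-LWP} identifies the fixed point with the true solution. I expect this step to be the main obstacle, in particular handling the mixed term with a smooth factor in $H^{s-1}$ for low $s$. If the trilinear estimate is awkward there, I would instead bound $\|\varphi z^2\|_{H^{s-1}}$ by $C_\varphi\|z\|_{H^s}\|z\|_{L^a}$ using the same duality/Leibniz argument.

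Next comes the limit itself. The position component is $y_0+o(1)$ in $H^s$: the nonlinear term is $\Lambda^{-1}\sin((\delta-\sigma)\Lambda)$ applied to something of size $O(1)$ in $L^1H^{s-1}$, and it gains the factor $\delta-\sigma\le\delta$. For the velocity, $-\Lambda\sin(\delta\Lambda)\delta^{-1/3}\alpha\varphi=O(\delta^{2/3})$ and the other linear terms tend to $y_1$. The leading nonlinear piece $\delta^{-1}\alpha^3\int_0^\delta\cos((\delta-\sigma)\Lambda)\varphi^3\,d\sigma$ tends to $\alpha^3\varphi^3$ in $H^{s-1}$ by strong continuity. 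The remainder terms are $o(1)$ in $L^1H^{s-1}$ by the bounds above. Multiplying by $-\kappa$ gives the claimed limit.

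The last step is the concatenation argument, verbatim as in \Cref{lem:KG-odd-kick}. Choose $\delta<\varkappa/3$ with error below $\eps/3$, and take $\rho$ from continuous dependence of the uncontrolled flow near the large datum; this comes from the same fixed-point argument applied to perturbed data. Then realize $\tau_{(\delta^{-1/3}\alpha\varphi,0)^\top}$ within $\rho$, evolve freely for time $\delta$, and realize $\tau_{(-\delta^{-1/3}\alpha\varphi,0)^\top}$ within $\eps/3$. The total time is below $\varkappa$, and the triangle inequality concludes.
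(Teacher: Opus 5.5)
Your architecture is the paper's: renormalize around $\delta^{-1/3}\alpha\varphi$, contract in a Strichartz space, read the kick off the leading term $\delta^{-1}\alpha^3\varphi^3$, then concatenate. Running the fixed point on the position only, as in \Cref{lem:KG-2-position-translations}, instead of on the full $\mathcal X_\delta^s$, is harmless. However, two steps are wrong as written.

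\textbf{The mixed term.} Your stated estimate for $3\delta^{-1/3}\alpha\varphi z^2$ is incorrect. The trilinear estimate of \Cref{prop:KG-2-LWP} with $f=\varphi$ necessarily contains the term $\|\varphi\|_{H^s}\|z\|_{L^a}^2$. H\"older in time then gives
\[
\delta^{-1/3}\int_0^\delta\|z\|_{L^a}^2\,dt\le\delta^{\frac23-\frac2b}\|z\|_{L^b(0,\delta;L^a)}^2,
\]
not $\delta^{2/3-1/b}R^2$. This power is nonpositive for $2<b\le3$, which \eqref{eq:wave-admissible} and \eqref{eq:KG-Strichartz-restriction} allow when $d\ge3$ (e.g.\ $d=3$, $a=10$, $b=5/2$). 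You also cannot trade $\|z\|_{L^a}$ for $\|z\|_{H^s}$, since $s$ may lie below $d/2-d/a$. So on that route the contraction does not close, and the remainder in the kick limit is not shown to vanish. Your fallback is the necessary argument, not an optional one, and it is what the paper's bound $\delta^{2/3-1/b}\|z\|_{C([0,\delta];H^s)}\|z\|_{L^b(0,\delta;L^a)}$ encodes. Since $\varphi$ is smooth, $\|\varphi z^2\|_{H^{s-1}}\le C_\varphi\|z^2\|_{H^{s-1}}\le C_\varphi\|z\|_{H^s}\|z\|_{L^a}$, by the same duality and fractional Leibniz arguments as in \Cref{prop:KG-2-LWP}, using $a>2d$. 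The same bilinear bound handles the Lipschitz differences.

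\textbf{The position limit.} Your justification here fails. $\Lambda^{-1}\sin((\delta-\sigma)\Lambda)$ does not gain the factor $\delta-\sigma$ on something that is only $O(1)$ in $L^1(0,\delta;H^{s-1})$. From $H^{s-1}$ to $H^s$ it is merely bounded, with no smallness as $t\to0$ because of high frequencies; the factor $t$ is only available from $H^s$ to $H^s$. That argument worked in \Cref{lem:KG-odd-kick} because $(y^\delta)^p$ was $O(\delta^{-1})$ in $H^s$. Here $H^s$ is no longer an algebra, and $\varphi(z^\delta)^2$ and $(z^\delta)^3$ are only controlled in $H^{s-1}$. Use the split you already use for the velocity. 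The smooth leading part gives
\[
\Bigl\|\delta^{-1}\int_0^\delta\Lambda^{-1}\sin((\delta-\sigma)\Lambda)\alpha^3\varphi^3\,d\sigma\Bigr\|_{H^s}\le C\delta .
\]
The remainder is $o(1)$ in $L^1(0,\delta;H^{s-1})$, hence $o(1)$ in $H^s$ after applying the bounded operator. With these two corrections your argument coincides with the paper's.
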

\begin{proof}
Set $\chi:=(0,-\kappa\alpha^3\varphi^3)^\top$, and fix $Y_0=(y_0,y_1)^\top\in\mathcal H^s$, $\eps>0$ and $\varkappa>0$. We first prove that for all sufficiently small $\delta>0$, the uncontrolled solution with initial datum $(y_0+\delta^{-1/3}\alpha\varphi,y_1)^\top$ is well defined on $[0,\delta]$ and
\begin{equation}
\label{eq:KG-Strichartz-cubic-kick-limit}
\mathcal R_\delta\left(\binom{y_0+\delta^{-1/3}\alpha\varphi}{y_1},0\right)-\binom{\delta^{-1/3}\alpha\varphi}{0}\longrightarrow Y_0+\chi \qquad\text{in }\mathcal H^s.
\end{equation}
As in the proof of \Cref{lem:KG-odd-kick}, we first construct a solution of the renormalized equation. For $Z=(z,\partial_t z)^\top\in\mathcal X_\delta^s$, define
\[\widetilde{\Gamma}_\delta(Z):=\Gamma_{(y_0+\delta^{-1/3}\alpha\varphi,y_1)^\top,0}\left(Z+\binom{\delta^{-1/3}\alpha\varphi}{0}\right)-\binom{\delta^{-1/3}\alpha\varphi}{0},\]
where $\Gamma_{Y_0,\eta}$ is the Duhamel map associated with initial datum $Y_0$ and control $\eta$. The point which differs from the case $s>d/2$ is the uniform bound for the renormalized solution. We prove it in the Strichartz space. By Duhamel's formula, the first component is
\[\begin{aligned}
(\widetilde{\Gamma}_\delta(Z))_1(t)=&\cos(t\Lambda)y_0+\Lambda^{-1}\sin(t\Lambda)y_1+(\cos(t\Lambda)-I)\delta^{-1/3}\alpha\varphi\\
&-\kappa\int_0^t\Lambda^{-1}\sin((t-\sigma)\Lambda)(\delta^{-1/3}\alpha\varphi+z(\sigma))^3d\sigma.    
\end{aligned}\]
The second component is
\[\begin{aligned}(\widetilde{\Gamma}_\delta(Z))_2(t)=&-\Lambda\sin(t\Lambda)y_0+\cos(t\Lambda)y_1-\Lambda\sin(t\Lambda)\delta^{-1/3}\alpha\varphi\\
&-\kappa\int_0^t\cos((t-\sigma)\Lambda)(\delta^{-1/3}\alpha\varphi+z(\sigma))^3d\sigma.\end{aligned}\]
The linear terms are uniformly bounded. For instance, since $\varphi$ is smooth,
\[\|(\cos(t\Lambda)-I)\delta^{-1/3}\alpha\varphi\|_{H^s}+\|\Lambda\sin(t\Lambda)\delta^{-1/3}\alpha\varphi\|_{H^{s-1}}\le C\delta^{2/3}\]
for $0\le t\le\delta$. Moreover,
\[\|(\cos(t\Lambda)-I)\delta^{-1/3}\alpha\varphi\|_{L^b(0,\delta;L^a)}\le C\delta^{2/3+1/b}.\]
We now estimate the nonlinear terms. Using the product estimate from \Cref{prop:KG-2-LWP}, for $z$ bounded in
$C([0,\delta];H^{s})\cap L^b(0,\delta;L^a)$, we have
\[\begin{aligned}
\|(\delta^{-1/3}\alpha\varphi+z)^3\|_{L^1(0,\delta;H^{s-1})}\le& C\left(1+\delta^{1/3}\|z\|_{C([0,\delta];H^{s})}+\delta^{\frac23-\frac1b}\|z\|_{C([0,\delta];H^{s})}\|z\|_{L^b(0,\delta;L^a)}\right.\\
&\left.+\delta^{1-\frac2b}\|z\|_{C([0,\delta];H^{s})}\|z\|_{L^b(0,\delta;L^a)}^2 \right).
\end{aligned}\]
Similarly, for $Z=(z,\partial_t z)^\top$ and $\widetilde Z=(\widetilde z,\partial_t\widetilde z)^\top$ in a fixed ball of radius $R$ in $\mathcal X_\delta^s$,
\[\|(\delta^{-1/3}\alpha\varphi+z)^3-(\delta^{-1/3}\alpha\varphi+\widetilde z)^3\|_{L^1(0,\delta;H^{s-1})}\le C_R\left(\delta^{1/3}+\delta^{\frac23-\frac1b}+\delta^{1-\frac2b}\right)\|Z-\widetilde Z\|_{\mathcal X_\delta^s}.\]
The previous estimates give, for $\|Z\|_{\mathcal X_\delta^s}\le R$,
\[\|\widetilde{\Gamma}_\delta(Z)\|_{\mathcal X_\delta^s}\le C_0+C\left(\delta^{1/3}R+\delta^{\frac23-\frac1b}R^2+\delta^{1-\frac2b}R^3\right),\]
where the constants are independent of $\delta$. Since $b>2$, choosing $R$ sufficiently large and then $\delta>0$ sufficiently small, the map leaves that closed ball invariant. The difference estimate above, together with the energy and Strichartz estimates, shows that it is a contraction there. Hence it admits a unique fixed point $Z^\delta=(z^\delta,\partial_t z^\delta)^\top$ in that ball. Set $y^\delta(t):=z^\delta(t)+\delta^{-1/3}\alpha\varphi$. By construction, $(y^\delta,\partial_t y^\delta)^\top$ satisfies the mild formulation with initial datum $(y_0+\delta^{-1/3}\alpha\varphi,y_1)^\top$ and no control. By uniqueness,
\[\binom{y^\delta(t)}{\partial_t y^\delta(t)}=\mathcal R_t\left(\binom{y_0+\delta^{-1/3}\alpha\varphi}{y_1},0\right),\qquad 0\le t\le\delta.\]
Moreover,
\begin{equation}
\label{eq:KG-Strichartz-z-bound}
\|z^\delta\|_{C([0,\delta];H^s)}+\|\partial_t y^\delta\|_{C([0,\delta];H^{s-1})}+\|z^\delta\|_{L^b(0,\delta;L^a)}\le C
\end{equation}
for all sufficiently small $\delta>0$. We now compute the leading nonlinear contribution. Expanding
\[(y^\delta)^3=(\delta^{-1/3}\alpha\varphi+z^\delta)^3=\delta^{-1}\alpha^3\varphi^3+3\delta^{-2/3}\alpha^2\varphi^2z^\delta
+3\delta^{-1/3}\alpha\varphi(z^\delta)^2+(z^\delta)^3,\]
and using \eqref{eq:KG-Strichartz-z-bound}, we obtain
\[\left\|(y^\delta)^3-\delta^{-1}\alpha^3\varphi^3\right\|_{L^1(0,\delta;H^{s-1})} \le C\left(\delta^{1/3}+\delta^{\frac23-\frac1b}+\delta^{1-\frac2b}\right)\longrightarrow0 .\]
For the position component, the linear part satisfies
\[\cos(\delta\Lambda)(y_0+\delta^{-1/3}\alpha\varphi)+\Lambda^{-1}\sin(\delta\Lambda)y_1 -\delta^{-1/3}\alpha\varphi\longrightarrow y_0\qquad\text{in }H^{s}.\]
Moreover,
\[\left\|\delta^{-1}\int_0^\delta\Lambda^{-1}\sin((\delta-\sigma)\Lambda)\alpha^3\varphi^3d\sigma\right\|_{H^{s}}\le C\delta\|\varphi^3\|_{H^{s}}\longrightarrow0,\]
and the remaining nonlinear part tends to zero because
\[\left\|(y^\delta)^3-\delta^{-1}\alpha^3\varphi^3\right\|_{L^1(0,\delta;H^{s-1})}\to0.\]
Thus the first component converges to $y_0$ in $H^{s}$. For the velocity component, we have
\[\partial_t y^\delta(\delta)=-\Lambda\sin(\delta\Lambda)(y_0+\delta^{-1/3}\alpha\varphi)+\cos(\delta\Lambda)y_1
-\kappa\int_0^\delta\cos((\delta-\sigma)\Lambda)(y^\delta(\sigma))^3d\sigma .\]
The linear terms converge to $y_1$ in $H^{s-1}$. For the nonlinear term,
\[\int_0^\delta\cos((\delta-\sigma)\Lambda)(y^\delta(\sigma))^3d\sigma=\delta^{-1}\alpha^3\int_0^\delta\cos((\delta-\sigma)\Lambda)\varphi^3d\sigma+o_{H^{s-1}}(1).\]
By the strong continuity of $\cos(t\Lambda)$ on $H^{s-1}$,
\[\frac1\delta\int_0^\delta\cos((\delta-\sigma)\Lambda)\varphi^3d\sigma\longrightarrow\varphi^3\qquad\text{in }H^{s-1}.\]
Hence
\[\partial_t y^\delta(\delta)\longrightarrow
y_1-\kappa\alpha^3\varphi^3\qquad\text{in }H^{s-1}.\]
This proves \eqref{eq:KG-Strichartz-cubic-kick-limit}. The rest of the argument is the same concatenation procedure as before. Choose $\delta\in(0,\varkappa/3)$ small enough so that the error in \eqref{eq:KG-Strichartz-cubic-kick-limit} is smaller than $\eps/3$. By the same fixed-point argument and continuous dependence on the initial datum, there exists $\rho>0$ such that, if
\[\left\|Z-\binom{y_0+\delta^{-1/3}\alpha\varphi}{y_1}\right\|_{\mathcal H^s}<\rho,\]
then the uncontrolled solution with initial datum $Z$ is well defined on $[0,\delta]$ and
\[\left\|\mathcal R_\delta(Z,0)-\mathcal R_\delta\left(\binom{y_0+\delta^{-1/3}\alpha\varphi}{y_1},0\right)\right\|_{\mathcal H^s}<\frac{\eps}{3}.\]
Since $\tau_{(\delta^{-1/3}\alpha\varphi,0)^\top}$ is small-time approximately realizable, there exist $T_1\in(0,\varkappa/3)$ and $\eta_1\in L^\infty(0,T_1;E^\R)$ such that
\[\left\|\mathcal R_{T_1}(Y_0,\eta_1)-\binom{y_0+\delta^{-1/3}\alpha\varphi}{y_1}\right\|_{\mathcal H^s}<\rho.\]
Set $Z:=\mathcal R_{T_1}(Y_0,\eta_1)$. On $[T_1,T_1+\delta]$, we take zero control, so the state becomes $\mathcal R_\delta(Z,0)$. Finally, since $\tau_{(-\delta^{-1/3}\alpha\varphi,0)^\top}$ is small-time approximately
realizable, there exist $T_2\in(0,\varkappa/3)$ and $\eta_2\in L^\infty(0,T_2;E^\R)$ such that
\[\left\|\mathcal R_{T_2}(\mathcal R_\delta(Z,0),\eta_2)-\left(\mathcal R_\delta(Z,0)-\binom{\delta^{-1/3}\alpha\varphi}{0}\right)
\right\|_{\mathcal H^s}<\frac{\eps}{3}.\]
Concatenating the three pieces, with $T:=T_1+\delta+T_2<\varkappa$, we obtain
\[\|\mathcal R_T(Y_0,\eta)-(Y_0+\chi)\|_{\mathcal H^s}<\eps.\]
Therefore $\tau_\chi$ is small-time approximately realizable.
\end{proof}
The analogues of \Cref{lem:KG-velocity-position-translations,lem:KG-sums,lem:KG-saturated-directions} and \Cref{prop:KG-saturation-criterion} follow directly. Here, $\mathsf E_n^\R$ and $\mathsf E_\infty^\R$ denote the cubic power saturation spaces.
\begin{lemma}
\label{lem:KG-2-saturation}
The following properties hold:
\begin{itemize}
\item Let $\xi\in C^\infty(M;\R)$. Assume that, for every $\delta>0$, the
translations $\tau_{(0,\pm\delta^{-1}\xi)^\top}$ are small-time approximately realizable for \eqref{eq:KG-2}. Then the
translation $\tau_{(\xi,0)^\top}$ is small-time approximately realizable for
\eqref{eq:KG-2}.
\item The set of small-time approximately realizable translations is stable
under sums.
\item Let $\xi\in \mathsf E_\infty^\R$. Then both translations $\tau_{(\xi,0)^\top}$ and $\tau_{(0,\xi)^\top}$ are small-time approximately realizable for \eqref{eq:KG-2}.
\item If $\mathsf E_\infty^\R$ is dense in $H^{s}(M;\R)$ and in $H^{s-1}(M;\R)$, then
\eqref{eq:KG-2} is small-time approximately controllable in $\mathcal H^s$.
\end{itemize}
\end{lemma}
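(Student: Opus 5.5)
The four assertions of \Cref{lem:KG-2-saturation} will be obtained by transcribing the proofs of \Cref{lem:KG-velocity-position-translations,lem:KG-sums,lem:KG-saturated-directions} and \Cref{prop:KG-saturation-criterion}. Only the well-posedness input changes: \Cref{prop:KG-2-LWP} replaces \Cref{prop:KG-LWP}, and the Strichartz-adapted limits of \Cref{lem:KG-2-direct-velocity-translations,lem:KG-2-position-translations,lem:KG-2-cubic-kick} replace their algebra-regime counterparts. I will not redo any algebraic saturation; \Cref{th:odd-saturation-density} will be invoked only in the final assertion.

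\textbf{Step 1: velocity-to-position.} For the first assertion I will repeat the proof of \Cref{lem:KG-2-position-translations} verbatim. The only change is that realizability of $\tau_{(0,\pm\delta^{-1}\xi)^\top}$ is taken from the hypothesis instead of from \Cref{lem:KG-2-direct-velocity-translations}. That proof used only three things about $\xi$:
\begin{itemize}
\item smoothness, so that $\delta^{-1}\Lambda^{-1}\sin(\delta\Lambda)\xi\to\xi$ and $\delta^{-1}(\cos(\delta\Lambda)-I)\xi\to0$;
\item the bound $\|\delta^{-1}\Lambda^{-1}\sin(t\Lambda)\xi\|_{L^b(0,\delta;L^a)}\lesssim\delta^{1/b}$;
\item the fixed-point argument on the position component in $C([0,\delta];H^s)\cap L^b(0,\delta;L^a)$.
\end{itemize}
All three hold for an arbitrary $\xi\in C^\infty(M;\R)$. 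The concatenation step then only needs the continuous dependence in \Cref{prop:KG-2-LWP}.

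\textbf{Step 2: sums and limits.} The second assertion is identical to \Cref{lem:KG-sums}. It uses only the concatenation property of the flow and the triangle inequality in $\mathcal H^s$, so no estimates are involved.

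\textbf{Step 3: induction on $n$.} For the third assertion I argue by induction on $n$ for $\xi\in\mathsf E_n^\R$, with the cubic saturation. The base case $n=0$ is \Cref{lem:KG-2-direct-velocity-translations,lem:KG-2-position-translations}. For the inductive step, take $\varphi\in\mathsf E_n^\R$.
\begin{itemize}
\item Since $\mathsf E_n^\R$ is a vector space, the induction hypothesis gives realizability of $\tau_{(\pm\delta^{-1/3}\alpha\varphi,0)^\top}$ for all $\delta>0$ and $\alpha\in\R$.
\item \Cref{lem:KG-2-cubic-kick} then gives realizability of $\tau_{(0,-\kappa\alpha^3\varphi^3)^\top}$. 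Because $\alpha\mapsto-\kappa\alpha^3$ is onto $\R$, every $\tau_{(0,\beta\varphi^3)^\top}$ is realizable.
\item Applying this with $\beta$ replaced by $\pm\delta^{-1}\beta$ and using Step 1 yields $\tau_{(\beta\varphi^3,0)^\top}$; Step 1 applies since $\varphi^3$ is smooth, the elements of $\mathsf E_n^\R$ being polynomials in smooth profiles.
\item Step 2 closes the induction for all of $\mathsf E_{n+1}^\R$.
\end{itemize}

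\textbf{Step 4: controllability.} The final assertion follows \Cref{prop:KG-saturation-criterion}. Given $Y_0$ and $Y_1$, choose $\xi_0,\xi_1\in\mathsf E_\infty^\R$ approximating $z_0-y_0$ in $H^s$ and $z_1-y_1$ in $H^{s-1}$. Step 3 gives realizability of $\tau_{(\xi_0,0)^\top}$ and $\tau_{(0,\xi_1)^\top}$, and Step 2 combines them into $\tau_{(\xi_0,\xi_1)^\top}$. Realizing this translation to within $\eps/2$ in time less than $\varkappa$ completes the argument.

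\textbf{Main obstacle.} The only step that is not purely formal is Step 1, specifically checking that the rescaled fixed point on $[0,\delta]$ needs no smallness of $\xi$ and no property of $E^\R$ beyond smoothness. The constants will depend on $\xi$, but $\xi$ is fixed before $\delta\to0$, so this is harmless. I would verify this first by rereading the estimates in \Cref{lem:KG-2-position-translations} with a general smooth $\xi$.
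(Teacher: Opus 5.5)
Your proposal is correct and follows the paper's route. The paper simply asserts that the analogues of \Cref{lem:KG-velocity-position-translations,lem:KG-sums,lem:KG-saturated-directions} and \Cref{prop:KG-saturation-criterion} carry over once the Strichartz-adapted direct-translation, position-translation and cubic-kick lemmas are available, which is exactly the transcription you describe (odd-case induction, surjectivity of $\alpha\mapsto-\kappa\alpha^3$, and the velocity-to-position step applied to $\pm\delta^{-1}\beta\varphi^3$). One cosmetic correction: the fourth assertion already assumes density, so \Cref{th:odd-saturation-density} is not needed inside this lemma, only later in the proof of \Cref{thm:KG-STAC2}.
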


\textit{Proof of \Cref{thm:KG-STAC2}.}
By \Cref{th:odd-saturation-density}, the power saturation $\mathsf E_\infty^\R$ of $E^\R$ is dense in $H^{s}(M;\R)$ and in $H^{s-1}(M;\R)$. The direct velocity translations are given by \Cref{lem:KG-2-direct-velocity-translations}, and the cubic generation step is given by \Cref{lem:KG-2-cubic-kick}. Hence \Cref{lem:KG-2-saturation} applies and yields the desired small-time approximate controllability in $\mathcal H^s$.
\qed
\section{Stochastic consequences}
\label{sec:stochastic-consequences}
In this section we derive two consequences of the small-time approximate controllability results proved above. We first establish topological irreducibility for the Markov transition probabilities associated with a randomly forced equation. We then prove that the trajectories are almost surely unbounded in higher Sobolev norms. Throughout this section we work in a defocusing regime in which the original equation is globally well-posed. 

The argument is written in a unified way for the cubic Schr\"odinger and Klein--Gordon equations on a smooth compact connected Riemannian manifold $(M,g)$ without boundary. Let $W\in C^\infty(M;\R)$ and $\kappa>0$. We set
\[X^s_{\rm Sch}:=H^s(M;\C),\qquad X^s_{\rm KG}:=\mathcal H^s:=H^s(M;\R)\times H^{s-1}(M;\R).\]
In what follows, $X^s$ denotes the space corresponding to the equation under consideration. Let $E\subset C^\infty(M;\mathbb K)$ be the finite-dimensional control space used in the saturation argument, where $\mathbb K=\C$ for the Schr\"odinger equation and $\mathbb K=\R$ for the Klein--Gordon equation. \medskip

\noindent As already mentioned, we consider a setting in which global-in-time solutions exist. 
\begin{assumption}[Global well-posedness]
\label{ass:global-wellposedness}
For the equation under consideration, for every $z_0\in X^s$ and every $\eta\in L^2_{\mathrm{loc}}([0,\infty);E)$, there exists a unique global solution in $C([0,\infty);X^s)$ of \eqref{eq:NLS-2} or \eqref{eq:KG-2}. Moreover, for every $T>0$, the resolving map is continuous from $X^s\times L^2(0,T;E)$ into $C([0,T];X^s)$.
\end{assumption}

\Cref{ass:global-wellposedness} can be verified in several situations. For the uncontrolled defocusing cubic Schr\"odinger equation, global well-posedness on closed surfaces is proved in \cite[Theorem~1.1]{Hani2012} for $r>2/3$. The energy-space global theory on compact manifolds of dimension at most three and the propagation of higher regularity follow from the Strichartz estimates and the
arguments in \cite{burq2004strichartz}. For the uncontrolled defocusing cubic wave equation on a three-dimensional compact manifold, global well-posedness is stated in \cite[Theorem~1]{BurqTzvetkov2014} for $r\geq3/4$. In particular, we can prove the following. 

\begin{proposition}
\label{prop:NLS-global-wellposedness}
Assume either that $d=2$ and $s\geq1$, or that $d=3$, $W=0$ and $s>1$. Then \Cref{ass:global-wellposedness} holds for \eqref{eq:NLS-2}.
\end{proposition}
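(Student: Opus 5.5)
The plan is to split the proof into two parts. First I would establish global well-posedness of the uncontrolled defocusing cubic equation in $H^s$, using the cited results together with propagation of regularity. Then I would treat the finite-dimensional forcing $\eta\in L^2_{\mathrm{loc}}([0,\infty);E)$ as a perturbation. The perturbation is harmless because $\eta(t)$ lies in a finite-dimensional space of smooth functions. Hence $\|\eta(t)\|_{H^\sigma}\le C_\sigma|\eta(t)|_E$ for every $\sigma$, and $\eta\in L^2(0,T;E)\subset L^1(0,T;H^s)$. Local well-posedness in $H^s$, with Lipschitz dependence on $(u_0,\eta)$ in $H^s\times L^1(0,T;H^s)$, is then provided by \Cref{prop:NLS-LWP-2}, or by \Cref{prop:NLS-LWP} when $s>d/2$. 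These estimates give the continuity of the resolving map on any interval where an a priori bound holds. Globalization therefore reduces to an a priori bound on $\|u(t)\|_{H^s}$ on $[0,T]$.

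For the a priori bound I would first control the energy level. I would compute, formally for smooth solutions and then by density using continuous dependence, the evolution of the mass and of the defocusing energy
\[\mathcal E(u)=\tfrac12\int_M\bigl(|\nabla_g u|^2+W|u|^2\bigr)+\tfrac{\kappa}{4}\int_M|u|^4 .\]
The mass satisfies $\frac{d}{dt}\|u\|_{L^2}^2=2\operatorname{Im}\langle \eta,u\rangle$, which gives $\|u(t)\|_{L^2}\le\|u_0\|_{L^2}+\int_0^t\|\eta\|_{L^2}$. The energy satisfies $\frac{d}{dt}\mathcal E(u)=\operatorname{Im}\langle(-\Delta_g+W)u+\kappa|u|^2u,\eta\rangle$ up to sign conventions. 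Since $\eta$ is smooth, I would move the Laplacian onto $\eta$ and bound the right-hand side by
\[C|\eta(t)|_E\bigl(\|u\|_{L^2}+\|u\|_{L^4}^3\bigr)\le C|\eta(t)|_E\bigl(1+\mathcal E(u)+\|u\|_{L^2}^2\bigr).\]
Here I use that $\kappa>0$ makes $\|u\|_{L^4}^4$ part of the energy, and that $\|u\|_{L^4}^3\le 1+\|u\|_{L^4}^4$. The term $W|u|^2$ is absorbed using the mass bound. Gronwall's lemma then gives an $H^1$ bound on $[0,T]$ depending only on $\|u_0\|_{H^1}$ and $\|\eta\|_{L^1(0,T;E)}$. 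This settles the case $s=1$.

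Next I would propagate this bound to higher regularity. For $d=2$ I would use the Burq--Gérard--Tzvetkov argument: the Strichartz estimate with $1/b$ derivative loss on a time interval whose length depends only on the $H^1$ bound gives a linear-in-$\|u\|_{H^s}$ estimate $\|u\|_{H^s}\le C\|u(t_0)\|_{H^s}+C\|\eta\|_{L^1H^s}$. The proof uses Moser's estimate $\||u|^2u\|_{H^s}\lesssim\|u\|_{L^\infty}^2\|u\|_{H^s}$, together with the control of $\|u\|_{L^b L^\infty}$ by the $H^1$-level Strichartz norm. Because the step length depends only on the uniformly bounded $H^1$ norm, iterating over finitely many steps covers $[0,T]$. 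For $d=3$, $W=0$ and $s>1$ I would argue the same way, starting from the $H^1$ global theory of \cite{burq2004strichartz} on three-manifolds. That theory gives, for $s>1$, a blow-up criterion controlled by $\|u\|_{L^bL^\infty}$ and a bound of that quantity on short intervals determined by the $H^1$ norm; the forcing enters only through harmless $L^1H^s$ terms. The $d=3$ case is the step I expect to be the main obstacle: the $H^1$ theory is only barely subcritical there, and one must check that the smooth forcing does not spoil the mechanism used in the cited work. I would try to handle this by writing $u=v+w$, where $w$ solves the linear forced equation, which is smooth and bounded, and $v$ solves a perturbed cubic equation to which the cited unforced argument applies with lower-order smooth coefficients. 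Finally, continuity of the resolving map into $C([0,T];X^s)$ follows from the local Lipschitz dependence of \Cref{prop:NLS-LWP-2}, applied on finitely many subintervals with the uniform a priori bound, together with the embedding $L^2(0,T;E)\hookrightarrow L^1(0,T;H^s)$.
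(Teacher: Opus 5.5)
Your mass and energy bounds and the case $d=2$ are fine. The $d=2$ propagation is the paper's argument. Your energy estimate moves $-\Delta_g+W$ onto the smooth forcing and uses $|\langle |u|^2u,\eta\rangle|\le\|u\|_{L^4}^3\|\eta\|_{L^4}$. This is a simpler alternative to the paper's bound $\||u|^2u\|_{H^{-1}}\lesssim\|u\|_{L^2}^{1/3}\|u\|_{L^4}^{8/3}$, and it does not need $d\le 3$.

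The gap is in the case $d=3$. You attribute to the cited three-dimensional theory a bound on $\|u\|_{L^b(I;L^\infty)}$ on intervals whose length and size depend only on the $H^1$ norm. That theory does not provide this, and it does not follow from the Strichartz estimates with loss either. \Cref{prop:NLS-LWP-2} needs $s>\frac32-\frac1b>1$, so at the $H^1$ level you get no $L^\infty$ control. At level $s>1$, the existence time depends on $\|u(t_0)\|_{H^s}$, which is the very quantity you want to bound. Hence arguing as in $d=2$ does not close: there you absorb a term linear in $\|u\|_{Y_I^s}$ on subintervals fixed by the $H^1$ bound, and no such subintervals are available here.

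The missing idea is the logarithmic (Brezis--Gallou\"et type) estimate. It comes from the frequency-localized endpoint estimate of Burq--G\'erard--Tzvetkov (their Lemma 3.6), Bernstein's inequality, and a low/high dyadic frequency splitting, and it reads
\[\|u\|_{L^2(I;L^\infty)}\le C_R\Bigl(\bigl(|I|\log(2+\|u\|_{L^2(I;H^s)})\bigr)^{1/2}+1\Bigr)+C\|\eta\|_{L^2(I;E)}.\]
Insert this into the Gronwall bound $\|u\|_{L^\infty(I;H^s)}\le(\|u(t_0)\|_{H^s}+\|\eta\|_{L^1(I;H^s)})\exp(C\|u\|_{L^2(I;L^\infty)}^2)$. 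This produces the factor $(2+|I|^{1/2}\|u\|_{L^\infty(I;H^s)})^{C_R|I|}$. You then choose $C_R|I|\le\frac12$, a length depending on $R$ alone, and apply Young's inequality. Only then do you get a bound that can be iterated finitely many times.

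Your splitting $u=v+w$ is a legitimate substitute for the paper's forced frequency-localized lemma. The nonlinearity in the $v$-equation is still $\kappa|u|^2u$ with $u$ bounded in $H^1$, so the estimates on $\chi_hP_h(|u|^2u)$ carry over. Also $P_hw=O(h^r)$, since $w$ is smooth in space. However, the splitting does not let you apply the unforced result as a black box, because the $v$-equation is not the one treated there. You must still re-derive the frequency-localized bound and the logarithmic inequality for it. That is exactly the work the paper does, using $\|P_h\eta\|_{L^2(I;L^2)}\le C_{E,r}h^r\|\eta\|_{L^2(I;E)}$.
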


\begin{proposition}
\label{prop:KG-global-wellposedness}
Assume that $d\leq3$ and $s\geq1$. Then \Cref{ass:global-wellposedness} holds for equation \eqref{eq:KG-2}.
\end{proposition}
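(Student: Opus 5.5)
The proof combines a conserved-energy bound in the energy space $\mathcal H^1=H^1\times L^2$ with a local theory whose existence time depends only on the size of the data, and then propagates higher regularity with a bound that is linear in the $\mathcal H^s$ norm. Since $E\subset C^\infty(M;\R)$ is finite-dimensional, every $\eta\in L^2_{\rm loc}([0,\infty);E)$ lies in $L^1_{\rm loc}([0,\infty);H^r)$ for every $r$. So only the growth of the solution has to be controlled.

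\textbf{Step 1: global solutions at $s=1$.} For $d\le 3$ we have $H^1(M)\hookrightarrow L^6(M)$, hence
\[\|y^3\|_{L^2}\le C\|y\|_{H^1}^3,\qquad \|y^3-z^3\|_{L^2}\le C(\|y\|_{H^1}+\|z\|_{H^1})^2\|y-z\|_{H^1}.\]
The fixed-point argument of \Cref{prop:KG-LWP} therefore runs in $C([0,T];\mathcal H^1)$. It gives a local solution whose existence time depends only on $\|Y_0\|_{\mathcal H^1}+\|\eta\|_{L^1(0,1;L^2)}$, together with local Lipschitz dependence on $(Y_0,\eta)$. Next take the energy
\[\mathcal E(t):=\tfrac12\|\partial_t y\|_{L^2}^2+\tfrac12\langle(-\Delta_g+W)y,y\rangle+\tfrac{\kappa}{4}\int_M y^4 .\]
Because $\kappa>0$ and $-\Delta_g+W$ is positive, $\mathcal E\ge c\|Y\|_{\mathcal H^1}^2$. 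Formally,
\[\frac{d}{dt}\mathcal E=\langle \eta,\partial_t y\rangle\le \|\eta\|_{L^2}\sqrt{2\mathcal E},\]
which yields
\[\sqrt{\mathcal E(t)}\le \sqrt{\mathcal E(0)}+C\int_0^t\|\eta\|_{L^2}.\]
I would justify this identity first for smooth data and smooth-in-time controls, where the solution is classical, and pass to the limit using the Lipschitz dependence. Since $\mathcal E(0)$ is controlled by $\|Y_0\|_{\mathcal H^1}$ through the embedding $H^1\hookrightarrow L^4$, the $\mathcal H^1$ norm stays bounded on every bounded interval. Iterating the local result with a uniform step length then gives a global solution.

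\textbf{Step 2: propagation of regularity for $s>1$.} This is the main obstacle, because the $\mathcal H^s$ estimate must grow at most linearly in $\|y\|_{H^s}$, with coefficients depending only on the $H^1$ bound.
\begin{itemize}
\item For $1<s\le 2$, set $\sigma=s-1\in(0,1]$. I would use the fractional Leibniz rule with exponents $\tfrac12=\tfrac16+\tfrac13$:
\[\|y^3\|_{H^{\sigma}}\lesssim \|y\|_{W^{\sigma,6}}\|y^2\|_{L^3}+\|y\|_{L^6}\|y^2\|_{W^{\sigma,3}}.\]
The last factor is bounded by $\|y^2\|_{W^{\sigma,3}}\lesssim\|y\|_{W^{\sigma,6}}\|y\|_{L^6}$. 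Combining these with $H^s\hookrightarrow W^{s-1,6}$ and $H^1\hookrightarrow L^6$ (both valid for $d\le3$) gives $\|y^3\|_{H^{s-1}}\le C\|y\|_{H^1}^2\|y\|_{H^s}$.
\item Applying this in Duhamel's formula and then Gronwall's inequality gives
\[\|Y(t)\|_{\mathcal H^s}\le C\Bigl(\|Y_0\|_{\mathcal H^s}+\int_0^t\|\eta\|_{H^{s-1}}\Bigr)\exp\Bigl(C\int_0^t(1+\|y\|_{H^1}^2)\Bigr),\]
which is finite on every bounded interval by Step 1.
\item For $s>2$, I would proceed inductively. Once $H^{s'}$ with $s'>3/2$ is bounded, $y$ is bounded in $L^\infty$, so the Moser estimate $\|y^3\|_{H^{s-1}}\lesssim\|y\|_{L^\infty}^2\|y\|_{H^{s-1}}$ again gives a linear bound.
\end{itemize}
With this a priori bound, the $\mathcal H^s$ local theory (\Cref{prop:KG-LWP}, or \Cref{prop:KG-2-LWP}) extends globally, and uniqueness in the larger space $\mathcal H^1$ identifies the two solutions.

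\textbf{Step 3: continuity of the resolving map.} Fix $(Y_0,\eta)$ and $T>0$. By Step 2 the trajectory is bounded in $\mathcal H^s$ on $[0,T]$ by some $R$. Nearby data $(Z_0,\zeta)$ in $\mathcal H^s\times L^2(0,T;E)$ satisfy the same a priori bound with $2R$, uniformly. Covering $[0,T]$ by finitely many intervals of the uniform local existence length, and chaining the local Lipschitz estimates of the LWP proposition (using $\|\zeta-\eta\|_{L^1(H^{s-1})}\le C_T\|\zeta-\eta\|_{L^2(0,T;E)}$), gives continuity into $C([0,T];\mathcal H^s)$. This proves \Cref{ass:global-wellposedness} for \eqref{eq:KG-2}.
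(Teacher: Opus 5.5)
Your proposal is correct and follows essentially the paper's route: a forced energy bound in $H^1\times L^2$, the tame estimate $\|y^3\|_{H^{s-1}}\le C\|y\|_{H^1}^2\|y\|_{H^s}$ via the fractional Leibniz rule and $H^1\hookrightarrow L^6$, Gronwall with the blow-up alternative, and continuity from Lipschitz bounds on bounded sets. The differences are minor: you use the unshifted energy, relying on the standing positivity of $-\Delta_g+W$, where the paper shifts by a constant $c$; you spell out the $\mathcal H^1$ local theory and uniqueness, which the paper leaves implicit; and your Moser induction for $s>2$ is unnecessary, because the Kato--Ponce bound you use for $1<s\le 2$ holds for all $s\ge1$, which is how the paper treats every $s\ge1$ at once.
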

For the sake of completeness, the proofs are given in \Cref{app:NLS-GWP,app:KG-GWP}.\medskip

\noindent For the remainder of this section, the Sobolev indices under consideration are
such that \Cref{ass:global-wellposedness} and the hypotheses of \Cref{thm:NLS-STAC2} or \Cref{thm:KG-STAC2} hold. We write
\[\mathcal R_T(z_0,\eta)\in X^s\]
for the value at time $T$ of the global solution starting from $z_0\in X^s$ and driven by $\eta\in L^2(0,T;E)$. \medskip

\noindent We now introduce the random forcing. Let $(\Omega,\mathcal F,\mathbb P)$ be a probability space and let $\mathcal E:=L^2(0,1;E)$. Let $\ell$ be a Borel probability measure on the separable Hilbert space $\mathcal E$, and let $(\eta_k)_{k\geq0}$ be a sequence of independent and identically distributed $\mathcal E$-valued random variables with common law $\ell$. We define the random forcing by
\begin{equation}
\label{eq:block-random-forcing}
\eta(t)=\eta_k(t-k),
\qquad t\in[k,k+1),\quad k\geq0.
\end{equation}
For every finite $T>0$, one has $\eta\in L^2(0,T;E)$ almost surely. Hence the global solution evaluated at integer times defines the time-homogeneous Markov chain
\begin{equation}
\label{eq:integer-Markov-chain}
Z_{k+1}=\mathcal R_1(Z_k,\eta_k), \qquad Z_0=z_0.
\end{equation}
Indeed, $\eta_k$ is independent of the past and its law does not depend on $k$. The phase space is $X^s$, endowed with its Borel $\sigma$-algebra $\mathcal B(X^s)$. Its one-step transition kernel is
\begin{equation}
\label{eq:one-step-transition-kernel}
P_1(z,\Gamma) := \mathbb P\bigl(\mathcal R_1(z,\eta_0)\in\Gamma\bigr), \qquad z\in X^s,\quad \Gamma\in\mathcal B(X^s).
\end{equation}
Thus, $P_1(z,\Gamma)$ is the probability that the solution starting from $z$ belongs to the Borel set $\Gamma$ after one forcing block. We refer to \cite[Section~1.3]{KuksinShirikyan2012} for the standard Markov framework for randomly forced equations. The continuity of the resolving map implies that $z\mapsto P_1(z,\Gamma)$ is Borel measurable for every $\Gamma\in\mathcal B(X^s)$, so $P_1$ is indeed a Markov kernel. We impose the following non-degeneracy assumption.

\begin{assumption}[Full support of the law]
\label{ass:full-support}
The law $\ell$ has full support in $\mathcal E$: $\operatorname{supp}\ell=\mathcal E$. Equivalently, for every $\zeta\in\mathcal E$ and every $\rho>0$, 
\[0<\ell\bigl(B_{\mathcal E}(\zeta,\rho)\bigr)=\mathbb P(\|\eta_k-\zeta\|_{\mathcal{E}}<\rho).\]
\end{assumption}

A concrete class of noises satisfying \Cref{ass:full-support} can be constructed as follows. Let $(e_j)_{j\geq1}$ be an orthonormal basis of $\mathcal E$, let $b_j\neq0$ with $\sum_{j\geq1}|b_j|^2<\infty$, and set
\[\eta_k=\sum_{j\geq1}b_j\xi_{j,k}e_j.\]
Assume that the $\mathbb K$-valued random variables $\xi_{j,k}$ are independent and, for each fixed $j$, identically distributed as $k$ varies. Assume also that $\mathbb E|\xi_{j,k}|^2=1$ and that their densities with respect to Lebesgue measure on
$\mathbb K$ are positive everywhere. Then the series converges in $L^2(\Omega;\mathcal E)$ and its law has full support in $\mathcal E$. 

\subsection{Topological irreducibility}
We first show that global well-posedness upgrades small-time approximate controllability to approximate controllability at any prescribed positive time. This point is needed because the transition kernel \eqref{eq:one-step-transition-kernel} is defined at the fixed time $1$.
\begin{lemma}
\label{lem:STAC-fixed-time}
For every $T_0>0$, every $z_0,z_1\in X^s$, and every $\varepsilon>0$, there exists $\zeta\in L^\infty(0,T_0;E)$ such that
\[\|\mathcal R_{T_0}(z_0,\zeta)-z_1\|_{X^s}<\varepsilon.\]
\end{lemma}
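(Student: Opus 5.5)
We split $[0,T_0]$ into a long uncontrolled (or zero-control) stretch followed by a short controlled one, using global well-posedness for the first piece and small-time approximate controllability for the second. Concretely, \Cref{ass:global-wellposedness} guarantees that the free trajectory exists on all of $[0,T_0]$, and \Cref{thm:NLS-STAC2} or \Cref{thm:KG-STAC2} supplies the short-time steering.

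The first step uses the continuity of the free flow. Since $t\mapsto \mathcal R_t(z_0,0)$ is continuous in $X^s$ and defined on $[0,T_0]$, we choose any $\varkappa\in(0,T_0)$ and set aside the final interval of length at most $\varkappa$. More precisely, we apply small-time approximate controllability with an as yet undetermined starting point $w$ and target $z_1$. For every $w\in X^s$ there exist $T\in(0,\varkappa)$ and $\eta\in L^\infty(0,T;E)$ with $\|\mathcal R_T(w,\eta)-z_1\|_{X^s}<\varepsilon$.

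The issue is that $T$ depends on $w$, so the order of choices must be arranged to avoid circularity. We proceed as follows.

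1. Apply STAC from the point $w:=\mathcal R_{T_0/2}(z_0,0)$ with $\varkappa=T_0/2$. This gives $T\in(0,T_0/2)$ and a control $\eta$ reaching $z_1$ within $\varepsilon$.
2. Fill the remaining time so that the total is exactly $T_0$. The free time needed is $T_0-T>T_0/2$, which differs from the $T_0/2$ used to define $w$. To fix this, pick $T$ first and then redefine the start as $w'=\mathcal R_{T_0-T}(z_0,0)$.
3. Since $T$ came from a different starting point, use the uniformity in $w$ instead. Small-time approximate controllability is applied from $w'$ with $\varkappa$ arbitrary, giving a new time $T'$.

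This iteration need not close. The cleaner route is to exploit the structure of the proof of STAC: the control realizing $\tau_\xi$ consists of concatenated pieces with total time that can be made arbitrarily small, and by \Cref{lem:NLS-direct-translations2} and the kicks the only free parameters are the $\delta$'s. The key observation is the following. If we apply STAC from $z_0$ toward the target $\mathcal R_{-}$, we cannot invert the flow. So we instead use an \emph{idle} state: prepend zero control on $[0,T_0-T]$ and append nothing, choosing the target adaptively.

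Best fix: use the reachable-time monotonicity trick. Take $\varkappa=T_0$. STAC from $z_0$ to a point $z_*$ close to $z_1$ gives time $T_1<T_0$. For the remaining time $T_0-T_1$, we need to "stay near" $z_1$. This can be achieved by repeatedly steering $z_1$ to itself in short times: small-time approximate controllability from $\mathcal R_{T_1}(z_0,\eta)$ to $z_1$ in time $T_2<(T_0-T_1)$, then from that point to $z_1$ again, and so on. Each step takes time in $(0,\varkappa_j)$ with $\varkappa_j$ free, but the exact sum still must equal $T_0-T_1$.

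This is the main obstacle: matching exact time. We resolve it by making the last step flexible. Within the proof of \Cref{lem:NLS-direct-translations2}, realizing $\tau_0$ (zero translation) can be done with zero control in any time $t$ small, by continuity of the free flow. So we finish with a free evolution of the leftover length $r$, which is small provided we choose the steering time close to the target.

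In more detail, we proceed in the following order.

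1. Use continuity of $(w,t)\mapsto \mathcal R_t(w,0)$ near $(z_1,0)$ to choose $r_0>0$ and $\rho>0$ such that $\|w-z_1\|<\rho$ and $t\le r_0$ imply $\|\mathcal R_t(w,0)-z_1\|<\varepsilon$. This uses the continuous dependence in \Cref{prop:NLS-LWP-2} or \Cref{prop:KG-2-LWP}.
2. Choose $\sigma:=T_0-r_0/2$. By global well-posedness, let $w_0:=\mathcal R_{\sigma}(z_0,0)$.
3. Apply STAC from $w_0$ to $z_1$ with accuracy $\rho$ and $\varkappa=r_0/2$, obtaining a time $T\in(0,r_0/2)$ and a control $\eta$.
4. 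Finish with a free evolution of length $r=T_0-\sigma-T=r_0/2-T\in(0,r_0)$.

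The concatenated control $\zeta=0$ on $[0,\sigma]$, then $\eta$, then $0$ lies in $L^\infty(0,T_0;E)$. By step 1, the final state lies within $\varepsilon$ of $z_1$. The only delicate point is step 1, which requires the local continuity of the flow jointly in datum and time; this follows from the fixed-point construction.
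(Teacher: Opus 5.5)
Your final argument (steps 1--4) is correct and is essentially the paper's proof: the paper also fixes a window $\delta$ (your $r_0/2$) and a radius $\varepsilon_0$ (your $\rho$) from continuity of the free flow near $z_1$ uniformly on $[0,\delta]$, lets $z_0$ evolve freely for time $T_0-\delta$, applies small-time approximate controllability with $\varkappa=\delta$, and finishes with a free evolution of length $\delta-T\in[0,\delta]$. You should additionally take $r_0$ small enough that $r_0/2<T_0$, so that $\sigma>0$, and the exploratory false starts before the final four steps should simply be deleted.
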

\begin{proof}
By the continuity of $t\mapsto\mathcal R_t(z_1,0)$, we may choose $\delta\in(0,T_0)$ such that $\|\mathcal R_\theta(z_1,0)-z_1\|_{X^s}<\varepsilon/2$ for $0\leq\theta\leq\delta$. Continuous dependence on the initial datum, uniformly on $[0,\delta]$, then gives $\varepsilon_0>0$ such that
\[\|w-z_1\|_{X^s}<\varepsilon_0,\qquad 0\leq\theta\leq\delta \quad\Longrightarrow\quad \|\mathcal R_\theta(w,0)-z_1\|_{X^s}<\varepsilon.\]
Set $\widetilde z_0:=\mathcal R_{T_0-\delta}(z_0,0)$. By \Cref{thm:NLS-STAC2} or \Cref{thm:KG-STAC2}, there exist $T\in(0,\delta)$ and $\zeta_0\in L^\infty(0,T;E)$ such that
\[\|\mathcal R_T(\widetilde z_0,\zeta_0)-z_1\|_{X^s}<\varepsilon_0.\]
Define $\zeta\in L^\infty(0,T_0;E)$ by
\[\zeta(t)=
\begin{cases}
0, &0\leq t<T_0-\delta,\\
\zeta_0(t-T_0+\delta), &T_0-\delta\leq t<T_0-\delta+T,\\
0, &T_0-\delta+T\leq t\leq T_0.
\end{cases}\]
The concatenation property gives $\mathcal R_{T_0}(z_0,\zeta)=\mathcal R_{\delta-T}\bigl(\mathcal R_T(\widetilde z_0,\zeta_0),0\bigr)$. Since $0\leq\delta-T\leq\delta$, the choice of $\delta$ and $\varepsilon_0$ proves the result.
\end{proof}

\begin{proposition}[Topological irreducibility]
\label{prop:irreducibility}
Under \Cref{ass:full-support}, for every $z_0,z_1\in X^s$ and every
$\varepsilon>0$,
\[P_1\bigl(z_0,B_{X^s}(z_1,\varepsilon)\bigr)>0.\]
In particular, the Markov chain \eqref{eq:integer-Markov-chain} is topologically irreducible in $X^s$.
\end{proposition}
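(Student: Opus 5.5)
The argument combines the fixed-time controllability of \Cref{lem:STAC-fixed-time} with the continuity of the resolving map from \Cref{ass:global-wellposedness} and the full support of $\ell$ from \Cref{ass:full-support}.

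\textbf{Step 1: a deterministic control at time $1$.} Fix $z_0,z_1\in X^s$ and $\varepsilon>0$. Apply \Cref{lem:STAC-fixed-time} with $T_0=1$ and tolerance $\varepsilon/2$. This gives $\zeta\in L^\infty(0,1;E)$ with
\[\|\mathcal R_1(z_0,\zeta)-z_1\|_{X^s}<\frac{\varepsilon}{2}.\]
Since $E$ is finite-dimensional and $(0,1)$ has finite measure, $\zeta\in L^2(0,1;E)=\mathcal E$.

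\textbf{Step 2: openness of the good set of forcings.} By \Cref{ass:global-wellposedness}, the map $\eta\mapsto\mathcal R_1(z_0,\eta)$ is continuous from $\mathcal E$ into $X^s$. It is the composition of the continuous resolving map into $C([0,1];X^s)$ with evaluation at $t=1$. Hence there exists $\rho>0$ such that $\|\eta-\zeta\|_{\mathcal E}<\rho$ implies
\[\|\mathcal R_1(z_0,\eta)-\mathcal R_1(z_0,\zeta)\|_{X^s}<\frac{\varepsilon}{2}.\]
By the triangle inequality, for every such $\eta$ we get $\mathcal R_1(z_0,\eta)\in B_{X^s}(z_1,\varepsilon)$. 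This gives the inclusion of events
\[\{\|\eta_0-\zeta\|_{\mathcal E}<\rho\}\subset\{\mathcal R_1(z_0,\eta_0)\in B_{X^s}(z_1,\varepsilon)\}.\]

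\textbf{Step 3: positivity.} By the definition \eqref{eq:one-step-transition-kernel} and the inclusion above,
\[P_1\bigl(z_0,B_{X^s}(z_1,\varepsilon)\bigr)\geq\mathbb P(\|\eta_0-\zeta\|_{\mathcal E}<\rho)=\ell\bigl(B_{\mathcal E}(\zeta,\rho)\bigr).\]
The right-hand side is positive by \Cref{ass:full-support}. Every nonempty open set of $X^s$ contains such a ball, so topological irreducibility follows.

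\textbf{Main subtlety.} The only delicate point is Step 2. \Cref{lem:STAC-fixed-time} produces controls in $L^\infty$, while the noise lives in $L^2$. We therefore need continuity of the solution map at time $1$ in the $L^2(0,1;E)$ topology, evaluated at the specific point $\zeta$. This is exactly what \Cref{ass:global-wellposedness} provides, because the continuity there is stated on $X^s\times L^2(0,T;E)$. The measurability of the event is automatic from this continuity.
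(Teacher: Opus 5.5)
Your proposal is correct and follows essentially the same argument as the paper: a fixed-time control from \Cref{lem:STAC-fixed-time} with tolerance $\varepsilon/2$, continuity of $\eta\mapsto\mathcal R_1(z_0,\eta)$ in $\mathcal E=L^2(0,1;E)$ to obtain a ball $B_{\mathcal E}(\zeta,\rho)$ of good forcings, and \Cref{ass:full-support} to conclude $P_1\bigl(z_0,B_{X^s}(z_1,\varepsilon)\bigr)\geq\ell\bigl(B_{\mathcal E}(\zeta,\rho)\bigr)>0$. Your remark that the $L^\infty$ control lies in $\mathcal E$ and that the needed continuity is the $L^2$ one from \Cref{ass:global-wellposedness} is the right way to justify Step 2.
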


\begin{proof}
By \Cref{lem:STAC-fixed-time}, there exists $\widehat\zeta\in L^\infty(0,1;E)\subset\mathcal E$ such that
\[\|\mathcal R_1(z_0,\widehat\zeta)-z_1\|_{X^s}<\frac{\varepsilon}{2}.\]
By the continuity of the resolving map with respect to the forcing, there exists $\rho>0$ such that
\[\|\eta-\widehat\zeta\|_{\mathcal E}<\rho\quad\Longrightarrow\quad\|\mathcal R_1(z_0,\eta)-z_1\|_{X^s}<\varepsilon.\]
Therefore, \Cref{ass:full-support} gives
\[P_1\bigl(z_0,B_{X^s}(z_1,\varepsilon)\bigr)\geq\ell\bigl(B_{\mathcal E}(\widehat\zeta,\rho)\bigr)>0.\]
\end{proof}

\begin{remark}
This is the standard support mechanism by which approximate controllability
and continuity of the resolving map imply topological irreducibility; see, for
instance, \cite[Lemma~4.7]{GHHM18}.
\end{remark}

We shall use the following uniform version on compact sets.
\begin{lemma}
\label{lem:compact-uniform-support}
Let $K\subset X^s$ be compact. For every $z_\ast\in X^s$ and every $\varepsilon>0$, there exists $p=p(K,z_\ast,\varepsilon)>0$ such that
\[\inf_{z\in K}P_1\bigl(z,B_{X^s}(z_\ast,\varepsilon)\bigr)\geq p.\]
\end{lemma}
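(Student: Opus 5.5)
The plan is to combine \Cref{prop:irreducibility} with a compactness argument based on the continuity of the resolving map in the initial datum. Fix $K$, $z_\ast$ and $\varepsilon$. For each $z\in K$, \Cref{lem:STAC-fixed-time} provides a control $\widehat\zeta_z\in L^\infty(0,1;E)\subset\mathcal E$ with
\[\|\mathcal R_1(z,\widehat\zeta_z)-z_\ast\|_{X^s}<\frac{\varepsilon}{2}.\]
By \Cref{ass:global-wellposedness}, the map $(w,\eta)\mapsto\mathcal R_1(w,\eta)$ is continuous from $X^s\times\mathcal E$ into $X^s$, and in particular continuous at $(z,\widehat\zeta_z)$. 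Hence there exist $r_z>0$ and $\rho_z>0$ such that
\[\|w-z\|_{X^s}<r_z,\quad \|\eta-\widehat\zeta_z\|_{\mathcal E}<\rho_z\quad\Longrightarrow\quad \|\mathcal R_1(w,\eta)-z_\ast\|_{X^s}<\varepsilon.\]
The point is that this is joint continuity, not just continuity in $\eta$ for fixed $w$. This is what makes the neighbourhood of controls uniform over a neighbourhood of initial data.

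Consequently, for every $w\in B_{X^s}(z,r_z)$ we have
\[P_1\bigl(w,B_{X^s}(z_\ast,\varepsilon)\bigr)\geq \ell\bigl(B_{\mathcal E}(\widehat\zeta_z,\rho_z)\bigr)=:p_z,\]
and $p_z>0$ by \Cref{ass:full-support}. The balls $B_{X^s}(z,r_z)$, $z\in K$, cover $K$, so compactness gives finitely many points $z_1,\dots,z_n$ whose balls still cover $K$. Taking $p:=\min_{1\le i\le n}p_{z_i}>0$ yields the claimed uniform lower bound on $K$.

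The only delicate step is extracting the joint continuity neighbourhood. Continuity of the resolving map into $C([0,1];X^s)$ on $X^s\times L^2(0,1;E)$ is exactly what \Cref{ass:global-wellposedness} states, so no additional estimate should be needed; I would simply invoke it there. An alternative would be to argue by lower semicontinuity of $w\mapsto P_1(w,U)$ for open $U$, via Fatou's lemma and the continuity of $w\mapsto\mathcal R_1(w,\eta)$ for each fixed $\eta$. A positive lower semicontinuous function attains a positive minimum on the compact set $K$, which gives the result directly. I would mention this as a one-line alternative to the covering argument.
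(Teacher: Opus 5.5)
Your proposal is correct and follows the paper's proof essentially step for step. You take the control from the fixed-time approximate controllability, use joint continuity of the resolving map (from \Cref{ass:global-wellposedness}) to obtain a neighbourhood of initial data and controls mapped into $B_{X^s}(z_\ast,\varepsilon)$, invoke full support for $p_z>0$, and take the minimum over a finite subcover of $K$. Your lower-semicontinuity alternative via Fatou's lemma is also valid.
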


\begin{proof}
Fix $z\in K$. The proof of \Cref{prop:irreducibility} provides a control
$\widehat\zeta_z\in\mathcal E$. By the joint continuity of the resolving map,
there exist $\delta_z>0$ and $\rho_z>0$ such that
\[\|w-z\|_{X^s}<\delta_z,\qquad\|\eta-\widehat\zeta_z\|_{\mathcal E}<\rho_z\]
imply $\mathcal R_1(w,\eta)\in B_{X^s}(z_\ast,\varepsilon)$. By \Cref{ass:full-support}, $p_z:=\ell\bigl(B_{\mathcal E}(\widehat\zeta_z,\rho_z)\bigr)>0$. The balls $B_{X^s}(z,\delta_z)$ cover $K$. Choose a finite subcover corresponding to $z_1,\ldots,z_N$ and set $p:=\min_{1\leq j\leq N}p_{z_j}>0$.
Then the required estimate follows.
\end{proof}

\subsection{Unbounded Sobolev trajectories}
We now combine irreducibility at a weaker Sobolev level with compactness of bounded sets at a stronger  Sobolev level. The following compactness and stopping-time argument is adapted from \cite[Section~5]{DN24}. Since we work under \Cref{ass:global-wellposedness}, the original flow is globally defined and the truncation introduced in \cite{DN24} is not needed here. Let $s_0<s$ be two indices satisfying the standing global well-posedness and small-time controllability assumptions. In addition, both $s_0$ and $s$ satisfy the Strichartz hypotheses of \Cref{thm:NLS-STAC2} or \Cref{thm:KG-STAC2}, as appropriate. By uniqueness and persistence of regularity, the resolving map on $X^s$ is the restriction of the resolving map on $X^{s_0}$; we therefore use the same notation for both.
\begin{theorem}[Almost sure unboundedness]
\label{thm:unbounded-Sobolev-trajectories}
Suppose that \Cref{ass:full-support} and the preceding hypotheses on $s_0<s$ hold. Then, for every $z_0\in X^s$, the solution driven by
\eqref{eq:block-random-forcing} satisfies
\[\mathbb P\left(\limsup_{k\to\infty}\|Z_k\|_{X^s}=+\infty\right)=1.\]
Consequently,
\[\mathbb P\left(\limsup_{t\to\infty}\|\mathcal R_t(z_0,\eta)\|_{X^s}=+\infty\right)=1.\]
\end{theorem}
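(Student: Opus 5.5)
The plan is to show that, for every $N>0$, the chain almost surely leaves the ball $B_N:=\{z\in X^s:\|z\|_{X^s}\le N\}$ at some integer time. Once this is known, intersecting over $N\in\N$ gives $\limsup_k\|Z_k\|_{X^s}=+\infty$ almost surely. The continuous-time statement then follows at once, since $Z_k=\mathcal R_k(z_0,\eta)$.

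\textbf{Step 1: a uniform escape probability.} Fix $N$. By the compact embedding $H^s\hookrightarrow H^{s_0}$ on the compact manifold $M$ (componentwise for $X^s_{\rm KG}$), the set $B_N$ is relatively compact in $X^{s_0}$. Let $K$ be its $X^{s_0}$-closure. We choose a target $z_\ast\in X^s$ with $\|z_\ast\|_{X^s}>N+2$ and aim to find $\varepsilon>0$ such that
\[
\inf_{z\in B_N}\mathbb P\bigl(\|\mathcal R_1(z,\eta_0)\|_{X^s}>N\bigr)\ge p>0.
\]
The difficulty is that the full-support argument only yields closeness in $X^{s_0}$, while leaving $B_N$ is measured in $X^s$. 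We resolve this with lower semicontinuity. The closed ball $B_N$ of $X^s$ is closed in $X^{s_0}$: if $z_n\to z$ in $X^{s_0}$ and $\|z_n\|_{X^s}\le N$, a weakly convergent subsequence in the Hilbert space $X^s$ shows $\|z\|_{X^s}\le N$. Since $z_\ast\notin B_N$, there is $\varepsilon>0$ with $B_{X^{s_0}}(z_\ast,\varepsilon)\cap B_N=\emptyset$. We then apply \Cref{lem:compact-uniform-support} at level $s_0$, with the compact set $K$, the target $z_\ast$ and this $\varepsilon$. It gives $p>0$ such that, for every $z\in B_N\subset K$, with probability at least $p$ the state $\mathcal R_1(z,\eta_0)$ lies in $B_{X^{s_0}}(z_\ast,\varepsilon)$, and hence outside $B_N$. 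The lemma applies at level $s_0$ because of the standing controllability and well-posedness hypotheses there. The identification of the $X^s$ flow with the restriction of the $X^{s_0}$ flow ensures that this event is the same for both flows.

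\textbf{Step 2: iteration via the Markov property.} Let $A_n:=\{Z_k\in B_N\text{ for all }0\le k\le n\}$. Since $\eta_n$ is independent of $(Z_0,\dots,Z_n)$, conditioning on $\mathcal F_n=\sigma(\eta_0,\dots,\eta_{n-1})$ gives
\[
\mathbb P(A_{n+1})=\mathbb E\bigl[\mathbf 1_{A_n}P_1(Z_n,B_N)\bigr]\le(1-p)\,\mathbb P(A_n).
\]
Here we use that $B_N$ is Borel in $X^s$ and that $P_1$ is a Markov kernel. Hence $\mathbb P(A_n)\le(1-p)^n\to0$, so the chain leaves $B_N$ almost surely.

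\textbf{Step 3: leaving infinitely often.} Step 2 only gives one exit, but this suffices. For each $N$, the event $\{\sup_k\|Z_k\|_{X^s}>N\}$ has full probability, so $\sup_k\|Z_k\|_{X^s}=+\infty$ almost surely. The exit times are finite at every level, which forces the norms to be unbounded along the sequence, and an unbounded sequence has $\limsup=+\infty$. For the formal version, apply the exit argument from the shifted chain started at an arbitrary time $k_0$, using the strong Markov property at integer times. This shows $\sup_{k\ge k_0}\|Z_k\|_{X^s}>N$ almost surely for every $k_0$ and $N$.

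The main obstacle is Step 1, namely transferring weak-topology irreducibility into a uniform positive probability of exceeding the strong norm. The two ingredients are the lower-semicontinuity argument, which makes $B_N$ closed in $X^{s_0}$, and the compact-uniform support lemma, which requires joint continuity of the flow in $X^{s_0}$.
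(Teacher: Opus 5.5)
Your proposal is correct and follows the paper's proof essentially step by step. The ingredients match: relative compactness of the $X^s$-ball in $X^{s_0}$, the uniform support lemma at level $s_0$, the geometric bound $(1-p)^n$ on the non-exit probability via the Markov property, and intersection over $N$. The only minor difference is how you keep the target ball away from $B_N$: you use weak lower semicontinuity to show $B_N$ is closed in $X^{s_0}$, whereas the paper picks $z_M$ with $\|z_M\|_{X^{s_0}}>C_{s,s_0}M$, so that $X^{s_0}$-closeness to $z_M$ forces $\|w\|_{X^s}>M$ directly through the embedding constant. Your shifted-chain remark in Step 3 is unnecessary, since $\sup_k\|Z_k\|_{X^s}=+\infty$ already gives $\limsup_k\|Z_k\|_{X^s}=+\infty$.
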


\begin{proof}
Fix $M>0$ and let
\[\mathcal B_M^s:=\{z\in X^s:\|z\|_{X^s}\leq M\}.\]
Since the embedding $X^s\hookrightarrow X^{s_0}$ is compact, the closure $K_M:=\overline{\mathcal B_M^s}^{X^{s_0}}$ is a compact subset of $X^{s_0}$. Let $C_{s,s_0}\geq 1$ be such that
\[\|z\|_{X^{s_0}}\leq C_{s,s_0}\|z\|_{X^s},\qquad \forall z\in X^s.\]
Choose $z_M\in X^s$ satisfying
\[\|z_M\|_{X^{s_0}}>C_{s,s_0}M\]
by multiplying any non-zero smooth state by a sufficiently large constant. Set
\[\varepsilon_M:=\|z_M\|_{X^{s_0}}-C_{s,s_0}M>0.\]
If $w\in X^s$ and
$\|w-z_M\|_{X^{s_0}}<\varepsilon_M$, then
\[\|w\|_{X^{s_0}}>C_{s,s_0}M,\]
and consequently $\|w\|_{X^s}>M$. Apply \Cref{lem:compact-uniform-support} at the Sobolev level $s_0$ to the compact set $K_M$, the target $z_M$, and the radius $\varepsilon_M$. There exists $p_M>0$ such that
\[\inf_{z\in K_M}\mathbb P\left(\mathcal R_1(z,\eta_0)\in B_{X^{s_0}}(z_M,\varepsilon_M)\right)\geq p_M.\]
Since a solution starting from $X^s$ remains in $X^s$, it follows that
\[\{\mathcal R_1(z,\eta_0)\in B_{X^{s_0}}(z_M,\varepsilon_M)\}\subset \{\|\mathcal R_1(z,\eta_0)\|_{X^s}>M\}.\]
Therefore
\begin{equation}
\label{eq:uniform-escape-probability}
\inf_{\|z\|_{X^s}\leq M}P_1\bigl(z,\{w\in X^s:\|w\|_{X^s}>M\}\bigr)\geq p_M.
\end{equation}
Let $\varkappa_M:=\inf\{k\geq0:\|Z_k\|_{X^s}>M\}$ be the first exit time from the ball of radius $M$, with the convention $\inf\varnothing=+\infty$, and let
$\mathcal F_k:=\sigma(\eta_0,\ldots,\eta_{k-1})$. The random variable $Z_k$ and the event $\{\varkappa_M>k\}$ are $\mathcal F_k$-measurable. On this event, one has $\|Z_k\|_{X^s}\leq M$. Hence, by the Markov property and
\eqref{eq:uniform-escape-probability},
\[\begin{aligned}
\mathbb P\left(\|Z_{k+1}\|_{X^s}>M\mid\mathcal F_k\right)&=P_1\left(Z_k,\{w\in X^s:\|w\|_{X^s}>M\}\right)\\
&\geq p_M
\end{aligned}\]
on $\{\varkappa_M>k\}$. Moreover, on this same event,
$\varkappa_M>k+1$ if and only if $\|Z_{k+1}\|_{X^s}\leq M$. Therefore,
\[\begin{aligned}
\mathbb P(\varkappa_M>k+1\mid\mathcal F_k)&=\mathbf 1_{\{\varkappa_M>k\}}\mathbb P\left(\|Z_{k+1}\|_{X^s}\leq M\mid\mathcal F_k\right)\\
&\leq(1-p_M)\mathbf 1_{\{\varkappa_M>k\}}.
\end{aligned}\]
Taking expectations, we obtain
\[\begin{aligned}
\mathbb P(\varkappa_M>k+1)
&=\mathbb E\left[\mathbb P(\varkappa_M>k+1\mid\mathcal F_k)\right]\\
&\leq(1-p_M)\mathbb E\left[\mathbf 1_{\{\varkappa_M>k\}}\right]\\
&=(1-p_M)\mathbb P(\varkappa_M>k).
\end{aligned}\]
Iterating this inequality, we obtain
\[\mathbb P(\varkappa_M>k)\leq(1-p_M)^k,\qquad k\geq0.\]
Since $p_M>0$, the right-hand side tends to zero as $k\to\infty$. Moreover, the events $\{\varkappa_M>k\}$ decrease to $\{\varkappa_M=+\infty\}$. Hence,
\[\mathbb P(\varkappa_M=+\infty)=\lim_{k\to\infty}\mathbb P(\varkappa_M>k)=0.\]
Therefore,
\[\mathbb P(\varkappa_M<+\infty)=1.\]
Applying this conclusion to every positive integer $M$ and taking the
countable intersection of the resulting events, we obtain
\[\mathbb P\left(\sup_{k\geq0}\|Z_k\|_{X^s}=+\infty\right)=1.\]
Since every $Z_k$ has finite $X^s$-norm, this is equivalent to
\[\mathbb P\left(\limsup_{k\to\infty}\|Z_k\|_{X^s}=+\infty\right)=1.\]
Finally, the continuous-time conclusion follows because the integer times
form a subsequence of $[0,\infty)$.

\end{proof}

\section*{Acknowledgments}
We thank Vahagn Nersesyan for suggesting the additional control profile used in the even-power saturation argument, which allowed us to simplify the proofs and the presentation of work.

\appendix

\section{Global well-posedness results}
\subsection{\texorpdfstring{Proof of \Cref{prop:NLS-global-wellposedness}}{Proof of global well-posedness for NLS}}
\label[appendix]{app:NLS-GWP}
Since $E$ is finite-dimensional and contained in $C^\infty(M;\C)$, for
every $s\geq0$ there exists $C_{E,s}>0$ such that
\[\|\eta\|_{L^1(0,T;H^s)}\leq C_{E,s}T^{1/2}\|\eta\|_{L^2(0,T;E)}.\]
Thus the local well-posedness theory applies. Let $[0,T_{\max})$ be the maximal interval of existence. We first obtain a global $H^1$ bound. The mass identity gives
\[\frac{d}{dt}\|u(t)\|_{L^2}\leq \|\eta(t)\|_{L^2}\]
for almost every $t$, and hence
\begin{equation}
\label{eq:NLS-forced-mass-bound}
\sup_{0\leq t\leq T}\|u(t)\|_{L^2} \leq \|u_0\|_{L^2}+\|\eta\|_{L^1(0,T;L^2)}
\end{equation}
for every $T<T_{\max}$. Choose $c>0$ so that $W+c\geq1$ on $M$, and define
\[\mathcal E_c(u):=\frac12\|\nabla_g u\|_{L^2}^2+\frac12\int_M(W+c)|u|^2dx+\frac{\kappa}{4}\|u\|_{L^4}^4, \qquad \kappa>0.\]
This functional is coercive:
\begin{equation}
\label{eq:NLS-coercive-energy}
\mathcal E_c(u) \geq C^{-1}\bigl(\|u\|_{H^1}^2+\|u\|_{L^4}^4\bigr).
\end{equation}
Set $A_c(u):=(-\Delta_g+W+c)u+\kappa |u|^2u$. Since $i\partial_tu=A_c(u)-cu+\eta$, and both $\langle A_c(u),A_c(u)\rangle$ and
$\langle A_c(u),u\rangle$ are real, we obtain
\[\left|\frac{d}{dt}\mathcal E_c(u(t))\right|\leq\|A_c(u(t))\|_{H^{-1}}\|\eta(t)\|_{H^1}.\]
For $d\leq3$, Sobolev embedding and interpolation give
\[\||u|^2u\|_{H^{-1}}\leq C\||u|^2u\|_{L^{6/5}}=C\|u\|_{L^{18/5}}^3\leq C\|u\|_{L^2}^{1/3}\|u\|_{L^4}^{8/3}.\]
Combining this estimate with \eqref{eq:NLS-forced-mass-bound} and \eqref{eq:NLS-coercive-energy}, we get
\[\left|\frac{d}{dt}\mathcal E_c(u(t))\right|\leq C_T\|\eta(t)\|_{H^1}\bigl(1+\mathcal E_c(u(t))\bigr)^{2/3}.\]
Consequently,
\[\frac{d}{dt}\bigl(1+\mathcal E_c(u(t))\bigr)^{1/3}\leq C_T\|\eta(t)\|_{H^1},\]
and therefore
\begin{equation}
\label{eq:NLS-global-H1-bound}
\sup_{0\leq t\leq T}\|u(t)\|_{H^1}<\infty
\end{equation}
for every finite $T<T_{\max}$. It remains to propagate the $H^s$ regularity.
\subsubsection*{\texorpdfstring{Case $d=2$}{Case d=2}}
The energy-level Strichartz theory gives $u\in L^b_{\mathrm{loc}}([0,T_{\max});L^\infty(M))$ for some $b>2$. On every interval $I=[t_0,t_1]\subset[0,T_{\max})$, the Duhamel and Strichartz estimates yield
\[\|u\|_{Y_I^s}\leq C\left(\|u(t_0)\|_{H^s}+\|\eta\|_{L^1(I;H^s)}\right)+C\left(|I|+|I|^{1-\frac2b}\|u\|_{L^b(I;L^\infty)}^2\right)\|u\|_{Y_I^s}.\]
By \eqref{eq:NLS-global-H1-bound}, every finite interval can be divided into finitely many subintervals on which the last coefficient is smaller than $1/2$. Absorbing this term and iterating over the subintervals proves that
\[\sup_{0\leq t\leq T}\|u(t)\|_{H^s}<\infty\]
for every finite $T<T_{\max}$. 
\subsubsection*{\texorpdfstring{Case $d=3$}{Case d=3}}
In this case, we assume that
$W=0$. We follow the argument of
\cite[Section~3.3]{burq2004strichartz}. We first adapt their frequency-localized estimate to
the equation with additive forcing.

\begin{lemma}[Forced frequency-localized estimate]
\label{lem:NLS-forced-frequency-estimate}
Let $I=[t_0,t_1]\subset[0,T_{\max})$ with $t_1-t_0\leq1$, and let $\varphi\in C_c^\infty(\R\setminus\{0\})$. Set
\[P_h:=\varphi(-h^2\Delta_g),\qquad 0<h<1.\]
Assume that
\[\sup_{t\in I}\|u(t)\|_{H^1}\leq R.\]
Then there exist $\varepsilon>0$, a nondecreasing function
$\Lambda:\R_+\to\R_+$, and a constant $C>0$ such that
\begin{equation}
\label{eq:NLS-forced-frequency-estimate}
\|P_hu\|_{L^2(I;L^6)}
\leq C\left(h^{1/2}\|P_hu\|_{L^2(I;H^1)}+h^{1/2+\varepsilon}\Lambda(R)+\|P_h\eta\|_{L^2(I;L^2)}
\right).
\end{equation}
\end{lemma}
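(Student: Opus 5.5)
We follow the Burq--G\'erard--Tzvetkov argument of \cite[Section~3.3]{burq2004strichartz} and treat the forcing $\eta$ as an extra source term. The key input is the semiclassical Strichartz estimate of \cite{burq2004strichartz}: on time intervals of length $h$, the frequency-localized linear flow $e^{it\Delta_g}P_h$ satisfies the Euclidean (lossless) Strichartz estimates. In dimension $d=3$ the endpoint-type pair $(2,6)$ is admissible, and gluing $h^{-1}$ intervals of length $h$ produces the loss $h^{-1/2}\cdot h^{1/2}$ structure that appears in \eqref{eq:NLS-forced-frequency-estimate}.

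First, apply $P_h$ to the equation. Since $W=0$, $P_h$ commutes with $-\Delta_g$, so $v:=P_hu$ solves
\[
i\partial_t v+\Delta_g v=\kappa P_h(|u|^2u)+P_h\eta .
\]
Split $I$ into $O(h^{-1})$ subintervals $J_k$ of length $\sim h$. On each $J_k$, the semiclassical Strichartz estimate gives
\[
\|v\|_{L^2(J_k;L^6)}\le C\Bigl(\|v\|_{L^\infty(J_k;L^2)}+\|P_h(|u|^2u)\|_{L^1(J_k;L^2)}+\|P_h\eta\|_{L^1(J_k;L^2)}\Bigr).
\]
Rather than using the $L^\infty_tL^2$ term directly, we follow the local smoothing/averaging trick of \cite{burq2004strichartz}. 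We average the Duhamel formula over initial times in $J_k$, which replaces the $L^\infty(J_k;L^2)$ norm by $h^{-1/2}\|v\|_{L^2(J_k;L^2)}$. Then $\|v\|_{L^2}\lesssim h\|v\|_{H^1}$ by frequency localization. Squaring and summing over $k$ yields the main term $h^{1/2}\|P_hu\|_{L^2(I;H^1)}$.

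The forcing contributions are handled by Cauchy--Schwarz:
\[
\|P_h\eta\|_{L^1(J_k;L^2)}\le h^{1/2}\|P_h\eta\|_{L^2(J_k;L^2)}.
\]
Summing the squares over $k$ gives at most $\|P_h\eta\|_{L^2(I;L^2)}$, even with a factor $h^{1/2}$ to spare. This is where the additive forcing enters, and it is harmless.

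The nonlinear term requires showing that
\[
h^{1/2}\Bigl(\sum_k\|P_h(|u|^2u)\|_{L^1(J_k;L^2)}^2\Bigr)^{1/2}\lesssim h^{1/2+\varepsilon}\Lambda(R).
\]
Using $\|P_hf\|_{L^2}\lesssim h\|f\|_{H^1}$, we have $\|P_h(|u|^2u)\|_{L^2}\lesssim h\,\||u|^2u\|_{H^1}$. Since $\||u|^2u\|_{H^1}\lesssim\|u\|_{L^\infty}^2\|u\|_{H^1}$, this is the step where an $H^1$ bound alone does not suffice in $d=3$.

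I expect this to be the main obstacle. The plan is to reproduce the bootstrap of \cite{burq2004strichartz}. The frequency-localized Strichartz bound with loss $1/2$ yields $u\in L^2(I;W^{1/2+,6})$ in terms of $R$. One then interpolates this with the $L^\infty_tH^1$ bound, using the embeddings $H^1\subset L^6$ and Bernstein inequalities for $P_h$. This gives
\[
\|P_h(|u|^2u)\|_{L^1(J_k;L^2)}\lesssim h^{1/2+\varepsilon}\Lambda(R)\cdot(\text{square-summable in }k)
\]
for some small $\varepsilon>0$. The forcing does not interfere here, because the hypothesis $\sup_I\|u\|_{H^1}\le R$ already absorbs its effect on $u$. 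Collecting the three contributions gives \eqref{eq:NLS-forced-frequency-estimate}.
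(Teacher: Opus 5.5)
Your handling of the linear part and of the forcing is fine. Averaging the Strichartz estimate over initial times in $J_k$ is equivalent to the time cut-off $\chi_h$ used in the paper. Cauchy--Schwarz in $L^1(J_k;L^2)$ for $P_h\eta$ even gives an extra $h^{1/2}$ compared with the paper's bound through $L^2(J';L^{6/5})$ and $L^2(M)\hookrightarrow L^{6/5}(M)$.

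The gap is the nonlinear term, which you flag but do not close. Because you measure the source in $L^1(J_k;L^2)$, the hypothesis $\sup_I\|u\|_{H^1}\le R$ only yields $\|P_h(|u|^2u)\|_{L^2}\le\|u\|_{L^6}^3\lesssim R^3$, with no power of $h$. In dimension three the map $u\mapsto|u|^2u$ is scaling-critical from $H^1$ into $L^2$ (think of $u$ concentrated at scale $h$), so no bound $\||u|^2u\|_{H^\varepsilon}\le\Lambda(\|u\|_{H^1})$ with $\varepsilon>0$ exists. Summing squares over the $O(h^{-1})$ intervals then gives only $h^{1/2}R^3$, which is the statement with $\varepsilon=0$.

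The $\varepsilon>0$ is essential. It is what keeps the dyadic sum of the terms $h^{\varepsilon}\Lambda(R)$ bounded when deriving \eqref{eq:NLS-forced-logarithmic-estimate}. With $\varepsilon=0$ that sum grows like the number of scales, i.e.\ like $\log(2+\|u\|_{L^2(I;H^s)})$ rather than its square root, and the Gronwall step fails.

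Your proposed remedy does not supply the gain. The Strichartz estimate with loss gives at most $L^2(I;W^{1/2,6})$, not $W^{1/2+,6}$, and only for the free evolution. Bounding the cubic Duhamel term in that norm by a function of $R$ alone requires exactly the space-time control the lemma is meant to produce, so the bootstrap is circular as written. There is also a bookkeeping slip: the source terms enter your per-interval estimate with constant $C$, not $Ch^{1/2}$, so your first display in the last step asks for too little.

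The missing idea is the one the paper takes from \cite[Lemma~3.4]{burq2004strichartz}. On intervals of length $\sim h$ the frequency-localized flow satisfies the inhomogeneous \emph{endpoint} Strichartz estimate, so different source terms can be placed in different dual norms. The cut-off term (your averaging) goes in $L^1_tL^2_x$. The nonlinear term goes in the dual endpoint norm $L^2(J';L^{6/5})$, where the $H^1$ bound alone does produce a gain:
\[\|\nabla(|u|^2u)\|_{L^{6/5}}\lesssim\|u\|_{L^6}^2\|\nabla u\|_{L^2}\lesssim R^3.\]
Hence, by a Bernstein-type bound from the functional calculus of \cite{burq2004strichartz}, $\|P_h(|u|^2u)\|_{L^{6/5}}\lesssim hR^3$. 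On each interval this gives $\|\chi_hP_h(|u|^2u)\|_{L^2(J';L^{6/5})}\lesssim h^{3/2}R^3$, better than the $h^{1+\varepsilon}\Lambda(R)$ required. Summing squares over the intervals gives $hR^3\le h^{1/2+\varepsilon}R^3$. In $L^1(J';L^2)$ the same term is only $O(hR^3)$ per interval, which is exactly the $h^{1/2}$ you lose. With this replacement the rest of your argument goes through, and the forcing can be treated either your way or the paper's way.
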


\begin{proof}
We follow the proof of
\cite[Lemma~3.6]{burq2004strichartz}. Let $J\subset I$ be an
interval of length comparable to $h$, let $J'$ be the corresponding enlarged
interval, and let $\chi_h$ be the time cut-off used in their proof. Set
$v_h(t):=\chi_h(t)P_hu(t)$. Since
\[i\partial_tu+\Delta_gu=\kappa|u|^2u+\eta,\]
we have
\[(i\partial_t+\Delta_g)v_h=i\chi_h'P_hu+\kappa\chi_hP_h(|u|^2u)+\chi_hP_h\eta.\]
The first two terms are exactly those appearing in \cite[Equation (3.12)]{burq2004strichartz}. The estimates \cite[Equations (3.13)--(3.16)]{burq2004strichartz}, combined with the inhomogeneous endpoint estimate of
\cite[Lemma~3.4]{burq2004strichartz}, give
\[\|P_hu\|_{L^2(J;L^6)}
\leq C\left( h^{1/2}\|P_hu\|_{L^2(J';H^1)}+h^{1+\varepsilon}\Lambda(R)+\|\chi_hP_h\eta\|_{L^2(J';L^{6/5})}
\right).\]
Since $M$ is compact, $L^2(M)$ is continuously embedded in $L^{6/5}(M)$.
Therefore,
\[\|\chi_hP_h\eta\|_{L^2(J';L^{6/5})}
\leq C\|P_h\eta\|_{L^2(J';L^2)}.\]
We now use the interval decomposition \cite[Equation (3.17)]{burq2004strichartz}. The enlarged intervals $J'$ have uniformly bounded
overlap, and hence
\[\sum_J\|P_h\eta\|_{L^2(J';L^2)}^2\leq C\|P_h\eta\|_{L^2(I;L^2)}^2.\]
Summing the squares of the estimates on the intervals $J$, and treating the
two boundary intervals exactly as in 
\cite[Equations (3.18)--(3.19)]{burq2004strichartz}, gives
\eqref{eq:NLS-forced-frequency-estimate}.
\end{proof}

Since $E$ is finite-dimensional and contained in $C^\infty(M;\C)$, for every
$r\geq0$ there exists $C_{E,r}>0$ such that
\begin{equation}
\label{eq:NLS-control-frequency-decay}
\|P_h\xi\|_{L^2} \leq C_{E,r}h^r\|\xi\|_E, \qquad \xi\in E,\quad 0<h<1.
\end{equation}
Indeed, on the support of the multiplier $\varphi(-h^2\Delta_g)$, the
frequencies are of order $h^{-1}$, and therefore
\[\|P_h\xi\|_{L^2}\leq Ch^r\|\xi\|_{H^r}\leq C_{E,r}h^r\|\xi\|_E.\]
Consequently,
\begin{equation}
\label{eq:NLS-forcing-frequency-decay}
\|P_h\eta\|_{L^2(I;L^2)} \leq C_{E,r}h^r\|\eta\|_{L^2(I;E)}.
\end{equation}
Let $\widetilde\varphi\in C_c^\infty(\R\setminus\{0\})$ be equal to one on the support of $\varphi$, and set $\widetilde P_h:=\widetilde\varphi(-h^2\Delta_g)$. Since $P_h=\widetilde P_hP_h$, the spectral Bernstein estimate of \cite[Corollary~2.2]{burq2004strichartz} gives
\[\|P_hu\|_{L^2(I;L^\infty)}\leq Ch^{-1/2}\|P_hu\|_{L^2(I;L^6)}.\]
Combining this inequality with
\eqref{eq:NLS-forced-frequency-estimate} and
\eqref{eq:NLS-forcing-frequency-decay}, we obtain
\[\|P_hu\|_{L^2(I;L^\infty)}\leq C\left(\|P_hu\|_{L^2(I;H^1)}+h^\varepsilon\Lambda(R)\right)+C_{E,r}h^{r-\frac12}\|\eta\|_{L^2(I;E)}.\]
Taking $r>1/2$, the last term is summable over the dyadic values $h=2^{-k}$. Repeating the low- and high-frequency decomposition in
\cite[Equations (3.22)--(3.26)]{burq2004strichartz}, we obtain the forced analogue of \cite[Equation (3.26)]{burq2004strichartz}:
\begin{equation}
\label{eq:NLS-forced-logarithmic-estimate}
\|u\|_{L^2(I;L^\infty)}
\leq C_R\left(\left(
|I|\log\left(2+\|u\|_{L^2(I;H^s)}\right)
\right)^{1/2}+1\right)+C_{E,r}\|\eta\|_{L^2(I;E)}.
\end{equation}
We now propagate the $H^s$ regularity. The standard Sobolev estimate gives
\[\||u|^2u\|_{H^s}\leq C\|u\|_{L^\infty}^2\|u\|_{H^s}.\]
Hence, for $t\in I=[t_0,t_1]$ by Duhamel's formulation,
\[\|u(t)\|_{H^s} \leq \|u(t_0)\|_{H^s}+\|\eta\|_{L^1(I;H^s)}+C\int_{t_0}^t\|u(\sigma)\|_{L^\infty}^2\|u(\sigma)\|_{H^s}d\sigma.\]
Gronwall's inequality yields
\begin{equation}
\label{eq:NLS-forced-Hs-Gronwall}
\|u\|_{L^\infty(I;H^s)} \leq \left(\|u(t_0)\|_{H^s}+\|\eta\|_{L^1(I;H^s)}\right)\exp\left(C\|u\|_{L^2(I;L^\infty)}^2\right).
\end{equation}
Using \eqref{eq:NLS-forced-logarithmic-estimate} in
\eqref{eq:NLS-forced-Hs-Gronwall}, and observing that
\[\|u\|_{L^2(I;H^s)}\leq|I|^{1/2}\|u\|_{L^\infty(I;H^s)},\]
we obtain the forced analogue of
\cite[Equation (3.27)]{burq2004strichartz}. More precisely,
\[\|u\|_{L^\infty(I;H^s)}\leq\left(\|u(t_0)\|_{H^s}+\|\eta\|_{L^1(I;H^s)}\right)\exp\left(C_R+C_{E,r}\|\eta\|_{L^2(I;E)}^2\right)\left(2+|I|^{1/2}\|u\|_{L^\infty(I;H^s)}
\right)^{C_R|I|}.\]
Choose $|I|\leq1$ sufficiently small so that $C_R|I|\leq1/2$.
Then, by Young's inequality,
\[\|u\|_{L^\infty(I;H^s)}\leq C\Bigg[1+\left(\|u(t_0)\|_{H^s}+\|\eta\|_{L^1(I;H^s)}\right)^2\exp\left(C_R+C_{E,r}\|\eta\|_{L^2(I;E)}^2\right)\Bigg].\]
The length condition imposed on $I$ depends only on the uniform $H^1$ bound
$R$. Suppose, for a contradiction, that $T_{\max}<+\infty$. By
\eqref{eq:NLS-global-H1-bound}, $R$ can be chosen uniformly on
$[0,T_{\max})$. Moreover,
\[\|\eta\|_{L^2(0,T_{\max};E)}<\infty,\qquad\|\eta\|_{L^1(0,T_{\max};H^s)}<\infty.\]
We divide $[0,T_{\max})$ into finitely many subintervals satisfying the
required length condition and iterate the preceding estimate, starting from
$u(0)=u_0\in H^s(M)$. This gives
\[\sup_{0\leq t<T_{\max}}\|u(t)\|_{H^s}<\infty,\]
which contradicts the local blow-up alternative. Hence $T_{\max}=+\infty$. Uniqueness follows from the local well-posedness theory. Continuous dependence is obtained by applying the local stability estimate on finitely many subintervals of $[0,T]$. 

The preceding estimates are first justified for smooth initial data and smooth $E$-valued controls. For general $u_0\in H^s(M)$ and $\eta\in L^2(0,T;E)$, choose sequences $u_0^n\in C^\infty(M)$, $\eta_n\in C^\infty([0,T];E)$, such that $u_0^n\to u_0$ in $H^s(M)$ and $\eta_n\to\eta$ in $L^2(0,T;E)$. The preceding bounds are uniform along these approximations. The local
continuous dependence of the resolving map then allows us to pass to the
limit, and the same estimates hold for the corresponding mild solution.\qed

\subsection{\texorpdfstring{Proof of \Cref{prop:KG-global-wellposedness}}{Proof of global well-posedness for KG}}
\label[appendix]{app:KG-GWP}
As in the Schrödinger case, for every $s\geq0$,
\begin{equation}
\label{eq:KG-finite-dimensional-source}
\|\eta\|_{L^1(0,T;H^s)} \leq C_{E,s}T^{1/2}\|\eta\|_{L^2(0,T;E)}.
\end{equation}
Let $[0,T_{\max})$ be the maximal interval of existence of the local solution. Choose $c>0$ so that $W+c\geq1$ on $M$, and set
\[\mathcal E_c(t):=\frac12\|\partial_ty(t)\|_{L^2}^2+\frac12\|\nabla_gy(t)\|_{L^2}^2+\frac12\int_M(W+c)|y(t)|^2dx+\frac{\kappa}{4}\|y(t)\|_{L^4}^4.\]
Then
\[\mathcal E_c(t)\geq C^{-1}\left(\|y(t)\|_{H^1}^2+\|\partial_ty(t)\|_{L^2}^2\right).\]
Multiplying the equation by $\partial_ty$ gives
\[\mathcal E_c'(t)=\langle\eta(t),\partial_ty(t)\rangle+c\langle y(t),\partial_ty(t)\rangle.\]
Therefore, with $X(t):=(1+\mathcal E_c(t))^{1/2}$,
\[X'(t)\leq CX(t)+C\|\eta(t)\|_{L^2}.\]
Gronwall's inequality and
\eqref{eq:KG-finite-dimensional-source} imply
\begin{equation}
\label{eq:KG-global-energy-bound}
\sup_{0\leq t\leq T}\left(\|y(t)\|_{H^1}+\|\partial_ty(t)\|_{L^2}\right)<\infty
\end{equation}
for every finite $T<T_{\max}$. We now propagate the higher regularity. Let $\Lambda:=(1-\Delta_g)^{1/2}$. For $s\geq1$, the fractional Leibniz rule and $H^1(M)\hookrightarrow L^6(M)$ give
\[\begin{aligned}
\|\Lambda^{s-1}(y^3)\|_{L^2} &\leq C\|\Lambda^{s-1}(y^2)\|_{L^3}\|y\|_{L^6}+C\|y^2\|_{L^3}\|\Lambda^{s-1}y\|_{L^6}\\
&\leq C\|y\|_{L^6}^2\|\Lambda^{s-1}y\|_{L^6}\\
&\leq C\|y\|_{H^1}^2\|y\|_{H^s}.
\end{aligned}\]
Thus
\begin{equation}
\label{eq:KG-tame-cubic-estimate}
\|y^3\|_{H^{s-1}} \leq C\|y\|_{H^1}^2\|y\|_{H^s}.
\end{equation}
The linear energy estimate for Klein--Gordon, applied in
$\mathcal H^s$, yields
\[\left\|\binom{y(t)}{\partial_ty(t)}\right\|_{\mathcal H^s} \leq C_T\left(\left\|\binom{y_0}{y_1}\right\|_{\mathcal H^s} +\|\eta\|_{L^1(0,t;H^{s-1})}+\int_0^t\|y(\sigma)^3\|_{H^{s-1}}d\sigma\right).\]
Using \eqref{eq:KG-tame-cubic-estimate}, we obtain
\[\left\|\binom{y(t)}{\partial_ty(t)}\right\|_{\mathcal H^s}
\leq C_T\left(\left\|\binom{y_0}{y_1}\right\|_{\mathcal H^s}+\|\eta\|_{L^1(0,t;H^{s-1})}+\int_0^t\|y(\sigma)\|_{H^1}^2\left\|\binom{y(\sigma)}{\partial_ty(\sigma)}\right\|_{\mathcal H^s}d\sigma\right).\]
The coefficient in the last integral is bounded on every finite interval by \eqref{eq:KG-global-energy-bound}. Gronwall's inequality therefore gives
\[\sup_{0\leq t\leq T}\left\|\binom{y(t)}{\partial_ty(t)}\right\|_{\mathcal H^s}<\infty\]
for every finite $T<T_{\max}$. Hence the local blow-up alternative implies $T_{\max}=+\infty$. Finally, let $y$ and $z$ be two solutions corresponding to $((y_0,y_1)^\top,\eta)$ and $((z_0,z_1)^\top,\zeta)$. On every bounded subset of $H^s(M)$, the fractional Leibniz rule gives
\[\|y^3-z^3\|_{H^{s-1}}\leq C_R\|y-z\|_{H^s}.\]
Applying the linear energy estimate to $y-z$ and using Gronwall's inequality, we obtain
\[\sup_{0\leq t\leq T}\left\|\binom{y(t)-z(t)}{\partial_ty(t)-\partial_tz(t)}\right\|_{\mathcal H^s}\leq C_{T,R}\left(\left\|\binom{y_0-z_0}{y_1-z_1}\right\|_{\mathcal H^s}+\|\eta-\zeta\|_{L^1(0,T;H^{s-1})}\right).\]
Together with \eqref{eq:KG-finite-dimensional-source}, this proves continuity with respect to the $L^2(0,T;E)$ topology.
\qed
\bibliography{biblio_sat}
\bibliographystyle{alpha}

\end{document}